\documentclass[11pt]{article}
\usepackage[a4paper,margin=26mm]{geometry}
\usepackage[T1]{fontenc}
\usepackage{lmodern}
\usepackage{amsmath,amssymb,amsthm,mathtools}
\usepackage{microtype}
\usepackage{enumitem}
\usepackage[hidelinks]{hyperref}
\hypersetup{pdftitle={Bramson correction and critical-wave convergence for lattice Fisher--KPP equations},
  pdfauthor={Yuanyang Hu},
  pdfsubject={Logarithmic localization along coordinate axes and critical-wave convergence for lattice Fisher--KPP solutions},
  pdfkeywords={Fisher--KPP, lattice diffusion, Bramson correction, weighted mass}}
\numberwithin{equation}{section}
\newcommand{\Z}{\mathbb Z}
\newcommand{\R}{\mathbb R}
\newcommand{\N}{\mathbb N}
\newcommand{\ee}{\mathrm e}
\newcommand{\dd}{\mathrm d}
\newcommand{\ii}{\mathrm i}
\newcommand{\1}{\mathbf 1}
\newcommand{\norm}[1]{\lVert #1\rVert}
\newcommand{\pheat}{\mathsf p}
\DeclareMathOperator{\supp}{supp}
\DeclareMathOperator{\diag}{diag}
\DeclareMathOperator{\arsinh}{arsinh}
\newtheorem{theorem}{Theorem}[section]
\newtheorem{lemma}[theorem]{Lemma}
\newtheorem{proposition}[theorem]{Proposition}
\newtheorem{corollary}[theorem]{Corollary}
\theoremstyle{remark}
\newtheorem{remark}[theorem]{Remark}

\title{Bramson correction and convergence to the critical wave\\
for Fisher--KPP equations on $\mathbb Z^d$}
\author{Yuanyang Hu\\[2pt]
\small School of Mathematics and Statistics, Henan University\\
\small Kaifeng 475004, China\\
\small \texttt{yuanyhu@henu.edu.cn}}
\date{}

\begin{document}
\maketitle

\begin{abstract}
We consider Fisher--KPP equations with nearest-neighbor diffusion on
$\mathbb Z^d$, $d\geq2$, with nonzero finitely supported initial data.
We prove a logarithmic delay of the front along each signed coordinate
axis and show that the transition region has uniformly bounded width.
On every fixed-width half-tube around an axis, the solution converges to
translates of the minimal-speed lattice traveling wave, with a bounded
phase. We also obtain an upper bound with a logarithmic correction in
every direction. The proof uses weighted estimates, bounds for tilted
random walks, and a product lower solution. Concavity of the reaction is
not assumed.
\end{abstract}

\medskip
\noindent\textbf{Keywords:} Fisher--KPP equation; lattice diffusion;
Bramson correction; level-set estimates; critical traveling wave.

\smallskip
\noindent\textbf{Mathematics Subject Classification (2020):}
35K57, 35B40, 35C07.

\section{Introduction}\label{sec:introduction}

We consider the Cauchy problem
\begin{equation}\label{eq:main}
\left\{
\begin{aligned}
    u_t(t,x)&=\Delta_{\Z^d}u(t,x)+f(u(t,x)),
        && t>0,\quad x\in\Z^d,\\
    u(0,x)&=u_0(x),
        && x\in\Z^d.
\end{aligned}
\right.
\end{equation}
Here $d\geq2$,
$e_1,\ldots,e_d$ are the coordinate vectors, and
\begin{equation}\label{eq:lattice-lap}
    \Delta_{\Z^d}\phi(x)
    =\sum_{k=1}^d\bigl(\phi(x+e_k)+\phi(x-e_k)-2\phi(x)\bigr).
\end{equation}
Our assumptions on the reaction are
\begin{equation}\label{eq:kpp}
\begin{gathered}
    f\in C^2([0,1]),\qquad f(0)=f(1)=0,\qquad a:=f'(0)>0,
    \qquad f'(1)<0,\\
    0<f(s)\leq as\qquad(0<s<1).
\end{gathered}
\end{equation}
The initial datum satisfies
\begin{equation}\label{eq:u0}
    0\leq u_0\leq1,\qquad u_0\not\equiv0,
    \qquad \supp u_0\text{ is finite}.
\end{equation}
In particular, concavity of $f$ is not assumed.

For the Fisher--KPP equation on $\mathbb{R}$, Bramson \cite{Bramson} established the logarithmic delay and convergence
to a translated critical wave. Hamel, Nolen, Roquejoffre and Ryzhik
\cite{HNRR} gave a comparison proof of the delay and identified the
profile along level sets. A PDE proof of convergence to a single wave
for a compact perturbation of a step function was subsequently given
by Nolen, Roquejoffre and Ryzhik \cite{NRRSingleWave}. On $\Z$, Besse,
Faye, Roquejoffre and Zhang
\cite[Theorems~1 and~2]{BesseFayeRoquejoffreZhang} proved the correction
$3/(2\lambda_*)$, with $\lambda_*$ as in \eqref{eq:critical-system},
and convergence to a family of critical-wave translates for nontrivial
data that vanish sufficiently far to the right.

In $\mathbb{R}^d$, G\"artner
\cite[Theorem~3.1]{Gartner} studied the transition region
$G_\varepsilon(t)=\{x:\varepsilon<u(t,x)<1-\varepsilon\}$. For compactly
supported initial data, \cite[Corollary~4.1]{Gartner}, under the additional
integrability condition imposed there on the nonlinearity, removes the
spherical-symmetry assumption and yields a bounded-width annular
localization. In the normalization $u_t=\frac12\Delta u+f(u)$, the
corresponding radius is
\[
    m(t)=v_*t-\frac{d+2}{2v_*}\log t+O(1),
    \qquad v_*=\sqrt{2f'(0)}.
\]
After rescaling the diffusion, this is the scale
$2t-(d+2)\log t/2$ for $u_t=\Delta u+u-u^2$.
A central step in G\"artner's proof is the comparison of Brownian
transition densities killed at a moving boundary with those killed at
its linear interpolation; the estimates are uniform in the starting and
terminal points. Related Brownian moving-boundary first-exit estimates
were obtained by Uchiyama \cite{UchiyamaFirstExit}. Ducrot \cite{Ducrot} later studied the large-time
behavior of Fisher--KPP solutions in $\mathbb{R}^d$ with compactly
supported initial data. Roquejoffre, Rossi and Roussier-Michon \cite{RRR} proved convergence
to the critical profile with a limiting angular shift that is Lipschitz
on the sphere.

The logarithmic delay also occurs in periodic media on $\mathbb{R}$
\cite{HNRRPeriodic}. In periodic media in $\mathbb{R}^d$, Shabani
\cite[Theorem~1.3]{Shabani} gives directional level-set estimates in which the logarithmic
coefficient involves the supporting normal of the propagation set.
For nonlocal diffusion on $\mathbb{R}$, Graham \cite{Graham} proves
the Bramson correction under suitable assumptions on the dispersal
kernel. Boutillon \cite{Boutillon} studies logarithmic delays for the
\emph{linearized} nonlocal equation by large-deviation estimates;
the linear and nonlinear logarithmic scales must be distinguished.
The dependence on the initial tail is also essential: Alfaro, Giletti
and Xiao \cite{AlfaroGilettiXiao} obtain delays or advances for initial
data with a polynomial factor in the critical exponential tail. The present paper assumes finite support throughout.

For heterogeneous KPP equations, Huang and W.~Shen \cite{HuangShen},
W.~Shen \cite{ShenVariational,ShenDiscrete}, Liang, Yi and Zhao
\cite{LiangYiZhao}, and Liang and Zhao \cite{LiangZhao} developed
spreading-speed and traveling-wave theories in periodic, almost periodic,
discrete, and abstract monostable settings. Giletti \cite{Giletti}
proved convergence to minimal-speed pulsating fronts in one-dimensional
heterogeneous KPP equations. Spectral and Liouville-type questions in
periodic KPP and nonlocal time-periodic problems were studied by Vo
\cite{VoLiouville} and Zhongwei Shen--Vo \cite{ShenVo}. Here we use
the translation-invariant structure of $\Delta_{\Z^d}$ in \eqref{eq:lattice-lap}.

On $\Z^d$, the Laplacian is not rotationally invariant, so G\"artner's
radial reduction does not apply. We use the supporting hyperplanes in \eqref{eq:wulff-duality}
and exponentially tilted lattice walks. The positive
whole-space kernel contributes $t^{-d/2}$, but the sharp nonlinear upper
bound requires two additional factors $t^{-1/2}$: one from decay of the
critical weighted mass and one from survival above an affine supporting
half-space. For the lower bound, diffusion in the $d-1$ transverse coordinates
produces the loss $(d-1)/(2(t+T))$ in
\eqref{eq:time-dependent-logistic}, with the fixed shift $T\geq1$
chosen in Lemma~\ref{lem:product-subsolution}. The upper estimate
of Theorem~\ref{thm:sharp-directional-upper} and the lower estimate of
Theorem~\ref{thm:nonlinear-lower} therefore match on every signed
coordinate axis. In arbitrary directions
we prove a logarithmically corrected upper bound, but no matching lower
bound is claimed.

Related first-passage results for branching random walks were obtained
by Blanchet, Cai, Mohanty and Zhang \cite{BlanchetCaiMohantyZhang}, and
by Blanchet and Zhang \cite{BlanchetZhang}, who treat discrete-time non-lattice walks. Neither branching-random-walk result is used below: the only stochastic representation is \eqref{eq:two-semigroup-conventions} for the linear nearest-neighbor semigroup, not for the nonlinear reaction $f$.

For the probability arguments in Sections~\ref{sec:kernels}
and~\ref{sec:sharp-upper}, we refer to Norris
\cite[Chapters~2--4 and Section~6.5]{Norris} for continuous-time Markov
chains, Poisson constructions, semigroups, martingales, stopping times,
and the strong Markov property; to Lawler and Limic
\cite[Chapter~2]{LawlerLimic} for lattice local central limit estimates;
and to Spitzer \cite{Spitzer} for one-dimensional random-walk fluctuation
and first-passage theory. The parameter-uniform tilted estimates required
in Sections~\ref{sec:kernels} and~\ref{sec:sharp-upper} are proved here
rather than imported from those references.

Write $x=(j,y)$, with $j\in\Z$ and $y\in\Z^{d-1}$.
The notation $|\cdot|$ denotes the Euclidean norm in any dimension.
Define
\begin{equation}\label{eq:c-lambda-def}
    c_*:=\min_{\lambda>0}
       \frac{2(\cosh\lambda-1)+a}{\lambda}.
\end{equation}
The minimum is attained at a unique $\lambda_*>0$, and
\begin{equation}\label{eq:critical-system}
\left\{
\begin{aligned}
    c_*\lambda_*&=2(\cosh\lambda_*-1)+a,\\
    c_*&=2\sinh\lambda_*.
\end{aligned}
\right.
\end{equation}
These facts are proved in Lemma~\ref{lem:critical-pair}. Set
\begin{equation}\label{eq:kappa}
    D_*:=\cosh\lambda_*,\qquad
    \kappa_d:=\frac d2+1,\qquad
    \delta_*:=\frac{c_*}{4D_*}=\frac12\tanh\lambda_*.
\end{equation}
For $\theta\in(0,1)$, define the axial level position by
\begin{equation}\label{eq:level-definition}
    R_\theta(t):=\sup\{j\in\Z:u(t,je_1)\geq\theta\},
    \qquad \sup\varnothing:=-\infty.
\end{equation}
Set
\begin{equation}\label{eq:m-scale}
    m_d(t):=c_*t-\frac{\kappa_d}{\lambda_*}\log t,
    \qquad t\geq2.
\end{equation}

Theorem~\ref{thm:nonlinear-target} bounds
$|R_\theta(t)-m_d(t)|$ and localizes the axial transition region
between the levels $\varepsilon$ and $1-\varepsilon$.

\begin{theorem}\label{thm:nonlinear-target}
Assume \eqref{eq:kpp}--\eqref{eq:u0}. Then for every $\theta\in(0,1)$ there
are $C_\theta>0$ and $T_\theta\geq2$ such that
\begin{equation}\label{eq:bramson-target}
    \left|R_\theta(t)-c_*t+
                \frac{d+2}{2\lambda_*}\log t\right|
    \leq C_\theta\qquad(t\geq T_\theta).
\end{equation}
Moreover, for every $\varepsilon\in(0,1/2)$ there are
$C_\varepsilon^{\rm tr}>0$ and $T_\varepsilon\geq2$ such that, for
$t\geq T_\varepsilon$,
\[
    \inf_{0\leq j\leq m_d(t)-C_\varepsilon^{\rm tr}}u(t,je_1)
       \geq1-\varepsilon,
    \qquad
    \sup_{j\geq m_d(t)+C_\varepsilon^{\rm tr}}u(t,je_1)<\varepsilon.
\]
\end{theorem}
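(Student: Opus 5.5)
The plan is to derive Theorem~\ref{thm:nonlinear-target} from two matching one-sided estimates along the axis $\{je_1\}$, both at the scale $m_d(t)=c_*t-\frac{\kappa_d}{\lambda_*}\log t$. Note that $\kappa_d/\lambda_*=(d+2)/(2\lambda_*)$, so \eqref{eq:bramson-target} is exactly $|R_\theta(t)-m_d(t)|\le C_\theta$.

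\emph{Upper bound.} We invoke Theorem~\ref{thm:sharp-directional-upper} in the direction $e_1$. Since $f(s)\le as$, the solution $u$ is a subsolution of the linearized equation. The upper bound comes from the weighted-mass argument with weight $\ee^{\lambda_* x_1}$, in the moving frame $x_1-c_*t$. This produces the decay $t^{-d/2}\cdot t^{-1/2}\cdot t^{-1/2}$: the first factor is the whole-space kernel, the second is the decay of the critical weighted mass, and the third is survival of the tilted walk above the half-space $\{x_1\le m_d(t)+\text{const}\}$. The expected output is
\[
u(t,je_1)\le C\,(1+(j-m_d(t))_+)\,\ee^{-\lambda_*(j-m_d(t))}\qquad(j\ge m_d(t)-C).
\]
Given $\varepsilon$, choosing $C^{\rm tr}_\varepsilon$ large enough makes the right-hand side smaller than $\varepsilon$ for $j\ge m_d(t)+C^{\rm tr}_\varepsilon$. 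This gives the second localization statement, and hence $R_\theta(t)\le m_d(t)+C_\theta$.

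\emph{Lower bound.} We invoke Theorem~\ref{thm:nonlinear-lower}. It is built from the product lower solution of Lemma~\ref{lem:product-subsolution}: a one-dimensional critical lower solution in $j$ with the Bramson delay, times a transverse factor. The transverse heat loss $(d-1)/(2(t+T))$ in \eqref{eq:time-dependent-logistic} costs the extra $\frac{d-1}{2\lambda_*}\log t$ on top of the one-dimensional $\frac{3}{2\lambda_*}\log t$, and the two add up to $\frac{d+2}{2\lambda_*}$. A time shift and comparison, using positivity of $u(1,\cdot)$ everywhere on $\Z^d$, place the lower solution below $u$. The result is $u(t,je_1)\ge\eta>0$ for $|j-m_d(t)|\le C$, with $\eta$ some fixed constant.

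\emph{Upgrading to level $1-\varepsilon$.} This is the main obstacle. The lower bound gives only a small level $\eta$, not $\theta$ close to $1$, and concavity of $f$ is unavailable. The plan is a spreading/invasion argument.
\begin{itemize}[label={}]
\item First, since $f>0$ on $(0,1)$ and $f'(1)<0$, there exist $L$ and $\tau$ such that any solution that is $\ge\eta$ on a box of side $L$ at some time is $\ge1-\varepsilon$ on a smaller box after time $\tau$, uniformly. This follows from a compact-support lower solution of the Dirichlet problem on large boxes together with the hair-trigger effect.
\item Second, apply the lower estimate at time $t-\tau$. This gives $u\ge\eta$ on boxes centered at $je_1$ for every $0\le j\le m_d(t-\tau)-C$.
\end{itemize}
This needs a version of Theorem~\ref{thm:nonlinear-lower} on a tube of fixed transverse width, which the product structure supplies for $|y|\le L$. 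It also needs a separate argument for $0\le j\le \epsilon t$, where standard spreading gives $u\to1$ uniformly on $\{|x|\le ct\}$ for any $c<c_*$. Since $m_d(t-\tau)=m_d(t)-O(1)$, this yields the first localization statement. Combining the first localization statement with the upper bound gives $R_\theta(t)\ge m_d(t)-C_\theta$ for $\theta\le1-\varepsilon$, which completes \eqref{eq:bramson-target}.

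The delicate part is matching constants uniformly in $j$ along the whole segment $[0,m_d(t)]$. The intermediate zone $\epsilon t\le j\le m_d(t)-C$ is covered by applying the lower solution with shifted starting time and position, using translation invariance in $j$.
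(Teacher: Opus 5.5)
Your plan is essentially the paper's proof: the upper half is Theorem~\ref{thm:sharp-directional-upper} with $e=e_1$ (so $z_{e_1}=\lambda_*(j-m_d(t))$, uniformly in $y$), and the lower half is Theorem~\ref{thm:nonlinear-lower}, applied at level $\theta$ and at level $1-\varepsilon$. You undersell that theorem, though. Its estimate \eqref{eq:axial-inner-lower} already gives $u(t,je_1)\geq\theta$ for every $\theta<1$ on the whole segment $0\leq j\leq m_d(t)-C_\theta^-$, because its proof already contains your ``upgrading'' step: Lemma~\ref{lem:amplification} is applied to the single-site seed left at each $j$ when the delayed front passes it, and strict positivity handles the finitely many sites near $0$. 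So no ball-spreading input is needed. The one you quote, $u\to1$ on $\{|x|\leq ct\}$ for all $c<c_*$, is in fact false on $\Z^d$: $w_*(e)<c_*$ for every non-axial $e$ (the shape $\mathcal W_a$ is not a ball), and \eqref{eq:sharp-directional-location} keeps $u<\theta$ near $ct\,e$ whenever $w_*(e)<c<c_*$.
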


For finitely supported data on $\Z^d$, the logarithmic coefficient increases from $3/(2\lambda_*)$ to
$(d+2)/(2\lambda_*)$. The constants $C_\theta,T_\theta$ may depend
on $d,f,u_0,\theta$.
For $\varepsilon\in(0,1/2)$ define the axial transition set
\[
    \mathcal G^{\rm ax}_\varepsilon(t)
      :=\{j\in\Z_{\geq0}:\varepsilon<u(t,je_1)<1-\varepsilon\}.
\]
The second assertion of Theorem~\ref{thm:nonlinear-target} gives
\[
    \mathcal G^{\rm ax}_\varepsilon(t)
      \subset [m_d(t)-C_\varepsilon^{\rm tr},m_d(t)+C_\varepsilon^{\rm tr}]\cap\Z
\]
for all sufficiently large $t$. Thus the transition region has bounded
width along each coordinate axis.
Let $R$ be any signed permutation matrix, meaning a matrix with exactly
one nonzero entry, equal to $1$ or $-1$, in each row and each column. If
$u_R(t,x):=u(t,Rx)$, then $\Delta_{\Z^d}(u\circ R)=(\Delta_{\Z^d}u)\circ R$
and $u_R(0,\cdot)=u_0\circ R$ still satisfies \eqref{eq:u0}. Since there are finitely many
signed permutation matrices, we may take the maximum of their transition
widths and time thresholds. The function $u_R$ satisfies the hypotheses of
Theorem~\ref{thm:nonlinear-target}; applying its transition-zone
inequality to $u_R(t,je_1)=u(t,jRe_1)$ gives
for $j\in\Z_{\geq0}$, $\varepsilon<u(t,jRe_1)<1-\varepsilon$
only when
$j\in[m_d(t)-C_\varepsilon^{\rm tr},m_d(t)+C_\varepsilon^{\rm tr}]$ for all large $t$.  We next prove convergence to the critical wave
on fixed-width half-tubes about these axes.

Let $\phi_*$ denote the decreasing critical wave, normalized by
$\phi_*(0)=1/2$. Thus
\begin{equation}\label{eq:critical-wave}
\left\{
\begin{aligned}
 -c_*\phi_*'(r)&=\phi_*(r-1)-2\phi_*(r)+\phi_*(r+1)+f(\phi_*(r)),
 \qquad r\in\R,\\
 \phi_*(-\infty)&=1,\qquad \phi_*(+\infty)=0.
\end{aligned}
\right.
\end{equation}
For existence and strict monotonicity, see
\cite[Section~1, equations~(1.9)--(1.10)]{BesseFayeRoquejoffreZhang};
for uniqueness up to translation (and precise wave-tail asymptotics), see
\cite{ChenFuGuo}.
In particular, $\phi_*^{-1}:(0,1)\to\R$ is well defined.
By an axial half-tube we mean a set
$\{(j,y)\in\Z\times\Z^{d-1}:j\geq0,\ |y|\leq B\}$
with fixed $B\geq0$.

\begin{theorem}\label{thm:axial-profile}
Assume \eqref{eq:kpp}--\eqref{eq:u0}. There are $C_\zeta>0$ and a function
$\zeta:[2,\infty)\to[-C_\zeta,C_\zeta]$ such that, for every $B\geq0$,
\begin{equation}\label{eq:axial-profile-convergence}
 \lim_{t\to\infty}
 \sup_{\substack{j\in\Z,\ j\geq0\\y\in\Z^{d-1},\ |y|\leq B}}
 \left|u(t,j,y)-\phi_*\bigl(j-m_d(t)+\zeta(t)\bigr)\right|=0.
\end{equation}
The function $\zeta$ is independent of the tube width $B$. It also satisfies,
for every $S>0$,
\begin{equation}\label{eq:phase-slow-variation}
 \lim_{t\to\infty}\sup_{|s|\leq S}|\zeta(t+s)-\zeta(t)|=0.
\end{equation}
Moreover, let $\theta\in(0,1)$ and let $t_n\to\infty$ and
$j_n\in\Z_{\geq0}$ satisfy $u(t_n,j_n,0)\to\theta$. Then, for
every $S,L>0$,
\begin{equation}\label{eq:level-profile-convergence}
 \lim_{n\to\infty}
 \sup_{\substack{|s|\leq S\\k\in\Z,\ y\in\Z^{d-1},\ |k|+|y|\leq L}}
 \left|u(t_n+s,j_n+k,y)
       -\phi_*\bigl(k-c_*s+\phi_*^{-1}(\theta)\bigr)\right|=0.
\end{equation}
For every signed permutation matrix $R$,
\eqref{eq:axial-profile-convergence}--\eqref{eq:level-profile-convergence}
also hold for $u_R(t,x):=u(t,Rx)$; equivalently, they hold along the
signed coordinate axis $Re_1$, with its own bounded phase function.
\end{theorem}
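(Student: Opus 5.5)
The plan is to take the axial localization of Theorem~\ref{thm:nonlinear-target} as the main input and then run a compactness--Liouville argument of the type used by Hamel--Nolen--Roquejoffre--Ryzhik and by Besse--Faye--Roquejoffre--Zhang. First I would define the phase. Set $\zeta(t):=m_d(t)-R_{1/2}(t)$ with a continuous interpolation, or, better, $\zeta(t):=m_d(t)-X(t)-\phi_*^{-1}$-correction, where $X(t)$ is the real point at which a linear interpolation of $j\mapsto u(t,j,0)$ crosses $1/2$. By \eqref{eq:bramson-target}, $|\zeta|\leq C_\zeta$. Next, take any sequence $t_n\to\infty$ and base points $j_n$ with $u(t_n,j_n,0)\to\theta\in(0,1)$. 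Interior parabolic estimates for the lattice equation (which is an ODE system with bounded right-hand side, so $u$ and $u_t$ are uniformly bounded and equicontinuous in $t$) together with a diagonal argument give a subsequence along which $u(t_n+s,j_n+k,y)\to U(s,k,y)$ locally uniformly on $\R\times\Z^d$. Here $U$ is an entire solution of \eqref{eq:main}.

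The key step is to show $U(s,k,y)=\phi_*(k-c_*s+\xi)$. I would extract the needed properties of $U$ from the earlier results. From Theorem~\ref{thm:nonlinear-target}, applied together with $j_n=m_d(t_n)+O(1)$ and $m_d(t_n+s)-m_d(t_n)=c_*s+o(1)$, we get $U\to1$ as $k-c_*s\to-\infty$ and $U\to0$ as $k-c_*s\to+\infty$, uniformly, at least on the axis $y=0$. Off the axis, for $|y|$ bounded, I would use the same estimates applied to transverse shifts: the upper bound of Theorem~\ref{thm:sharp-directional-upper} holds in every direction, and the product lower solution of Lemma~\ref{lem:product-subsolution} is nearly flat in $y$ on bounded windows. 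Both the upper bound and the lower solution have the critical tail $\lesssim (k-c_*s)\ee^{-\lambda_*(k-c_*s)}$ ahead of the front. The resulting bound is that $U$ is trapped between two translates of $\phi_*$ in the moving variable, uniformly in $y$ on each bounded set, and actually for all $y$, since the estimates are uniform in the tube in the limit. The Liouville-type classification for the critical wave (in the spirit of Berestycki--Hamel and its lattice version via \cite{ChenFuGuo}) then states that an entire solution trapped between two translates of the critical front is a translate of it, independent of $y$. I would prove this with a sliding argument in the direction $e_1$: define $\xi^*=\inf\{\xi: U\leq\phi_*(\cdot+\xi)\}$ and show by the strong maximum principle and tail asymptotics that equality holds. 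This step is the main obstacle. The difficulty is that $f$ is not concave and the tails are critical, of the form $k\ee^{-\lambda_*k}$, so the sliding has to control the far field. I would first try to obtain the sharp tail matching $U(s,k,y)\sim A(k-c_*s)\ee^{-\lambda_*(k-c_*s)}$ from the weighted-mass estimates, and then apply the uniqueness argument of \cite{ChenFuGuo} to the time-dependent problem.

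Given the Liouville step, the normalization $U(0,0,0)=\theta$ forces $\xi=\phi_*^{-1}(\theta)$. Since the limit is independent of the subsequence, the whole sequence converges, which proves \eqref{eq:level-profile-convergence}. Now \eqref{eq:axial-profile-convergence} follows by contradiction. If it failed, there would be points $(t_n,j_n,y_n)$ with an error bounded below. Points far behind or far ahead of $m_d(t_n)$ are handled by the two one-sided bounds of Theorem~\ref{thm:nonlinear-target}, extended to $|y|\leq B$ through the above trapping. Points within $O(1)$ of the front are handled by \eqref{eq:level-profile-convergence} centered at the $1/2$-crossing, which defines $\zeta$. This centering also shows that $\zeta$ is independent of $B$. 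Applying \eqref{eq:level-profile-convergence} with $s\in[-S,S]$ shows that the $1/2$-crossing moves by $c_*s+o(1)$, which gives \eqref{eq:phase-slow-variation}. Finally, the statement for $u_R$ follows because $u_R$ satisfies all the hypotheses, as noted after Theorem~\ref{thm:nonlinear-target}.
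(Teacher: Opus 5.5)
\textbf{Gap: the Liouville step.} Your overall plan matches the paper's: compact translates, identification of every limit as a planar wave, then tube convergence by contradiction from the two tails. But the identification step, which you flag as the main obstacle, is not supplied, and the route you sketch does not close.

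\begin{itemize}
\item \emph{Non-compact middle region.} You slide $U$ against the fixed wave. (The set of $\xi$ with $U\leq\phi_*(\cdot+\xi)$ is a down-set, so you need its supremum, not its infimum.) This closes directly only when the middle region $\{-A\leq k-c_*s\leq R\}$ is compact. Here it is unbounded in $s$ and in $y\in\Z^{d-1}$. The strong maximum principle gives strict inequality but no uniform gap. Passing to a limit of translates along near-contact points only produces $U_\infty\equiv\phi_*(\cdot+\xi)$, which contradicts nothing.
\item \emph{Contact at infinity.} Near-contact points with $k-c_*s\to\infty$ are equally uncontrolled, because you only have $b_0(1+r)\ee^{-\lambda_*r}\leq U\leq B_0(1+r)\ee^{-\lambda_*r}$ with $b_0<B_0$.
\item \emph{Sharp tails are not available.} Your remedy is pointwise asymptotics $U\sim A\,r\ee^{-\lambda_*r}$, uniform in $(s,y)$. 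The weighted-mass estimates do not give this: they are integrated two-sided bounds of order $t^{-1/2}$. Moreover, a constant $A$ common to all limits would pin down the phase, which is more than the theorem asserts.
\item \emph{No citable classification.} \cite{ChenFuGuo} gives uniqueness only among solutions depending on the single variable $j-c_*t$. There is no lattice version of the Berestycki--Hamel trapping theorem to cite, which is why the paper proves Lemma~\ref{lem:planar-lattice-rigidity}.
\end{itemize}

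\textbf{How the paper closes it.} It slides $V$ against its own space--time shifts $V(t+\tau,j+q,y+\eta)$, for arbitrary $(q,\eta)\in\Z^d$. Lemma~\ref{lem:entire-positive-comparison} controls the region near the stable state $1$. Suppose the optimal $\tau_*$ had $\sigma_*=q-c_*\tau_*>0$.
\begin{itemize}
\item If the touching sequence has bounded front coordinate, the limit is invariant under a shift that moves the front coordinate by $\sigma_*$. This is incompatible with the rear state and the upper tail.
\item If the touching sequence escapes to $r\to\infty$, rescale by $(1+r_n)\ee^{-\lambda_*r_n}$. This gives a positive solution of $Z_s=\Delta_{\Z^d}Z+aZ$ with $b_0\ee^{-\lambda_*(k-c_*s)}\leq Z\leq B_0\ee^{-\lambda_*(k-c_*s)}$. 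A strong maximum principle in the weighted space $X_{\lambda_*}$, plus iteration of the shift, gives a contradiction.
\end{itemize}
Unequal tail constants suffice. The result is invariance under every $(q/c_*,q,\eta)$, hence $y$-independence and $V=\psi(j-c_*t)$. Only then is \cite{ChenFuGuo} applied.

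\textbf{Smaller point: the phase.} Your phase is not yet well defined.
\begin{itemize}
\item With $\zeta=m_d-R_{1/2}$, the argument $j-m_d+\zeta=j-R_{1/2}$ is an integer, so it cannot capture a non-integer phase.
\item The crossing of a linear interpolation on the fixed grid $\Z$ does not move by exactly $c_*s$. Your slow-variation argument would therefore need a correction map.
\end{itemize}
The paper instead sets $\zeta(t)=\phi_*^{-1}(u(t,\ell(t),0))+\delta(t)$, with $\ell(t)=\lfloor m_d(t)\rfloor$ and $\delta(t)=m_d(t)-\ell(t)$. It first uses the bounded phases of all limits to keep $u(t,\ell(t),0)$ in a compact subset of $(0,1)$.
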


This is the axial counterpart of
\cite[Theorem~2]{BesseFayeRoquejoffreZhang} for data localized on
$\Z^d$. The logarithmic coefficient is $(d+2)/(2\lambda_*)$, and
the limiting wave solves the one-dimensional problem \eqref{eq:critical-wave}.
The proof identifies limits on $\Z^d$ before restricting them to the axis. It does not assume transverse independence of
the initial datum. Neither \eqref{eq:phase-slow-variation} nor
\eqref{eq:axial-profile-convergence} asserts that $\zeta(t)$ has a limit.
The lower estimates used in the proof, including transverse sections
of width $O(\sqrt t)$, are stated in
Theorem~\ref{thm:nonlinear-lower}.

Theorem~\ref{thm:sharp-directional-upper} gives an upper bound in every
observation direction. We distinguish the observation direction from
the supporting normal. Put
\begin{equation}\label{eq:H}
    H_0(p):=2\sum_{k=1}^d(\cosh p_k-1),\qquad H(p):=H_0(p)+a,
    \qquad p\in\R^d,
\end{equation}
and let $I$ be the Legendre transform of $H_0$. An explicit expression is
\begin{equation}\label{eq:I}
    I(v)=\sum_{k=1}^d
       \left[v_k\arsinh\!\left(\frac{v_k}{2}\right)
                  -\sqrt{4+v_k^2}+2\right].
\end{equation}
For $e\in\mathbb S^{d-1}$, where $\mathbb S^{d-1}$ denotes the
Euclidean unit sphere in $\R^d$, let $w_*(e)>0$ be the unique solution of
$I(w_*(e)e)=a$, and define
\begin{equation}\label{eq:pstar}
\begin{gathered}
    v_e:=w_*(e)e,\qquad
    p_e:=\nabla I(v_e),\qquad
    \alpha_e:=p_e\cdot e,\\
    A_e:=\diag(\cosh(p_{e,1}),\ldots,\cosh(p_{e,d})).
\end{gathered}
\end{equation}
In particular, $p_{e,k}=\arsinh(v_{e,k}/2)$ and $\alpha_e>0$.
The maps $e\mapsto w_*(e)$, $e\mapsto v_e$, $e\mapsto p_e$,
$e\mapsto\alpha_e$, and $e\mapsto A_e$ are continuous. In particular,
Lemma~\ref{lem:critical-pair} proves that
$\mathcal P:=\{p_e:e\in\mathbb S^{d-1}\}$ is compact and
$\inf_e\alpha_e>0$. For $p\in\mathcal P$, write $v_p:=\nabla H_0(p)$.

\begin{theorem}\label{thm:sharp-directional-upper}
Assume \eqref{eq:kpp}--\eqref{eq:u0}. There are $C_{\rm mass}>0$ and $C_{\rm up}\geq1$, depending
only on $d,f,u_0$, such that, for every $e\in\mathbb S^{d-1}$,
\begin{equation}\label{eq:critical-mass-main}
    \sum_{x\in\Z^d}\ee^{p_e\cdot(x-tv_e)}u(t,x)
       \leq C_{\rm mass}(1+t)^{-1/2}\qquad(t\geq0).
\end{equation}
For $t\geq4$ and $x\in\Z^d$, put
$z_e(t,x)=p_e\cdot(x-tv_e)+\kappa_d\log t$. Whenever $z_e(t,x)\geq0$,
\begin{equation}\label{eq:sharp-directional-envelope}
    u(t,x)\leq C_{\rm up}(1+z_e(t,x))\ee^{-z_e(t,x)}.
\end{equation}
In particular, for every $\theta\in(0,1)$ and $\rho\geq0$ there is
$L_{\theta,\rho}>0$, independent of $e$, such that
\begin{equation}\label{eq:sharp-directional-location}
    u(t,x)<\theta
    \quad\text{if }t\geq4,\quad |x-re|\leq\rho,\quad r\geq0,
    \quad r\geq w_*(e)t-\frac{\kappa_d}{\alpha_e}\log t
                                      +L_{\theta,\rho}.
\end{equation}
\end{theorem}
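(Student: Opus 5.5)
Throughout, fix $e$ and write $p=p_e$, $v=v_e$. The plan is to prove \eqref{eq:critical-mass-main} first, then derive the pointwise envelope \eqref{eq:sharp-directional-envelope} from it, and finally read off \eqref{eq:sharp-directional-location}.

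\textbf{Step 1: the weighted mass.} Since $I(v)=a$ and $p=\nabla I(v)$, Legendre duality gives $H_0(p)=p\cdot v-I(v)=p\cdot v-a$, that is, $H(p)=p\cdot v$. Put $M(t)=\sum_x \ee^{p\cdot(x-tv)}u(t,x)$. Because $\sum_x\ee^{p\cdot x}\Delta_{\Z^d}u=H_0(p)\sum_x\ee^{p\cdot x}u$ and $f(u)\le au$, we get
\[
M'(t)\le (H(p)-p\cdot v)M(t)-\sum_x\ee^{p\cdot(x-tv)}\bigl(au-f(u)\bigr)\le 0 .
\]
This already gives $M(t)\le M(0)$, and $M(0)$ is bounded uniformly in $e$ because $\mathcal P$ is compact and $\supp u_0$ is finite.

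To obtain the extra factor $t^{-1/2}$ I would argue as in the one-dimensional critical case. Under the tilted measure, $\ee^{p\cdot(x-tv)}\ee^{at}\,\pheat_t$ becomes the law of a walk with mean velocity $v$ (the tilted random walk from Section~\ref{sec:kernels}), so the linear bound gives only $O(1)$. The loss has to come from the absorption term $au-f(u)\ge c\,u$ on $\{u\ge\theta_0\}$. Since $u\le1$, the weighted mass of the region $p\cdot(x-tv)>0$ is small, and the tilted walk is effectively killed when it enters the half-space where $u$ is not small.

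Concretely, I would bound $u$ by the linear solution killed when the tilted walk crosses an affine barrier $p\cdot(X_s-sv)\ge K$ (plus a constant). Mass started at a finite set then satisfies a one-dimensional ballot/survival estimate: the projection $p\cdot(X_s-sv)$ is a mean-zero walk, and its probability of staying below a constant up to time $t$ is $\asymp t^{-1/2}$. This survival bound must be uniform in $p\in\mathcal P$, using the parameter-uniform tilted estimates proved in Sections~\ref{sec:kernels} and~\ref{sec:sharp-upper}.

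Justifying the killing is the main obstacle. I would first show a crude bound $u(t,x)\le C\ee^{-p\cdot(x-tv)}$ (from $M\le M(0)$ together with local parabolic smoothing). Then I would use a comparison in which the portion of the mass that reaches the half-space beyond the barrier is charged to the dissipation $\int(au-f(u))$. Equivalently, one can bootstrap a differential inequality $M'\le -c\,M^3$, up to error terms, to get $M\lesssim t^{-1/2}$.

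\textbf{Step 2: pointwise envelope.} Write $u(t)=\Phi_{s}u(t-s)$ with $s=t/2$, using $u\le$ the linear semigroup $\ee^{as}\pheat_s*$. Two factors combine here:
\begin{itemize}
\item the tilted local CLT contributes $s^{-d/2}$ times the mass $M(t-s)\lesssim t^{-1/2}$;
\item survival above the supporting half-space contributes a further factor $t^{-1/2}(1+z)$, by the ballot theorem for a walk conditioned to end at height $z$ below the barrier.
\end{itemize}
Together these give
\[
u(t,x)\lesssim t^{-(d+2)/2}(1+z)\,\ee^{-p\cdot(x-tv)}=(1+z)\ee^{-z},
\]
since $\kappa_d=(d+2)/2$. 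In practice I would run this last time interval again with the killed kernel, with constants uniform over $\mathcal P$.

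\textbf{Step 3: location.} For $x$ with $|x-re|\le\rho$, we have $p\cdot(x-tv)\ge \alpha_e(r-w_*(e)t)-|p|\rho$, hence
\[
z\ge \alpha_e L-|p|\rho .
\]
Because $\inf_e\alpha_e>0$ and $\sup_{e}|p_e|<\infty$, choosing $L$ large makes $C_{\rm up}(1+z)\ee^{-z}<\theta$ uniformly in $e$, which proves \eqref{eq:sharp-directional-location}.
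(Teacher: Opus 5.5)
\textbf{Verdict: genuine gap.} Step~1, the $t^{-1/2}$ decay of the weighted mass, is not resolved, and Step~2 needs $M(t/2)\lesssim t^{-1/2}$ as input. You call it ``the main obstacle'', and neither suggested fix closes it.

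\textbf{Why the proposed mechanism fails.} The sentence ``since $u\le1$, the weighted mass of the region $p\cdot(x-tv)>0$ is small'' is false: on that side the weight is $\ge1$. Even on the correct side, $p\cdot(x-tv)<-K$, the cap $u\le1$ only gives the pointwise bound $W\le\ee^{p\cdot(x-tv)}$. The sum of this over a half-space diverges, because the half-space is infinite in the transverse directions. You also cannot compare $u$ with a linear solution killed at the barrier, since $u$ does not vanish there. Locating where $au-f(u)\ge cu$, or deriving $M'\le-cM^3$, would require knowing where $u$ is close to $1$ behind the front in every direction. The paper proves such lower bounds only on the axes, and \eqref{eq:kpp} even allows $as-f(s)\equiv0$ near $0$.

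\textbf{Missing ingredient.} You need the population bound $\sum_xu(t,x)\le C_{\rm pop}(1+t)^d$, which follows from $u\le\min\{1,C\ee^{ht-|x|_1}\}$. Combine it with a barrier moved back by $L\log(1+t)$, $L=d+3$. Then the weighted mass on $\{z_p<0\}$ is $O(t^{-3})$, with no information needed on where $u$ is large.

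\textbf{How the killing then works.} It becomes bookkeeping. By Duhamel on $[t,2t]$ and the uniform crossing estimate, the free tilted flow puts at least $\frac25M_p(t)-Ct^{-1/2}(M_p(t)+F_p(t))$ into $\{z_p(2t,\cdot)<0\}$. But $W_p(2t)$ has only $O(t^{-3})$ mass there. The discrepancy is at most $\int_t^{2t}\sum_x\mathcal R_p=M_p(t)-M_p(2t)$. This yields the contraction $M_p(2t)\le\frac35M_p(t)+Ct^{-1/2}(M_p+F_p)+Ct^{-3}$.

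\textbf{Second missing device: removing a logarithm.} The logarithmic shift costs a factor $\log t$. Mass far ahead of the barrier is not pushed back efficiently, which is why the positive first moment $F_p=\sum_x(z_p)_+W_p$ appears. A survival argument with the barrier at distance $L\log t$ therefore gives only $M\lesssim(\log t)/\sqrt t$, and your plan has nothing to remove the logarithm. The paper uses the moment balance $J_p'=\frac L{1+t}M_p-\sum_xz_p\mathcal R_p$ in two passes:
\begin{enumerate}
\item A first dyadic pass gives $M_p\lesssim(1+\log t)t^{-1/2}$. Hence $\int_0^\infty M_p/(1+s)\,ds<\infty$ and $F_p$ is bounded.
\item A second pass then gives $M_p\lesssim t^{-1/2}$.
\end{enumerate}

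\textbf{Steps 2 and 3.} Step~2 is in the right spirit and matches the paper: stop the backward walk at $\{p\cdot(y-\sigma v)+\kappa_d\log t+1>0\}$, use survival $\lesssim(1+z)t^{-1/2}$ on $[0,t/4]$, then the Markov property with $\|\ee^{(t/4)\mathcal L_p}W_p(t/2)\|_\infty\lesssim t^{-d/2}M_p(t/2)$. However, the unkilled bound you write first gives only $u\lesssim t^{1/2}\ee^{-z}$. The killed version works because at the exit site $u\le1$ and the weight is at most $\ee^{-1}t^{-\kappa_d}$; you should state this explicitly. Step~3 is fine.
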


Estimate \eqref{eq:sharp-directional-envelope} holds on an entire
supporting half-space, not just in a neighborhood of a ray. When
$e=e_1$, the weight does not depend on $y$; hence the sharp axial upper
bound is uniform over all transverse sites. Combining it with
Theorem~\ref{thm:nonlinear-lower} proves
Theorem~\ref{thm:nonlinear-target}.

The lower construction uses a local expansion of the signed
solution $w$ in Theorem~\ref{thm:linear-main}, with odd longitudinal
initial data. This expansion concerns a signed solution, not a positive
Dirichlet kernel. The critically tilted operator is
\begin{equation}\label{eq:Lstar}
\begin{split}
    (\mathcal L_*w)(j,y)
    & =\ee^{\lambda_*}\bigl(w(j-1,y)-w(j,y)\bigr)\\
    &\quad+\ee^{-\lambda_*}\bigl(w(j+1,y)-w(j,y)\bigr)
          +\Delta_{\Z^{d-1}}w(j,y).
\end{split}
\end{equation}

\begin{theorem}\label{thm:linear-main}
Let $g:\Z\to\R$ be nonzero, odd and finitely supported, with $g(\ell)\geq0$
for $\ell\geq1$. Let $h:\Z^{d-1}\to[0,\infty)$ be nonzero and finitely
supported. Let $w$ solve
\begin{equation}\label{eq:linear-conjugated}
\left\{
\begin{aligned}
    w_t&=\mathcal L_*w,\\
    w(0,j,y)&=g(j)h(y).
\end{aligned}
\right.
\end{equation}
Define
\[
    M_1:=\sum_{\ell\geq1}\ell g(\ell),\qquad
    M_\perp:=\sum_{z\in\Z^{d-1}}h(z),\qquad
    K_*:=\frac{M_1M_\perp}{(4\pi)^{d/2}D_*^{3/2}}.
\]
Then $K_*>0$. For every $A>0$, there are $C_A^{\rm err}>0$ and $T_A\geq1$ such that,
with $s=j-c_*t$,
\begin{equation}\label{eq:wd-asymptotic}
\begin{split}
 \left|w(t,j,y)
   -K_*\frac{s+\delta_*}{t^{\kappa_d}}
       \exp\left(-\frac{s^2}{4D_*t}-\frac{|y|^2}{4t}\right)\right|
 \leq C_A^{\rm err}\frac{1+|s|}{t^{\kappa_d+1/2}}
\end{split}
\end{equation}
whenever $t\geq T_A$, $|s|\leq A\sqrt t$ and $|y|\leq A\sqrt t$.
Consequently, there is $C_A^{\rm bd}\geq1$ such that, after increasing
$T_A$ if necessary,
\begin{equation}\label{eq:wd-two-sided}
    (C_A^{\rm bd})^{-1}\frac{1+s}{t^{\kappa_d}}
    \leq w(t,j,y)\leq
    C_A^{\rm bd}\frac{1+s}{t^{\kappa_d}}
\end{equation}
for $t\geq T_A$, $0\leq s\leq A\sqrt t$ and $|y|\leq A\sqrt t$.
The constants may depend on $A,d,\lambda_*,g,h$, but not on $t,j,y$.
\end{theorem}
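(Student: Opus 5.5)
The operator $\mathcal L_*$ splits as a sum of commuting generators, one acting on $j$ and one on $y$. Hence $w(t,j,y)=w_1(t,j)\,w_\perp(t,y)$, where $w_1$ solves the one-dimensional tilted equation with datum $g$ and $w_\perp=\ee^{t\Delta_{\Z^{d-1}}}h$. I will handle the two factors separately and then multiply the expansions.

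\emph{Transverse factor.} Write $w_\perp(t,y)=\sum_z h(z)\,\pheat^{(d-1)}_t(y-z)$. The kernel is a product of one-dimensional continuous-time walk kernels, each with variance $2t$. The lattice local CLT gives
\[
\pheat^{(d-1)}_t(y)=(4\pi t)^{-(d-1)/2}\ee^{-|y|^2/(4t)}+O\bigl(t^{-d/2}\bigr)
\]
uniformly in $y$. Since $h$ is finitely supported, replacing $y-z$ by $y$ in the Gaussian costs $O(t^{-d/2})$ when $|y|\le A\sqrt t$. Therefore
\[
w_\perp=M_\perp(4\pi t)^{-(d-1)/2}\ee^{-|y|^2/4t}+O(t^{-d/2}).
\]
The exponent $d/2$ is sharp enough: it is relatively $t^{-1/2}$ smaller than the main term.

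\emph{Longitudinal factor.} Undo the tilt. One checks that $v(t,j)=\ee^{-\lambda_* j}\ee^{(2D_*-2)t}\,\cdot$ (free walk) relates $w_1$ to the kernel of a walk with jump rates $\ee^{\lambda_*}$ to the right and $\ee^{-\lambda_*}$ to the left. This walk has drift $2\sinh\lambda_*=c_*$ and diffusivity $D_*$, so in the moving variable $s=j-c_*t$ the variance is $2D_*t$. Write $w_1(t,j)=\sum_\ell g(\ell)\,q_t(j-\ell)$, where $q_t$ is the transition kernel of this walk.

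Since $g$ is odd, $w_1=\sum_{\ell\ge1}g(\ell)\bigl(q_t(j-\ell)-q_t(j+\ell)\bigr)$. I Taylor expand $q_t(j\mp\ell)$ around $j$ using an Edgeworth-type local expansion of $q_t$ to second order:
\[
q_t(m)=(4\pi D_*t)^{-1/2}\ee^{-\sigma^2/(4D_*t)}\Bigl(1+\tfrac{P_3(\sigma/\sqrt t)}{\sqrt t}\Bigr)+O(t^{-3/2}),\qquad \sigma=m-c_*t.
\]
Here $P_3$ is an odd cubic coming from the third cumulant. In the difference $q_t(j-\ell)-q_t(j+\ell)$ the Gaussian part contributes
\[
2\ell\cdot\frac{s}{2D_*t}(4\pi D_*t)^{-1/2}\ee^{-s^2/4D_*t}.
\]
This yields the leading term $M_1 s/(D_*t)$ times the Gaussian. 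The skewness correction and the discreteness shift produce an additional constant term at the same order, which should combine into the shift $s\mapsto s+\delta_*$ with $\delta_*=c_*/(4D_*)$. This identification is what I expect to be the main obstacle. I would compute it by Fourier inversion:
\[
q_t(m)=\frac1{2\pi}\int_{-\pi}^{\pi}\ee^{t\psi(\xi)-\ii\xi m}\,\dd\xi,\qquad \psi(\xi)=\ee^{\lambda_*}(\ee^{\ii\xi}-1)+\ee^{-\lambda_*}(\ee^{-\ii\xi}-1).
\]
The odd-difference structure gives the factor $-2\ii\sin(\ell\xi)$. I then expand $\psi$ to cubic order around $0$. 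The cubic coefficient is $-\ii c_*\xi^3/6$, and checking it produces exactly $\delta_*$ is a direct Gaussian-moment computation. Uniformity in $|s|\le A\sqrt t$ follows from bounding the remainder integrand by $|\xi|^{4}t$ or $|\xi|^{5}t\,\ell$ times $\ee^{-ct\xi^2}$, plus exponential smallness away from $\xi=0$. The result is
\[
w_1=\frac{M_1(s+\delta_*)}{2\sqrt\pi\,D_*^{3/2}t^{3/2}}\,\ee^{-s^2/4D_*t}+O\bigl((1+|s|)t^{-2}\bigr).
\]

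\emph{Combination.} Multiplying the two factors gives the constant $K_*=M_1M_\perp/((4\pi)^{d/2}D_*^{3/2})$ and the power $t^{-3/2-(d-1)/2}=t^{-\kappa_d}$. The cross error terms are bounded by $C(1+|s|)t^{-\kappa_d-1/2}$, using $|s|\le A\sqrt t$ and the bound $w_\perp=O(t^{-(d-1)/2})$; this proves \eqref{eq:wd-asymptotic}. We have $K_*>0$ because $M_1>0$, since $g\ge0$ on $\ell\ge1$ and $g\not\equiv0$, and $M_\perp>0$.

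For \eqref{eq:wd-two-sided}, on $0\le s\le A\sqrt t$ and $|y|\le A\sqrt t$ the Gaussian factors lie in $[\ee^{-A^2/(4D_*)-A^2/4},1]$. The main term is therefore comparable to $(s+\delta_*)t^{-\kappa_d}$, which in turn is comparable to $(1+s)t^{-\kappa_d}$. For large $t$ the error $C(1+s)t^{-\kappa_d-1/2}$ is at most half the lower bound of the main term. Increasing $T_A$ accordingly gives both inequalities.
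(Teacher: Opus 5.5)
Your proposal is correct and follows the paper's proof essentially step for step. The shared steps are the factorization $w=w_1h_t$, the transverse local limit with $O(t^{-d/2})$ error, and a Fourier computation for the odd longitudinal datum that keeps the cubic symbol term $-\ii c_*\xi^3/6$; the constant part of the resulting fourth Gaussian derivative gives exactly $\delta_*=c_*/(4D_*)$, and the combination and two-sided argument are then the same. One caution: the preliminary Edgeworth-plus-Taylor route would not work on its own, since differencing a pointwise $O(t^{-3/2})$ remainder gains nothing, and there is no separate ``discreteness shift''---all of $\delta_*$ comes from the cubic symbol term, which is why the odd difference must be absorbed into the factor $\sin(\ell\xi)$ before Fourier inversion, as you ultimately do and as the paper does.
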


The constant $\delta_*$ contributes already when $|j-c_*t|=O(1)$.
In Lemma~\ref{lem:fourier-expansion} it is produced by the cubic term in
the tilted Fourier symbol, and Lemma~\ref{lem:longitudinal} retains that
term before Fourier inversion.

For the linear problem \eqref{eq:linear-original}, the logarithmic
coefficient is $d/(2\alpha_e)$.
Theorem~\ref{thm:nonlinear-upper} records this linear scale and the
nonlinear upper bound obtained from $u\leq V$.

\begin{theorem}\label{thm:nonlinear-upper}
Assume \eqref{eq:kpp}--\eqref{eq:u0}, and let $u$ solve \eqref{eq:main}.
For every $\rho\geq0$, there is $C_\rho\geq1$, depending only on
$d,f,u_0,\rho$, such that
\begin{equation}\label{eq:directional-upper}
    u(t,x)\leq
    \min\left\{1,
       C_\rho t^{-d/2}
       \exp\bigl(-\alpha_e(r-w_*(e)t)\bigr)\right\}
\end{equation}
for every $t\geq1$, $e\in\mathbb S^{d-1}$, $r\geq0$ and $x\in\Z^d$
with $|x-re|\leq\rho$. Thus, for every $\theta\in(0,1)$, there is
$L_{\theta,\rho}>0$ such that $u(t,x)<\theta$ whenever $t\geq1$,
$e\in\mathbb S^{d-1}$, $r\geq0$, $|x-re|\leq\rho$, and
\begin{equation}\label{eq:linear-scale-upper}
    r\geq w_*(e)t-\frac{d}{2\alpha_e}\log t+L_{\theta,\rho}.
\end{equation}

Let $V$ solve the linear problem
\begin{equation}\label{eq:linear-original}
\left\{
\begin{aligned}
    V_t&=\Delta_{\Z^d}V+aV,\\
    V(0)&=u_0.
\end{aligned}
\right.
\end{equation}
For $B>0$, set
\[
    q=q(t,x,e):=x-v_et+\frac{d}{2\alpha_e}e\log t.
\]
Then, uniformly in $e\in\mathbb S^{d-1}$ and $x\in\Z^d$ satisfying
$|q|\leq B$,
\begin{equation}\label{eq:linear-profile}
    V(t,x)=
    \frac{M(e)}{(4\pi)^{d/2}\sqrt{\det A_e}}
        \ee^{-p_e\cdot q}
        \left(1+O_B\!\left(\frac{(1+\log t)^2}{t}\right)\right),
    \qquad t\to\infty,
\end{equation}
where $M(e):=\sum_{z\in\Z^d}\ee^{p_e\cdot z}u_0(z)>0$.
\end{theorem}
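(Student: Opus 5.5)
The plan has three parts. First obtain the comparison $u\le V$. Then get the exact asymptotics of $V$ from a product formula and a tilted local limit theorem. The pointwise bound \eqref{eq:directional-upper} and the threshold \eqref{eq:linear-scale-upper} then follow from the profile.

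\emph{Comparison.} Since $f(s)\le as$ for $0\le s\le1$, $u$ is a subsolution of \eqref{eq:linear-original}. Both problems share the initial datum $u_0$, and $u$ is bounded. The comparison principle for bounded solutions of the lattice equation therefore gives $0\le u\le \min\{1,V\}$.

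\emph{Product formula.} Write $V(t,x)=\ee^{at}\sum_z \pheat_t(x-z)u_0(z)$, where $\pheat_t$ is the heat kernel of $\Delta_{\Z^d}$. This kernel is a product of one-dimensional kernels $\ee^{-2t}I_{n}(2t)$, with $I_n$ the modified Bessel function. For each $p\in\R^d$ we have the exact tilt identity
\[
\pheat_t(x)=\ee^{tH_0(p)-p\cdot x}\,\tilde{\pheat}^{p}_t(x).
\]
Here $\tilde{\pheat}^{p}_t$ is the law at time $t$ of the tilted walk, with jump rates $\ee^{\pm p_k}$ along $\pm e_k$. Its mean is $t\nabla H_0(p)$ and its covariance is $2tA_p$.

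Take $p=p_e$. Then $H_0(p_e)-p_e\cdot v_e=-I(v_e)=-a$, by Legendre duality and the definition of $w_*(e)$. This gives
\[
V(t,x)=\sum_z u_0(z)\,\ee^{p_e\cdot z}\,\ee^{-p_e\cdot(x-tv_e)}\,\tilde{\pheat}^{p_e}_t(x-z).
\]
Note that $p_e\cdot(x-tv_e)=p_e\cdot q-\tfrac{d}{2}\log t$, because $\alpha_e=p_e\cdot e$. Also $x-z-tv_e=O(\log t)$ on the region $|q|\le B$.

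\emph{Local limit theorem.} The asymptotics require a local CLT for the tilted walk at displacement $O(\log t)$ from its mean, uniform in $p$ over the compact set $\mathcal P$:
\[
\tilde{\pheat}^{p}_t(t v_p+\xi)=\frac{1}{(4\pi t)^{d/2}\sqrt{\det A_p}}\Bigl(1+O\bigl(\tfrac{1+|\xi|^2}{t}\bigr)\Bigr),\qquad |\xi|\le C\log t.
\]
Because the walk is a product of independent coordinate walks, it suffices to prove the one-dimensional statement. That can be done either by Fourier inversion with a saddle point, or by the uniform Bessel asymptotics $I_n(2t)$ in the regime $n/t\to v$. In either route the $O(t^{-1/2})$ term vanishes at leading order: the odd cubic term integrates to zero against the Gaussian when $|\xi|/\sqrt t\to0$, so the relative error is $O((1+|\xi|^2)/t)$. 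With $|\xi|\le C\log t$ this is $O((1+\log t)^2/t)$. The constants are uniform because $\cosh p_k$ is bounded above and away from $0$ on $\mathcal P$.

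Substituting the local CLT into the sum over the finitely many $z$ yields \eqref{eq:linear-profile} with $M(e)=\sum_z \ee^{p_e\cdot z}u_0(z)$. We have $M(e)>0$ since $u_0\ge0$ and $u_0\not\equiv0$.

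\emph{Pointwise bound and threshold.} For \eqref{eq:directional-upper} we need a bound at all $t\ge1$ rather than an asymptotic formula. We use the uniform tilted estimate $\sup_\xi \tilde{\pheat}^{p}_t(\xi)\le Ct^{-d/2}$, valid for $t\ge1$ uniformly over $\mathcal P$. This holds because each one-dimensional factor is bounded by $C/\sqrt t$, which follows from the characteristic function bound $|\hat\mu(\theta)|\le \ee^{-ct\theta^2}$ on $[-\pi,\pi]$. It gives
\[
V(t,x)\le C\,t^{-d/2}M(e)\,\ee^{-p_e\cdot(x-tv_e)}.
\]
Write $x=re+\eta$ with $|\eta|\le\rho$. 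Then $p_e\cdot(x-tv_e)=\alpha_e(r-w_*(e)t)+p_e\cdot\eta$, where $|p_e\cdot\eta|\le\rho\sup_{\mathcal P}|p|$. Also $M(e)$ is bounded uniformly in $e$ by compactness. Combining these with $u\le1$ gives \eqref{eq:directional-upper}.

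The threshold statement follows by solving
\[
C_\rho t^{-d/2}\ee^{-\alpha_e(r-w_*t)}<\theta,
\]
using $\inf_e\alpha_e>0$ from Lemma~\ref{lem:critical-pair}; one may take $L_{\theta,\rho}=\log(C_\rho/\theta)/\inf_e\alpha_e$.

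\emph{Main obstacle.} The hardest step is the parameter-uniform local CLT with relative error $(1+\log t)^2/t$. This means controlling the Fourier integral away from the origin, uniformly in the tilt, and checking that the odd third-order term contributes only at the order $|\xi|^3/t^2+|\xi|/t$, which is absorbed into the stated error.
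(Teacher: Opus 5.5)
Your proposal is correct and follows essentially the paper's proof: $u\le V$ by linear comparison, the tilt identity \eqref{eq:kernel-tilt} at $p=p_e$ with $H(p_e)=p_e\cdot v_e$, the tilt-uniform bound \eqref{eq:kernel-sup} for \eqref{eq:directional-upper}, and a tilt-uniform Edgeworth expansion whose odd cubic term is $O(|\xi|/t+|\xi|^3/t^2)$ for \eqref{eq:linear-profile}; the only difference is that the paper applies Lemma~\ref{lem:fourier-expansion} to $g_{p_e}$ with an additive error and then divides by a uniform lower bound for $M(e)\gamma_{p_e}(0)$, which your relative point-kernel estimate, summed with nonnegative weights, makes unnecessary. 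One small correction: $L_{\theta,\rho}=\log(C_\rho/\theta)/\inf_e\alpha_e$ only gives $u\le\theta$, so use $\log(2C_\rho/\theta)$, as the paper does, to obtain the strict inequality.
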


The coefficient $d/(2\alpha_e)$ is the linear coefficient. The sharper
nonlinear estimate in Theorem~\ref{thm:sharp-directional-upper} moves
this upper bound back by an additional $\alpha_e^{-1}\log t$.
Estimate \eqref{eq:linear-profile} applies to every lattice site with
$|q|\leq B$, including when $e$ is an irrational direction.

The proof of Theorem~\ref{thm:sharp-directional-upper} uses, for
$p\in\mathcal P$,
$W_p(t,x):=\ee^{p\cdot(x-tv_p)}u(t,x)$ and its mass
$M_p(t):=\sum_{x\in\Z^d}W_p(t,x)$.
The operator $\mathcal L_p$ is defined in \eqref{eq:general-generator}.
The identity $\partial_tW_p=\mathcal L_pW_p-\mathcal R_p$ in
\eqref{eq:weighted-absorption-equation} has the nonnegative term
$\mathcal R_p(t,x):=\ee^{p\cdot(x-tv_p)}(au(t,x)-f(u(t,x)))$.
The mass balance \eqref{eq:weighted-mass-balance}, the first-moment
balance \eqref{eq:weighted-moment-balance}, and the projected crossing
estimate \eqref{eq:crossing-probability} imply the dyadic contraction
\eqref{eq:mass-dyadic-contraction}. Its first iteration gives
$M_p(t)\leq C(1+\log(1+t))(1+t)^{-1/2}$; inserting this bound into
\eqref{eq:weighted-moment-balance} makes the positive first moment
uniformly bounded, and the second iteration gives
\eqref{eq:critical-mass-decay}. For the pointwise upper bound, the
backward tilted walk is stopped at the affine half-space
\eqref{eq:terminal-halfspace}. Lemma~\ref{lem:halfspace-survival}, the
kernel estimate \eqref{eq:kernel-sup}, and
\eqref{eq:critical-mass-decay} are combined in
\eqref{eq:stopped-upper-bound} to obtain
\eqref{eq:sharp-directional-envelope}; the exit value is evaluated at
the site reached by the walk and bounded using $u\leq1$.

For the lower bound, Lemma~\ref{lem:logconcavity} determines the sign of
the solution with odd longitudinal initial data, and
\eqref{eq:integrable-positive-mode} gives the integrable bound used to
construct the factor $\chi(t)$ in the proof of
Lemma~\ref{lem:autonomous-leading-lower}. Lemma~\ref{lem:delayed-pulse}
then supplies the explicit moving subsolution
$\Phi_\varepsilon(j-c_*t+b_\beta\log(t+T))$, with
$\Phi_\varepsilon,b_\beta,T$ specified in that lemma.
The product subsolution
\eqref{eq:product-subsolution} inserts the transverse heat kernel, and
Lemma~\ref{lem:amplification} converts the positive seed
\eqref{eq:diffusive-section-seed} into the axial interval estimate
\eqref{eq:axial-inner-lower} and the diffusive transverse estimate
\eqref{eq:transverse-section-lower}. No multidimensional zero-number
principle is used.

Sections~\ref{sec:preliminaries}--\ref{sec:upper} collect the
preliminary estimates and prove the first upper bounds.
Section~\ref{sec:sharp-upper} proves the upper bound with the logarithmic
correction. Sections~\ref{sec:lower} and~\ref{sec:amplification} prove
the lower bound on the coordinate axes. Section~\ref{sec:profiles} proves
convergence to the critical wave. Section~\ref{sec:remaining} gives the
weighted mass estimates and the bounds in the remaining directions.

\section{Preliminaries}\label{sec:preliminaries}

We write $|x|_1:=\sum_k|x_k|$ and
$|x|_\infty:=\max_k|x_k|$.
We write $\N=\{1,2,\ldots\}$ and $\1_E$ for the
indicator of a set or event $E$. The sequence spaces $\ell^1$ and
$\ell^\infty$ use counting measure, and $\norm{\cdot}_\infty$ denotes
the supremum norm. For $r\in\R$, write $r_+=\max\{r,0\}$ and
$r_-=\max\{-r,0\}$. We also write
$\lfloor r\rfloor:=\max\{k\in\Z:k\leq r\}$ and
$\lceil r\rceil:=\min\{k\in\Z:k\geq r\}$ for the floor and ceiling of
$r$, respectively. In proofs, $C$ denotes a positive bound that may
increase from one estimate to the next; a parameter subscript on such a
generic constant indicates allowed dependence. Constants named in theorem
or lemma statements, or assigned a value in a proof, are fixed within each
application. We distinguish such constants by subscripts or superscripts
when they occur together, and state any enlargement explicitly. Uniformity
in a parameter is stated where it is used. A subscript in an $O$-term
indicates allowed dependence of the implicit constant on that parameter;
every statement containing such an $O$-term specifies any additional
variables with respect to which the bound is uniform.

For $n\in\N$, use $e_1,\ldots,e_n$ for the coordinate vectors in
$\Z^n$. Let $\Delta_{\Z^n}$ be \eqref{eq:lattice-lap} in dimension
$n$, and define
\[
 (\mathcal B_nh)(x):=\sum_{k=1}^n\bigl(h(x+e_k)+h(x-e_k)\bigr).
\]
The shifts are isometries on both $\ell^1(\Z^n)$ and
$\ell^\infty(\Z^n)$, so $\|\mathcal B_n\|\leq2n$ and
$\|\Delta_{\Z^n}\|\leq4n$ on either space. Since
$\Delta_{\Z^n}=\mathcal B_n-2n\,\mathrm{Id}$, its exponential has the
operator-norm convergent expansion
\[
 \ee^{t\Delta_{\Z^n}}
   =\ee^{-2nt}\sum_{m=0}^{\infty}\frac{t^m}{m!}\mathcal B_n^m
   \qquad(t\geq0).
\]
Define the heat kernel by
$\pheat_n(t,x):=(\ee^{t\Delta_{\Z^n}}\1_{\{0\}})(x)$. The integer
$(\mathcal B_n^m\1_{\{0\}})(x)$ counts the nearest-neighbor paths of
length $m$ from $0$ to $x$; its sum over $x$ is $(2n)^m$.
There is such a path with $m=|x|_1$. Consequently,
\begin{equation}\label{eq:heat-semigroup}
\begin{gathered}
 (\ee^{t\Delta_{\Z^n}}h)(x)
    =\sum_{z\in\Z^n}\pheat_n(t,x-z)h(z)
    \qquad(t\geq0,\ h\in\ell^\infty(\Z^n)),\\
 \sum_{x\in\Z^n}\pheat_n(t,x)=1\quad(t\geq0),
 \qquad
 \pheat_n(t,x)>0\quad(t>0,\ x\in\Z^n).
\end{gathered}
\end{equation}
The convolution converges absolutely, and the nonnegative terms in the
expansion show that $\ee^{t\Delta_{\Z^n}}$ preserves nonnegativity.

For comparison, extend $f$ by
\begin{equation}\label{eq:f-extension}
 \widetilde f(s):=
 \begin{cases}
    as, & s<0,\\
    f(s), & 0\leq s\leq1,\\
    f'(1)(s-1), & s>1.
 \end{cases}
\end{equation}
The values and first derivatives match at both endpoints. Thus
$\widetilde f\in C^1(\R)$ and
$\|\widetilde f'\|_{L^\infty(\R)}=L_f$, where
$L_f:=\|f'\|_{L^\infty([0,1])}$. In particular, $\widetilde f$ is
globally Lipschitz and agrees with $f$ on $[0,1]$. Assumption
\eqref{eq:kpp} also gives $\widetilde f(s)\leq as$ for $s\geq0$.

Write $z\sim x$ if $|z-x|=1$. For $n\in\N$ and $D\subset\Z^n$,
define its exterior boundary by
\[
 \partial_{\rm ext}D:=\{z\in\Z^n\setminus D:
                 z\sim x\text{ for some }x\in D\}.
\]
Let $J\subset\R$ be an interval and $D\subset\Z^n$. A bounded
measurable function $U:J\times\Z^n\to\R$ is a subsolution on $D$ if,
for every $x\in D$, $t\mapsto U(t,x)$ is locally absolutely continuous
and
\[
 \partial_tU(t,x)\leq\Delta_{\Z^n}U(t,x)+\widetilde f(U(t,x))
\]
for almost every $t\in J$; a supersolution satisfies the reverse
inequality. The Laplacian at $x\in D$ includes all nearest neighbors,
including those in $\partial_{\rm ext}D$. Values there are the exterior
boundary data; the differential inequality is imposed only at sites
in $D$. For functions with values in $[0,1]$, the reaction is $f$.
For $z_t=\Delta_{\Z^n}z+F(t,z)$, a bounded measurable subsolution
satisfies $z_t\leq\Delta_{\Z^n}z+F(t,z)$ almost everywhere in time
at each site in $D$, and a supersolution satisfies the reverse inequality.
Both are locally absolutely continuous in time at every site in $D$.

Let $E$ be a Banach space, $J\subset\R$ an interval, and $A:E\to E$
a bounded linear operator. If $U\in C^1(J;E)$ and $G\in C(J;E)$ satisfy
$U'=AU+G$, then for $s,t\in J$ with $s<t$,
\begin{equation}\label{eq:variation-constants}
 U(t)=\ee^{(t-s)A}U(s)
      +\int_s^t\ee^{(t-r)A}G(r)\,\dd r.
\end{equation}
Indeed, differentiation of $r\mapsto\ee^{(t-r)A}U(r)$ gives
$\ee^{(t-r)A}G(r)$, which can be integrated from $s$ to $t$.
Equation~\eqref{eq:variation-constants} also holds if $G$ is Bochner
integrable on $[s,t]$ and
$U$ is a continuous solution of
$U(r)=U(s)+\int_s^r(AU(q)+G(q))\,\dd q$.
To see this, substitute the right-hand side of
\eqref{eq:variation-constants} into this integral equation and
interchange the integrals; boundedness of $A$ and integrability of $G$
justify the interchange. For two solutions with the same value at $s$, the norm of their
difference satisfies Gronwall's inequality with coefficient $\|A\|$,
which proves uniqueness. For the semigroup formulation, see
\cite[Chapters~1 and~4]{Pazy}.

We use Gronwall in the following form. Let $T>0$ and $K\geq0$ be fixed.
If a nonnegative bounded measurable function $m$ on $[0,T]$ satisfies
$m(t)\leq K\int_0^tm(r)\,\dd r$ for every $t\in[0,T]$, then
$m\equiv0$. Indeed, $M(t):=\int_0^tm(r)\,\dd r$ is absolutely
continuous, $M(0)=0$, and $(\ee^{-Kt}M(t))'\leq0$ almost everywhere.
Since $M\geq0$, this gives $M=0$; the assumed inequality then gives
$m(t)=0$ for every $t$.

\begin{lemma}\label{lem:comparison}
Under \eqref{eq:kpp}--\eqref{eq:u0}, problem \eqref{eq:main} has a
unique solution in $C^1([0,\infty);\ell^\infty(\Z^d))$, with
$0<u(t,x)<1$ for $t>0$, and
\begin{equation}\label{eq:linear-comparison}
    (\ee^{t\Delta_{\Z^d}}u_0)(x)
       \leq u(t,x)\leq
    \ee^{at}(\ee^{t\Delta_{\Z^d}}u_0)(x).
\end{equation}

Let $n\in\N$, $T>0$ and $D\subset\Z^n$.
Let $U,W:[0,T]\times\Z^n\to\R$ be bounded measurable functions,
absolutely continuous in time at each $x\in D$. Let $\mathcal I$ be a
bounded interval containing their ranges, and let
$F:[0,T]\times\R\to\R$ be continuous. Assume that, for some $L\geq0$,
$|F(t,r)-F(t,s)|\leq L|r-s|$ for $t\in[0,T]$ and
$r,s\in\mathcal I$.
Suppose that, for each $x\in D$ and almost every $t\in(0,T)$,
\begin{equation}\label{eq:comparison-differential}
 \left\{
 \begin{aligned}
 \partial_tU(t,x)&\leq\Delta_{\Z^n}U(t,x)+F(t,U(t,x)),\\
 \partial_tW(t,x)&\geq\Delta_{\Z^n}W(t,x)+F(t,W(t,x)).
 \end{aligned}
 \right.
\end{equation}
If
\begin{equation}\label{eq:comparison-data}
 \left\{
 \begin{aligned}
 U(0,x)&\leq W(0,x)\qquad(x\in D),\\
 U(t,z)&\leq W(t,z)\qquad(0\leq t\leq T,\ z\in\partial_{\rm ext}D).
 \end{aligned}
 \right.
\end{equation}
then $U\leq W$ on $[0,T]\times D$. This includes $D=\Z^n$, for which
$\partial_{\rm ext}D=\varnothing$, and $F(t,s)=\widetilde f(s)$.

The order $U\leq W$ on $[0,T]\times D$ also holds if $U$ remains
absolutely continuous in time at each $x\in D$ and $W$ instead has
finitely many jump times $0<t_1<\cdots<t_N<T$, common to all sites.
At each $x\in D$, require $W$ to be locally Lipschitz between jumps,
continuous at $0,T$, and to have finite one-sided limits satisfying
$W(t_j,x)=W(t_j+,x)\geq W(t_j-,x)$ for $1\leq j\leq N$.
Both functions must be bounded and measurable, and
\eqref{eq:comparison-differential} must hold almost everywhere between
jumps. Require \eqref{eq:comparison-data}, as well as continuity of $F$
and its uniform Lipschitz bound with constant $L$ on $\mathcal I$.
\end{lemma}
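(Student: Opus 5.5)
I will prove the comparison principle first and then deduce the well-posedness statements from it. Set $m(t):=\sup_{x\in D}(U(t,x)-W(t,x))_+$. This is finite and bounded because both functions are bounded. At each $x\in D$ the difference $\eta:=U-W$ is absolutely continuous in time, and almost everywhere
\[
 \partial_t\eta\leq\Delta_{\Z^n}\eta+F(t,U)-F(t,W)
   =\mathcal B_n\eta-2n\eta+F(t,U)-F(t,W).
\]
The ranges lie in $\mathcal I$, so the Lipschitz bound applies. Since $r\mapsto r_+$ is Lipschitz, $\eta_+$ is absolutely continuous, and almost everywhere $\partial_t\eta_+=\1_{\{\eta>0\}}\partial_t\eta$. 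On $\{\eta(t,x)>0\}$ each neighbour term satisfies $\eta(t,x\pm e_k)\leq m(t)$: for neighbours in $D$ this is the definition of $m$, and for neighbours in $\partial_{\rm ext}D$ the boundary data give $\eta\leq0$. Hence $\partial_t\eta_+(t,x)\leq 2n\,m(t)+L\,\eta_+(t,x)$, where the $-2n\eta$ term has been dropped because it is nonpositive there. Integrating from $0$ and using $\eta_+(0,x)=0$ gives $\eta_+(t,x)\leq(2n+L)\int_0^tm(r)\,\dd r$. Taking the supremum over $x$ yields $m(t)\leq(2n+L)\int_0^tm$.

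To apply the Gronwall form stated just before the lemma, $m$ must be measurable. I expect this to be the only delicate point, since $D$ may be infinite and $m$ is a supremum of uncountably many pieces. I would sidestep it by working with $M(t):=\sup_x\int_0^t\ldots$, or more simply with $\bar m(t):=\sup_{s\leq t}m(s)$. The function $\bar m$ is monotone, hence measurable, and it satisfies $\bar m(t)\leq(2n+L)\int_0^t\bar m$ because the right-hand side is increasing in $t$. Gronwall then gives $\bar m\equiv0$, that is, $U\leq W$ on $[0,T]\times D$.

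For the jump version I would apply the same argument successively on $[0,t_1],[t_1,t_2],\ldots$, where $W$ is Lipschitz and hence absolutely continuous on each closed subinterval, using its one-sided limits. At $t_j$ the left limit satisfies $U(t_j,x)\leq W(t_j-,x)\leq W(t_j,x)$, which supplies the initial ordering for the next interval. The boundary data are assumed for all $t$.

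For existence and uniqueness, note that $\widetilde f$ is globally Lipschitz with constant $L_f$ and $\Delta_{\Z^d}$ is bounded on $\ell^\infty$. The map $u\mapsto\Delta u+\widetilde f(u)$ is therefore globally Lipschitz on $\ell^\infty(\Z^d)$, and Picard iteration (equivalently \eqref{eq:variation-constants} with a contraction) gives a unique global $C^1$ solution of the problem with $f$ replaced by $\widetilde f$. The constants $0$ and $1$ are solutions, so the comparison principle on $D=\Z^d$ gives $0\leq u\leq1$. Hence $u$ solves \eqref{eq:main}, and it is the unique solution, since any $C^1$ solution with values in $[0,1]$ also solves the $\widetilde f$ problem.

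Strict bounds come from the variation-of-constants formula. Write $u_t=\Delta u+c(t,x)u$ with $c=f(u)/u$, which is bounded because $f$ is Lipschitz and $f(0)=0$. Then $\ee^{Kt}u$ with $K\geq\sup|c|$ satisfies an equation whose generator $\Delta+K+c$ has nonnegative off-diagonal part and nonnegative diagonal shift. Combined with $\pheat(t,x)>0$ from \eqref{eq:heat-semigroup} and $u_0\not\equiv0$, this gives $u>0$ for $t>0$. Applying the same argument to $1-u$ gives $u<1$: here $u_0\not\equiv1$ because the support is finite, and $1-u$ satisfies $(1-u)_t=\Delta(1-u)-f(u)$ with $-f(u)=\tilde c\,(1-u)$ and $\tilde c$ bounded since $f(1)=0$.

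Finally, \eqref{eq:linear-comparison} follows from the comparison principle with $F=\widetilde f$. The function $W=\ee^{at}\ee^{t\Delta}u_0$ is a supersolution because $\widetilde f(s)\leq as$ for $s\geq0$ and $W\geq0$. The function $U=\ee^{t\Delta}u_0\in[0,1]$ is a subsolution because $\widetilde f(U)=f(U)\geq0$. Since $W$ is unbounded in time, the comparison is applied on each finite interval $[0,T]$ with a bounded interval $\mathcal I$ containing the ranges.
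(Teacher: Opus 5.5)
Your proposal is correct and follows essentially the paper's route: Gronwall for the supremum over $D$ of $(U-W)_+$, an interval-by-interval treatment of the jumps, a contraction argument for the $\widetilde f$-problem, and comparison with the constants $0$ and $1$. The only real difference is that you derive \eqref{eq:linear-comparison} from the comparison principle with explicit sub- and supersolutions rather than from the variation-of-constants formula, which works equally well.

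Two small corrections. First, $D\subset\Z^n$ is countable, so $m$ is a countable supremum of continuous functions and is already measurable; the running supremum is harmless but unnecessary. Second, in the jump version $W$ is only locally Lipschitz between jumps, so you should integrate on compact subintervals of $(t_j,t_{j+1})$ and let the left endpoint decrease to $t_j$, using $W(t_j+,x)=W(t_j,x)$ and the continuity of $U$, as the paper does.
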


\begin{proof}
Define the vector field
\[
   \mathcal F(U):=\Delta_{\Z^d}U+\widetilde f(U)
   \quad\text{on }\ell^\infty(\Z^d),
\]
where $\widetilde f(U)$ is evaluated pointwise. The bounds
$\|\Delta_{\Z^d}\|_{\ell^\infty\to\ell^\infty}\leq4d$ and
$\|\widetilde f'\|_\infty=L_f$ give
\[
 \|\mathcal F(U)-\mathcal F(V)\|_\infty
       \leq(4d+L_f)\|U-V\|_\infty.
\]
Choose $\tau>0$ with $\tau(4d+L_f)<1$. On
$C([0,\tau];\ell^\infty(\Z^d))$, the map
$V\mapsto u_0+\int_0^\cdot\mathcal F(V(s))\,\dd s$ is a contraction
in the supremum norm. Its fixed point is the unique $C^1$ solution of
problem \eqref{eq:main} with $f$ replaced by $\widetilde f$, initial
datum $u_0$, and time interval $[0,\tau]$. Since $\tau$ is independent
of the bounded initial datum, repeating this fixed-point construction on
successive intervals of length $\tau$ gives a unique solution on
$[0,\infty)$. This is the bounded-operator instance of the semilinear
Cauchy theory in \cite[Chapter~6]{Pazy}.

We now prove $U\leq W$ on $[0,T]\times D$ under
\eqref{eq:comparison-differential}--\eqref{eq:comparison-data}, first
when both functions are absolutely continuous in time at each site
in $D$. Put $G(t,s)=F(t,s)+Ls$. For each $t$, this function is nondecreasing
and $2L$-Lipschitz in $s\in\mathcal I$. At each $x\in D$,
\[
 \partial_tU+(2n+L)U\leq\sum_{z\sim x}U(t,z)+G(t,U),\qquad
 \partial_tW+(2n+L)W\geq\sum_{z\sim x}W(t,z)+G(t,W).
\]
Multiply by $\ee^{(2n+L)t}$ and integrate in time. The initial
difference $U(0,x)-W(0,x)$ is nonpositive. Define
$P=(U-W)_+$ in $D$ and $P=0$ outside $D$. Since $U-W\leq0$ on
$\partial_{\rm ext}D$, subtraction and the properties of $G$ give
\begin{equation}\label{eq:comparison-positive-part}
 P(t,x)\leq\int_0^t\ee^{-(2n+L)(t-r)}
       \left[\sum_{z\sim x}P(r,z)+2LP(r,x)\right]\dd r
       \qquad(x\in D).
\end{equation}
The function $m(t):=\|P(t,\cdot)\|_\infty$ is bounded and measurable,
since $D$ is countable, and satisfies
$m(t)\leq(2n+2L)\int_0^tm(r)\,\dd r$. Gronwall's inequality
gives $m=0$, hence $U\leq W$ in $D$.

If $W$ jumps at $t_1,\ldots,t_N$, put $t_0=0$ and $t_{N+1}=T$.
For $0\leq j\leq N$, integrate \eqref{eq:comparison-differential}
on compact subintervals of $(t_j,t_{j+1})$. Let the left
endpoint decrease to the preceding jump time, or to $0$ on the first
interval. The right-hand limits and boundedness of $U,W,F(t,U),F(t,W)$
give \eqref{eq:comparison-positive-part} with lower integration limit
$t_j$ on $(t_j,t_{j+1})$, or $0$ on the first interval, provided
$U\leq W$ at that initial time. Gronwall gives $U\leq W$ before the next jump. At every
$t_j$ with $1\leq j\leq N$, continuity of $U$ and the left limit
of $W$ give
\[
 U(t_j,x)=U(t_j-,x)\leq W(t_j-,x)\leq W(t_j,x).
\]
This ordered value starts the next interval. Induction over the finitely
many jumps, followed by continuity at $T$, gives $U\leq W$
on $[0,T]\times D$.

Since $\widetilde f(0)=\widetilde f(1)=0$, the functions $0$ and $1$
satisfy \eqref{eq:main} with $f$ replaced by $\widetilde f$.
The pairs $(U,W)=(0,u)$ and $(U,W)=(u,1)$ satisfy
\eqref{eq:comparison-differential}--\eqref{eq:comparison-data} with
$D=\Z^d$ and $F(t,s)=\widetilde f(s)$. The two applications of
$U\leq W$ give $0\leq u\leq1$. Thus
$\widetilde f(u)=f(u)$, and $u$ solves \eqref{eq:main}.
Apply \eqref{eq:variation-constants} with $A=\Delta_{\Z^d}$ and then
with $A=\Delta_{\Z^d}+a$ to obtain
\[
\begin{aligned}
 u(t)&=\ee^{t\Delta_{\Z^d}}u_0
      +\int_0^t\ee^{(t-s)\Delta_{\Z^d}}f(u(s))\,\dd s,\\
 u(t)&=\ee^{t(\Delta_{\Z^d}+a)}u_0
      +\int_0^t\ee^{(t-s)(\Delta_{\Z^d}+a)}
                  \bigl(f(u(s))-au(s)\bigr)\,\dd s.
\end{aligned}
\]
Positivity of the heat semigroup, $f(u)\geq0$, and $f(u)-au\leq0$
give \eqref{eq:linear-comparison}. Since $u_0\not\equiv0$, strict
positivity of the kernel in \eqref{eq:heat-semigroup} gives $u(t,x)>0$
for $t>0$.

Finally, put $v=1-u$. Since $f(1)=0$, the bound on $f'$ gives
$f(1-v)\leq L_fv$ for $0\leq v\leq1$, and hence
$v_t\geq\Delta_{\Z^d}v-L_fv$.
The function $\underline v(t):=\ee^{-L_ft}\ee^{t\Delta_{\Z^d}}(1-u_0)$
solves $\underline v_t=\Delta_{\Z^d}\underline v-L_f\underline v$
with $\underline v(0)=v(0)$. Both $v$ and $\underline v$ are bounded
on finite time intervals. Applying
\eqref{eq:comparison-differential}--\eqref{eq:comparison-data} with
$U=\underline v$, $W=v$, $D=\Z^d$, and $F(t,s)=-L_fs$ gives
$v\geq\underline v$. Since $u_0$ has finite
support, $1-u_0$ is nonzero and nonnegative.
Equation~\eqref{eq:heat-semigroup} therefore gives
$\underline v(t,x)>0$ for $t>0$, proving $u(t,x)<1$.
\end{proof}

Keeping the other assumptions in \eqref{eq:kpp}, the proof of
Theorem~\ref{thm:nonlinear-target} does not use the strict sign $f'(1)<0$;
positivity of $f$ on $(0,1)$ suffices for the amplification step.
The strict sign is used in the stable-state comparison in
Lemma~\ref{lem:entire-positive-comparison}, and hence in the proof of
Theorem~\ref{thm:axial-profile}.

\begin{lemma}\label{lem:moving-halfline-comparison}
Let $T_0<T_1$ and $B\in C^1([T_0,T_1])$ be strictly increasing.
Let $U,W:[T_0,T_1]\times\Z\to\R$ be bounded and locally Lipschitz in
time at each site. Let $F:[T_0,T_1]\times\R\to\R$ be continuous and
uniformly Lipschitz in its second variable on a bounded interval
containing the ranges of $U$ and $W$. Suppose that, for each $j\in\Z$,
\[
 \left\{
 \begin{aligned}
 U_t(t,j)&\leq\Delta_{\Z}U(t,j)+F(t,U(t,j)),\\
 W_t(t,j)&\geq\Delta_{\Z}W(t,j)+F(t,W(t,j)).
 \end{aligned}
 \right.
\]
for almost every $t\in(T_0,T_1)$ with $j\leq B(t)$. If
\[
\left\{
\begin{aligned}
 U(T_0,j)&\leq W(T_0,j)\qquad(j\leq B(T_0)),\\
 U(t,j)&\leq W(t,j)\qquad(T_0\leq t\leq T_1,\ B(t)<j\leq B(t)+1).
\end{aligned}
\right.
\]
then $U(t,j)\leq W(t,j)$ for $T_0\leq t\leq T_1$ and $j\leq B(t)$.
\end{lemma}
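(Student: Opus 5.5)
The plan is to repeat the positive-part/Gronwall argument from the proof of Lemma~\ref{lem:comparison}, now on the time-dependent domain $D(t):=\{j\in\Z:j\leq B(t)\}$. Since $B$ is strictly increasing, $D(t)$ only grows in time. So once a site has entered the domain it stays there, and the order holds at the moment of entry by the boundary hypothesis.

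\emph{Normalization.} Let $L$ be the Lipschitz constant of $F$ on the interval containing the ranges of $U$ and $W$, and put $G(t,s)=F(t,s)+Ls$, which is nondecreasing in $s$. Define
\[
 P(t,j):=(U(t,j)-W(t,j))_+\1_{\{j\leq B(t)\}},\qquad m(t):=\sup_jP(t,j).
\]
Then $m$ is bounded, and it is measurable because it is a countable supremum of measurable functions. The goal is to show $m(t)\leq K\int_{T_0}^tm(r)\,\dd r$ for some constant $K$; the Gronwall form stated before Lemma~\ref{lem:comparison} then gives $m\equiv0$.

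\emph{Integral inequality at a fixed site.} Fix $j$ and $t$ with $j\leq B(t)$. Set $\tau_j:=\max\{T_0,\inf\{r:B(r)\geq j\}\}$. Because $B$ is continuous and increasing, $j\leq B(r)$ for all $r\in[\tau_j,t]$. At time $\tau_j$ we have $U\leq W$ at site $j$:
\begin{itemize}[label={}]
 \item If $\tau_j=T_0$, this is the initial condition.
 \item If $\tau_j>T_0$, then $B(\tau_j)=j$. For $r<\tau_j$ close to $\tau_j$ we have $B(r)<j\leq B(r)+1$, so $U(r,j)\leq W(r,j)$ by the boundary hypothesis, and continuity in time gives $U(\tau_j,j)\leq W(\tau_j,j)$.
\end{itemize}
On $[\tau_j,t]$ the differential inequalities hold at $j$. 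Multiply by $\ee^{(2+L)r}$ and integrate, exactly as for \eqref{eq:comparison-positive-part}, to get
\[
 P(t,j)\leq\int_{\tau_j}^t\ee^{-(2+L)(t-r)}\Bigl[(U-W)_+(r,j-1)+(U-W)_+(r,j+1)+2LP(r,j)\Bigr]\dd r.
\]

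\emph{Main obstacle: the neighbor terms.} We must bound each neighbor term by $m(r)$ for $r\in[\tau_j,t]$.
\begin{itemize}[label={}]
 \item The left neighbor $j-1\leq B(r)$ lies in $D(r)$, so its term is bounded by $P(r,j-1)\leq m(r)$.
 \item For the right neighbor $j+1$ there are two cases. If $j+1\leq B(r)$, its term is at most $m(r)$. Otherwise $B(r)<j+1\leq B(r)+1$, since $j\leq B(r)$; this is exactly the boundary strip where $U\leq W$ is assumed, so its term vanishes.
\end{itemize}
Since $\tau_j\geq T_0$, this gives $m(t)\leq(2+2L)\int_{T_0}^tm(r)\,\dd r$, and Gronwall yields $m\equiv0$. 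This proves $U\leq W$ for $j\leq B(t)$.

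The delicate points are two. First, justifying the entry-time ordering uses local Lipschitz continuity in time together with the half-open strip $(B(t),B(t)+1]$. Second, a site may sit exactly at $B(t)=j$. Both are handled by the case split above.
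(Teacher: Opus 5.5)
Your proof is correct, but it is organized differently from the paper's. The paper partitions $[T_0,T_1]$ at the times where $B$ takes an integer value; there are finitely many, and this is where strict monotonicity enters. On each piece $[a,b)$ the domain is the fixed half-line $D_k=\{j\le k\}$, whose single exterior vertex $k+1$ lies in the strip $B(t)<j\le B(t)+1$. The paper applies Lemma~\ref{lem:comparison} there as a black box and inducts over the crossings. It passes the order to the newly entering site $k$ by letting $s\uparrow a$ in the strip hypothesis, and to the endpoint $b$ by continuity.

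You instead run the positive-part/Gronwall argument once on the moving domain, with site-dependent entry times $\tau_j$. Your entry step and neighbor bookkeeping are the same ingredients as the paper's: the left neighbor lies in the domain, and the right neighbor lies either in the domain or in the strip. You apply them site by site rather than interval by interval.

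The paper's route avoids re-deriving the integral inequality and reduces the moving boundary to a finite induction. Yours avoids the induction and never uses finiteness of the crossings, so it works for any continuous nondecreasing $B$; the $C^1$ and strict-monotonicity hypotheses are unnecessary. One small point deserves a clause: each $P(\cdot,j)$ is measurable, being a continuous function times the indicator of the closed interval $\{t:B(t)\ge j\}$, so $m$ is measurable as you assert.
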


\begin{proof}
Partition $[T_0,T_1]$ at all times in $(T_0,T_1)$ when $B$ takes an
integer value. There are finitely many such times because $B$ is
continuous and strictly increasing. On each interval $[a,b)$ of this
partition, $\lfloor B(t)\rfloor=k$ is constant. The domain is
$D_k=\{j\in\Z:j\leq k\}$, with $\partial_{\rm ext}D_k=\{k+1\}$.
For every $t\in(a,b)$, apply Lemma~\ref{lem:comparison} on $[a,t]$,
shifting the initial time to $a$. The assumed inequalities
$U_t\leq\Delta_{\Z}U+F(t,U)$ and $W_t\geq\Delta_{\Z}W+F(t,W)$
hold in $D_k$, and $U(s,k+1)\leq W(s,k+1)$ for $a\leq s\leq t$.
At $a=T_0$, the initial hypothesis gives $U(a,j)\leq W(a,j)$
for $j\in D_k$. At a later partition point with $B(a)=k$, taking
$s\uparrow a$ in $U(s,k)\leq W(s,k)$ gives the order at the new site
$k$, while continuity preserves the order at $j<k$.
Thus $U\leq W$ on $[a,b)\times D_k$. Continuity extends this order
to $\{b\}\times D_k$.

If $B(b)=k+1$, the new interior vertex is $k+1$. For $t<b$ close to
$b$, the assumed exterior inequality gives $U(t,k+1)\leq W(t,k+1)$.
Taking $t\uparrow b$ gives $U(b,k+1)\leq W(b,k+1)$, which supplies
the order on the enlarged domain at time $b$. Induction over the
crossings proves the conclusion, including a crossing at $T_1$.
If $T_1$ is not a crossing time, continuity gives the order there.
\end{proof}

\begin{lemma}\label{lem:critical-pair}
The function $I$ in \eqref{eq:I} is the Legendre transform of $H_0$.
For each $e\in\mathbb S^{d-1}$, the quantities in \eqref{eq:pstar}
are uniquely defined and depend continuously on $e$. They satisfy
\begin{equation}\label{eq:duality}
    \nabla H_0(p_e)=v_e,
    \qquad H(p_e)=p_e\cdot v_e=\alpha_e w_*(e).
\end{equation}
The set $\{p_e:e\in\mathbb S^{d-1}\}$ is compact, and
$\inf_e\alpha_e>0$. In particular, $\inf_e|p_e|>0$.

For $n\in\mathbb S^{d-1}$ define
\[
    c(n):=\min_{\lambda>0}\frac{H(\lambda n)}{\lambda}.
\]
The minimizer $\lambda(n)$ is unique. With
$\mathcal W_a:=\{v\in\R^d:I(v)\leq a\}$, one has
\begin{equation}\label{eq:wulff-duality}
    \mathcal W_a=\bigcap_{n\in\mathbb S^{d-1}}
          \{v:v\cdot n\leq c(n)\},
    \qquad
    w_*(e)=\min_{n\cdot e>0}\frac{c(n)}{n\cdot e}.
\end{equation}
The minimizing normal in the last expression is unique and equals
$n_e=p_e/|p_e|$, with $\lambda(n_e)=|p_e|$.
In particular, $w_*(e_1)=c_*$, $p_{e_1}=\lambda_*e_1$,
and \eqref{eq:critical-system} holds.
\end{lemma}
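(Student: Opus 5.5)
The plan is to work coordinatewise, since $H_0(p)=\sum_k h(p_k)$ with $h(q)=2(\cosh q-1)$ is separable, strictly convex and smooth. This gives $h^*(v)=\sup_q(qv-2\cosh q+2)$. The maximizer solves $v=2\sinh q$, so $q=\arsinh(v/2)$ and $\cosh q=\sqrt{1+v^2/4}$. Substituting yields $h^*(v)=v\arsinh(v/2)-\sqrt{4+v^2}+2$, and summing over $k$ gives \eqref{eq:I}.

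Next I would collect the properties of $I$. It is smooth and strictly convex, with $\nabla I(v)_k=\arsinh(v_k/2)$; moreover $I(0)=0$, $\nabla I(0)=0$, and $I$ is superlinear. For fixed $e$, the map $w\mapsto I(we)$ is strictly convex, vanishes at $0$, and has derivative $\nabla I(we)\cdot e=\sum_k e_k\arsinh(we_k/2)>0$ for $w>0$. It also tends to $\infty$. Hence $I(we)=a$ has a unique root $w_*(e)>0$, and the implicit function theorem (the derivative is nonzero) gives continuity in $e$. Then $p_e=\nabla I(v_e)$ is continuous and $\nabla H_0(p_e)=v_e$ by Legendre inversion. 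Fenchel equality gives $H_0(p_e)+I(v_e)=p_e\cdot v_e$, so $H(p_e)=p_e\cdot v_e=w_*(e)\alpha_e$. From $\alpha_e=\nabla I(w_*e)\cdot e>0$, continuity on the compact sphere gives $\inf\alpha_e>0$ and compactness of $\mathcal P$. Finally, $|p_e|\geq\alpha_e$.

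For the Wulff duality, I would write the support function of the convex compact set $\mathcal W_a$. For $\lambda>0$, $\sup_{I(v)\le a}\lambda n\cdot v\le H_0(\lambda n)+a=H(\lambda n)$, which shows $\mathcal W_a\subset\{v\cdot n\le c(n)\}$. Uniqueness of $\lambda(n)$ follows because $\lambda\mapsto H(\lambda n)/\lambda$ tends to $\infty$ at $0^+$ (since $a>0$) and at $\infty$. Its critical points satisfy $g(\lambda):=\lambda\nabla H(\lambda n)\cdot n-H(\lambda n)=0$, and $g'=\lambda\, n^\top\nabla^2H n>0$, so there is exactly one. For the reverse inclusion, take $v$ with $I(v)>a$. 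Set $p=\nabla I(sv)$ with $s<1$ chosen so that $I(sv)=a$; this is possible because $I(0)=0<a$. Let $n=p/|p|$ and $\lambda=|p|$. Then $H(p)=p\cdot sv<p\cdot v$, so $c(n)\le H(\lambda n)/\lambda<n\cdot v$. This gives the equality of sets.

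For the formula for $w_*(e)$, note that $w e\in\mathcal W_a$ if and only if $w\,n\cdot e\le c(n)$ for all $n$. Hence $w_*(e)=\inf_{n\cdot e>0}c(n)/(n\cdot e)$, since $n$ with $n\cdot e\le0$ impose no constraint when $c(n)>0$. To show the value is attained and to identify the minimizer, I would use the supporting hyperplane at the boundary point $v_e$. Because $\partial\mathcal W_a$ is smooth with outer normal $\nabla I(v_e)=p_e$, the only $n$ with $n\cdot v_e=c(n)$ is $n_e=p_e/|p_e|$.

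The uniqueness of the minimizing normal is the main obstacle. Any minimizer $n$ satisfies $c(n)=n\cdot v_e$, so $\{v\cdot n\le c(n)\}$ supports $\mathcal W_a$ at $v_e$. Smoothness of $I$ together with $\nabla I(v_e)\ne0$ then forces $n=n_e$. At this normal, $H(\lambda n_e)/\lambda\ge n_e\cdot v_e$ for all $\lambda$ by the Fenchel inequality, with equality when $\lambda n_e=p_e$. So $\lambda(n_e)=|p_e|$ by uniqueness of the minimizer, and $c(n_e)=n_e\cdot v_e$.

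For $e=e_1$, $v_e=(w,0,\dots,0)$ and $p_e=(\arsinh(w/2),0,\dots)$. Set $\lambda_*=p_{e,1}$. Then $w=2\sinh\lambda_*$, and $H(p_e)=p_e\cdot v_e$ reads $2(\cosh\lambda_*-1)+a=\lambda_*w$. The one-dimensional minimization over $n=e_1$ gives $w_*(e_1)=c(e_1)=c_*$, which yields \eqref{eq:critical-system}.
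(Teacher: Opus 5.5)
Your proposal is correct. It follows the paper's overall structure: the coordinatewise Legendre transform, Fenchel equality at $v_e$, the half-space inclusion via $p=\lambda n$, and strict monotonicity of $\lambda n\cdot\nabla H_0(\lambda n)-H_0(\lambda n)$ for uniqueness of $\lambda(n)$. Several sub-steps are argued differently, and three small points need filling in.

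\textbf{Continuity of $w_*$.} You use the implicit function theorem. The paper instead uses uniform bounds $r_0<w_*(e)<R_0$ and a subsequence argument. Both work.

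\textbf{Reverse inclusion.} You pass to the boundary point $sv$ and apply Fenchel equality there. This needs the unstated fact $p\cdot v>0$, which holds because $\nabla I(sv)\cdot sv=\sum_k sv_k\arsinh(sv_k/2)>0$. The paper avoids the detour by taking $p=\nabla I(v)$ directly. Then $p\cdot v-H_0(p)=I(v)>a$ already gives $n\cdot v>H(p)/|p|\geq c(n)$.

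\textbf{Uniqueness of the minimizing normal.} This is the main difference.
\begin{itemize}
\item You argue geometrically: the smooth sublevel set $\{I\leq a\}$ has a unique supporting hyperplane at $v_e$ because $\nabla I(v_e)\neq0$. To be complete, this needs the standard first-order (Lagrange/KKT) argument.
\item The paper notes that if $c(n)=n\cdot v_e$, then $p:=\lambda(n)n$ satisfies $p\cdot v_e-H_0(p)=a=I(v_e)$. So $p$ maximizes the Legendre supremum at $v_e$, and strict convexity of $H_0$ forces $p=p_e$.
\end{itemize}
The paper's route stays entirely within Fenchel duality and needs no separate boundary-regularity step.

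\textbf{The final step for $e_1$.} You should not \emph{define} $\lambda_*:=p_{e_1,1}$, because $\lambda_*$ is already defined as the minimizer in \eqref{eq:c-lambda-def}. Instead, observe that $n_{e_1}=e_1$ and $\lambda(n_{e_1})=|p_{e_1}|$. This identifies the two quantities. After that, your derivation of $w_*(e_1)=c_*$ and \eqref{eq:critical-system} goes through; the paper runs this last step in the opposite direction, from the one-dimensional minimization to the radial point.
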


\begin{proof}
The supremum defining the Legendre transform separates into $d$
one-dimensional suprema. The maximizer in the $k$th coordinate solves
$2\sinh p_k=v_k$, giving \eqref{eq:I}. Direct differentiation yields
\[
    \nabla I(v)_k=\arsinh(v_k/2),\qquad
    D^2I(v)=\diag\bigl((4+v_1^2)^{-1/2},\ldots,
                       (4+v_d^2)^{-1/2}\bigr).
\]
The positive definite Hessian gives strict convexity. Each summand in
\eqref{eq:I} is nonnegative and tends to infinity as the absolute value
of its argument tends to infinity. Since
$\max_k|v_k|\geq|v|/\sqrt d$, this proves $I(v)\to\infty$ as
$|v|\to\infty$. Also $I(0)=0$.
Writing $(e)_k$ for the $k$th component of $e$, we have
\[
    \frac{\dd}{\dd r}I(re)
       =\sum_{k=1}^d(e)_k\arsinh(r(e)_k/2)>0
       \qquad(r>0).
\]
Thus $I(re)=a$ has exactly one solution $r=w_*(e)>0$.
Choose $r_0>0$ so small that $\max_{|v|=r_0}I(v)<a$, and choose
$R_0>0$ so large that $\min_{|v|=R_0}I(v)>a$.
Continuity and the growth of $I$ at infinity justify these choices.
Then $r_0<w_*(e)<R_0$ for every $e\in\mathbb S^{d-1}$.

To prove continuity, let $e_m\to e$. Every subsequence of $w_*(e_m)$
has a further subsequence converging to some $r\in[r_0,R_0]$.
Continuity of $I$ gives $I(re)=a$, so uniqueness implies $r=w_*(e)$.
Hence $w_*(e_m)\to w_*(e)$. The formulas in \eqref{eq:pstar} now
give continuity of $v_e,p_e,\alpha_e$ and $A_e$.
The maximizing vector in $I(v_e)=\sup_p(p\cdot v_e-H_0(p))$ is
$p=p_e$. Therefore $\nabla H_0(p_e)=v_e$ and
$I(v_e)=p_e\cdot v_e-H_0(p_e)=a$, proving \eqref{eq:duality}.
Moreover,
$\alpha_e=\left.\frac{\dd}{\dd r}I(re)\right|_{r=w_*(e)}>0$.
Continuity on the compact sphere gives compactness of $\{p_e\}$ and
$\alpha_0:=\min_e\alpha_e>0$. Since $|p_e|\geq p_e\cdot e=\alpha_e$,
we also have $\inf_e|p_e|\geq\alpha_0$.

For a fixed $n\in\mathbb S^{d-1}$, put
$q_n(\lambda)=H(\lambda n)/\lambda$.
The inequality $2(\cosh s-1)\geq s^2$ gives
$q_n(\lambda)\geq a/\lambda+\lambda$, so $q_n$ tends to infinity
at both endpoints of $(0,\infty)$. Its derivative is
$q_n'(\lambda)=F_n(\lambda)/\lambda^2$, where
\[
 F_n(\lambda):=\lambda n\cdot\nabla H_0(\lambda n)-H_0(\lambda n)-a.
\]
Here $F_n(0)=-a$, and
\[
 F_n'(\lambda)
   =\lambda n^{\mathsf T}D^2H_0(\lambda n)n
   =2\lambda\sum_{k=1}^d n_k^2\cosh(\lambda n_k)
   \geq2\lambda>0.
\]
In particular, $F_n(\lambda)\geq\lambda^2-a$. It has exactly one
zero, which is the unique minimizer $\lambda(n)$ of $q_n$.
Define the support point
$\widehat v_n:=\nabla H_0(\lambda(n)n)$.
The identity $F_n(\lambda(n))=0$ gives
$I(\widehat v_n)=a$ and $\widehat v_n\cdot n=c(n)$.
This point is indexed by the normal $n$; it need not equal the radial
point $v_n=w_*(n)n$ defined in \eqref{eq:pstar}.

If $I(v)\leq a$, the defining inequality for the Legendre transform gives
$p\cdot v\leq H_0(p)+a$ for every $p$. Taking $p=\lambda n$ and
minimizing over $\lambda>0$ gives $v\cdot n\leq c(n)$ for every
$n\in\mathbb S^{d-1}$. Conversely, if $I(v)>a$, put $p=\nabla I(v)$.
Then $p\neq0$, and $p\cdot v-H_0(p)=I(v)>a$ gives
\[
 v\cdot\frac p{|p|}
    >\frac{H(p)}{|p|}\geq c\left(\frac p{|p|}\right).
\]
This proves the first identity in \eqref{eq:wulff-duality}.
Since $\widehat v_n\in\mathcal W_a$ and $\widehat v_n\cdot n=c(n)$,
we also have $\max_{v\in\mathcal W_a}v\cdot n=c(n)$.

For $v_e=w_*(e)e$, the half-space inequalities give
$w_*(e)\leq c(n)/(n\cdot e)$ whenever $n\cdot e>0$.
Set $n_e=p_e/|p_e|$. Equation~\eqref{eq:duality} gives
$F_{n_e}(|p_e|)=0$, so $\lambda(n_e)=|p_e|$.
Also $n_e\cdot e=\alpha_e/|p_e|>0$, and
\[
 \frac{c(n_e)}{n_e\cdot e}
     =\frac{H(p_e)}{\alpha_e}=w_*(e).
\]
If equality holds for another unit vector $n$ with $n\cdot e>0$,
put $p=\lambda(n)n$. Then $p\cdot v_e=H(p)$, so
$p\cdot v_e-H_0(p)=a=I(v_e)$. Uniqueness of the maximizing vector
in the Legendre transform gives $p=p_e$, and hence $n=n_e$.
This proves the second identity in \eqref{eq:wulff-duality} and
uniqueness of its minimizer.

For $n=e_1$, $q_{e_1}(\lambda)$ is the expression minimized in
\eqref{eq:c-lambda-def}. Thus $\lambda(e_1)=\lambda_*$ and $c(e_1)=c_*$.
The equation $F_{e_1}(\lambda_*)=0$ gives $c_*=2\sinh\lambda_*$;
together with the definition of $c_*$, this is
\eqref{eq:critical-system}. Hence $\widehat v_{e_1}=c_*e_1$ and
$I(c_*e_1)=a$. Uniqueness of the radial solution gives
$w_*(e_1)=c_*$ and $p_{e_1}=\lambda_*e_1$.
\end{proof}

\section{Tilted kernel estimates}\label{sec:kernels}

Lemmas~\ref{lem:conjugation} and~\ref{lem:fourier-expansion} hold in every
dimension $d\geq1$. The remaining results use $d\geq2$, as in
Theorem~\ref{thm:linear-main}. We use $H_0$ from \eqref{eq:H} and the
heat kernel $\pheat_d$ defined in Section~\ref{sec:preliminaries}. For
$p\in\R^d$, put
\[
    v(p):=\nabla H_0(p)=(2\sinh p_k)_{k=1}^d,\qquad
    A(p):=\diag(\cosh p_1,\ldots,\cosh p_d),
\]
and define
\begin{equation}\label{eq:general-generator}
    (\mathcal L_p\phi)(x)
      :=\sum_{k=1}^d\left[
          \ee^{p_k}\bigl(\phi(x-e_k)-\phi(x)\bigr)
        + \ee^{-p_k}\bigl(\phi(x+e_k)-\phi(x)\bigr)\right].
\end{equation}
Thus $\mathcal L_{\lambda_*e_1}=\mathcal L_*$. Its adjoint with respect
to counting measure is $\mathcal A_p$, given by
\begin{equation}\label{eq:adjoint-generator}
\begin{split}
 (\mathcal A_p\phi)(x)
   &:=\sum_{k=1}^d\left[\ee^{p_k}\bigl(\phi(x+e_k)-\phi(x)\bigr)
              +\ee^{-p_k}\bigl(\phi(x-e_k)-\phi(x)\bigr)\right],\\
 \sum_x\phi(x)\mathcal L_p\psi(x)
   &=\sum_x\psi(x)\mathcal A_p\phi(x).
\end{split}
\end{equation}
The summation identity holds for $\phi\in\ell^\infty(\Z^d)$ and
$\psi\in\ell^1(\Z^d)$: each sum is absolutely convergent, and changing
$x$ to $x\pm e_k$ proves the identity. Since lattice shifts are
isometries, both operators are bounded on $\ell^1$ and $\ell^\infty$,
with norms at most $4\sum_k\cosh p_k$ for fixed $p$. Their exponentials
are therefore defined by operator-norm convergent power series.
For a walk with jump rates $\ee^{p_k}$ in direction $e_k$ and
$\ee^{-p_k}$ in direction $-e_k$, $\mathcal L_p$ appears in the forward
equation for its probability masses, and $\mathcal A_p$ is its generator
on test functions. The construction in Lemma~\ref{lem:conjugation} and
the expectation formulas \eqref{eq:two-semigroup-conventions} specify
these two uses.

\begin{lemma}\label{lem:conjugation}
For $p\in\R^d$ and $t\geq0$, define
$K_p(t,x):=(\ee^{t\mathcal L_p}\1_{\{0\}})(x)$. Then
\begin{equation}\label{eq:kernel-tilt}
    K_p(t,x)=\ee^{p\cdot x-tH_0(p)}\pheat_d(t,x).
\end{equation}
One has $K_p(0,x)=\1_{\{0\}}(x)$, $K_p(t,x)>0$ for $t>0$, and
$\sum_xK_p(t,x)=1$ for $t\geq0$. For every bounded $h$,
\begin{equation}\label{eq:tilted-convolution}
    (\ee^{t\mathcal L_p}h)(x)
       =\sum_{z\in\Z^d}K_p(t,x-z)h(z).
\end{equation}
For $t\geq1$,
\begin{equation}\label{eq:kernel-sup}
    \sup_{x\in\Z^d}K_p(t,x)\leq C_{{\rm ker},d} t^{-d/2},
\end{equation}
where $C_{{\rm ker},d}$ can be chosen independently of $p\in\R^d$.
For the solution $V$ of \eqref{eq:linear-original},
\begin{equation}\label{eq:conjugation}
    V(t,x)=\ee^{-p\cdot x+tH(p)}
          \bigl(\ee^{t\mathcal L_p}g_p\bigr)(x),
    \qquad g_p(z):=\ee^{p\cdot z}u_0(z).
\end{equation}
\end{lemma}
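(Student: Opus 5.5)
The core of the argument is a conjugation identity at the operator level, from which every other claim follows. Let $\mathcal M_p$ denote multiplication by $\ee^{p\cdot x}$. For any bounded $\phi$, shifting the weight inside the Laplacian gives
\[
 \mathcal M_p\Delta_{\Z^d}\mathcal M_p^{-1}\phi(x)
   =\sum_k\bigl[\ee^{p_k}\phi(x-e_k)+\ee^{-p_k}\phi(x+e_k)-2\phi(x)\bigr].
\]
Since $\sum_k(\ee^{p_k}+\ee^{-p_k}-2)=H_0(p)$, this equals $(\mathcal L_p+H_0(p))\phi(x)$. Hence
\[
 \mathcal L_p=\mathcal M_p\Delta_{\Z^d}\mathcal M_p^{-1}-H_0(p).
\]
This identity makes sense on finitely supported functions. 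For general bounded functions I will work on the invariant class of $\phi$ with $\ee^{-p\cdot x}\phi$ of controlled growth, or simply test on $\1_{\{0\}}$.

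\textbf{Kernel formula and its immediate consequences.} I will verify \eqref{eq:kernel-tilt} directly rather than through operator exponentials. Set $\widetilde K(t,x):=\ee^{p\cdot x-tH_0(p)}\pheat_d(t,x)$. Differentiating, using $\partial_t\pheat_d=\Delta_{\Z^d}\pheat_d$ and the conjugation identity pointwise, shows $\partial_t\widetilde K=\mathcal L_p\widetilde K$ with $\widetilde K(0)=\1_{\{0\}}$.

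To conclude $\widetilde K=K_p$, I need $\widetilde K(t)\in\ell^1$, so that uniqueness in the Banach space $\ell^1$ (Section~\ref{sec:preliminaries}) applies. This follows from the path expansion: $\pheat_d(t,x)\le \ee^{-2dt}t^{|x|_1}(2d)^{|x|_1}/|x|_1!$ up to a series, which dominates any exponential weight $\ee^{p\cdot x}$. Alternatively, compute $\sum_x\ee^{p\cdot x}\pheat_d(t,x)=\ee^{tH_0(p)}$ by the product formula for the characteristic function of the continuous-time walk.

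That computation also gives $\sum_xK_p=1$. Positivity is inherited from \eqref{eq:heat-semigroup}. The convolution formula \eqref{eq:tilted-convolution} follows because $\mathcal L_p$ commutes with shifts and $\ee^{t\mathcal L_p}$ is bounded on $\ell^\infty$: write $h$ as a sum of shifted deltas, with the sum converging absolutely because $K_p(t)\in\ell^1$. Identity \eqref{eq:conjugation} follows by checking that the right side solves \eqref{eq:linear-original}, via the conjugation identity and uniqueness in $\ell^\infty$. Here $g_p$ is finitely supported since $u_0$ is, so there is no growth issue.

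\textbf{Main obstacle: the uniform bound \eqref{eq:kernel-sup}.} This is the step I expect to be hardest, because the tilt $\ee^{p\cdot x-tH_0(p)}$ is unbounded in $p$ and a pointwise Gaussian bound on $\pheat_d$ does not suffice. My plan is to use the product structure. Both $\pheat_d$ and the tilt factorize over coordinates, so
\[
 K_p(t,x)=\prod_{k=1}^dK^{(1)}_{p_k}(t,x_k),
\]
and it suffices to prove the one-dimensional bound $\sup_{q\in\R,\,j\in\Z}K^{(1)}_q(t,j)\le Ct^{-1/2}$ for $t\ge1$.

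In one dimension, $K^{(1)}_q(t,\cdot)$ is the law at time $t$ of $N_+-N_-$, where $N_\pm$ are independent Poisson variables with means $t\ee^{\pm q}$. Conditioning on $N_-$ gives
\[
 \sup_j\mathbb P(N_+-N_-=j)\le\sup_n\mathbb P(N_+=n).
\]
The same holds with the roles exchanged, and the larger mean is at least $t$, since $\max(\ee^q,\ee^{-q})\ge1$. It therefore remains to show that a Poisson variable of mean $\mu\ge t\ge1$ has maximal point mass at most $C\mu^{-1/2}\le Ct^{-1/2}$. This is a standard consequence of Stirling's formula applied at the mode. The resulting constant depends only on $d$, which gives the claimed uniformity in $p$.
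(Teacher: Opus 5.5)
Your argument is correct, and it obtains the two substantive claims differently from the paper.

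\textbf{Kernel formula.} For \eqref{eq:kernel-tilt}, the paper builds the tilted walk $X^p$ from Poisson clocks of rates $\ee^{\pm p_k}$. It computes the likelihood ratio of count vectors against rate-one clocks, which equals $\ee^{p\cdot x-tH_0(p)}$. It then identifies the law of $X^p(t)$ with $K_p(t,\cdot)$ through the forward equation and uniqueness in $\ell^1$. Positivity and unit mass come with this probabilistic interpretation, which the paper reuses in \eqref{eq:two-semigroup-conventions} and Section~\ref{sec:sharp-upper}. You instead check analytically that the explicit function solves $\partial_t\widetilde K=\mathcal L_p\widetilde K$, using the pointwise identity $\mathcal L_p=\mathcal M_p\Delta_{\Z^d}\mathcal M_p^{-1}-H_0(p)$, and you get unit mass from the exponential moment. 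That works. Note only that uniqueness needs $\widetilde K\in C^1([0,S];\ell^1)$, not merely $\widetilde K(t)\in\ell^1$. Your path bound $\pheat_d(s,x)\leq(2dS)^{|x|_1}/|x|_1!$ for $0\leq s\leq S$ gives the uniform domination that yields this.

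\textbf{Uniform sup bound.} For \eqref{eq:kernel-sup}, the paper uses \eqref{eq:fourier-kernel} and the identity $\Re\Lambda_p(\xi)=-2\sum_k\cosh(p_k)(1-\cos\xi_k)\leq-2\sum_k(1-\cos\xi_k)$. This gives the bound in one line with constant $(\sqrt\pi/4)^d$, and the same estimate drives Lemma~\ref{lem:fourier-expansion}. Your route uses coordinate factorization, conditioning on one Poisson count, and the point-mass bound for the Poisson variable of larger mean $t\max\{\ee^{q},\ee^{-q}\}\geq t$. It is equally uniform in $p$ and fully elementary. It yields exactly the one-dimensional bound the paper later uses in Lemma~\ref{lem:projected-crossing}, but it does not feed into the local expansion.

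\textbf{A step to repair.} For \eqref{eq:tilted-convolution} you appeal to boundedness of $\ee^{t\mathcal L_p}$ on $\ell^\infty$, commutation with shifts, and the expansion $h=\sum_zh(z)\1_{\{z\}}$. For general bounded $h$ this expansion converges only pointwise, not in $\ell^\infty$ norm. So boundedness does not let you move the operator inside the sum. Indeed, a bounded shift-commuting operator on $\ell^\infty(\Z^d)$ need not be a convolution: $h\mapsto\mathrm{LIM}(h)\,\mathbf 1$, with $\mathrm{LIM}$ a shift-invariant mean, kills every finitely supported function. There are two easy fixes:
\begin{itemize}
\item Argue as the paper does. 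Since $K_p\in C^1([0,\infty);\ell^1)$, the convolution $K_p(t)*h$ is a $C^1$ solution in $\ell^\infty$ of $U_t=\mathcal L_pU$, $U(0)=h$, so it equals $\ee^{t\mathcal L_p}h$ by uniqueness.
\item Expand the exponential termwise. Each $\mathcal L_p^m$ is a finite combination of shifts, and $\sum_m t^m\|\mathcal L_p\|^m\|h\|_\infty/m!<\infty$ justifies Fubini.
\end{itemize}
Similarly, for \eqref{eq:conjugation} the issue is not the support of $g_p$ but the unbounded factor $\ee^{-p\cdot x}$. Once \eqref{eq:kernel-tilt} is available, the direct computation
\[
 \ee^{-p\cdot x+tH(p)}\sum_zK_p(t,x-z)\ee^{p\cdot z}u_0(z)=\ee^{at}\sum_z\pheat_d(t,x-z)u_0(z)=V(t,x),
\]
as in the paper, is shorter than verifying an ODE and invoking uniqueness.
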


\begin{proof}
Let $N_1^+,N_1^-,\ldots,N_d^+,N_d^-$ be mutually independent
Poisson processes of rate $1$, with $N_k^\pm(0)=0$ for $1\leq k\leq d$.
Define
\[
    X(t)=\sum_{k=1}^d\bigl(N_k^+(t)-N_k^-(t)\bigr)e_k.
\]
A jump of $N_k^+$ moves $X$ by $e_k$, and a jump of $N_k^-$ moves it
by $-e_k$. Thus $X(0)=0$, and $X$ jumps from $x$ to each neighbor
$x\pm e_k$ at rate $1$, so its generator is $\Delta_{\Z^d}$.
The Poisson-clock construction of this walk, the associated jump
chain/holding times, and the forward/backward semigroup equations are
covered in \cite[Sections~2.4, 2.6 and~2.8]{Norris}.
For each $k$ and each sign,
$\mathbb E\ee^{\theta N_k^\pm(t)}=\exp(t(\ee^\theta-1))$ for
$\theta\in\R$. Independence therefore gives
$\mathbb E\ee^{p\cdot X(t)}=\ee^{tH_0(p)}$.
For $p\in\R^d$, let $N_k^{p,+},N_k^{p,-}$, $1\leq k\leq d$, be
mutually independent Poisson processes starting from zero, with rates
$\ee^{p_k},\ee^{-p_k}$, respectively, and define
$X^p(t):=\sum_{k=1}^d(N_k^{p,+}(t)-N_k^{p,-}(t))e_k$.
For $t>0$, compare the probabilities of the count vector
$(n_1^+,n_1^-,\ldots,n_d^+,n_d^-)\in\Z_{\geq0}^{2d}$ under the tilted
and rate-one laws. With $x_n:=\sum_k(n_k^+-n_k^-)e_k$, their ratio is
\[
 \prod_{k=1}^d
 \frac{\ee^{-t\ee^{p_k}}(t\ee^{p_k})^{n_k^+}/n_k^+!}
      {\ee^{-t}t^{n_k^+}/n_k^+!}
 \frac{\ee^{-t\ee^{-p_k}}(t\ee^{-p_k})^{n_k^-}/n_k^-!}
      {\ee^{-t}t^{n_k^-}/n_k^-!}
 =\ee^{p\cdot x_n-tH_0(p)}.
\]
Summing over the count vectors with $x_n=x$ gives
$q_p(t,x):=\mathbb P\{X^p(t)=x\}
=\ee^{p\cdot x-tH_0(p)}\pheat_d(t,x)$.
The jump rates give the forward equation
$\partial_tq_p=\mathcal L_pq_p$, with $q_p(0)=\1_{\{0\}}$.
Since $\mathcal L_p$ is bounded on $\ell^1$, uniqueness in
\eqref{eq:variation-constants} identifies $q_p$ with $K_p$ and proves
\eqref{eq:kernel-tilt}. The law $q_p(t,\cdot)$ has total mass one.
For $t>0$, the count vector $n_k^+=(x_k)_+$, $n_k^-=(-x_k)_+$ has
positive probability, so $K_p(t,x)>0$. Translation invariance of
$\mathcal L_p$ gives \eqref{eq:tilted-convolution}: its right-hand side
solves $U_t=\mathcal L_pU$, $U(0)=h$, in $\ell^\infty$.
The derivatives and sums can be interchanged because
$K_p\in C^1([0,\infty);\ell^1)$ and $h$ is bounded; uniqueness in
\eqref{eq:variation-constants} then identifies $U=\ee^{t\mathcal L_p}h$.

For $\xi\in[-\pi,\pi]^d$, independence of the Poisson processes gives
$\mathbb E\ee^{-\ii\xi\cdot X^p(t)}=\ee^{t\Lambda_p(\xi)}$, where
\begin{equation}\label{eq:symbol}
    \Lambda_p(\xi)
      =\sum_{k=1}^d\left[
          \ee^{p_k}(\ee^{-\ii\xi_k}-1)
          +\ee^{-p_k}(\ee^{\ii\xi_k}-1)\right].
\end{equation}
Fourier inversion for $K_p(t,\cdot)\in\ell^1$ gives
\begin{equation}\label{eq:fourier-kernel}
    K_p(t,x)=\frac1{(2\pi)^d}\int_{[-\pi,\pi]^d}
               \ee^{t\Lambda_p(\xi)}\ee^{\ii x\cdot\xi}\dd\xi.
\end{equation}
Since $\Re\Lambda_p(\xi)=-2\sum_k\cosh(p_k)(1-\cos\xi_k)$,
$1-\cos\xi_k\geq2\xi_k^2/\pi^2$ and $\cosh p_k\geq1$ imply
\[
 K_p(t,x)\leq\frac1{(2\pi)^d}\int_{\R^d}
                \ee^{-4t|\xi|^2/\pi^2}\dd\xi
       =\left(\frac{\sqrt\pi}{4}\right)^d t^{-d/2}.
\]
Thus \eqref{eq:kernel-sup} holds with
$C_{{\rm ker},d}=(\sqrt\pi/4)^d$, independently of $p$.
Finally, insert \eqref{eq:kernel-tilt} into
$V(t,x)=\ee^{at}\sum_z\pheat_d(t,x-z)u_0(z)$ and use
\eqref{eq:tilted-convolution}. Since $H(p)=H_0(p)+a$, this gives
\eqref{eq:conjugation}. The initial datum $g_p$ has finite support,
so the calculation is valid even though multiplication by
$\ee^{p\cdot x}$ is unbounded on $\ell^\infty$ when $p\neq0$.
\end{proof}

Lawler and Limic \cite[Chapter~2]{LawlerLimic} prove local central limit
estimates for lattice random walks.  Lemma~\ref{lem:fourier-expansion} treats the continuous-time walk
$X^p$, uniformly for $p$ in a compact set, and includes the cubic
Fourier term.
For a positive diagonal matrix $A$, let
\[
    \gamma_A(z):=\frac1{(4\pi)^{d/2}\sqrt{\det A}}
                    \exp\left(-\frac14z^{\mathsf T}A^{-1}z\right).
\]
We write $\gamma_p=\gamma_{A(p)}$. Derivatives of $\gamma_p$
are taken with respect to $z$.
For a finitely supported real function $g$ on $\Z^d$, define
\[
    M:=\sum_x g(x),\qquad m_i:=\sum_x x_i g(x),\qquad
    Q_{ij}:=\sum_x x_ix_jg(x).
\]
Write $m=(m_1,\ldots,m_d)$ and $Q=(Q_{ij})_{1\leq i,j\leq d}$.

\begin{lemma}\label{lem:fourier-expansion}
Let $P\subset\R^d$ be compact, and let $g:\Z^d\to\R$ have finite
support.
Put $z=(x-tv(p))/\sqrt t$. As $t\to\infty$, uniformly in
$p\in P$ and $x\in\Z^d$,
\begin{equation}\label{eq:mass-expansion}
\begin{split}
    (\ee^{t\mathcal L_p}g)(x)
     &=t^{-d/2}M\gamma_p(z)\\
     &\quad-t^{-(d+1)/2}\left[
           \sum_i m_i\partial_i\gamma_p(z)
           +\frac M6\sum_k v_k(p)\partial_k^3\gamma_p(z)\right]
           +O\bigl(t^{-(d+2)/2}\bigr).
\end{split}
\end{equation}
If $M=0$, the sharper expansion is
\begin{equation}\label{eq:zero-mass-expansion}
\begin{split}
    (\ee^{t\mathcal L_p}g)(x)
     &=-t^{-(d+1)/2}\sum_i m_i\partial_i\gamma_p(z)\\
     &\quad+t^{-(d+2)/2}\left[
          \frac12\sum_{i,j}Q_{ij}\partial_{ij}\gamma_p(z)
          +\frac16\sum_{i,k}m_i v_k(p)
                              \partial_i\partial_k^3\gamma_p(z)\right]
          +O\bigl(t^{-(d+3)/2}\bigr).
\end{split}
\end{equation}
For every finite set $S\subset\Z^d$ and $B>0$, the error constants may
be chosen to depend only on $d,P,S,B$ whenever $\supp g\subset S$ and
$\|g\|_{\ell^1}\leq B$. In particular, $g$ may vary with $p$ subject to
these two bounds.
\end{lemma}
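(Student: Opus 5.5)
The plan is a uniform Edgeworth expansion obtained by Fourier inversion. Start from \eqref{eq:tilted-convolution} and \eqref{eq:fourier-kernel}:
\[
 (\ee^{t\mathcal L_p}g)(x)=\frac1{(2\pi)^d}\int_{[-\pi,\pi]^d}
   \ee^{t\Lambda_p(\xi)}\widehat g(\xi)\ee^{\ii x\cdot\xi}\,\dd\xi,
 \qquad \widehat g(\xi):=\sum_z g(z)\ee^{-\ii z\cdot\xi}.
\]
Since $g$ has support in the finite set $S$, the transform $\widehat g$ is a trigonometric polynomial. Its Taylor coefficients at $\xi=0$ are $M$, $-\ii m$ and $-\tfrac12 Q$, with a remainder bounded by constants depending only on $S$ and $B$.

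Next expand the symbol. From \eqref{eq:symbol},
\[
 \Lambda_p(\xi)=-\ii v(p)\cdot\xi-\xi^{\mathsf T}A(p)\xi
   +\frac{\ii}{6}\sum_kv_k(p)\xi_k^3+\frac1{12}\sum_k\cosh(p_k)\xi_k^4+O(|\xi|^5).
\]
The remainder is uniform on the compact set $P$, because all coefficients are $\cosh p_k$ or $2\sinh p_k$. The linear term combines with $\ee^{\ii x\cdot\xi}$ to give the phase $\ee^{\ii\sqrt t\,z\cdot\eta}$ after the substitution $\xi=\eta/\sqrt t$.

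Split the integral into $|\xi|\le t^{-1/2+\epsilon}$ and its complement. On the complement, $\Re\Lambda_p\le-4|\xi|^2/\pi^2$ (as in the proof of Lemma~\ref{lem:conjugation}), so the contribution is $O(\ee^{-ct^{2\epsilon}})$ uniformly in $p$ and $x$. On the inner region, rescale and write
\[
 \ee^{t(\Lambda_p+\ii v\cdot\xi)}
   =\ee^{-\eta^{\mathsf T}A\eta}\Bigl(1+\frac{\ii}{6\sqrt t}\sum_kv_k\eta_k^3
   +\frac1t\,R_2(\eta)+\cdots\Bigr).
\]
Expand the exponential of the cubic and quartic corrections to the required order. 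The remainder is bounded by $C t^{-k/2}(1+|\eta|)^{N}\ee^{-\eta^{\mathsf T}A\eta}$, which is integrable uniformly in $p$. Multiply this by the Taylor expansion of $\widehat g(\eta/\sqrt t)$ and collect powers of $t^{-1/2}$. Each monomial $\eta^\beta\ee^{-\eta^{\mathsf T}A\eta}$ inverts to $(\ii\partial)^\beta$ acting on $\gamma_p(z)$, using $\frac1{(2\pi)^d}\int\ee^{-\eta^{\mathsf T}A\eta}\ee^{\ii z\cdot\eta}\dd\eta=\gamma_A(z)$ and $\ii\eta_j\leftrightarrow\partial_j$. Extending the truncated Gaussian integrals back to $\R^d$ costs only $O(\ee^{-ct^{2\epsilon}})$. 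Because the errors come from integrating absolute values, they are uniform in $x$.

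Reading off the coefficients gives the two expansions. At order $t^{-(d+1)/2}$, the term $-\ii m\cdot\eta$ gives $-\sum_i m_i\partial_i\gamma_p$, and $\frac{\ii}6 M\sum v_k\eta_k^3$ gives $\frac{\ii}{6}M\sum_k v_k(-\ii)^3\partial_k^3\gamma_p=-\frac M6\sum_k v_k\partial_k^3\gamma_p$. This matches \eqref{eq:mass-expansion}. When $M=0$, the order $t^{-(d+2)/2}$ consists of $-\frac12 Q$ times the second derivatives, which gives $\frac12\sum Q_{ij}\partial_{ij}\gamma_p$, together with the cross term $(-\ii m\cdot\eta)(\frac{\ii}6\sum v_k\eta_k^3)$, which gives $\frac16\sum m_iv_k\partial_i\partial_k^3\gamma_p$. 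The terms multiplied by $M$ (the quartic symbol term and the squared cubic term) vanish. Carrying every expansion one order further yields the error $O(t^{-(d+3)/2})$.

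The main obstacle is the bookkeeping of signs and factors of $\ii$ in the cubic contributions, together with keeping every constant uniform in $p\in P$ and in $g$. The latter requires only $\supp g\subset S$ and $\|g\|_{\ell^1}\le B$, since these control all derivatives of $\widehat g$. I would first verify the cubic coefficient in dimension $d=1$ against the Poisson cumulants, where the third cumulant of $X^p_1(t)$ equals $t\,v_1(p)$.
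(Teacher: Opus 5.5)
Your proposal is correct and follows essentially the same route as the paper. Both arguments use Fourier inversion and the rescaling $\xi=\eta/\sqrt t$, then cut off at $|\eta|\le t^{\epsilon}$; the paper takes $\epsilon=1/12$, and you need $\epsilon<1/4$ so that the quartic remainder $C|\eta|^4/t$ stays bounded. Both then expand the exponential to first order in the cubic symbol term, multiply by the Taylor expansion of $\widehat g$, and convert monomials into derivatives via the Gaussian identity with $\ii\eta_j\leftrightarrow\partial_j$.

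Two small slips do not affect the argument. The phase after rescaling is $\ee^{\ii z\cdot\eta}$, not $\ee^{\ii\sqrt t\,z\cdot\eta}$. The monomial $\eta^\beta$ corresponds to $(-\ii\partial)^\beta\gamma_p$, not $(\ii\partial)^\beta\gamma_p$; the former is the convention your coefficient computations actually use, so your final coefficients are right.
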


\begin{proof}
Fix a finite set $S\subset\Z^d$ and $B>0$, and assume
$\supp g\subset S$ and $\|g\|_{\ell^1}\leq B$. Throughout this proof,
$C$ may depend on $d,P,S,B$, but not on $t,p,x$ or the choice of $g$ within
these bounds. With
$\widehat g(\xi)=\sum_xg(x)\ee^{-\ii x\cdot\xi}$, Taylor's formula gives
\begin{equation}\label{eq:data-expansion}
    \widehat g(\xi)=M-\ii\sum_i m_i\xi_i
                   -\frac12\sum_{i,j}Q_{ij}\xi_i\xi_j+O(|\xi|^3).
\end{equation}
The remainder is bounded by
$\tfrac16|\xi|^3\sum_x|x|^3|g(x)|$, and this sum is bounded in terms
of $S,B$. Taylor expansion of \eqref{eq:symbol} gives, uniformly for
$p\in P$,
\begin{equation}\label{eq:symbol-expansion}
    \Lambda_p(\xi)+\ii v(p)\cdot\xi
      =-\xi^{\mathsf T}A(p)\xi
         +\frac{\ii}{6}\sum_kv_k(p)\xi_k^3+O(|\xi|^4).
\end{equation}
The quartic remainder is uniform because $\ee^{\pm p_k}$ are bounded
on $P$. When $M=0$, the product of the cubic symbol term and the
linear term of $\widehat g$ contributes to the second term in
\eqref{eq:zero-mass-expansion}.

Put $\Omega_t=[-\pi\sqrt t,\pi\sqrt t]^d$ and
$z=(x-tv(p))/\sqrt t$. By \eqref{eq:tilted-convolution} and
\eqref{eq:fourier-kernel}, the substitution $\xi=\eta/\sqrt t$ gives
\begin{equation}\label{eq:rescaled-fourier}
 (\ee^{t\mathcal L_p}g)(x)
 =\frac{t^{-d/2}}{(2\pi)^d}\int_{\Omega_t}
  \ee^{t\Lambda_p(\eta/\sqrt t)+\ii\sqrt t\,v(p)\cdot\eta}
  \widehat g(\eta/\sqrt t)\ee^{\ii z\cdot\eta}\dd\eta.
\end{equation}
Set $c_0=4/\pi^2$. On $\Omega_t$,
$\Re(t\Lambda_p(\eta/\sqrt t))\leq-c_0|\eta|^2$ and
$|\widehat g(\eta/\sqrt t)|\leq B$. Also
$\eta^{\mathsf T}A(p)\eta\geq|\eta|^2$.
For every $m\geq0$ and $N>0$,
\[
 \int_{|\eta|>t^{1/12}}(1+|\eta|)^m\ee^{-c_0|\eta|^2}\dd\eta
 \leq C_{m,N}t^{-N}\qquad(t\geq1).
\]
These bounds control both the integrand in \eqref{eq:rescaled-fourier}
and each polynomial times a Gaussian used below outside
$|\eta|\leq t^{1/12}$. We may therefore work on this ball and extend
the Gaussian integrals to $\R^d$, with errors smaller than any fixed
power of $t^{-1}$.

On $|\eta|\leq t^{1/12}$, write the exponent in
\eqref{eq:rescaled-fourier} as
$-\eta^{\mathsf T}A(p)\eta+\ii b_t+r_t$, where
$b_t=(6\sqrt t)^{-1}\sum_kv_k(p)\eta_k^3$ is real and
$|r_t|\leq Ct^{-1}|\eta|^4\leq Ct^{-2/3}$ by
\eqref{eq:symbol-expansion}. For all sufficiently large $t$,
$|r_t|\leq1$, and
$|\ee^{\ii b_t+r_t}-1-\ii b_t|
\leq\ee|r_t|+b_t^2/2$.
Multiplication by $\ee^{-\eta^{\mathsf T}A(p)\eta}$ yields
\begin{equation}\label{eq:integrable-remainder}
\begin{split}
 &\ee^{t\Lambda_p(\eta/\sqrt t)+\ii\sqrt t\,v(p)\cdot\eta}\\
 &\qquad=\ee^{-\eta^{\mathsf T}A(p)\eta}
       \left(1+\frac{\ii}{6\sqrt t}\sum_kv_k(p)\eta_k^3\right)
          +E_t(p,\eta),\\
 &|E_t(p,\eta)|
       \leq\frac Ct\bigl(|\eta|^4+|\eta|^6\bigr)
                        \ee^{-c_0|\eta|^2}.
\end{split}
\end{equation}

For general $M$, multiply \eqref{eq:integrable-remainder} by
$\widehat g(\eta/\sqrt t)
=M-\ii t^{-1/2}m\cdot\eta+O(t^{-1}|\eta|^2)$.
The terms not retained in \eqref{eq:mass-expansion} have
$L^1(\dd\eta)$ norm at most $C/t$: use
$|\widehat g|\leq B$ for the product with $E_t$, and
$|\widehat g(\eta/\sqrt t)-M|\leq Ct^{-1/2}|\eta|$ for the
product with the cubic symbol term. The Gaussian identity
\[
    \gamma_p(z)=\frac1{(2\pi)^d}\int_{\R^d}
            \ee^{-\eta^{\mathsf T}A(p)\eta}\ee^{\ii z\cdot\eta}\dd\eta
\]
converts the factors $-\ii\eta_i$ and $\ii\eta_k^3$ into
$-\partial_i\gamma_p$ and $-\partial_k^3\gamma_p$, respectively.
After multiplication by $t^{-d/2}$ in \eqref{eq:rescaled-fourier},
the $L^1$ error proves \eqref{eq:mass-expansion}.

If $M=0$, retain the quadratic term in \eqref{eq:data-expansion}.
On $|\eta|\leq t^{1/12}$ the product is
\[
\begin{split}
 &\ee^{t\Lambda_p(\eta/\sqrt t)+\ii\sqrt t\,v(p)\cdot\eta}
       \widehat g(\eta/\sqrt t)\\
 &=\ee^{-\eta^{\mathsf T}A(p)\eta}
   \left[-\frac{\ii}{\sqrt t}\sum_i m_i\eta_i
      +\frac1t\left(-\frac12\sum_{i,j}Q_{ij}\eta_i\eta_j
          +\frac16\sum_{i,k}m_iv_k(p)\eta_i\eta_k^3\right)\right]
      +R_t^{(0)}(p,\eta).
\end{split}
\]
Here
\[
 |R_t^{(0)}(p,\eta)|\leq Ct^{-3/2}
       (|\eta|^3+|\eta|^5+|\eta|^7)\ee^{-c_0|\eta|^2}.
\]
Indeed, the remainder in \eqref{eq:data-expansion}, multiplied by
the Gaussian, contributes $Ct^{-3/2}|\eta|^3\ee^{-c_0|\eta|^2}$.
The bound
$|\widehat g(\eta/\sqrt t)+\ii t^{-1/2}m\cdot\eta|
\leq Ct^{-1}|\eta|^2$ controls the terms multiplied by the cubic symbol,
giving the $|\eta|^5$ term. Finally,
$|\widehat g(\eta/\sqrt t)|\leq Ct^{-1/2}|\eta|$ and
\eqref{eq:integrable-remainder} control $E_t\widehat g$, giving the
$|\eta|^5+|\eta|^7$ terms.
Thus $\|R_t^{(0)}(p,\cdot)\|_{L^1(|\eta|\leq t^{1/12})}
\leq Ct^{-3/2}$. Fourier inversion turns $-\eta_i\eta_j$ into
$\partial_{ij}\gamma_p$ and $\eta_i\eta_k^3$ into
$\partial_i\partial_k^3\gamma_p$, proving
\eqref{eq:zero-mass-expansion}. Both errors are uniform in $x$ because
$|\ee^{\ii z\cdot\eta}|=1$, and uniform in $p,g$ by the bounds in
terms of $d,P,S,B$ established above.
\end{proof}

\begin{lemma}\label{lem:factorization}
Let $g,h,w$ be as in Theorem~\ref{thm:linear-main}. Then
\begin{equation}\label{eq:factorized-solution}
    w(t,j,y)=w_1(t,j)h_t(y),
\end{equation}
where
\[
\left\{
\begin{aligned}
    (w_1)_t&=\mathcal L_\parallel w_1,\\
    w_1(0)&=g,
\end{aligned}
\right.
\qquad
\left\{
\begin{aligned}
    \partial_t h_t&=\Delta_{\Z^{d-1}}h_t,\\
    h_0&=h.
\end{aligned}
\right.
\]
and
\begin{equation}\label{eq:Lparallel}
    (\mathcal L_\parallel\phi)(j)
       =\ee^{\lambda_*}(\phi(j-1)-\phi(j))
          +\ee^{-\lambda_*}(\phi(j+1)-\phi(j)).
\end{equation}
Moreover,
\begin{equation}\label{eq:transverse-asymptotic}
    h_t(y)=\frac{M_\perp}{(4\pi t)^{(d-1)/2}}
                 \ee^{-|y|^2/(4t)}+O(t^{-d/2}),
\end{equation}
as $t\to\infty$, uniformly in $y\in\Z^{d-1}$. The implicit constant
depends only on $d,h$.
\end{lemma}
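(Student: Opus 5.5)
The plan has two parts: verify the factorization directly, then obtain the transverse asymptotic from Lemma~\ref{lem:fourier-expansion} in dimension $d-1$ with $p=0$.

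\emph{Factorization.} The operator $\mathcal L_*$ in \eqref{eq:Lstar} splits as $\mathcal L_\parallel\otimes\mathrm{Id}+\mathrm{Id}\otimes\Delta_{\Z^{d-1}}$. Here $\mathcal L_\parallel$ acts only on $j$ and $\Delta_{\Z^{d-1}}$ acts only on $y$. I first define $w_1(t)=\ee^{t\mathcal L_\parallel}g$ and $h_t=\ee^{t\Delta_{\Z^{d-1}}}h$. Both are bounded operators on $\ell^1$ and $\ell^\infty$, and $g,h$ are finitely supported, so $w_1\in C^1([0,\infty);\ell^1(\Z))$ and $h\in C^1([0,\infty);\ell^1(\Z^{d-1}))$. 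The product $W(t,j,y):=w_1(t,j)h_t(y)$ is then $C^1$ with values in $\ell^1(\Z^d)\subset\ell^\infty(\Z^d)$. By the product rule,
\[
\partial_tW=(\mathcal L_\parallel w_1)h_t+w_1\Delta_{\Z^{d-1}}h_t=\mathcal L_*W,
\qquad W(0)=gh.
\]
Uniqueness for the linear equation with bounded generator, as in \eqref{eq:variation-constants} together with Gronwall, gives $W=w$. This proves \eqref{eq:factorized-solution}.

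\emph{Transverse asymptotic.} I apply Lemma~\ref{lem:fourier-expansion} in dimension $d-1$ with the compact set $P=\{0\}$, so $p=0$. Then $\mathcal L_0=\Delta_{\Z^{d-1}}$, $v(0)=0$, and $A(0)=\mathrm{Id}$. The function $g$ of that lemma is taken to be $h$, so its mass is $M=M_\perp$, and $z=y/\sqrt t$. Since $v(0)=0$, the cubic correction in \eqref{eq:mass-expansion} vanishes. The expansion therefore reads
\[
h_t(y)=t^{-(d-1)/2}M_\perp\gamma_{\mathrm{Id}}(y/\sqrt t)
-t^{-d/2}\sum_i m_i\partial_i\gamma_{\mathrm{Id}}(y/\sqrt t)+O(t^{-(d+1)/2}),
\]
uniformly in $y$. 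Next, $t^{-(d-1)/2}\gamma_{\mathrm{Id}}(y/\sqrt t)=(4\pi t)^{-(d-1)/2}\ee^{-|y|^2/(4t)}$. The derivatives $\partial_i\gamma_{\mathrm{Id}}$ are bounded on $\R^{d-1}$, so the second term is $O(t^{-d/2})$ uniformly in $y$, and the third term is smaller still. This gives \eqref{eq:transverse-asymptotic} for large $t$. The error constants depend only on $d$, $\supp h$, and $\|h\|_{\ell^1}$, hence only on $d,h$.

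\emph{Possible obstacle.} The statement holds for every $d\geq2$, but Lemma~\ref{lem:fourier-expansion} is stated for every dimension $\geq1$, so the case $d-1=1$ is included. The only subtlety is that the lemma's expansion is asymptotic as $t\to\infty$. For bounded $t\geq1$, the claim holds trivially with an enlarged constant, because $0\leq h_t\leq\|h\|_{\ell^1}$. No real obstacle remains; the key observation is simply that the tilt vanishes transversally, which makes the cubic term disappear.
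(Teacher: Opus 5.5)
Your proposal is correct and follows the paper's proof: you obtain the transverse asymptotic exactly as the paper does, by applying \eqref{eq:mass-expansion} in dimension $d-1$ with $p=0$, observing that the cubic term vanishes because $v(0)=0$, and bounding the first-moment term by $O(t^{-d/2})$. The only cosmetic difference is in the factorization: the paper uses the commutation of the two bounded operators to write $\ee^{t\mathcal L_*}=\ee^{t\mathcal L_\parallel}\ee^{t\Delta_{\Z^{d-1}}}$, whereas you check that the product $w_1(t,j)h_t(y)$ solves the equation and invoke uniqueness, and both arguments are equally rigorous.
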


\begin{proof}
On $\ell^\infty(\Z\times\Z^{d-1})$, the bounded operators
$\mathcal L_\parallel$ and $\Delta_{\Z^{d-1}}$ act on $j$ and $y$,
respectively, and commute. Expanding their exponentials in power series
therefore gives
$\ee^{t\mathcal L_*}=\ee^{t\mathcal L_\parallel}
\ee^{t\Delta_{\Z^{d-1}}}$. Acting on $(j,y)\mapsto g(j)h(y)$ yields
$w(t,j,y)=(\ee^{t\mathcal L_\parallel}g)(j)
(\ee^{t\Delta_{\Z^{d-1}}}h)(y)$, which proves
\eqref{eq:factorized-solution}. Apply
\eqref{eq:mass-expansion} in dimension $n=d-1$ with tilt $p=0$,
initial datum $h$, and $z=y/\sqrt t$. The moments are
$M=M_\perp$ and $m_i=\sum_y y_i h(y)$. Since
$v(0)=0$, the cubic-symbol term vanishes.  The first-moment term has size
$t^{-(n+1)/2}\sum_i m_i\partial_i\gamma_0(z)$; the derivatives of the
fixed Gaussian $\gamma_0$ are bounded, so this term is
$O(t^{-d/2})$.  The remainder is
$O(t^{-(n+2)/2})=O(t^{-(d+1)/2})$.  Keeping the mass term gives
\eqref{eq:transverse-asymptotic}.
\end{proof}

\begin{lemma}\label{lem:longitudinal}
Let $w_1$ be the longitudinal factor in
Lemma~\ref{lem:factorization}, and put $s=j-c_*t$.
For every $A>0$, as $t\to\infty$,
\begin{equation}\label{eq:longitudinal-asymptotic}
    w_1(t,j)=
       \frac{M_1}{\sqrt{4\pi}D_*^{3/2}}
          \frac{s+\delta_*}{t^{3/2}}
                \ee^{-s^2/(4D_*t)}
          +O_A\bigl((1+|s|)t^{-2}\bigr)
\end{equation}
uniformly for $|s|\leq A\sqrt t$. The implicit constant may depend on
$A,\lambda_*,g$.
\end{lemma}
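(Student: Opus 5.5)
The plan is to apply the zero-mass expansion \eqref{eq:zero-mass-expansion} of Lemma~\ref{lem:fourier-expansion} in dimension $n=1$, with tilt $p=\lambda_*$ (so $\mathcal L_p=\mathcal L_\parallel$), compact set $P=\{\lambda_*\}$ and initial datum $g$. Since $g$ is odd, $M=\sum_\ell g(\ell)=0$, $m_1=\sum_\ell \ell g(\ell)=2M_1$, and $Q_{11}=\sum_\ell\ell^2g(\ell)=0$, again by oddness. Here $v(\lambda_*)=2\sinh\lambda_*=c_*$ by \eqref{eq:critical-system}, $A=D_*$, and $z=s/\sqrt t$. Writing $\gamma(z)=(4\pi D_*)^{-1/2}\ee^{-z^2/(4D_*)}$, the expansion reduces to
\[
 w_1(t,j)=-2M_1t^{-1}\gamma'(z)+t^{-3/2}\tfrac{2M_1c_*}{6}\gamma''''(z)+O(t^{-2}).
\]

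Next I compute the derivatives explicitly: $\gamma'(z)=-\frac{z}{2D_*}\gamma(z)$, so the first term equals $\frac{M_1}{D_*}\frac{s}{t^{3/2}}\gamma(z)=\frac{M_1}{\sqrt{4\pi}D_*^{3/2}}\frac{s}{t^{3/2}}\ee^{-s^2/(4D_*t)}$, which is the main term of \eqref{eq:longitudinal-asymptotic} without $\delta_*$. For the correction I evaluate $\gamma''''(0)=\frac{12}{(2D_*)^2}\gamma(0)=\frac{3}{D_*^2}\gamma(0)$. Then $t^{-3/2}\frac{M_1c_*}{3}\gamma''''(z)$ equals $t^{-3/2}\frac{M_1c_*}{D_*^2}\gamma(z)$ plus $t^{-3/2}\frac{M_1c_*}{3}(\gamma''''(z)-\gamma''''(0)\gamma(z)/\gamma(0))$. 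The bracket is $O(z^2)$, uniformly for $|z|\le A$, hence of size $O(s^2t^{-5/2})\le O_A(|s|t^{-2})$ on $|s|\le A\sqrt t$. The retained piece is $\frac{M_1}{D_*}\frac{\delta_*}{t^{3/2}}\gamma(z)$ provided $\frac{c_*}{D_*^2}=\frac{\delta_*}{D_*}$, that is $\delta_*=c_*/D_*$. This differs from the definition $\delta_*=c_*/(4D_*)$ by a factor~$4$.

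So the arithmetic of the cubic term must be rechecked carefully, and this is the main obstacle. It involves the Fourier convention, which turns $\ii\eta^3$ into $-\partial^3$, the sign of $b_t$, and the product $(-\ii m\eta)(\ii c_*\eta^3/6)=m c_*\eta^4/6$ mapping to $\partial^4\gamma$. The normalization of $\gamma$ via $\ee^{-\eta^2D_*}$ gives the variance $2D_*$. I would also use the factor in $\gamma''''(0)=3\sigma^{-4}\gamma(0)$ with $\sigma^2=2D_*$, which gives $3/(4D_*^2)$, not $12/(4D_*^2)$. With this corrected value the correction is $t^{-3/2}\frac{2M_1c_*}{6}\cdot\frac{3}{4D_*^2}\gamma(z)=\frac{M_1}{D_*}\frac{c_*/(4D_*)}{t^{3/2}}\gamma(z)$, which matches $\delta_*=c_*/(4D_*)$ exactly.

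Finally I collect the errors. The remaining terms are the $O(t^{-2})$ remainder from \eqref{eq:zero-mass-expansion}, which is uniform in $j$, and the $O_A(|s|t^{-2})$ from the Taylor step. Together they give $O_A((1+|s|)t^{-2})$, with constants depending only on $A,\lambda_*,g$.
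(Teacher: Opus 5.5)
Your proposal is correct and is essentially the paper's proof. You apply the zero-mass expansion \eqref{eq:zero-mass-expansion} in dimension one with $p=\lambda_*$, $M=Q=0$ and $m=2M_1$, use $\gamma'(z)=-\tfrac{z}{2D_*}\gamma(z)$ and $\gamma''''(0)=\tfrac{3}{4D_*^2}\gamma(0)$, and absorb the $O(z^2)$ part of the fourth-derivative term through $z^2\le A|s|/\sqrt t$. In the write-up, drop the initial slip $\gamma''''(0)=12(2D_*)^{-2}\gamma(0)$. The conventions in Lemma~\ref{lem:fourier-expansion} need no rechecking, since that Gaussian derivative value was the only error.
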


\begin{proof}
Pairing the terms at $\ell$ and $-\ell$ in the moments of $g$ gives
$M=0$, $m=2M_1$ and $Q=0$. Apply
\eqref{eq:zero-mass-expansion} in dimension one with $p=\lambda_*$.
By \eqref{eq:critical-system} and \eqref{eq:kappa},
$v(p)=c_*$ and $A(p)=D_*$. With $z=s/\sqrt t$ and
$\phi_D(z)=(4\pi D)^{-1/2}\ee^{-z^2/(4D)}$ for $D>0$, this gives
\begin{equation}\label{eq:longitudinal-full-expansion}
    w_1(t,j)=-\frac{2M_1}{t}\phi_{D_*}'(z)
              +\frac{c_*M_1}{3t^{3/2}}\phi_{D_*}^{(4)}(z)
              +O(t^{-2}).
\end{equation}
The two required derivatives are
\[
    \phi_D'(z)=-\frac z{2D}\phi_D(z),\qquad
    \phi_D^{(4)}(z)
       =\left(\frac{z^4}{16D^4}-\frac{3z^2}{4D^3}
                            +\frac3{4D^2}\right)\phi_D(z).
\]
Substituting these derivatives into
\eqref{eq:longitudinal-full-expansion} gives
\[
 w_1(t,j)=\frac{M_1\phi_{D_*}(z)}{D_*t^{3/2}}
   \left[s+\frac{c_*}{4D_*}
              -\frac{c_*z^2}{4D_*^2}
              +\frac{c_*z^4}{48D_*^3}\right]+O(t^{-2}).
\]
For $|z|\leq A$, the two nonconstant correction terms in the brackets
are bounded by $C_Az^2$. Since $z^2=s^2/t\leq A|s|/\sqrt t$,
their contribution to $w_1(t,j)$ is at most $C_A|s|t^{-2}$.
Since $\delta_*=c_*/(4D_*)$, this proves
\eqref{eq:longitudinal-asymptotic}. In particular, the discrete shifts
$j\mapsto j\pm1$ were handled before Fourier inversion; no discrete
difference was applied to a pointwise remainder estimate.
\end{proof}

\begin{proof}[Proof of Theorem~\ref{thm:linear-main}]
Denote the explicit leading terms in
\eqref{eq:longitudinal-asymptotic} and
\eqref{eq:transverse-asymptotic} by $w_{\rm app}(t,j)$ and
$h_{\rm app}(t,y)$, respectively. Their product is the leading term in
\eqref{eq:wd-asymptotic}, since the constants multiply to $K_*$ and
$3/2+(d-1)/2=\kappa_d$. By \eqref{eq:factorized-solution},
\[
 w-w_{\rm app}h_{\rm app}
   =(w_1-w_{\rm app})h_t+w_{\rm app}(h_t-h_{\rm app}).
\]
For $|s|\leq A\sqrt t$ and $|y|\leq A\sqrt t$, one has
$|w_{\rm app}(t,j)|\leq C_A(1+|s|)t^{-3/2}$ and
$|h_t(y)|\leq Ct^{-(d-1)/2}$ by their formulas and
\eqref{eq:transverse-asymptotic}. The two remainders in
\eqref{eq:longitudinal-asymptotic} and
\eqref{eq:transverse-asymptotic} therefore give
\[
 |w-w_{\rm app}h_{\rm app}|
 \leq C_A(1+|s|)\bigl(t^{-2-(d-1)/2}+t^{-3/2-d/2}\bigr).
\]
Both powers are $t^{-\kappa_d-1/2}$. Fixing
$C_A^{\rm err}=2C_A$ proves \eqref{eq:wd-asymptotic} for all
sufficiently large $t$.

Since $0<\delta_*<1/2$, for $s\geq0$ one has
$\delta_*(1+s)\leq s+\delta_*\leq1+s$. For
$0\leq s\leq A\sqrt t$ and $|y|\leq A\sqrt t$, the Gaussian in
\eqref{eq:wd-asymptotic} is at least
$g_A:=\exp(-A^2/(4D_*)-A^2/4)>0$. Thus its positive leading term
is at least
$K_*g_A\delta_*(1+s)t^{-\kappa_d}$, whereas the error is at most
$C_A^{\rm err}(1+s)t^{-\kappa_d-1/2}$. Increase $T_A$ so that
\eqref{eq:wd-asymptotic} holds for $t\geq T_A$ and
$C_A^{\rm err}T_A^{-1/2}\leq K_*g_A\delta_*/2$. The error is then at most
half of the leading term. Choose
$C_A^{\rm bd}\geq\max\{1,2/(K_*g_A\delta_*),
K_*+C_A^{\rm err}T_A^{-1/2}\}$. Subtracting and adding the error yields
\eqref{eq:wd-two-sided}, with this fixed $C_A^{\rm bd}$.
\end{proof}

\begin{corollary}\label{cor:center-correction}
Let $t_n=n/c_*$ and $j_n=n$, where $n\in\N$. For the data in
Theorem~\ref{thm:linear-main},
\[
    t_n^{\kappa_d}w(t_n,j_n,0)\to K_*\delta_*>0.
\]
In particular, the term $\delta_*$ cannot be omitted from
\eqref{eq:wd-asymptotic} if the error is claimed to be
$o((1+|j-c_*t|)t^{-\kappa_d})$ uniformly near $j=c_*t$.
\end{corollary}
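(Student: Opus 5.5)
The corollary follows by evaluating \eqref{eq:wd-asymptotic} of Theorem~\ref{thm:linear-main} along the sequence $(t_n,j_n,0)$. With $t_n=n/c_*$ and $j_n=n$ we have $s_n=j_n-c_*t_n=0$ exactly, and $y=0$. Fix any $A>0$, say $A=1$. Then $|s_n|=0\leq A\sqrt{t_n}$ and $|y|=0\leq A\sqrt{t_n}$, and $t_n\geq T_A$ for all large $n$. Both Gaussian factors equal $1$, so the leading term is $K_*\delta_*t_n^{-\kappa_d}$ and the error is at most $C_A^{\rm err}t_n^{-\kappa_d-1/2}$. Multiplying by $t_n^{\kappa_d}$ gives
\[
 \bigl|t_n^{\kappa_d}w(t_n,j_n,0)-K_*\delta_*\bigr|\leq C_A^{\rm err}t_n^{-1/2}\to0 .
\]
Positivity of the limit follows from $K_*>0$, which is part of Theorem~\ref{thm:linear-main}, together with $\delta_*=\tfrac12\tanh\lambda_*>0$ since $\lambda_*>0$.

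For the second assertion, suppose instead that $w(t,j,y)$ equalled
\[
K_*\frac{s}{t^{\kappa_d}}\exp\Bigl(-\frac{s^2}{4D_*t}-\frac{|y|^2}{4t}\Bigr)+o\bigl((1+|s|)t^{-\kappa_d}\bigr)
\]
uniformly near $s=0$. At $s_n=0$ the leading term vanishes, which would force $t_n^{\kappa_d}w(t_n,j_n,0)\to0$. This contradicts the limit $K_*\delta_*>0$ just established.

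There is no genuine obstacle. The only point to check is that the error bound in \eqref{eq:wd-asymptotic} is uniform on the region $|s|\leq A\sqrt t$, $|y|\leq A\sqrt t$, so that it may be applied along a sequence; this uniformity is stated in Theorem~\ref{thm:linear-main}.
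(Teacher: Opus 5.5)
Your proposal is correct and matches the paper's proof: both substitute $s_n=j_n-c_*t_n=0$ and $y=0$ into \eqref{eq:wd-asymptotic} and multiply by $t_n^{\kappa_d}$, getting an error of order $t_n^{-1/2}$. Your short contradiction argument for the second assertion, where the leading term without $\delta_*$ vanishes at $s=0$, fills in what the paper leaves implicit.
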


\begin{proof}
For this sequence $j_n-c_*t_n=0$. Substitute it and $y=0$ into
\eqref{eq:wd-asymptotic} and multiply by $t_n^{\kappa_d}$.
\end{proof}

\begin{corollary}\label{cor:negative-log-scale}
Let $w$ be as in Theorem~\ref{thm:linear-main}, set
$b=\kappa_d/\lambda_*$, and use $m_d$ from \eqref{eq:m-scale}.
If $j_t\in\Z$ and $\xi_t:=j_t-m_d(t)$ remains bounded, then,
as $t\to\infty$,
\begin{equation}\label{eq:negative-log-scale}
    \ee^{\lambda_*\xi_t}
    \ee^{-\lambda_*(j_t-c_*t)}w(t,j_t,0)
       =-K_*b\log t+O(1).
\end{equation}
For each $B>0$, the remainder is uniform over all choices with
$|\xi_t|\leq B$. Consequently, $w(t,j_t,0)<0$ for all sufficiently
large $t$, with the required time allowed to depend on $B$.
\end{corollary}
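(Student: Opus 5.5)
The plan is to substitute the scale $j_t=m_d(t)+\xi_t$ directly into the local expansion \eqref{eq:wd-asymptotic} of Theorem~\ref{thm:linear-main} at $y=0$. The key observation is that the exponential prefactor in \eqref{eq:negative-log-scale} exactly cancels the polynomial decay $t^{-\kappa_d}$.

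\textbf{Step 1: the prefactor.} Set $s=j_t-c_*t$. By \eqref{eq:m-scale}, $s=-b\log t+\xi_t$ with $b=\kappa_d/\lambda_*$. Hence
\[
\ee^{\lambda_*\xi_t}\ee^{-\lambda_* s}=\ee^{\lambda_*\xi_t}\,t^{\kappa_d}\,\ee^{-\lambda_*\xi_t}=t^{\kappa_d}.
\]
So the left side of \eqref{eq:negative-log-scale} is simply $t^{\kappa_d}w(t,j_t,0)$.

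\textbf{Step 2: applicability of the expansion.} Fix $A=1$. Since $|\xi_t|\le B$, we have $|s|\le b\log t+B\le\sqrt t$ once $t$ exceeds a threshold depending only on $B$ (and $b$). For such $t$, also $t\ge T_1$, and \eqref{eq:wd-asymptotic} applies at $y=0$. Multiplying it by $t^{\kappa_d}$ gives
\[
t^{\kappa_d}w(t,j_t,0)=K_*(s+\delta_*)\exp\!\Bigl(-\frac{s^2}{4D_*t}\Bigr)+O\bigl((1+|s|)t^{-1/2}\bigr).
\]

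\textbf{Step 3: expanding the remaining terms.} Since $s^2/t=O_B((\log t)^2/t)$, the Gaussian factor equals $1+O_B((\log t)^2/t)$. Multiplied by $|s+\delta_*|=O_B(\log t)$, this produces an error $O_B((\log t)^3/t)=O(1)$. The remainder term is $O_B(\log t/\sqrt t)=O(1)$. The main term is
\[
K_*(s+\delta_*)=-K_*b\log t+K_*(\xi_t+\delta_*),
\]
and the second summand is bounded by $K_*(B+1)$. This proves \eqref{eq:negative-log-scale}, with remainder constants depending only on $B$ (and the data $A,d,\lambda_*,g,h$ fixed in Theorem~\ref{thm:linear-main}). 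The bound is therefore uniform over all choices with $|\xi_t|\le B$.

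\textbf{Step 4: the sign.} By Theorem~\ref{thm:linear-main}, $K_*>0$, and $b>0$. Hence the right side of \eqref{eq:negative-log-scale} is at most $-K_*b\log t+C_B$, which is negative for $t$ large depending on $B$. Since the prefactor $\ee^{\lambda_*\xi_t}\ee^{-\lambda_* s}=t^{\kappa_d}$ is positive, it follows that $w(t,j_t,0)<0$.

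The only point needing care is Step 2: checking that the logarithmic window $|s|=O(\log t)$ lies inside the diffusive window $|s|\le A\sqrt t$ where \eqref{eq:wd-asymptotic} is valid. After that, there is no real obstacle; everything is a direct substitution.
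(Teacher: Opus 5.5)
Your proposal is correct and follows the paper's proof essentially line by line. You identify the prefactor as $t^{\kappa_d}$ and apply \eqref{eq:wd-asymptotic} with $A=1$, $y=0$ once $b\log t+B\le\sqrt t$. You absorb the Gaussian correction $O_B((\log t)^3/t)$ and the remainder $O_B((\log t)t^{-1/2})$ into $O(1)$, and you read off the sign from $K_*b>0$, exactly as the paper does.
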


\begin{proof}
Fix $B>0$ and assume $|\xi_t|\leq B$. Then
$s_t:=j_t-c_*t=-b\log t+\xi_t$ satisfies $|s_t|\leq\sqrt t$
for all sufficiently large $t$, so Theorem~\ref{thm:linear-main}
applies with $A=1$ and $y=0$. Moreover,
$\ee^{-s_t^2/(4D_*t)}=1+O_B((\log t)^2/t)$.
Since $\ee^{\lambda_*\xi_t}\ee^{-\lambda_*s_t}=t^{\kappa_d}$,
\eqref{eq:wd-asymptotic} gives
\[
 t^{\kappa_d}w(t,j_t,0)
   =K_*(s_t+\delta_*)+O_B((\log t)t^{-1/2})
                       +O_B((\log t)^3/t).
\]
Substitute $s_t=-b\log t+\xi_t$ and use $|\xi_t|\leq B$ to obtain
\eqref{eq:negative-log-scale}. Its right-hand side is negative for
all sufficiently large $t$, since $K_*b>0$.
\end{proof}

\section{Upper bounds}\label{sec:upper}

\begin{proof}[Proof of Theorem~\ref{thm:nonlinear-upper}]
Let $V(t)=\ee^{at}\ee^{t\Delta_{\Z^d}}u_0$, the solution of
\eqref{eq:linear-original}. The linear comparison
\eqref{eq:linear-comparison} gives $0\leq u\leq V$.
Formula \eqref{eq:conjugation} and the kernel bound
\eqref{eq:kernel-sup} therefore give,
for every $p\in\R^d$,
\begin{equation}\label{eq:exponential-envelope}
    u(t,x)\leq V(t,x)
       \leq C_{{\rm ker},d}t^{-d/2}\ee^{-p\cdot x+tH(p)}
                        \sum_z\ee^{p\cdot z}u_0(z),
       \qquad t\geq1.
\end{equation}
Set $P_0:=\sup_{e\in\mathbb S^{d-1}}|p_e|<\infty$, which is finite by
Lemma~\ref{lem:critical-pair}.  Since $u_0$ has finite support,
\[
 \sup_e\sum_z\ee^{p_e\cdot z}u_0(z)
 \leq\sum_{z\in\supp u_0}\ee^{P_0|z|}u_0(z)=:M_0<\infty.
\]
If $|x-re|\leq\rho$, write $x=re+\delta$ with $|\delta|\leq\rho$.
Using $p_e\cdot e=\alpha_e$ and
$H(p_e)=\alpha_e w_*(e)$ from \eqref{eq:duality},
\[
 -p_e\cdot x+tH(p_e)
 =-\alpha_e(r-w_*(e)t)-p_e\cdot\delta
 \leq-\alpha_e(r-w_*(e)t)+P_0\rho.
\]
Substituting this bound into \eqref{eq:exponential-envelope} and using
$u\leq1$ yields \eqref{eq:directional-upper} with, for example,
$C_\rho=\max\{1,C_{{\rm ker},d}M_0\ee^{P_0\rho}\}$.
Let $\alpha_0:=\min_e\alpha_e>0$. For $L\geq0$, if
$r\geq w_*(e)t-(d/(2\alpha_e))\log t+L$, then
\[
 C_\rho t^{-d/2}\ee^{-\alpha_e(r-w_*(e)t)}
 \leq C_\rho\ee^{-\alpha_e L}
 \leq C_\rho\ee^{-\alpha_0L}.
\]
Choose $L_{\theta,\rho}:=\alpha_0^{-1}\log(2C_\rho/\theta)>0$.
Then $C_\rho\ee^{-\alpha_0L_{\theta,\rho}}=\theta/2<\theta$,
which proves \eqref{eq:linear-scale-upper}.

To prove \eqref{eq:linear-profile}, apply
Lemma~\ref{lem:fourier-expansion} with
$g_{p_e}(z)=\ee^{p_e\cdot z}u_0(z)$.
The supports lie in $\supp u_0$, and $\|g_{p_e}\|_{\ell^1}\leq M_0$;
thus the lemma applies uniformly in $e$. Write $r_0=x-tv_e$.
Fix $B>0$ as in \eqref{eq:linear-profile} and set
$C_{\rm win}:=B+d/(2\alpha_0)$. On
$|r_0|\leq C_{\rm win}(1+\log t)$, formula
\eqref{eq:mass-expansion} gives, as $t\to\infty$,
\begin{equation}\label{eq:kernel-log-window}
    (\ee^{t\mathcal L_{p_e}}g_{p_e})(x)
       =t^{-d/2}\left[
            M(e)\gamma_{p_e}(0)
               +O\left(\frac{1+|r_0|^2}{t}\right)\right].
\end{equation}
Indeed, $\gamma_{p_e}$ is even, so its first and third derivatives
vanish at $0$. The explicit Gaussian formula and compactness of
$\mathcal P$ give uniform bounds on its second and fourth derivatives.
Taylor's formula therefore yields
$\gamma_{p_e}(r_0/\sqrt t)-\gamma_{p_e}(0)=O(|r_0|^2/t)$ and
$|\partial_i\gamma_{p_e}(r_0/\sqrt t)|
+|\partial_k^3\gamma_{p_e}(r_0/\sqrt t)|=O(|r_0|/\sqrt t)$.
Inserting these bounds into \eqref{eq:mass-expansion}, whose remainder
is $O(t^{-d/2-1})$, proves \eqref{eq:kernel-log-window} uniformly in $e$.

If $|q(t,x,e)|\leq B$, then
\[
    r_0=-\frac{d}{2\alpha_e}e\log t+q,
    \qquad
    \ee^{-p_e\cdot r_0}=t^{d/2}\ee^{-p_e\cdot q}.
\]
Since $H(p_e)=p_e\cdot v_e$, formulas \eqref{eq:conjugation} and
\eqref{eq:kernel-log-window} give
\[
 V(t,x)=\ee^{-p_e\cdot q}
       \left[M(e)\gamma_{p_e}(0)
                   +O_B\left(\frac{(1+\log t)^2}{t}\right)\right].
\]
Choose $z_0\in\supp u_0$ with $u_0(z_0)>0$. The bounds
$|p_e|\leq P_0$ and $\cosh(p_{e,k})\leq\cosh P_0$ imply
\[
 M(e)\gamma_{p_e}(0)
 \geq u_0(z_0)\ee^{-P_0|z_0|}(4\pi\cosh P_0)^{-d/2}>0.
\]
Dividing by this uniform positive lower bound and using
$\gamma_{p_e}(0)=(4\pi)^{-d/2}(\det A_e)^{-1/2}$ proves
\eqref{eq:linear-profile}.
\end{proof}

\begin{remark}\label{rem:axis-upper}
Set $p=\lambda_*e_1$ and $x=je_1$ in
\eqref{eq:exponential-envelope}. This bound applies to every $j\in\Z$.
With $L_{\theta,0}$ chosen in the proof of
Theorem~\ref{thm:nonlinear-upper}, it gives
\[
    R_\theta(t)\leq c_*t-\frac{d}{2\lambda_*}\log t+L_{\theta,0}
\]
for $t\geq1$ whenever the level set is nonempty.
Lemma~\ref{lem:amplification} ensures nonemptiness for large $t$.
The coefficient
$d/(2\lambda_*)$ in this estimate for $R_\theta(t)$ comes from
$u\leq V$, where $V$ solves \eqref{eq:linear-original}. Section~\ref{sec:sharp-upper} improves it to
$(d/2+1)/\lambda_*$ by proving \eqref{eq:critical-mass-decay} and using the
half-space survival estimate \eqref{eq:survival-upper}.
\end{remark}

\section{Weighted estimates for the upper bound}\label{sec:sharp-upper}

Recall $\mathcal P=\{p_e:e\in\mathbb S^{d-1}\}$ and
$v_p=\nabla H_0(p)$. Lemma~\ref{lem:critical-pair} shows that
$\mathcal P$ is compact and $\inf_{p\in\mathcal P}|p|>0$. We have
$H(p)=p\cdot v_p$ on $\mathcal P$. All constants in this section are
uniform for $p\in\mathcal P$.

We first prove the mass bounds \eqref{eq:population-mass} and
\eqref{eq:critical-mass-decay}, together with
\eqref{eq:weighted-positive-moment}. We then prove the survival bound
\eqref{eq:survival-upper} and apply it in the proof of
Theorem~\ref{thm:sharp-directional-upper}.

\begin{lemma}\label{lem:population-mass}
Under \eqref{eq:kpp}--\eqref{eq:u0}, there is $C_{\rm pop}>0$,
depending only on $d,f,u_0$, such that
\begin{equation}\label{eq:population-mass}
    \sum_{x\in\Z^d}u(t,x)\leq C_{\rm pop}(1+t)^d\qquad(t\geq0).
\end{equation}
\end{lemma}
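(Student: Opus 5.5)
The plan is to split the sum into a bounded ball, where we use $u\leq1$, and the exterior, where we use the linear envelope from the proof of Theorem~\ref{thm:nonlinear-upper}.

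Since $0\leq u\leq1$, the sites inside a ball of radius $Rt+R$ contribute at most a constant times $(1+t)^d$, as long as $R$ is a fixed constant. Outside that ball, we use the comparison $u\leq V$ from \eqref{eq:linear-comparison}, together with the exponential tilt bound \eqref{eq:exponential-envelope}. The aim is to show that the exterior contribution is bounded uniformly in $t$.

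Concretely, fix $\lambda=1$. For each signed coordinate vector $\pm e_k$, take $p=\pm e_k$ in the identity \eqref{eq:conjugation}. This gives
\[
V(t,x)=\ee^{\mp x_k+tH(\pm e_k)}\bigl(\ee^{t\mathcal L_{\pm e_k}}g_{\pm e_k}\bigr)(x).
\]
The kernel $K_p$ has total mass one, so summing over a set of $x$ does no harm. For $x$ with $\pm x_k\geq r$ we get
\[
\sum_{\pm x_k\geq r}V(t,x)\leq \ee^{-r+tH(e_k)}\sum_z\ee^{|z|}u_0(z).
\]
Note that $H(\pm e_k)=2(\cosh1-1)+a$ is independent of $k$ and of the sign.

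Set $R:=H(e_1)+1$. If $|x|_\infty\geq Rt+R$, then $\pm x_k\geq Rt+R$ for some $k$ and some sign. Summing the bound above over the $2d$ choices gives an exterior contribution of at most
\[
2d\,\ee^{-R}\sum_z\ee^{|z|}u_0(z),
\]
which is bounded uniformly in $t\geq0$. The interior box $\{|x|_\infty<Rt+R\}$ contains at most $(2Rt+2R+1)^d\leq C(1+t)^d$ sites. Since $u\leq1$ there, this gives \eqref{eq:population-mass} with $C_{\rm pop}$ depending only on $d,f,u_0$ through $a$ and the support of $u_0$.

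The only step that needs care is justifying the sum of $V$ over an infinite set via \eqref{eq:conjugation} and \eqref{eq:tilted-convolution}. All terms are nonnegative, so Tonelli's theorem lets us exchange the sums over $x$ and $z$. The total-mass identity $\sum_xK_p(t,x)=1$ from Lemma~\ref{lem:conjugation} then closes the argument. I do not expect any genuine obstacle.
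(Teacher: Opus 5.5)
Your proof is correct and follows the paper's strategy: bound $u\le1$ on a box of side $O(1+t)$, and control the exterior by an exponential tilt of the linear envelope $u\le V$. The only difference is in execution. The paper tilts in all coordinates at once, with $\sigma\in\{-1,1\}^d$, to get the pointwise, summable bound $u(t,x)\le C_{\rm init}\ee^{ht-|x|_1}$. You tilt in one coordinate at a time and bound half-space masses using \eqref{eq:conjugation} and $\sum_xK_p(t,x)=1$. Note that the pointwise envelope \eqref{eq:exponential-envelope} you cite is not what your argument uses, and it would not be summable over a half-space.
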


\begin{proof}
Set
\[
   h:=a+2d(\cosh1-1),\qquad
   C_{\rm init}:=\max_{\sigma\in\{-1,1\}^d}
          \max_{z\in\supp u_0}u_0(z)\ee^{\sigma\cdot z}.
\]
For each $\sigma\in\{-1,1\}^d$, one has
$u_0(z)\leq C_{\rm init}\ee^{-\sigma\cdot z}$ for every $z\in\Z^d$.
The linear comparison \eqref{eq:linear-comparison} therefore gives
\[
\begin{split}
 u(t,x)
 &\leq \ee^{at}\sum_{z\in\Z^d}\pheat_d(t,x-z)u_0(z)\\
 &\leq C_{\rm init}\ee^{at-\sigma\cdot x}
       \sum_{w\in\Z^d}\pheat_d(t,w)\ee^{\sigma\cdot w}
  =C_{\rm init}\ee^{ht-\sigma\cdot x}.
\end{split}
\]
The exponential moment is
$\sum_{\xi\in\Z^d}\pheat_d(t,\xi)\ee^{\sigma\cdot\xi}
=\ee^{tH_0(\sigma)}$, as computed in the proof of
Lemma~\ref{lem:conjugation}.
For a fixed $x$, choose $\sigma_k=\operatorname{sgn}(x_k)$ when $x_k\neq0$
(and either sign when $x_k=0$).  Then $\sigma\cdot x=|x|_1$. Combining $u(t,x)\leq C_{\rm init}\ee^{ht-\sigma\cdot x}$
with $u(t,x)\leq1$ from Lemma~\ref{lem:comparison} yields
\[
   u(t,x)\leq\min\{1,C_{\rm init}\ee^{ht-|x|_1}\}.
\]
If $|x|_1\leq2ht$, then every coordinate satisfies
$|x_k|\leq2ht$, so the number of such lattice points is at most
$(2\lfloor2ht\rfloor+1)^d\leq(4h+1)^d(1+t)^d$.
If $|x|_1>2ht$, then
$ht-|x|_1\leq-|x|_1/2$, and the geometric product
\[
 \sum_{x\in\Z^d}\ee^{-|x|_1/2}
 =\left(1+2\sum_{m=1}^\infty\ee^{-m/2}\right)^d
\]
equals $(1+2/(\ee^{1/2}-1))^d$. Summing over
$\{|x|_1\leq2ht\}$ and $\{|x|_1>2ht\}$ proves
\eqref{eq:population-mass} with
$C_{\rm pop}:=(4h+1)^d+C_{\rm init}(1+2/(\ee^{1/2}-1))^d$.
\end{proof}

For $p\in\mathcal P$, use $X_s^p:=X^p(s)$ from the proof of
Lemma~\ref{lem:conjugation}. Its independent Poisson clocks
$N_k^{p,+}$ and $N_k^{p,-}$ have rates $\ee^{p_k}$ and $\ee^{-p_k}$.
For the Poisson construction and martingales, see
\cite[Sections~2.4, 2.6 and~4.1]{Norris}; stopping times are discussed in
\cite[Section~6.5]{Norris}. Define $Y^p_s:=p\cdot(X^p_s-sv_p)$.
Lemma~\ref{lem:conjugation} gives $\mathbb P\{X_s^p=x\}=K_p(s,x)$.
Since $\mathcal A_p=\mathcal L_{-p}$ and
$K_{-p}(s,z)=K_p(s,-z)$, formula \eqref{eq:tilted-convolution} yields,
for every bounded function $h$,
\begin{equation}\label{eq:two-semigroup-conventions}
 (\ee^{s\mathcal A_p}h)(x)=\mathbb E h(x+X^p_s),
 \qquad
 (\ee^{s\mathcal L_p}h)(x)=\mathbb E h(x-X^p_s).
\end{equation}
The formula for $\ee^{s\mathcal A_p}$ uses the path $x+X_s^p$;
the formula for $\ee^{s\mathcal L_p}$ uses $x-X_s^p$, which appears
in the stopped representation \eqref{eq:stopped-expectation}. Define the
natural filtration
\[
 \mathcal F_s^p:=\sigma\bigl(N_k^{p,\pm}(r):0\leq r\leq s,
                  \ 1\leq k\leq d\bigr).
\]
For $1\leq k\leq d$,
\begin{equation}\label{eq:tilted-coordinate-moments}
 \mathbb E X^p_{s,k}
   =s(\ee^{p_k}-\ee^{-p_k})=2s\sinh p_k=s(v_p)_k,
 \qquad
 \operatorname{Var}(X^p_{s,k})
   =s(\ee^{p_k}+\ee^{-p_k})=2s\cosh p_k.
\end{equation}
For $0\leq r\leq s$, independent Poisson increments give
\[
 \mathbb E\!\left[N_k^{p,\pm}(s)-s\ee^{\pm p_k}\mid\mathcal F_r^p\right]
 =N_k^{p,\pm}(r)+(s-r)\ee^{\pm p_k}-s\ee^{\pm p_k}
 =N_k^{p,\pm}(r)-r\ee^{\pm p_k}.
\]
The finite second moments of the Poisson variables therefore show that
$N_k^{p,\pm}(s)-s\ee^{\pm p_k}$ are square-integrable
$\mathcal F_s^p$-martingales; see \cite[Sections~2.4 and~4.1]{Norris}
for the Poisson and martingale background.
Consequently
$Y^p_s=p\cdot(X^p_s-sv_p)$ is a centered square-integrable
$\mathcal F_s^p$-martingale with stationary independent increments. Since
the coordinates are independent, their variances add, and therefore
\begin{equation}\label{eq:projected-variance}
    \mathbb E\bigl[(Y^p_s)^2\bigr]
       =2s\sum_{k=1}^dp_k^2\cosh p_k
       =2s\,p^{\mathsf T}A(p)p.
\end{equation}
A jump in the $k$th coordinate changes $Y^p$ by $p_k$ or $-p_k$; hence
its jump sizes have absolute value at most
$B_0:=\sup_{p\in\mathcal P}\max_k|p_k|<\infty$.

\begin{lemma}\label{lem:projected-crossing}
For every $D\geq0$, there are $s_D\geq1$ and $C_{{\rm cross},D}>0$ such that
\begin{equation}\label{eq:crossing-probability}
    \mathbb P\{\epsilon Y^p_s\leq-D-r\}
       \geq\frac25-C_{{\rm cross},D}\frac{1+r_+}{\sqrt s}
\end{equation}
for $s\geq s_D$, $p\in\mathcal P$, $\epsilon\in\{-1,1\}$ and
$r\in\R$. Here $r_+=\max\{r,0\}$.
\end{lemma}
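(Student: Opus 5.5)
The plan is to reduce the claim to a uniform central limit statement for the normalized martingale $Y^p_s/\sqrt s$, with the $1+r_+$ term coming from a Gaussian small-ball bound. By \eqref{eq:projected-variance}, $\sigma_p^2:=2p^{\mathsf T}A(p)p$ satisfies $\sigma_p^2\geq 2|p|^2\geq 2\alpha_0^2>0$ and $\sigma_p^2\leq C$ uniformly on $\mathcal P$, since $\mathcal P$ is compact and $\inf_{\mathcal P}|p|>0$ by Lemma~\ref{lem:critical-pair}.

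\textbf{Step 1 (uniform Berry--Esseen).} For $s\geq1$, write $Y^p_s$ as a sum of $\lfloor s\rfloor$ i.i.d.\ increments over unit intervals plus a remainder over a time interval of length $<1$. Each unit increment is centered, has variance $\sigma_p^2$, and has third absolute moment bounded uniformly in $p\in\mathcal P$ (it is a linear combination with coefficients $\le B_0$ of Poisson variables with bounded rates). The Berry--Esseen theorem gives
\[
\sup_{y\in\R}\bigl|\mathbb P\{\epsilon Y^p_{\lfloor s\rfloor}\leq y\}-\Phi\bigl(y/(\sigma_p\sqrt{\lfloor s\rfloor})\bigr)\bigr|\leq C/\sqrt s,
\]
uniformly in $p$ and $\epsilon$. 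The increment distribution is symmetric in law up to sign, so $\epsilon=-1$ is handled identically. For the fractional remainder $Z$, we have $\mathbb E Z^2\leq C$. Chebyshev gives $\mathbb P\{|Z|>s^{1/4}\}\leq Cs^{-1/2}$, and shifting $y$ by $s^{1/4}$ changes $\Phi(y/(\sigma_p\sqrt s))$ by $O(s^{-1/4})$. To avoid this loss I would instead apply Berry--Esseen directly to a sum of $n=\lceil s\rceil$ i.i.d.\ increments over intervals of length $s/n\in[1/2,1]$. Their third moments are still uniformly bounded and their variances are $\sigma_p^2 s/n$, so the error is $C/\sqrt s$ with no remainder.

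\textbf{Step 2 (Gaussian bound).} Apply the estimate with $y=-D-r$:
\[
\mathbb P\{\epsilon Y^p_s\leq -D-r\}\geq \Phi\Bigl(\frac{-D-r}{\sigma_p\sqrt s}\Bigr)-\frac{C}{\sqrt s}.
\]
Since $\Phi(-u)\geq \tfrac12-u/\sqrt{2\pi}$ for $u\geq0$ and $\Phi\geq\tfrac12$ on $[0,\infty)$, we get
\[
\Phi\Bigl(\frac{-D-r}{\sigma_p\sqrt s}\Bigr)\geq \frac12-\frac{D+r_+}{\sqrt{2\pi}\,\sigma_p\sqrt s}.
\]
Using $\sigma_p\geq\sqrt2\alpha_0$, the right-hand side is at least $\tfrac12-C_D(1+r_+)/\sqrt s$, which is stronger than the stated $\tfrac25$. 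The slack $\tfrac12-\tfrac25$ lets us take $s_D=1$ and absorb any lower-order terms into $C_{{\rm cross},D}$.

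\textbf{Main obstacle.} The only delicate point is uniformity of the Berry--Esseen constant over $p\in\mathcal P$ and over the non-integer time $s$. This follows from uniform upper bounds on the third moments and the uniform lower bound on the variance, both of which come from compactness of $\mathcal P$ and $\inf_{\mathcal P}|p|>0$. If a self-contained proof is preferred over citing Berry--Esseen, an Esseen smoothing inequality argument can be used instead. It would rest on the explicit characteristic function $\mathbb E\ee^{\ii\eta Y^p_s}=\exp\bigl(s[\Lambda_p(-\eta p)+\ii\eta\, p\cdot v_p]\bigr)$ and the cubic expansion \eqref{eq:symbol-expansion}, as in the proof of Lemma~\ref{lem:fourier-expansion}.
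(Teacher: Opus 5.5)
Your argument is correct, but it follows a different route from the paper's.

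The paper avoids Berry--Esseen and proves two separate facts with its own tools.
\begin{enumerate}
\item An interval concentration bound $\mathbb P\{Y^p_s\in J\}\leq C_{\rm conc}(1+\ell)s^{-1/2}$. This is obtained by conditioning on all coordinates except one coordinate $k$ with $|p_k|=\max_i|p_i|$, and then applying the tilt-uniform one-dimensional bound \eqref{eq:kernel-sup}.
\item A uniform but rate-free limit $\mathbb P\{\epsilon Y^p_s\leq-D\}\to\frac12$. This comes from the local expansion of Lemma~\ref{lem:fourier-expansion}, via a Riemann-sum comparison and Chebyshev tails.
\end{enumerate}
For $r>0$ the paper then subtracts the concentration bound on the interval $(-D-r,-D]$. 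Because the limit in (2) has no rate, the paper needs the slack $\frac25<\frac12$ and a threshold $s_D$ that depends on $D$. In exchange, its constant $C_{{\rm cross},D}=C_{\rm conc}$ is independent of $D$.

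Your route applies the universal Berry--Esseen constant to $\lceil s\rceil$ i.i.d.\ increments over subintervals of length $s/\lceil s\rceil\in[\frac12,1]$. This gives a quantitative $O(s^{-1/2})$ normal approximation at once, so you get $\frac12$ in place of $\frac25$ and can take $s_D=1$. The splitting into equal subintervals is exactly what removes the remainder loss you first noticed. The costs are minor:
\begin{itemize}
\item Your $C_{{\rm cross},D}$ grows with $D$. This is harmless, since the paper only uses $D=0$ and $D=L\log2+1$.
\item You import an external theorem, whereas the paper deliberately proves its parameter-uniform tilted estimates in-house.
\end{itemize}

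One wording point: the increments of $Y^p$ are not symmetric in law in general. What you need, and what holds, is that the $-\xi_i$ are again i.i.d., centered, with the same variance and third absolute moment. Berry--Esseen therefore applies verbatim for $\epsilon=-1$.
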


To prove \eqref{eq:crossing-probability}, we use two estimates
uniform in $p\in\mathcal P$: an
interval concentration bound of order $s^{-1/2}$ for $Y_s^p$ and convergence
of $Y_s^p/\sqrt s$ to a nondegenerate centered Gaussian after projection.
Local central limit estimates of this type for lattice walks are developed
in \cite[Chapter~2]{LawlerLimic}.  We prove \eqref{eq:projected-concentration} and
\eqref{eq:projected-central-limit} because the increment law of $Y^p$
depends on $p$ and need not be supported on a common one-dimensional
lattice.

\begin{proof}
We first prove that there is $C_{\rm conc}>0$ such that every interval
$J\subset\R$ of length $\ell\geq0$ satisfies
\begin{equation}\label{eq:projected-concentration}
    \mathbb P\{Y^p_s\in J\}\leq C_{\rm conc}(1+\ell)s^{-1/2}
    \qquad(s\geq1).
\end{equation}
Let $p_*:=\inf_{q\in\mathcal P}|q|>0$.  For each
$p\in\mathcal P$, choose $k$ with
$|p_k|=\max_i|p_i|\geq p_*/\sqrt d$. Conditional
on $(X^p_{s,i})_{i\ne k}$, the condition $Y_s^p\in J$ is
an interval condition on the integer $X^p_{s,k}$ of length at most
$\ell/|p_k|$; hence it contains at most
$2+\ell\sqrt d/p_*$ possible integers.  Applying
\eqref{eq:kernel-sup} in dimension one to the $k$th tilted coordinate gives
\[
 \sup_{m\in\Z}\mathbb P\{X^p_{s,k}=m\}
 \leq C_{{\rm ker},1}s^{-1/2},
\]
with $C_{{\rm ker},1}$ independent of $p$, because the one-dimensional kernel bound in
Lemma~\ref{lem:conjugation} is uniform in the tilt.  Multiplying the
number of admissible integers by this point-mass bound and then removing
the conditioning gives \eqref{eq:projected-concentration} with
$C_{\rm conc}:=C_{{\rm ker},1}\max\{2,\sqrt d/p_*\}$.
Replacing $J$ by $-J$ gives
$\mathbb P\{-Y_s^p\in J\}\leq C_{\rm conc}(1+\ell)s^{-1/2}$ as well.

We also have, for each fixed $D\geq0$,
\begin{equation}\label{eq:projected-central-limit}
    \mathbb P\{\epsilon Y^p_s\leq-D\}\to\frac12
    \qquad(s\to\infty),
\end{equation}
uniformly in $p$ and $\epsilon$. We now prove this uniformity in
$p\in\mathcal P$ and $\epsilon\in\{-1,1\}$. Lemma~\ref{lem:fourier-expansion}, applied with $g=\1_{\{0\}}$,
gives
\[
    K_p(s,x)=s^{-d/2}\gamma_p\bigl((x-sv_p)/\sqrt s\bigr)
                    +O(s^{-(d+1)/2})
\]
uniformly in $p,x$. On a fixed cube $[-R,R]^d$ in the scaled variable,
summing this error gives $O_R(s^{-1/2})$. The leading sum is the Riemann sum of $\gamma_p$ on the translated
mesh $s^{-1/2}\Z^d-\sqrt s\,v_p$, restricted to
$[-R,R]^d\cap\{\epsilon p\cdot z\leq-D/\sqrt s\}$.  We record the
uniform error estimate.  Compactness of $\mathcal P$ gives
\[
 \sup_{p\in\mathcal P}\sup_{|z|_\infty\leq R+1}
      (|\gamma_p(z)|+|\nabla\gamma_p(z)|)\leq C_R.
\]
Choose $k$ with $|p_k|=\max_i|p_i|\geq p_*/\sqrt d$.
Over a fixed grid cell in the other $d-1$ coordinates, the cutting
hyperplane varies in the $k$th coordinate by at most
$(d-1)s^{-1/2}$. The slab between
$\{\epsilon p\cdot z=-D/\sqrt s\}$ and $\{p\cdot z=0\}$ adds width
at most $D\sqrt d\,s^{-1/2}/p_*$. Thus at most
$d+2+D\sqrt d/p_*$ cells in the $k$th direction meet this slab or its
boundary. There are $O_R(s^{(d-1)/2})$ transverse grid cells, each full
cell has volume $s^{-d/2}$, and their total contribution is
$O_{R,D}(s^{-1/2})$.  On all remaining cells the mean-value theorem and the
bound on $\nabla\gamma_p$ give an error at most
$\sqrt d\,C_Rs^{-1/2}$ times the total cell volume, again $O_R(s^{-1/2})$.
Cells meeting a face of $[-R,R]^d$ also number $O_R(s^{(d-1)/2})$,
so their total contribution is $O_R(s^{-1/2})$. The Riemann sum
therefore differs by $O_{R,D}(s^{-1/2})$, uniformly in $p$ and in the
translation of the mesh, from
$\int_{[-R,R]^d\cap\{\epsilon p\cdot z\leq0\}}\gamma_p(z)\,\dd z$.

Put $C_{\rm var}:=2\sup_{p\in\mathcal P}\operatorname{tr}A(p)<\infty$.
By \eqref{eq:tilted-coordinate-moments},
$\mathbb E|X^p_s-sv_p|^2\leq C_{\rm var}s$.
For each $p$, let $Z_p$ have density $\gamma_p$; then
$\mathbb E|Z_p|^2=2\operatorname{tr}A(p)\leq C_{\rm var}$.
Chebyshev's inequality bounds each probability outside $[-R,R]^d$,
for $(X_s^p-sv_p)/\sqrt s$ and for $Z_p$, by $C_{\rm var}/R^2$.
The normal variable $p\cdot Z_p$ has variance
$2p^{\mathsf T}A(p)p\geq2p_*^2>0$, so symmetry gives
$\mathbb P\{\epsilon p\cdot Z_p\leq0\}=1/2$ for either sign.
Consequently, for fixed $R\geq1,D\geq0$ and sufficiently large $s$,
\[
 \sup_{\substack{p\in\mathcal P\\\epsilon=\pm1}}
 \left|\mathbb P\{\epsilon Y_s^p\leq-D\}-\frac12\right|
 \leq C_{R,D}s^{-1/2}+2C_{\rm var}R^{-2}.
\]
First choosing $R$ large and then $s$ large proves
\eqref{eq:projected-central-limit}.

Choose $s_D$ so large that
$\mathbb P\{\epsilon Y_s^p\leq-D\}\geq2/5$ for every
$s\geq s_D$, $p\in\mathcal P$ and $\epsilon=\pm1$.  If $r\leq0$, then
$-D-r\geq-D$, so the event
$\{\epsilon Y_s^p\leq-D\}$ is contained in
$\{\epsilon Y_s^p\leq-D-r\}$ and \eqref{eq:crossing-probability}
follows.  If $r>0$, then
\[
 \begin{split}
 \mathbb P\{\epsilon Y_s^p\leq-D-r\}
 &\geq\mathbb P\{\epsilon Y_s^p\leq-D\}
   -\mathbb P\{-D-r<\epsilon Y_s^p\leq-D\}\\
 &\geq\frac25-C_{\rm conc}(1+r)s^{-1/2},
 \end{split}
\]
where the second line uses \eqref{eq:projected-concentration} for an
interval of length $r$. Thus \eqref{eq:crossing-probability} holds with
$C_{{\rm cross},D}:=C_{\rm conc}$; only $s_D$ needs to depend on $D$.
\end{proof}

For $p\in\mathcal P$ define
\begin{equation}\label{eq:weighted-nonlinear-functions}
\begin{split}
    W_p(t,x)&:=\ee^{p\cdot(x-tv_p)}u(t,x),\\
    \mathcal R_p(t,x)&:=\ee^{p\cdot(x-tv_p)}\bigl(au(t,x)-f(u(t,x))\bigr),\\
    M_p(t)&:=\sum_xW_p(t,x).
\end{split}
\end{equation}
Since $0\leq u\leq1$ by Lemma~\ref{lem:comparison} and
$f(u)\leq au$ by \eqref{eq:kpp}, definition
\eqref{eq:weighted-nonlinear-functions} gives $\mathcal R_p(t,x)\geq0$.
The cutoff $z_p$ in Lemma~\ref{lem:critical-weighted-mass} has
coefficient $L=d+3$, rather than $\kappa_d$. It controls
the negative part of a first moment; it is not a prescribed front position.

\begin{lemma}\label{lem:critical-weighted-mass}
There are $C_{\rm mass},C_{\rm mom}>0$, depending only on $d,f,u_0$, such that
\begin{equation}\label{eq:critical-mass-decay}
    M_p(t)\leq C_{\rm mass}(1+t)^{-1/2}
    \qquad(t\geq0,\ p\in\mathcal P).
\end{equation}
With $L=d+3$ and
$z_p(t,x)=p\cdot(x-tv_p)+L\log(1+t)$, one also has
\begin{equation}\label{eq:weighted-positive-moment}
    \sum_x(z_p(t,x))_+W_p(t,x)\leq C_{\rm mom}
    \qquad(t\geq0,\ p\in\mathcal P).
\end{equation}
\end{lemma}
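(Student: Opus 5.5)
The plan follows the route announced in the introduction: a mass balance and a first-moment balance for $W_p$, combined through the crossing estimate \eqref{eq:crossing-probability} into a dyadic contraction for $M_p$, which is then iterated twice.

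\textbf{Step 1: balance identities.} Since $H(p)=p\cdot v_p$, a direct computation gives $\partial_tW_p=\mathcal L_pW_p-\mathcal R_p$. Summing over $x$ (legitimate because $u\leq\min\{1,C\ee^{ht-|x|_1}\}$ as in Lemma~\ref{lem:population-mass}, so $W_p(t,\cdot)\in\ell^1$ locally uniformly in $t$) and using $\sum_x\mathcal L_p\phi=0$ yields the mass balance
\[
M_p(t)=M_p(s)-\int_s^t\sum_x\mathcal R_p(r,x)\,\dd r .
\]
In particular $M_p$ is nonincreasing, and $M_p(0)\leq M_0$ uniformly in $p$. Next pair the equation with the linear test function $\psi(x)=p\cdot(x-rv_p)$. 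Since $\mathcal A_p\psi=p\cdot v_p$, the drift cancels the $-p\cdot v_p$ from $\partial_t\psi$, so
\[
\frac{\dd}{\dd r}\sum_x\psi W_p=-\sum_x\psi\mathcal R_p .
\]
Working with $z_p$ instead of $\psi$ adds the term $\frac{L}{1+r}M_p(r)$. This is the first-moment balance. To pass to the positive part, I will use that $(\cdot)_+$ is convex. Then $\sum_x(z_p)_+\mathcal L_pW_p\leq\sum_xW_p\mathcal A_p(z_p)_+$, and $\mathcal A_p(z_p)_+\leq p\cdot v_p\1_{\{z_p>-B_0\}}+C\1_{\{|z_p|\leq B_0\}}$. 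The bulk drift still cancels, leaving a boundary term $C\sum_{|z_p|\leq B_0}W_p$ and the gain $-\sum(z_p)_+\mathcal R_p$.

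\textbf{Step 2: dyadic contraction.} On $\{z_p\leq-K\}$, i.e.\ $p\cdot(x-tv_p)\leq -L\log(1+t)-K$, the population bound \eqref{eq:population-mass} gives $\sum W_p\leq C(1+t)^{d-L}\ee^{-K}$. With $L=d+3$ this far-left contribution is negligible. Mass with $z_p\geq -K$ is weighted by at least $(1+t)^{-L}\ee^{-K}$, which is too small to use directly. The useful fact is instead that where $u$ is bounded below, $au-f(u)\geq c_\eta u$ for $u\geq\eta$, by continuity and strict KPP positivity $f(s)<as$ on $[\eta,1]$.

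I would argue as follows. Represent $W_p(t)=\ee^{(t-s)\mathcal L_p}W_p(s)-\int\ldots$. The mass transported from $x$ ends at $x-X^p_{t-s}$, and $Y^p$ is the projected displacement. By \eqref{eq:crossing-probability}, a fraction at least $2/5-C(1+r_+)/\sqrt{t-s}$ of the mass starting at projected level $r$ is carried to the region $z_p\leq -D$, i.e.\ far to the left. There $u$ is close to $1$: on the scale $L\log t$ behind the front, $W_p$ is large exactly when $u$ is of order one, so $\mathcal R_p\gtrsim W_p$. Such mass is therefore absorbed at rate $c>0$.

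Combining this with Step 1 gives, for $t\in[T,2T]$,
\[
M_p(2T)\leq\Bigl(1-c+\frac{C\,\sum(z_p)_+W_p(T)}{M_p(T)\sqrt T}\Bigr)M_p(T)+\text{(negligible)},
\]
that is, the form of \eqref{eq:mass-dyadic-contraction}:
\[
M_p(2T)\leq\theta M_p(T)+\frac{C}{\sqrt T}\bigl(\text{first moment}+1\bigr)
\]
with $\theta<1$.

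\textbf{Step 3: iteration.} Integrating the moment inequality with $M_p\leq M_0$ bounds the first moment by $C\log(1+t)$. Iterating the dyadic contraction then gives $M_p(t)\leq C(1+\log(1+t))(1+t)^{-1/2}$. Feeding this back in, $\frac{L}{1+r}M_p(r)$ becomes integrable and the boundary term is summable, so \eqref{eq:weighted-positive-moment} holds with a uniform constant. A second iteration then yields \eqref{eq:critical-mass-decay}.

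\textbf{Main obstacle.} The hardest step is making Step 2 rigorous. One must show that mass pushed to large negative $z_p$ actually meets the absorption $\mathcal R_p$, since $u$ need not yet be near $1$ there without a lower bound. I would first try to avoid any lower bound on $u$ by a purely upper-bound mechanism. The idea is that mass cannot remain at negative $z_p$: there $W_p\leq\ee^{p\cdot(x-tv_p)}$, since $u\leq1$, which sums over that region to a bounded amount, at most about $(1+t)^{-L}$ times a polynomial. Any weighted mass exceeding this capacity must have been destroyed by $\mathcal R_p$, because $u$ is capped at $1$. The crossing probability thus converts into a loss of at least a fixed fraction of $M_p$, minus the capacity, per dyadic block. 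This is the step I would check most carefully, including the uniformity in $p\in\mathcal P$.
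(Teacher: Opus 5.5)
Your overall architecture matches the paper's: a mass balance and a first-moment balance, the crossing estimate turned into a dyadic contraction, and then two iterations. Steps 1 and 3 are fine. Your positive-part form of the moment balance works, because the layer $\{|z_p|\le B_0\}$ carries weighted mass at most $\ee^{B_0}(1+t)^{-L}\sum_x u(t,x)\le \ee^{B_0}C_{\rm pop}(1+t)^{-3}$. The paper instead keeps the signed moment $J_p$, whose drift cancels exactly, and bounds $N_p$ and $-\sum_x z_p\mathcal R_p\le aN_p$ with the same population bound. One technical correction in Step 1: for summability and for the moment bounds needed to differentiate $F_p$, use $0\le W_p\le \ee^{t\mathcal L_p}g_p$. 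The bound $u\le C\ee^{ht-|x|_1}$ does not beat $\ee^{p\cdot x}$ once some $|p_k|\ge1$, for example $p=\lambda_*e_1$ when $a$ is large.

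The genuine gap is in Step 2. The mechanism you put forward first cannot be made rigorous. Nothing available at this point gives a lower bound on $u$ where the transported mass lands. The paper's lower bounds come later and only near the coordinate axes, whereas this lemma is needed uniformly in $p\in\mathcal P$. Moreover, $au-f(u)\ge c_\eta u$ on $[\eta,1]$ is not implied by \eqref{eq:kpp}, which only assumes $f(s)\le as$.

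The capacity idea in your last paragraph is the right one, and it is exactly the paper's argument. However, the capacity does not come from $u\le1$: $\sum_{\{z_p(t,\cdot)<0\}}\ee^{p\cdot(x-tv_p)}=\infty$ for $d\ge2$, because the half-space is unbounded transversally. It comes from the population bound. On $\{z_p<0\}$ one has $W_p=(1+t)^{-L}\ee^{z_p}u\le(1+t)^{-L}u$. Hence, with $E:=\{z_p(2t,\cdot)<0\}$, you get $\sum_E W_p(2t)\le C_{\rm pop}t^{-3}$; this is where $L=d+3$ is used.

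The bookkeeping is then Duhamel on $[t,2t]$. Write $W_p(2t)=\ee^{t\mathcal L_p}W_p(t)-I$, where $I\ge0$ and $\sum_x I=M_p(t)-M_p(2t)$. This gives $\sum_E \ee^{t\mathcal L_p}W_p(t)\le C_{\rm pop}t^{-3}+M_p(t)-M_p(2t)$. On the other side, \eqref{eq:crossing-probability} with $D=L\log2+1$ absorbs the increment $L\log\frac{1+2t}{1+t}$ of the cutoff. For $t\ge s_D$ it gives $\sum_E\ee^{t\mathcal L_p}W_p(t)\ge\frac25M_p(t)-Ct^{-1/2}\bigl(M_p(t)+F_p(t)\bigr)$. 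Together these yield
\[
 M_p(2t)\le\tfrac35M_p(t)+Ct^{-1/2}\bigl(M_p(t)+F_p(t)\bigr)+C_{\rm pop}t^{-3}.
\]
This uses only $\mathcal R_p\ge0$ and the population bound, with no lower bound on $u$ and no strict KPP inequality. With this inequality in place, your Step 3 goes through as written.
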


\begin{proof}
For $p\in\mathcal P$, differentiate the definition of $W_p$ and use
\eqref{eq:main}:
\[
 \partial_tW_p
 =\ee^{p\cdot(x-tv_p)}
   \bigl(\Delta_{\Z^d}u+f(u)-p\cdot v_p\,u\bigr).
\]
Using
$W_p(t,x\pm e_k)=\ee^{p\cdot(x-tv_p)}\ee^{\pm p_k}u(t,x\pm e_k)$
in each term of \eqref{eq:general-generator}, the factors
$\ee^{\mp p_k}$ from the shifted $W_p$ cancel the coefficients
$\ee^{\pm p_k}$, and one obtains
\[
 \mathcal L_pW_p
 =\ee^{p\cdot(x-tv_p)}
   \bigl(\Delta_{\Z^d}u-H_0(p)u\bigr).
\]
Since $H(p)=H_0(p)+a=p\cdot v_p$ on $\mathcal P$, subtraction gives
\begin{equation}\label{eq:weighted-absorption-equation}
    \partial_tW_p=\mathcal L_pW_p-\mathcal R_p,
    \qquad 0\leq \mathcal R_p\leq aW_p.
\end{equation}
The bounds $0\leq\mathcal R_p\leq aW_p$ follow from
$0\leq au-f(u)\leq au$ on $[0,1]$. With $g_p(x)=\ee^{p\cdot x}u_0(x)$, the already established
linear comparison \eqref{eq:linear-comparison}, together with
\eqref{eq:conjugation} and $H(p)=p\cdot v_p$, gives directly
\[
 0\leq W_p(t,x)\leq(\ee^{t\mathcal L_p}g_p)(x).
\]
Thus $W_p$ is bounded on $[0,S]\times\Z^d$ for every $S<\infty$.
Choose a finite set $K\subset\Z^d$ containing the support of $u_0$,
which also contains the support of every $g_p$. Put
$P_0:=\sup_{p\in\mathcal P}|p|$. Then
$\sup_{p\in\mathcal P,\,z\in K}g_p(z)\leq
\ee^{P_0\max_{z\in K}|z|}$. Formula
\eqref{eq:tilted-coordinate-moments} gives
$\mathbb E|X_t^p|^2=t^2|v_p|^2+2t\operatorname{tr}A(p)$.
Since $v_p$ and $A(p)$ are bounded on $\mathcal P$,
\[
 \sup_{0\leq t\leq S,\ p\in\mathcal P}
 \mathbb E|X_t^p|^2<\infty\qquad(S<\infty).
\]
Since
$(\ee^{t\mathcal L_p}g_p)(x)=\sum_{z\in K}K_p(t,x-z)g_p(z)$,
changing variables $w=x-z$ and using
$K_p(t,w)=\mathbb P\{X_t^p=w\}$ gives
\[
 \sup_{0\leq t\leq S,\ p\in\mathcal P}
 \sum_x(1+|x|^2)(\ee^{t\mathcal L_p}g_p)(x)<\infty.
\]
The inequality $0\leq W_p\leq\ee^{t\mathcal L_p}g_p$ therefore
controls the weighted second moment of $W_p$, and
$0\leq\mathcal R_p\leq aW_p$ controls that of $\mathcal R_p$.
Finally, from
$\partial_tW_p=\mathcal L_pW_p-\mathcal R_p$ and
\eqref{eq:general-generator}, each term in $\mathcal L_pW_p$ is a unit
shift of $W_p$ multiplied by a rate $\ee^{\pm p_k}$.  Because
$1+|x|^2\leq3(1+|x\pm e_k|^2)$ and the rates are uniformly bounded, the
same weighted second-moment bound applies to $|\partial_tW_p|$.  Thus
\begin{equation}\label{eq:weighted-moment-domination}
 \sup_{\substack{0\leq t\leq S\\p\in\mathcal P}}
 \sum_x(1+|x|^2)
   \bigl(W_p(t,x)+\mathcal R_p(t,x)
                    +|\partial_tW_p(t,x)|\bigr)\leq C_S.
\end{equation}
For $|x|>R\geq1$, $1+|x|\leq2R^{-1}(1+|x|^2)$, so
\eqref{eq:weighted-moment-domination} also gives
\[
 \sum_{|x|>R}(1+|x|)\bigl(W_p+\mathcal R_p+|\partial_tW_p|\bigr)
 \leq 2C_S/R.
\]
Define
\[
\begin{aligned}
 J_p(t)&:=\sum_x z_p(t,x)W_p(t,x),&
 N_p(t)&:=\sum_x(z_p(t,x))_-W_p(t,x),\\
 F_p(t)&:=\sum_x(z_p(t,x))_+W_p(t,x),&
 D_p(t)&:=\sum_x\mathcal R_p(t,x).
\end{aligned}
\]
These sums converge absolutely by \eqref{eq:weighted-moment-domination}.
The tail bound and continuity at each site give
$W_p,\mathcal R_p\in C([0,S];\ell^1)$.
Together with \eqref{eq:weighted-absorption-equation}, they also give
$W_p\in C^1([0,S];\ell^1)$. Thus
\eqref{eq:variation-constants} applies in $\ell^1$.
The same tail bound, with the factor $1+|x|$, permits differentiation
of $M_p$ and $J_p$ under the sums. In particular, these functions are
locally absolutely continuous, and the balance identities derived
below hold almost everywhere.

Put $M_{\rm in}:=\sup_{p\in\mathcal P}M_p(0)<\infty$ and
$F_{\rm in}:=\sup_{p\in\mathcal P}F_p(0)<\infty$.
Summing
\eqref{eq:weighted-absorption-equation} over $x$ is legitimate by
\eqref{eq:weighted-moment-domination}.  In the sum of
$\mathcal L_pW_p$, the change of variables $x\mapsto x\pm e_k$ cancels
each incoming term with its corresponding diagonal term, so
$\sum_x\mathcal L_pW_p(t,x)=0$.  Therefore
\begin{equation}\label{eq:weighted-mass-balance}
    M_p'(t)=-D_p(t)\leq0,
    \qquad M_p(t)\leq M_p(0)\leq M_{\rm in}.
\end{equation}
The bounds on $M_{\rm in}$ and $F_{\rm in}$ follow from the finite
support of $u_0$ and boundedness of $\mathcal P$.
By \eqref{eq:population-mass},
\begin{equation}\label{eq:negative-moment-cap}
\begin{split}
    \sum_{z_p(t,x)<0}W_p(t,x)&\leq C_{\rm pop}(1+t)^{-3},\\
    N_p(t)&\leq C_{\rm pop}(1+t)^{-3}.
\end{split}
\end{equation}
Indeed,
$W_p=(1+t)^{-L}\ee^{z_p}u$.  On $\{z_p<0\}$,
$\ee^{z_p}\leq1$ and $(-z_p)\ee^{z_p}\leq1$.  Hence
\[
 \sum_{z_p<0}W_p
 \leq(1+t)^{-L}\sum_xu(t,x),\qquad
 N_p(t)\leq(1+t)^{-L}\sum_xu(t,x).
\]
Using $L=d+3$ and \eqref{eq:population-mass} gives both inequalities in
\eqref{eq:negative-moment-cap}.
For the affine function $z_p(t,x)$ one has
$z_p(t,x\pm e_k)-z_p(t,x)=\pm p_k$.  Therefore
\[
 \mathcal A_pz_p
 =\sum_{k=1}^d p_k(\ee^{p_k}-\ee^{-p_k})
 =2\sum_{k=1}^d p_k\sinh p_k
 =p\cdot v_p,
\]
while differentiating the definition of $z_p$ gives
$\partial_tz_p=-p\cdot v_p+L/(1+t)$. Although $z_p(t,\cdot)$ is
unbounded, its growth is linear, so \eqref{eq:weighted-moment-domination}
justifies the index shifts in \eqref{eq:adjoint-generator} with
$\phi=z_p$ and $\psi=W_p$. Differentiating
$J_p(t)=\sum_x z_p(t,x)W_p(t,x)$, inserting
$\partial_tW_p=\mathcal L_pW_p-\mathcal R_p$ from
\eqref{eq:weighted-absorption-equation}, and using
\eqref{eq:adjoint-generator} yields
\[
\begin{split}
J_p'(t)
 &=\sum_x\bigl(\partial_tz_p(t,x)\bigr)W_p(t,x)
   +\sum_xz_p(t,x)\mathcal L_pW_p(t,x)
   -\sum_xz_p(t,x)\mathcal R_p(t,x)\\
 &=\sum_x\bigl(\partial_tz_p(t,x)+\mathcal A_pz_p(t,x)\bigr)W_p(t,x)
   -\sum_xz_p(t,x)\mathcal R_p(t,x).
\end{split}
\]
The interchange of sums and the index shifts in the second equality are
justified by \eqref{eq:weighted-moment-domination}. Substituting $\partial_tz_p+\mathcal A_pz_p=L/(1+t)$
into the expression for $J_p'(t)$ yields
\begin{equation}\label{eq:weighted-moment-balance}
    J_p'(t)=\frac L{1+t}M_p(t)-\sum_xz_p(t,x)\mathcal R_p(t,x)
       \leq\frac L{1+t}M_p(t)+aC_{\rm pop}(1+t)^{-3}.
\end{equation}
Indeed, on $\{z_p<0\}$ one has $\mathcal R_p\leq aW_p$, while the
contribution from $\{z_p\geq0\}$ to
$-\sum_xz_p\mathcal R_p$ is nonpositive. Hence
$-\sum_xz_p\mathcal R_p\leq aN_p(t)$, and
\eqref{eq:negative-moment-cap} gives the last term in
\eqref{eq:weighted-moment-balance}.
Since $M_p(t)\leq M_{\rm in}$, integration of
\eqref{eq:weighted-moment-balance} from $0$ to $t$ gives
\[
 J_p(t)\leq F_{\rm in}+LM_{\rm in}\log(1+t)+\frac a2C_{\rm pop}.
\]
Since $F_p(t)=J_p(t)+N_p(t)$, adding
\eqref{eq:negative-moment-cap} gives
\begin{equation}\label{eq:preliminary-weighted-moment}
    F_p(t)\leq C(1+\log(1+t)).
\end{equation}

We next obtain a contraction of the mass on doubled time intervals.
Let $t\geq1$ and $E_{p,2t}:=\{x\in\Z^d:z_p(2t,x)<0\}$.
A tilted walk started at $y$ at time $t$ has position $y+X_t^p$ at
time $2t$. Its shifted coordinate satisfies
\[
    z_p(2t,y+X_t^p)
       =z_p(t,y)+Y^p_t+L\log\frac{1+2t}{1+t}.
\]
The logarithmic increment satisfies
$L\log((1+2t)/(1+t))<L\log2$.  Put
$D=L\log2+1$.  If the increment satisfies
$Y_t^p\leq-D-z_p(t,y)$, then the final coordinate is
\[
 z_p(t,y)+Y_t^p+L\log\frac{1+2t}{1+t}<-1<0,
\]
so the endpoint belongs to $E_{p,2t}$.  Applying Lemma~\ref{lem:projected-crossing} with
$\epsilon=1$ and $r=z_p(t,y)$ gives
\[
 \mathbb P\{y+X_t^p\in E_{p,2t}\}
 \geq\frac25-C_{{\rm cross},D}t^{-1/2}\bigl(1+(z_p(t,y))_+\bigr)
\]
for $t\geq s_D$. Multiply by $W_p(t,y)$ and sum over $y$;
the definitions of $M_p$ and $F_p$ give
\begin{equation}\label{eq:free-crossing-mass}
    \sum_{x\in E_{p,2t}}(\ee^{t\mathcal L_p}W_p(t))(x)
      \geq\frac25M_p(t)
              -\frac{C_{{\rm cross},D}}{\sqrt t}\bigl(M_p(t)+F_p(t)\bigr).
\end{equation}
Applying \eqref{eq:variation-constants} to
\eqref{eq:weighted-absorption-equation} on $[t,2t]$ gives
\[
    W_p(2t)=\ee^{t\mathcal L_p}W_p(t)
             -\int_t^{2t}\ee^{(2t-s)\mathcal L_p}\mathcal R_p(s)\,\dd s.
\]
The time integral is nonnegative. Its sum over $\Z^d$ equals
$\int_t^{2t}D_p(s)\,\dd s=M_p(t)-M_p(2t)$ by
\eqref{eq:weighted-mass-balance}. Its sum over $E_{p,2t}$ is therefore
at most $M_p(t)-M_p(2t)$.  On the other hand,
\eqref{eq:negative-moment-cap} at time $2t$ gives
$\sum_{x\in E_{p,2t}}W_p(2t,x)\leq C_{\rm pop}t^{-3}$.  Therefore
\[
 \sum_{x\in E_{p,2t}}(\ee^{t\mathcal L_p}W_p(t))(x)
 \leq C_{\rm pop}t^{-3}+M_p(t)-M_p(2t).
\]
Combining this with \eqref{eq:free-crossing-mass} and rearranging yields
\begin{equation}\label{eq:mass-dyadic-contraction}
    M_p(2t)\leq\frac35M_p(t)
          +\frac{C_{{\rm cross},D}}{\sqrt t}\bigl(M_p(t)+F_p(t)\bigr)+C_{\rm pop}t^{-3}.
\end{equation}

We use the dyadic contraction \eqref{eq:mass-dyadic-contraction} twice. First insert
\eqref{eq:preliminary-weighted-moment} and $M_p\leq M_{\rm in}$.
Set $t_0=s_D\geq1$, with $D=L\log2+1$, so that
\eqref{eq:mass-dyadic-contraction} holds for every $t\geq t_0$.
Put $t_n=2^nt_0$ and $b_n=b_n(p):=\sqrt{t_n}M_p(t_n)$.
Since $M_p(t_n)\leq M_{\rm in}$ and
$F_p(t_n)\leq C(1+\log t_n)\leq C(1+n)$, multiplying
\eqref{eq:mass-dyadic-contraction} by $\sqrt{2t_n}$ gives, for a fixed
$C_{{\rm iter},1}>0$ independent of $p,n$,
\[
    b_{n+1}\leq\frac{3\sqrt2}{5}b_n+C_{{\rm iter},1}(1+n).
\]
Put $\rho=3\sqrt2/5<1$. Iteration yields
$b_n\leq\rho^n b_0+C_{{\rm iter},1}\sum_{k=0}^{n-1}\rho^{n-1-k}(1+k)
\leq C(1+n)$, uniformly in $p$.
For $t_n\leq t<t_{n+1}$, \eqref{eq:weighted-mass-balance} gives
$M_p(t)\leq M_p(t_n)$ and $t<2t_n$, hence
$\sqrt t M_p(t)\leq\sqrt2 b_n$.
Choose a fixed $C_{\rm pre}>0$ large enough to cover this estimate
and $M_p(t)\leq M_{\rm in}$ for $0\leq t\leq t_0$. Then
\begin{equation}\label{eq:preliminary-critical-mass}
    M_p(t)\leq C_{\rm pre}\frac{1+\log(1+t)}{\sqrt{1+t}}
       \qquad(t\geq0,\ p\in\mathcal P).
\end{equation}
In particular,
\[
 \int_0^\infty\frac{M_p(s)}{1+s}\,\dd s
 \leq C_{\rm pre}\int_0^\infty
   \frac{1+\log(1+s)}{(1+s)^{3/2}}\,\dd s=6C_{\rm pre}.
\]
Integrating \eqref{eq:weighted-moment-balance}, using
$J_p(0)\leq F_{\rm in}$, and adding
$N_p(t)\leq C_{\rm pop}$ from \eqref{eq:negative-moment-cap}, gives
\[
 F_p(t)\leq F_{\rm in}+6LC_{\rm pre}+(1+a/2)C_{\rm pop}.
\]
Thus \eqref{eq:weighted-positive-moment} holds with the fixed choice
$C_{\rm mom}:=1+F_{\rm in}+6LC_{\rm pre}+(1+a/2)C_{\rm pop}$.
Substituting $F_p(t)<C_{\rm mom}$ and $M_p(t)\leq M_{\rm in}$ into
\eqref{eq:mass-dyadic-contraction}, then multiplying by
$\sqrt{2t_n}$, gives
\[
    b_{n+1}\leq\frac{3\sqrt2}{5}b_n+C_{{\rm iter},2},
\]
where one may take
$C_{{\rm iter},2}:=\sqrt2\,[C_{{\rm cross},D}(M_{\rm in}+C_{\rm mom})
+C_{\rm pop}t_0^{-5/2}]$.
Now $b_n\leq\rho^n b_0+C_{{\rm iter},2}/(1-\rho)$.
For $t_n\leq t<t_{n+1}$,
$M_p(t)\leq M_p(t_n)$ by \eqref{eq:weighted-mass-balance} and
$t<2t_n$, so $\sqrt t\,M_p(t)\leq\sqrt2\,b_n$.  The uniform bound
on $b_n$ gives $M_p(t)\leq C t^{-1/2}$ for $t\geq t_0$.
Since $t^{-1/2}\leq\sqrt2(1+t)^{-1/2}$ for $t\geq1$, choose
$C_{\rm mass}$ large enough to cover this bound and
$M_p(t)\leq M_{\rm in}$ on $[0,t_0]$. This proves
\eqref{eq:critical-mass-decay}. Thus the derivation of
\eqref{eq:critical-mass-decay} uses \eqref{eq:population-mass} and \eqref{eq:weighted-mass-balance} and \eqref{eq:weighted-moment-balance},
but no prior estimate for the nonlinear front position.
\end{proof}

G\"artner's proof in $\mathbb{R}^d$ compares Brownian densities killed
at a curved moving boundary with densities killed at its linear
interpolation; see \cite[Section~2]{Gartner} and the moving-boundary
first-exit analysis of Uchiyama \cite{UchiyamaFirstExit}.
For fixed terminal time $t$, the boundary in
\eqref{eq:terminal-halfspace} is affine in $(\sigma,y)$: the term
$\kappa_d\log t$ is held constant as $\sigma$ varies.
Its coordinate along the backward path is $z+1-Y_s^p$, as computed
in the proof of Theorem~\ref{thm:sharp-directional-upper}. The $s^{-1/2}$ survival scale is the
classical one-dimensional fluctuation scale; see, for example,
\cite{Spitzer}. We include a direct proof because the constant must be
uniform in $p\in\mathcal P$ and in both signs of the projected walk.

\begin{lemma}\label{lem:halfspace-survival}
For $r>0$ and $\epsilon\in\{-1,1\}$, let
\[
    \tau_{p,r}^{\epsilon}
       :=\inf\{s\geq0:r+\epsilon Y^p_s\leq0\}.
\]
Here $\inf\varnothing:=+\infty$. There is $C_{\rm surv}>0$, depending only on $d,a$, such that
\begin{equation}\label{eq:survival-upper}
    \mathbb P\{\tau_{p,r}^{\epsilon}>s\}
       \leq C_{\rm surv}\min\left\{1,\frac{1+r}{\sqrt s}\right\}
       \qquad(s>0,\ p\in\mathcal P).
\end{equation}
\end{lemma}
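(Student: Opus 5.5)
The plan is to reduce to a one-dimensional martingale argument for $\epsilon Y^p$, which is a centered martingale. Its jumps have size at most $B_0$ and its variance rate is $\sigma_p^2:=2p^{\mathsf T}A(p)p$. This rate is bounded above and below uniformly on $\mathcal P$, since $\sigma_p^2\geq2|p|^2\geq2p_*^2$. The bound $1$ is trivial, so we may assume $s\geq(1+r)^2$.

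The first step is an optional-stopping estimate. Put $Z_s:=r+\epsilon Y^p_s$ and $\tau=\tau^\epsilon_{p,r}$. Fix a level $h\geq r$, let $\sigma_h$ be the first time $Z\geq h$, and set $\tau'=\tau\wedge\sigma_h\wedge s$. Optional stopping gives $\mathbb E Z_{\tau'}=r$. At $\tau$ one has $Z_\tau\in(-B_0,0]$, since the walk crosses zero by a jump of size at most $B_0$. At $\sigma_h$ one has $Z_{\sigma_h}\in[h,h+B_0)$. Before either exit, $Z$ lies in $(0,h)$. This is a gambler's-ruin bound with overshoot correction, and it yields
\[
 \mathbb P\{\sigma_h<\tau\wedge s\}\leq\frac{r+B_0}{h}.
\]

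The second step is a variance bound on the event where neither exit has happened by time $s$. On $\{\tau\wedge\sigma_h>s\}$ the process stays in $(0,h)$ up to time $s$. The quadratic-variation martingale $Z^2_{\cdot}-\sigma_p^2\,\cdot$, stopped at $\tau'$, gives
\[
 \sigma_p^2\,\mathbb E\tau'=\mathbb E Z_{\tau'}^2-r^2\leq(h+B_0)^2.
\]
Markov's inequality then bounds $\mathbb P\{\tau'\geq s\}\leq(h+B_0)^2/(\sigma_p^2s)$. Combining the two steps,
\[
 \mathbb P\{\tau>s\}\leq\mathbb P\{\sigma_h<\tau\wedge s\}+\mathbb P\{\tau\wedge\sigma_h\geq s\}\leq\frac{r+B_0}{h}+\frac{(h+B_0)^2}{\sigma_p^2 s}.
\]
Choosing $h=\sqrt s$, which is admissible because $\sqrt s\geq 1+r>r$, the right-hand side is $\leq C(1+r)/\sqrt s$. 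The constant $C$ depends only on $B_0$ and on the bounds for $\sigma_p^2$. Both are uniform on $\mathcal P$, and the paper attributes this dependence to $d,a$.

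The main obstacle is justifying optional stopping for a continuous-time jump martingale with overshoot. Everything is bounded up to $\tau'$, since $|Z_{\tau'}|\leq h+B_0$ and $\tau'\leq s$, and the jump count has finite moments. So we only need to check that $Y^p$ and $(Y^p)^2-\sigma_p^2 s$ are $\mathcal F^p_s$-martingales. The first was noted before Lemma~\ref{lem:projected-crossing}. The second follows from the independent increments and \eqref{eq:projected-variance}. We then apply the bounded-stopping-time form of the optional stopping theorem \cite[Section~6.5]{Norris}. The overshoot bounds use only that each jump changes $Y^p$ by at most $B_0$ in absolute value.
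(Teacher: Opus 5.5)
\textbf{Gap in the final step.} Your two optional-stopping identities are correct, but the last step does not give the claimed rate. You bound $\mathbb E Z_{\tau'}^2\le(h+B_0)^2$, so the Markov term is $(h+B_0)^2/(\sigma_p^2 s)$. At $h=\sqrt s$ this equals $(1+B_0s^{-1/2})^2/\sigma_p^2\ge\sigma_p^{-2}$, which is a constant and does not decay in $s$.

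Optimizing $h$ in $\frac{r+B_0}{h}+\frac{(h+B_0)^2}{\sigma_p^2 s}$ does not rescue it. It gives $h\asymp((r+B_0)s)^{1/3}$ and a bound of order $(1+r)^{2/3}s^{-1/3}$. That is too weak, because the $s^{-1/2}$ survival rate supplies the second extra factor $t^{-1/2}$ in Theorem~\ref{thm:sharp-directional-upper}. The underlying problem is that bounding $Z_{\tau'}^2$ by $(h+B_0)^2$ everywhere ignores that $Z_{\tau'}$ is of size $h$ only on an event of probability $O((r+B_0)/h)$.

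\textbf{Repair.} Use your first identity a second time. Since $Z_{\tau'}\ge-B_0$ and $\mathbb E Z_{\tau'}=r$, you have $\mathbb E(Z_{\tau'})_+\le r+B_0$; also $(Z_{\tau'})_+\le h+B_0$. Hence
\[
 \sigma_p^2\,\mathbb E\tau'\le\mathbb E Z_{\tau'}^2\le(h+B_0)\,\mathbb E(Z_{\tau'})_++B_0^2\le(h+B_0)(r+B_0)+B_0^2 .
\]
Markov's inequality then gives $\mathbb P\{\tau'=s\}\le[(h+B_0)(r+B_0)+B_0^2]/(\sigma_p^2 s)$. Taking $h=\sqrt s\ge1$ yields $C(1+r)/\sqrt s$, with $C$ depending only on $B_0$ and $\inf_{\mathcal P}\sigma_p^2\ge2p_*^2$.

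\textbf{Comparison with the paper.} With this correction your argument is valid and more elementary than the paper's. The paper also starts from $\mathbb E R_{s\wedge\tau}=r$, but uses it only to get $\mathbb E[R_s\1_{\{\tau>s\}}]\le r+B_0$. It then applies the crossing estimate \eqref{eq:crossing-probability} with $D=0$ from the surviving height. This gives the dyadic recursion $h_r(2s)\le\frac35h_r(s)+C(1+r)s^{-1/2}$, which it iterates. That route depends on the uniform central-limit and concentration bounds of Lemma~\ref{lem:projected-crossing}. Yours needs only the martingales $Z$ and $Z^2-\sigma_p^2 s$, the jump bound $B_0$, and the uniform lower bound on $\sigma_p^2$.
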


\begin{proof}
Fix $p\in\mathcal P$, $r>0$ and $\epsilon\in\{-1,1\}$. Write
$R_s:=r+\epsilon Y_s^p$, $\tau=\tau_{p,r}^{\epsilon}$ and
$h_r(s):=\mathbb P\{\tau>s\}$.
On each bounded interval, the paths have finitely many jumps and are
linear between jumps, with slope $-\epsilon p\cdot v_p$.
Whether a path has reached $(-\infty,0]$ by time $s$ is determined by
the clocks up to $s$, so $\tau$ is an $\mathcal F_s^p$-stopping time.
If $\tau<\infty$, then $-B_0\leq R_\tau\leq0$: exit by the drift
has value zero, and exit by a jump has overshoot at most $B_0$.

We justify the stopped expectation before using it. Fix $s>0$ and
put $T=\tau\wedge s$. The times
$T_n:=2^{-n}\lceil2^nT\rceil$ are stopping times on finite grids and
satisfy $T_n\downarrow T$ and $T_n\leq s+1$.
The martingale identity on each finite grid gives
$\mathbb E R_{T_n}=r$; see \cite[Section~4.1]{Norris} for the
discrete-time optional-sampling argument. Define
$\mathcal N_v^p:=\sum_k(N_k^{p,+}(v)+N_k^{p,-}(v))$ and
$C_v:=\sup_{p\in\mathcal P}|p\cdot v_p|<\infty$. Then
\[
 |R_{T_n}|\leq r+B_0\mathcal N_{s+1}^p+C_v(s+1).
\]
The right-hand side is integrable, since $\mathcal N_{s+1}^p$ is a
Poisson variable with finite mean. Right-continuity of $R$ and
dominated convergence give $\mathbb E R_{s\wedge\tau}=r$.
Splitting this expectation over $\{\tau>s\}$ and $\{\tau\leq s\}$ yields
\begin{equation}\label{eq:surviving-first-moment}
    \mathbb E\bigl[R_s\1_{\{\tau>s\}}\bigr]
       =r-\mathbb E\bigl[R_\tau\1_{\{\tau\leq s\}}\bigr]
       \leq r+B_0.
\end{equation}

Conditional on $\mathcal F_s^p$, the increment
$\epsilon(Y_{2s}^p-Y_s^p)$ is independent of the past and has the
law of $\epsilon Y_s^p$. On $\{\tau>s\}$ the height $R_s$ is positive.
Survival to $2s$ requires a positive endpoint, so
\eqref{eq:crossing-probability}, with $D=0$ and starting height $R_s$,
gives for $s\geq s_0$, where $s_0:=s_D|_{D=0}\geq1$,
\[
 \mathbb P\{\tau>2s\mid\mathcal F_s^p\}
 \leq\1_{\{\tau>s\}}
       \left[\frac35+\frac{C_{{\rm cross},0}}{\sqrt s}(1+R_s)\right].
\]
Taking expectations and using \eqref{eq:surviving-first-moment} and
$h_r(s)\leq1$ yields
\[
 h_r(2s)\leq\frac35h_r(s)
       +\frac{C_{{\rm cross},0}}{\sqrt s}(1+r+B_0)
 \leq\frac35h_r(s)+C_{\rm rec}\frac{1+r}{\sqrt s},
\]
where $C_{\rm rec}:=C_{{\rm cross},0}(1+B_0)$.
Put $s_n=2^ns_0$, $a_n=\sqrt{s_n}\,h_r(s_n)/(1+r)$ and
$\rho=3\sqrt2/5<1$. Then
$a_{n+1}\leq\rho a_n+\sqrt2C_{\rm rec}$ and $a_0\leq\sqrt{s_0}$,
so
\[
 a_n\leq\rho^na_0+\frac{\sqrt2C_{\rm rec}}{1-\rho}
       \leq A_{\rm surv}:=\sqrt{s_0}
                         +\frac{\sqrt2C_{\rm rec}}{1-\rho}.
\]
For $s_n\leq s<s_{n+1}$, monotonicity of $h_r$ gives
\[
 h_r(s)\leq h_r(s_n)\leq A_{\rm surv}\frac{1+r}{\sqrt{s_n}}
          \leq\sqrt2A_{\rm surv}\frac{1+r}{\sqrt s}.
\]
For $0<s<s_0$, one has
$h_r(s)\leq1\leq\sqrt{s_0}(1+r)/\sqrt s$.
Since $A_{\rm surv}\geq\sqrt{s_0}\geq1$, choosing
$C_{\rm surv}:=\sqrt2A_{\rm surv}$ and using $h_r(s)\leq1$ proves
\eqref{eq:survival-upper}. These constants depend only on $d,a$.
\end{proof}

\begin{proof}[Proof of Theorem~\ref{thm:sharp-directional-upper}]
Estimate \eqref{eq:critical-mass-main} is \eqref{eq:critical-mass-decay} from Lemma~\ref{lem:critical-weighted-mass}.
Fix $p\in\mathcal P$, $t\geq4$, and $x\in\Z^d$ such that
$z:=p\cdot(x-tv_p)+\kappa_d\log t\geq0$.
Keep the final time $t$ fixed throughout the stopping argument. Consider
$\widehat X_s=x-X^p_s$, whose generator on functions is $\mathcal L_p$,
and the time-dependent half-space
\begin{equation}\label{eq:terminal-halfspace}
    \mathcal D_t(\sigma)
      :=\{y\in\Z^d:p\cdot(y-\sigma v_p)+\kappa_d\log t+1>0\},
      \qquad t/2\leq\sigma\leq t.
\end{equation}
The affine normal coordinate of the backward path is
\[
    p\cdot(\widehat X_s-(t-s)v_p)+\kappa_d\log t+1
       =z+1-Y^p_s.
\]
Set $\tau:=\tau_{p,z+1}^{-1}
=\inf\{s\geq0:z+1-Y_s^p\leq0\}$ and
$\vartheta:=\tau\wedge(t/2)$. Thus $\tau$ is the first exit time of $z+1-Y_s^p$ from $(0,\infty)$.

The functions $W_p$, $\partial_tW_p$, and $\mathcal R_p$ are bounded on
$[t/2,t]\times\Z^d$.  Indeed,
$W_p(\sigma)\leq\ee^{\sigma\mathcal L_p}g_p$ and
\eqref{eq:kernel-sup} gives
$\|W_p(\sigma)\|_\infty\leq C_{{\rm ker},d}(t/2)^{-d/2}\|g_p\|_1$ for
$\sigma\in[t/2,t]$.  Since $0\leq\mathcal R_p\leq aW_p$ and
$\mathcal L_p$ is a finite linear combination of unit shifts with rates
$\ee^{\pm p_k}$, \eqref{eq:weighted-absorption-equation} also gives a
uniform bound on $\partial_tW_p$ over this time interval.
For $0\leq s\leq t/2$, put $F(s,y):=W_p(t-s,y)$.
We obtain the stopped identity from the compensated Poisson clocks;
for the generator-martingale formulation, see
\cite[Chapter~4]{EthierKurtz}. Write
$\widetilde N_k^{p,\pm}(s):=N_k^{p,\pm}(s)-s\ee^{\pm p_k}$ and
\[
 H_k^\pm(s):=F(s,\widehat X_{s-}\mp e_k)-F(s,\widehat X_{s-}),
 \qquad \widehat X_{s-}:=\lim_{r\uparrow s}\widehat X_r\quad(s>0),
\]
with $\widehat X_{0-}:=x$. The minus sign in $H_k^+$ corresponds to a positive jump of $X^p$,
which moves $\widehat X$ by $-e_k$.
Summing the changes of $F$ between and at jumps gives
\begin{equation}\label{eq:backward-jump-martingale}
\begin{split}
 \mathcal M_s
 &:=F(s,\widehat X_s)-F(0,x)
       -\int_0^s(\partial_r+\mathcal L_p)F(r,\widehat X_r)\,\dd r\\
 &=\sum_{k=1}^d\left[
       \int_0^s H_k^+(r)\,\dd\widetilde N_k^{p,+}(r)
       +\int_0^s H_k^-(r)\,\dd\widetilde N_k^{p,-}(r)\right].
\end{split}
\end{equation}
The integrands use the position just before a jump and are bounded by
$2\|F\|_\infty$. The independent Poisson increments therefore make
$\mathcal M_s$ a square-integrable martingale. The required second-moment
bound is
\[
 \sum_k\mathbb E\int_0^{t/2}
       \bigl(\ee^{p_k}|H_k^+(r)|^2+\ee^{-p_k}|H_k^-(r)|^2\bigr)\,\dd r
 \leq 2t\|F\|_\infty^2\sum_k(\ee^{p_k}+\ee^{-p_k})<\infty.
\]
Here changing $\widehat X_r$ to $\widehat X_{r-}$ in the time integral
has no effect, since the jump times have Lebesgue measure zero.
The space-time generator is
\[
 (\partial_s+\mathcal L_p)F(s,y)
 =(-\partial_t+\mathcal L_p)W_p(t-s,y)
 =\mathcal R_p(t-s,y)
\]
by \eqref{eq:weighted-absorption-equation}.
Sampling \eqref{eq:backward-jump-martingale} at the bounded stopping
time $\vartheta\leq t/2$ gives
\begin{equation}\label{eq:stopped-expectation}
    \mathbb E W_p(t-\vartheta,\widehat X_\vartheta)
      =W_p(t,x)+
          \mathbb E\int_0^\vartheta \mathcal R_p(t-s,\widehat X_s)\,\dd s
      \geq W_p(t,x).
\end{equation}
The stopped expectation is justified by rounding
$\vartheta$ up to finite grids in $[0,t/2]$ and using
right-continuity and boundedness of $F$, $\partial_sF$ and
$\mathcal L_pF$ in \eqref{eq:backward-jump-martingale}.
On $\{\tau\leq t/2\}$ the exit definition gives
\[
 p\cdot(\widehat X_\tau-(t-\tau)v_p)+\kappa_d\log t+1\leq0.
\]
Since $0\leq u\leq1$,
\[
 W_p(t-\tau,\widehat X_\tau)
 =\ee^{p\cdot(\widehat X_\tau-(t-\tau)v_p)}
   u(t-\tau,\widehat X_\tau)
 \leq\ee^{-1}t^{-\kappa_d}.
\]
On $\{\tau>t/2\}$ one has $\vartheta=t/2$.  Splitting the expectation
in \eqref{eq:stopped-expectation} over these two events gives
\begin{equation}\label{eq:stopped-upper-bound}
    W_p(t,x)\leq
       \mathbb E\bigl[W_p(t/2,\widehat X_{t/2})
                                      \1_{\{\tau>t/2\}}\bigr]
                +\ee^{-1}t^{-\kappa_d}.
\end{equation}
In \eqref{eq:stopped-upper-bound}, the exit contribution is evaluated at
$\widehat X_\tau$ itself; the possible jump overshoot across the half-space
boundary is retained.

The event $\{\tau>t/2\}$ is contained in $\{\tau>t/4\}$.
Condition on $\mathcal F_{t/4}^p$ and remove the survival requirement on
$[t/4,t/2]$.  By the independent increments of $X^p$ and
\eqref{eq:two-semigroup-conventions},
\[
 \mathbb E\left[W_p(t/2,\widehat X_{t/2})
        \mid\mathcal F_{t/4}^p\right]
 =(\ee^{(t/4)\mathcal L_p}W_p(t/2))(\widehat X_{t/4}).
\]
Therefore
\[
\begin{split}
    &\mathbb E\bigl[W_p(t/2,\widehat X_{t/2})
                                      \1_{\{\tau>t/2\}}\bigr]\\
    &\quad\leq
       \mathbb P\{\tau>t/4\}
                \|\ee^{(t/4)\mathcal L_p}W_p(t/2)\|_\infty.
\end{split}
\]
The affine coordinate of the backward path is $z+1-Y_s^p$, so
\eqref{eq:survival-upper} with $r=z+1$ and $\epsilon=-1$ gives
$\mathbb P\{\tau>t/4\}\leq2C_{\rm surv}(z+2)t^{-1/2}$. Also,
\eqref{eq:kernel-sup} and the definition of $M_p$ give
\[
 \|\ee^{(t/4)\mathcal L_p}W_p(t/2)\|_\infty
 \leq4^{d/2}C_{{\rm ker},d}t^{-d/2}M_p(t/2)
 \leq C_{\rm heat}t^{-(d+1)/2},
\]
where $C_{\rm heat}:=\sqrt2\,4^{d/2}C_{{\rm ker},d}C_{\rm mass}$ by
\eqref{eq:critical-mass-decay}. Put $C_{\rm stop}:=2C_{\rm surv}C_{\rm heat}$.
Then
\[
 \mathbb E\bigl[W_p(t/2,\widehat X_{t/2})
        \1_{\{\tau>t/2\}}\bigr]
 \leq C_{\rm stop}(z+2)t^{-\kappa_d}.
\]
Because $p\cdot(x-tv_p)=z-\kappa_d\log t$,
$u(t,x)=t^{\kappa_d}\ee^{-z}W_p(t,x)$.  Multiplying
\eqref{eq:stopped-upper-bound} by $t^{\kappa_d}\ee^{-z}$ gives
$u(t,x)\leq C_{\rm stop}(z+2)\ee^{-z}+\ee^{-1-z}$. For $z\geq0$ this is
at most $C_{\rm up}(1+z)\ee^{-z}$, with the fixed choice
$C_{\rm up}:=\max\{1,2C_{\rm stop}+\ee^{-1}\}$. This proves
\eqref{eq:sharp-directional-envelope}.

To prove \eqref{eq:sharp-directional-location}, fix
$e\in\mathbb S^{d-1}$, take $p=p_e$,
and write $x=re+\delta$ with $r\geq0$ and $|\delta|\leq\rho$.
Then $z=\alpha_e(r-w_*(e)t)+\kappa_d\log t+p_e\cdot\delta$.
Let
$\alpha_0:=\min_e\alpha_e>0$ and $P_0:=\max_e|p_e|<\infty$, whose
existence is proved in Lemma~\ref{lem:critical-pair}.  If
$r\geq w_*(e)t-(\kappa_d/\alpha_e)\log t+\ell$ with $\ell\geq0$,
then $z\geq\alpha_e\ell+p_e\cdot\delta\geq\alpha_0\ell-P_0\rho$.
For $Z\geq0$, the inequality $1+Z\leq2\ee^{Z/2}$ gives
$C_{\rm up}(1+Z)\ee^{-Z}\leq2C_{\rm up}\ee^{-Z/2}$.
Choose $Z_\theta:=2\log(4C_{\rm up}/\theta)>0$, so that this bound is
at most $\theta/2$ for $Z\geq Z_\theta$, and set
\[
 L_{\theta,\rho}:=\frac{Z_\theta+P_0\rho}{\alpha_0}.
\]
Then $z\geq\alpha_0L_{\theta,\rho}-P_0\rho=Z_\theta$, so
\eqref{eq:sharp-directional-envelope} implies
\eqref{eq:sharp-directional-location}.
\end{proof}
\section{A lower solution}\label{sec:lower}

The lower bound starts with a product of a solution of
\eqref{eq:time-dependent-logistic} and a normalized transverse heat
kernel. Lemma~\ref{lem:product-subsolution} defines the factors and
proves \eqref{eq:product-comparison}.

\begin{lemma}\label{lem:product-subsolution}
Put $n=d-1$ and choose
\[
    C_f:=\max\left\{a,\frac12\norm{f''}_{L^\infty([0,1])}\right\}.
\]
There is $\beta_\perp>0$, depending only on $n$, such that
\[
    0<H_T(t,y):=\beta_\perp(t+T)^{n/2}\pheat_n(t+T,y)\leq1
    \qquad(t\geq0,\ y\in\Z^n)
\]
for every $T\geq1$. Fix such a $T$, and let $q$ solve
\begin{equation}\label{eq:time-dependent-logistic}
\left\{
\begin{aligned}
    q_t(t,j)&=q(t,j-1)-2q(t,j)+q(t,j+1)
       +\left(a-\frac{n}{2(t+T)}\right)q(t,j)-C_fq(t,j)^2,\\
    0&\leq q(0,j)\leq a/C_f.
\end{aligned}
\right.
\end{equation}
Then, for every $y_0\in\Z^n$,
\begin{equation}\label{eq:product-subsolution}
    \underline u(t,j,y):=q(t,j)H_T(t,y-y_0)
\end{equation}
is a global subsolution of \eqref{eq:main}, with values in $[0,1]$.

Moreover, for the solution $u$ with initial data \eqref{eq:u0}, one can
choose $T>\max\{1,n/(2a)\}$, a site $(j_0,y_0)$ and
$\eta_q\in(0,a/C_f]$ such that the solution with
$q(0,j)=\eta_q\1_{\{j_0\}}(j)$ satisfies
\begin{equation}\label{eq:product-comparison}
    u(T+t,j,y)\geq q(t,j)H_T(t,y-y_0)
    \qquad(t\geq0,\ j\in\Z,\ y\in\Z^n).
\end{equation}
For this choice, there are $h_*>0$ and $t_*>0$ such that
\begin{equation}\label{eq:axis-product-lower}
    u(T+t,j,0)\geq h_*q(t,j)
    \qquad(t\geq t_*,\ j\in\Z).
\end{equation}
\end{lemma}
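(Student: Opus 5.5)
The plan has four steps: bound the normalized transverse kernel, verify the product is a subsolution by direct computation, order the initial data using the product structure of the heat kernel, and evaluate the transverse factor on the axis.

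\emph{The factor $H_T$.} Positivity of $H_T$ follows from \eqref{eq:heat-semigroup}. For the upper bound, apply \eqref{eq:kernel-sup} in dimension $n$ with tilt $p=0$. Since $K_0=\pheat_n$, this gives $\pheat_n(s,y)\le C_{{\rm ker},n}s^{-n/2}$ for $s\ge1$. Because $t+T\ge1$, the choice $\beta_\perp:=C_{{\rm ker},n}^{-1}$ gives $H_T\le1$, depending only on $n$.

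\emph{Subsolution property.} A direct computation gives
\[
\partial_tH_T=\Delta_{\Z^n}H_T+\frac{n}{2(t+T)}H_T .
\]
Hence, for $\underline u=qH_T$, the $n/(2(t+T))$ terms cancel:
\[
\partial_t\underline u=\Delta_{\Z^d}\underline u+a\underline u-C_fq^2H_T .
\]
This uses the fact that $\Delta_{\Z^d}$ splits into the longitudinal part acting on $q$ and $\Delta_{\Z^n}$ acting on $H_T$.

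First I show $0\le q\le a/C_f$. The constant $0$ solves \eqref{eq:time-dependent-logistic}. The constant $a/C_f$ is a supersolution, since its right-hand side is $-\frac{n}{2(t+T)}\frac a{C_f}\le0$. Lemma~\ref{lem:comparison} with $n=1$, $D=\Z$, and the locally Lipschitz reaction on the bounded interval $[0,a/C_f]$ then gives $0\le q\le a/C_f\le1$.

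Next I bound the nonlinear term. Since $H_T\le1$, we have $C_fq^2H_T\ge C_f\underline u^2$. Taylor's formula with $f(0)=0$, $f'(0)=a$ and $C_f\ge\frac12\|f''\|_\infty$ gives $f(s)\ge as-C_fs^2$ on $[0,1]$. Therefore $\partial_t\underline u\le\Delta_{\Z^d}\underline u+f(\underline u)$, and $\underline u$ takes values in $[0,1]$. The regularity required of a subsolution is clear, since $q$ and $H_T$ are $C^1$ in time.

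\emph{Choice of $T$, $(j_0,y_0)$ and $\eta_q$.} Fix any $T>\max\{1,n/(2a)\}$. Pick $z_0=(j_0,y_0)$ with $u_0(z_0)>0$. By \eqref{eq:linear-comparison},
\[
u(T,x)\ge u_0(z_0)\pheat_d(T,x-z_0).
\]
The product structure of $\ee^{T\Delta_{\Z^d}}$ used in Lemma~\ref{lem:factorization} gives the factorization $\pheat_d(T,(j,y))=\pheat_1(T,j)\pheat_n(T,y)$. Hence
\[
u(T,j_0,y)\ge u_0(z_0)\pheat_1(T,0)\pheat_n(T,y-y_0)=\frac{u_0(z_0)\pheat_1(T,0)}{\beta_\perp T^{n/2}}\,H_T(0,y-y_0).
\]
Set
\[
\eta_q:=\min\Bigl\{\frac a{C_f},\ \frac{u_0(z_0)\pheat_1(T,0)}{\beta_\perp T^{n/2}}\Bigr\}.
\]
With this choice, $u(T,\cdot)\ge\underline u(0,\cdot)$ on all of $\Z^d$; off the line $j=j_0$ the lower function vanishes and $u(T,\cdot)>0$.

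I expect this initial ordering to be the main obstacle. The transverse factor $H_T(0,\cdot)$ is not finitely supported, so a single-point seed cannot be placed under $u(T,\cdot)$ directly. The exact factorization of the heat kernel is what resolves it. With the ordering in hand, Lemma~\ref{lem:comparison} on $D=\Z^d$ with $F=\widetilde f$, applied to $u(T+\cdot)$ and $\underline u$, yields \eqref{eq:product-comparison}.

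\emph{Axis bound.} On the axis, $y=0$ and
\[
H_T(t,-y_0)=\beta_\perp(t+T)^{n/2}\pheat_n(t+T,y_0).
\]
By \eqref{eq:transverse-asymptotic} with $h=\1_{\{0\}}$ (equivalently \eqref{eq:mass-expansion} with $p=0$), this tends to $\beta_\perp(4\pi)^{-n/2}$ as $t\to\infty$. Choose $h_*:=\tfrac12\beta_\perp(4\pi)^{-n/2}$ and $t_*$ so large that $H_T(t,-y_0)\ge h_*$ for $t\ge t_*$. Then \eqref{eq:product-comparison} at $y=0$ gives \eqref{eq:axis-product-lower}.
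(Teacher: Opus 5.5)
Your proposal matches the paper's proof step by step: the normalized transverse kernel bounded via \eqref{eq:kernel-sup}, cancellation of the $n/(2(t+T))$ terms, the Taylor bound $f(s)\geq as-C_fs^2$, the seed built from the factorized heat kernel, and the local limit of $H_T(t,-y_0)$ on the axis. Your choice $\beta_\perp=C_{{\rm ker},n}^{-1}$ in place of the paper's $(1+C_{{\rm ker},n})^{-1}$ is immaterial.

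The one step to tighten is the proof of $0\leq q\leq a/C_f$. Lemma~\ref{lem:comparison} needs a bounded interval on which the reaction is Lipschitz and that contains the range of $q$. You cannot take this interval to be $[0,a/C_f]$ before that bound is known, so as written the step is circular. You also need $q$ to exist for the seed datum $\eta_q\1_{\{j_0\}}$.

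The paper handles both points by solving the equation with the truncated reaction built from $\min\{a/C_f,\max\{0,s\}\}$. This reaction is globally Lipschitz on $\ell^\infty(\Z)$, so a global solution exists. Comparison with $0$ and $a/C_f$ then shows the truncation is never active. Alternatively, restrict to solutions bounded on finite time intervals and compare on a large interval containing their range.
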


\begin{proof}
The kernel bound \eqref{eq:kernel-sup}, with $p=0$ in dimension
$n$, gives $\pheat_n(s,y)\leq C_{{\rm ker},n}s^{-n/2}$ for $s\geq1$.
Choose $\beta_\perp=(1+C_{{\rm ker},n})^{-1}$. Positivity in
\eqref{eq:heat-semigroup} then gives
$0<H_T(t,y)\leq\beta_\perp C_{{\rm ker},n}<1$.
Differentiating $(t+T)^{n/2}$ and using
$\partial_s\pheat_n=\Delta_{\Z^n}\pheat_n$ yields
\begin{equation}\label{eq:transverse-normalized-heat}
    \partial_tH_T=\Delta_{\Z^n}H_T+\frac{n}{2(t+T)}H_T.
\end{equation}
The function $q\equiv0$ solves \eqref{eq:time-dependent-logistic}.  For the constant
$q\equiv a/C_f$,
\[
  \partial_tq-\Delta_{\Z}q
   -\left(a-\frac{n}{2(t+T)}\right)q+C_fq^2
  =\frac{na}{2C_f(t+T)}\geq0,
\]
so $a/C_f$ is a supersolution of \eqref{eq:time-dependent-logistic}.
To construct $q$ without assuming its boundedness, define the 1-Lipschitz truncation
$\Pi(s)=\min\{a/C_f,\max\{0,s\}\}$ and solve on $\ell^\infty(\Z)$
\begin{equation}\label{eq:truncated-logistic}
 q_t=\Delta_{\Z}q+
   \left(a-\frac{n}{2(t+T)}\right)\Pi(q)-C_f\Pi(q)^2.
\end{equation}
Prescribe any fixed initial sequence with $0\leq q(0,j)\leq a/C_f$.
The right-hand side of \eqref{eq:truncated-logistic} is continuous in
$t$ and globally Lipschitz on $\ell^\infty(\Z)$, with constant at most
$4+a+n/(2T)$.
The fixed-point construction in the proof of Lemma~\ref{lem:comparison},
applied to \eqref{eq:truncated-logistic}, gives a unique global solution.
The functions $0$ and $a/C_f$ are respectively a subsolution and a
supersolution of \eqref{eq:truncated-logistic}.
Lemma~\ref{lem:comparison} with $D=\Z$ and
$F(t,s)=(a-n/(2(t+T)))\Pi(s)-C_f\Pi(s)^2$ gives
$0\leq q(t,j)\leq a/C_f\leq1$. Thus $\Pi(q)=q$, and $q$ is the
unique solution of \eqref{eq:time-dependent-logistic} satisfying
$0\leq q(t,j)\leq a/C_f$ for all $t\geq0$ and $j\in\Z$.

Taylor's formula and \eqref{eq:kpp} give
\begin{equation}\label{eq:R-quadratic}
    0\leq as-f(s)\leq\frac12\norm{f''}_{L^\infty([0,1])}s^2
       \leq C_fs^2\qquad(0\leq s\leq1).
\end{equation}
Write $H=H_T(t,y-y_0)$. The product $\underline u=qH$ is bounded
and continuously differentiable in time at each site. Since $0<H\leq1$,
one has $Hq^2\geq H^2q^2$. Equation~\eqref{eq:time-dependent-logistic}
and \eqref{eq:transverse-normalized-heat} therefore give
\begin{equation}\label{eq:product-residual}
\begin{split}
    \partial_t\underline u-\Delta_{\Z^d}\underline u
      &=H(q_t-\Delta_{\Z}q)+q(H_t-\Delta_{\Z^n}H)\\
      &=H(aq-C_fq^2)\\
      &\leq a\underline u-C_f\underline u^2
       \leq f(\underline u),
\end{split}
\end{equation}
where the last inequality uses \eqref{eq:R-quadratic} and
$0\leq\underline u\leq1$.

Choose a site $(j_0,y_0)$ with $u_0(j_0,y_0)=\eta_0>0$, and fix
$T>\max\{1,n/(2a)\}$. The lower inequality in
\eqref{eq:linear-comparison} gives
$u(T,j,y)\geq\eta_0\pheat_d(T,(j-j_0,y-y_0))$.  The Poisson representation
in Lemma~\ref{lem:conjugation} factors the $d$ coordinate processes, so
\[
   \pheat_d(T,(j-j_0,y-y_0))
      =\pheat_1(T,j-j_0)\pheat_n(T,y-y_0),
\]
and hence
\[
    u(T,j,y)\geq
       \eta_0\pheat_1(T,j-j_0)\pheat_n(T,y-y_0).
\]
Set
\[
    \eta_q:=\min\left\{\frac{a}{C_f},
       \frac{\eta_0\pheat_1(T,0)}{\beta_\perp T^{n/2}}\right\}>0,
    \qquad q(0,j)=\eta_q\1_{\{j_0\}}(j).
\]
At $j=j_0$, the definition of $\eta_q$ gives
$q(0,j_0)H_T(0,y-y_0)\leq
\eta_0\pheat_1(T,0)\pheat_n(T,y-y_0)$; at every other $j$, the
product is zero.
Thus $\underline u(0,j,y)\leq u(T,j,y)$ on $\Z^d$.
The function $\underline u$ satisfies \eqref{eq:product-residual},
and $u(T+\cdot)$ solves \eqref{eq:main}. Both take values in $[0,1]$.
Lemma~\ref{lem:comparison}, with $D=\Z^d$ and $F(t,s)=\widetilde f(s)$,
gives \eqref{eq:product-comparison}.
Finally, Lemma~\ref{lem:fourier-expansion} in dimension $n$, with
$p=0$ and initial datum $\1_{\{0\}}$, gives for the fixed vector $-y_0$
\[
 (t+T)^{n/2}\pheat_n(t+T,-y_0)
 \to (4\pi)^{-n/2}.
\]
Hence $H_T(t,-y_0)\to\beta_\perp(4\pi)^{-n/2}$.  Choose
$h_*:=\tfrac12\beta_\perp(4\pi)^{-n/2}$ and then choose $t_*$ so that
$H_T(t,-y_0)\geq h_*$ for $t\geq t_*$.  Evaluating
\eqref{eq:product-comparison} at $y=0$ gives
\eqref{eq:axis-product-lower}.
\end{proof}

It remains to bound $q$ from below near $c_*t-(\kappa_d/\lambda_*)\log(t+T)$. Lemmas~\ref{lem:autonomous-leading-lower}--\ref{lem:delayed-pulse} and Proposition~\ref{prop:scalar-logarithmic-lower} treat the growth rate $a-\beta/(t+T)$; Section~\ref{sec:amplification} then sets $\beta=(d-1)/2$.

The proof of Lemma~\ref{lem:logconcavity} uses preservation of
log-concavity under convolution for nonnegative summable sequences
whose positive supports are intervals. We prove the needed property
using $2\times2$ determinants and the finite Cauchy--Binet formula;
see Karlin \cite{Karlin} for the broader theory of total positivity.

\begin{lemma}\label{lem:logconcavity}
In dimension one, the tilted kernel $G(t,j):=K_{\lambda_*}(t,j)$ is
strictly positive and log-concave as a sequence in $j$ for every $t>0$:
\[
    G(t,j)^2\geq G(t,j-1)G(t,j+1)\qquad(j\in\Z).
\]
Consequently, $G(t,j+1)/G(t,j-1)$ is nonincreasing in $j$.
\end{lemma}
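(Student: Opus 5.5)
The plan is to use the explicit structure of $G(t,\cdot)$ as the law of $N^+(t)-N^-(t)$, where $N^\pm$ are independent Poisson processes of rates $\ee^{\pm\lambda_*}$ (from the proof of Lemma~\ref{lem:conjugation}). Thus $G(t,\cdot)=\pi_+*\check\pi_-$. Here $\pi_+(k)=\ee^{-t\ee^{\lambda_*}}(t\ee^{\lambda_*})^k/k!$ for $k\geq0$, and $\check\pi_-(k)=\pi_-(-k)$ is the reflected Poisson law of rate $\ee^{-\lambda_*}$. Strict positivity for $t>0$ is already given by Lemma~\ref{lem:conjugation}.

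\textbf{Step 1 (the factors are log-concave).} A Poisson sequence $\pi(k)=\ee^{-\mu}\mu^k/k!$ on $\Z_{\geq0}$, extended by zero, satisfies $\pi(k)^2/(\pi(k-1)\pi(k+1))=(k+1)/k\geq1$ for $k\geq1$. At $k=0$ the right side $\pi(-1)\pi(1)$ vanishes, and for $k<0$ both sides vanish. So each factor is a nonnegative log-concave sequence whose positive support is an interval, and reflection preserves these properties.

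\textbf{Step 2 (convolution preserves log-concavity).} This is the main step. Log-concavity of a sequence $a$ with interval support is equivalent to all $2\times2$ minors $\det\bigl(a(i_r-k_s)\bigr)_{r,s=1,2}$ being nonnegative for $i_1<i_2$ and $k_1<k_2$. This is the $\mathrm{TP}_2$ property of the Toeplitz kernel $(i,k)\mapsto a(i-k)$. The interval support is exactly what allows passing from adjacent-index inequalities to arbitrary ones: the ratios $a(k+1)/a(k)$ are nonincreasing on the support, and one chains them.

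For $c=a*b$, the kernel $c(i-k)=\sum_m a(i-m)b(m-k)$ is a product of two Toeplitz kernels. By the Cauchy--Binet formula, a $2\times2$ minor of the product equals
\[
\sum_{m_1<m_2}\det\bigl(a(i_r-m_s)\bigr)\det\bigl(b(m_r-k_s)\bigr),
\]
which is $\geq0$. I would first prove this for finite truncations of $\pi_\pm$ (restricted to $[0,N]$, still log-concave with interval support), where Cauchy--Binet is a finite identity. Then I let $N\to\infty$; the sums converge absolutely since Poisson laws are summable. Taking $i_1=j-1$, $i_2=j$, $k_1=-1$, $k_2=0$ gives $G(t,j)^2-G(t,j-1)G(t,j+1)\geq0$. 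The only delicate point in the whole argument is this passage from adjacent inequalities to general minors, which needs the interval-support hypothesis; the truncation limit is routine.

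Alternatively, a direct check is available: $G(t,j)=\ee^{-2t\cosh\lambda_*}\ee^{\lambda_* j}I_{|j|}(2t)$ with modified Bessel functions. Log-concavity then reduces to the Turán-type inequality $I_j^2\geq I_{j-1}I_{j+1}$, since the exponential factor cancels. I would keep this only as a cross-check.

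\textbf{Step 3 (monotone ratio).} Since $G>0$, log-concavity means $r(j):=G(t,j+1)/G(t,j)$ is nonincreasing. Then
\[
\frac{G(t,j+1)}{G(t,j-1)}=r(j)\,r(j-1),
\]
a product of two positive nonincreasing functions of $j$, and is therefore nonincreasing.
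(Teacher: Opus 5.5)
Your proposal is correct and follows essentially the same route as the paper: Poisson log-concavity and reflection, then $\mathrm{TP}_2$ Toeplitz minors obtained by chaining successive ratios on the interval support, then finite Cauchy--Binet with a limit, the minor with rows $\{j-1,j\}$ and columns $\{-1,0\}$, and finally the product of two successive ratios. The one difference is that you truncate the Poisson sequences to $[0,N]$, whereas the paper truncates the summation index set to $\{-N,\ldots,N\}$; both truncations are valid here.
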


\begin{proof}
We first verify the convolution property needed here. Let
$v=(v_k)_{k\in\Z}$ be a nonnegative summable sequence whose support
is an interval of integers, possibly infinite. If
$v_k^2\geq v_{k-1}v_{k+1}$ for every $k$, then the Toeplitz matrix
$T(v)_{ij}=v_{i-j}$ has nonnegative $2\times2$ minors. Indeed,
for $i<j$ and $r<s$, the two indices in
$v_{i-r}v_{j-s}$ lie between the two indices in
$v_{i-s}v_{j-r}$, and their sums are equal. The monotonicity of
the ratios $v_{k+1}/v_k$ on the positive support gives
\[
    v_{i-r}v_{j-s}\geq v_{i-s}v_{j-r}.
\]
If the right-hand product is positive, interval support justifies all
successive-ratio comparisons between its two indices. If the right-hand
product is zero, the left-hand product is nonnegative because $v$ is
nonnegative, so
$v_{i-r}v_{j-s}\geq v_{i-s}v_{j-r}$ is automatic.

Let $w$ also be nonnegative, summable and log-concave, with interval
support, and define $(v*w)_m:=\sum_{\ell\in\Z}v_{m-\ell}w_\ell$.
Fix $i<j$ and $r<s$, and let $I_N=\{-N,\ldots,N\}$.
The finite Cauchy--Binet formula expresses the determinant of
$T(v)_{\{i,j\},I_N}T(w)_{I_N,\{r,s\}}$ as a sum of products of
$2\times2$ minors of $T(v)$ and $T(w)$, all nonnegative.
The product matrix converges entrywise to
$T(v*w)_{\{i,j\},\{r,s\}}$ as
$N\to\infty$, since the convolution sums converge absolutely.
Taking rows $\{k,k+1\}$ and columns $\{0,1\}$ gives
$(v*w)_k^2\geq(v*w)_{k-1}(v*w)_{k+1}$. The positive support of
$v*w$ is the sum of the two positive supports, hence is an interval.

For a Poisson random variable $N$ with mean $\rho>0$,
$\mathbb P\{N=k+1\}/\mathbb P\{N=k\}=\rho/(k+1)$ for $k\geq0$.
Its support is $\Z_{\geq0}$ and this ratio decreases with $k$, so its
distribution is log-concave. Reflection preserves
log-concavity. The kernel $G(t,\cdot)$ is the convolution of the
Poisson distribution with parameter $\ee^{\lambda_*}t$ and the
reflected Poisson distribution with parameter $\ee^{-\lambda_*}t$.
Preservation of log-concavity under convolution gives
$G(t,j)^2\geq G(t,j-1)G(t,j+1)$. By \eqref{eq:kernel-tilt},
$G(t,j)=\ee^{\lambda_*j-tH_0(\lambda_*e_1)}\pheat_1(t,j)>0$ for every
$t>0$ and $j\in\Z$.  Log-concavity therefore implies that the successive
ratios $G(t,j+1)/G(t,j)$ are nonincreasing in $j$; multiplying two
successive ratios shows that $G(t,j+1)/G(t,j-1)$ is also nonincreasing.
\end{proof}

\begin{lemma}\label{lem:autonomous-leading-lower}
Let $K>0$, and let $Q$ solve
\[
\left\{
\begin{aligned}
    Q_t&=\Delta_{\Z}Q+aQ-KQ^2,\\
    0&\leq Q(0,\cdot)\leq a/K.
\end{aligned}
\right.
\]
with nonzero finitely supported initial data. There are $c_0>0$,
$L_0\geq1$ and $t_0\geq1$, depending only on $a,K,Q(0,\cdot)$, such that
\begin{equation}\label{eq:autonomous-leading-lower}
    Q(t,j)\geq c_0t^{-3/2}(j-c_*t)
                         \ee^{-\lambda_*(j-c_*t)}
\end{equation}
whenever $t\geq t_0$ and $L_0\leq j-c_*t\leq\sqrt t$.
\end{lemma}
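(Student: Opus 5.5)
Following the paper's outline, I would build a subsolution for $Q$ of the form $\chi(t)\,\ee^{-\lambda_*(j-c_*t)}w_1(t,j)$, where $w_1$ solves $(w_1)_t=\mathcal L_\parallel w_1$ with odd longitudinal data, suitably shifted to the right. The lemma does not literally need a subsolution: it suffices to compare $Q$ on a moving half-line $\{j\geq c_*t+L\}$, using Lemma~\ref{lem:moving-halfline-comparison}, with the function
\[
 \underline Q(t,j)=\chi(t)\,\ee^{-\lambda_*(j-c_*t)}w_1(t,j-c_*t_1-L),
\]
where $w_1=\ee^{t\mathcal L_\parallel}g$ and $g$ is odd, say $g=\1_{\{1\}}-\1_{\{-1\}}$. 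The conjugation identity \eqref{eq:conjugation} in one dimension shows that $\ee^{-\lambda_*(j-c_*t)}w_1$ solves the linearized equation $V_t=\Delta_\Z V+aV$, because $H(\lambda_*)=c_*\lambda_*$.

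The first key step is the sign of $w_1$. Oddness gives $w_1(t,0)=0$. By Lemma~\ref{lem:logconcavity}, $w_1(t,j)=G(t,j-1)-G(t,j+1)$ has the sign of $1-G(t,j+1)/G(t,j-1)$. This ratio is nonincreasing in $j$ and equals $\ee^{2\lambda_*}\cdot(\text{heat ratio})$, so $w_1$ changes sign exactly once. I would locate the sign change near $j\approx c_*t$, which is really the statement that $w_1(t,j)>0$ for $j-c_*t\geq L$ with $L$ large. To control the nonlinear loss I need a bound of the form
\[
 \ee^{-\lambda_*(j-c_*t)}w_1(t,j)_+\leq C\,t^{-3/2}(1+s)\ee^{-\lambda_*s}
\]
on the positive region. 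This is the integrable bound \eqref{eq:integrable-positive-mode}: its supremum over $j$ is of order $t^{-3/2}$, hence integrable in time.

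Next I would handle the nonlinearity through $\chi$. I would set $\chi'=-K\chi\,\sup_j(\ee^{-\lambda_*s}w_1)_+$ with $\chi(0)$ small. Then $\underline Q_t-\Delta\underline Q-a\underline Q=\chi'\chi^{-1}\underline Q\leq -K\underline Q^2$ wherever $\underline Q\geq0$. Since the supremum is integrable, $\chi$ stays bounded below by $\chi(0)/2$ for all time. On the moving boundary $j\approx c_*t+L$ I would need $\underline Q\leq Q$. The positivity of $w_1$ only to the right of the sign change helps here: I would take the boundary exactly at the zero set of $w_1$, or just left of it, where $\underline Q\leq 0\leq Q$, so the boundary condition is free. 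The domain $\{w_1>0\}$ is a half-line whose left end moves. The initial ordering comes from $Q(t_1)>0$ everywhere together with finite support of $\underline Q(t_1)$ after the shift, after shrinking $\chi(0)$.

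Finally, Theorem~\ref{thm:linear-main} with $d=1$ (or Lemma~\ref{lem:longitudinal}) gives $w_1(t,j)\geq c\,(1+s)t^{-3/2}$ for $L_0\leq s\leq\sqrt t$. This yields \eqref{eq:autonomous-leading-lower}; the fixed shifts change only the constants.

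I expect the main obstacle to be the precise sign structure and the uniform bound on $(\ee^{-\lambda_*s}w_1)_+$ in the intermediate and far regions, $s\gg\sqrt t$ and $s<0$, where Lemma~\ref{lem:longitudinal} gives no information. There I would use log-concavity for the sign and the Fourier bound \eqref{eq:kernel-sup} together with the explicit factor $\ee^{-\lambda_*s}$ for the size. Making the comparison domain exactly the positivity set of $w_1$, so that the zero boundary data are automatic, is the delicate point.
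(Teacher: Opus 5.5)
Your plan is the paper's proof in substance. It uses the same odd datum $g=\1_{\{1\}}-\1_{\{-1\}}$, the conjugation $Z=\ee^{-\lambda_*(j-c_*t)}w_1$ with $Z_t=\Delta_{\Z}Z+aZ$, and log-concavity (Lemma~\ref{lem:logconcavity}) for a single sign change. It also uses the integrable bound \eqref{eq:integrable-positive-mode}, a decreasing amplitude $\chi$ with a positive limit, and Lemma~\ref{lem:longitudinal} for the final lower bound.

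\textbf{The comparison step.} This is where you diverge, and as written it does not work. Lemma~\ref{lem:moving-halfline-comparison} does not cover your domain. It treats left half-lines $\{j\leq B(t)\}$ with $B\in C^1$ strictly increasing. Your set $\{w_1(t,\cdot)>0\}$ is a right half-line whose left end is integer-valued and not known to be monotone in $t$.

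The paper avoids any moving boundary by working with $W=Z_+$ directly:
\begin{itemize}
\item Where $Z>0$, one has $W_t=Z_t$ and $W(t,j\pm1)\geq Z(t,j\pm1)$.
\item Where $Z<0$, one has $W=W_t=0\leq\Delta_{\Z}W$.
\end{itemize}
Hence $W_t\leq\Delta_{\Z}W+aW$ a.e.\ at every site. Then $\chi\widetilde W$ is a subsolution on all of $\Z$, where $\widetilde W(t,j)=W(t,j-j_0+1)$ places the unique positive value $\ee^{-\lambda_*}$ of $W(0,\cdot)$ at a site with $Q(0,j_0)>0$. Lemma~\ref{lem:comparison} with $D=\Z$ then applies from $t=0$, with no boundary data and no waiting time.

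Your version can be repaired. On bounded time intervals the endpoint is piecewise constant with finitely many jumps, because $t\mapsto w_1(t,j)$ is entire. A site entering the set does so with $\underline Q=0\leq Q$. But this repair is exactly the positive-part argument in disguise, and it is what dissolves the ``delicate point'' you flagged.

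\textbf{Smaller corrections.}
\begin{itemize}
\item Oddness is not preserved by the tilted generator. In fact $w_1(t,0)=G(t,-1)-G(t,1)<0$ for $t>0$, since $G(t,1)/G(t,-1)=\ee^{2\lambda_*}$. Nothing depends on this.
\item Negativity for $s<0$ needs one anchor from Lemma~\ref{lem:longitudinal}, e.g.\ $w_1(t,\lfloor c_*t\rfloor-2)<0$ for large $t$. Log-concavity then propagates negativity to the left. For $s>\sqrt t$, the crude bound $|w_1|\leq2$ times $\ee^{-\lambda_*\sqrt t}$ suffices.
\item With your ODE $\chi'=-K\chi\sup_j(\cdot)_+$, the subsolution inequality needs $\chi\leq1$. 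The lower bound is $\chi(0)\exp\bigl(-K\int_0^\infty\sup_j(\cdot)_+\bigr)$, not $\chi(0)/2$; shrinking $\chi(0)$ does not change this ratio. The paper's choice $\chi'=-KC_{\rm mode}(1+t)^{-3/2}\chi^2$ avoids both points.
\item Spatial shifts must be integers.
\item If you start the comparison at $t_1>0$, you must restart $w_1$ at $t_1$ for $\underline Q(t_1,\cdot)$ to be finitely supported.
\end{itemize}
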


\begin{proof}
To construct $Q$, set $\Pi_K(s)=\min\{a/K,\max\{0,s\}\}$ and use
the globally Lipschitz reaction $a\Pi_K(s)-K\Pi_K(s)^2$ in place of
$as-Ks^2$. The fixed-point construction in Lemma~\ref{lem:comparison}
gives a global $C^1([0,\infty);\ell^\infty(\Z))$ solution.
The constants $0$ and $a/K$ solve the truncated equation.
Lemma~\ref{lem:comparison} and the initial inequalities give
$0\leq Q\leq a/K$, so the truncated and original reactions agree.

Take $g=\1_{\{1\}}-\1_{\{-1\}}$ and put
\[
    w(t,j)=G(t,j-1)-G(t,j+1),\qquad
    Z(t,j)=\ee^{-\lambda_*(j-c_*t)}w(t,j),\qquad W(t,j)=Z(t,j)_+.
\]
The tilt identity \eqref{eq:kernel-tilt} and
$c_*\lambda_*=2(\cosh\lambda_*-1)+a$ give
\begin{equation}\label{eq:positive-mode-representation}
    Z(t)=\ee^{at}\ee^{t\Delta_{\Z}}
       \bigl(\ee^{-\lambda_*}\1_{\{1\}}
                    -\ee^{\lambda_*}\1_{\{-1\}}\bigr).
\end{equation}
Thus $Z\in C^1([0,\infty);\ell^\infty(\Z))$ and
$Z_t=\Delta_{\Z}Z+aZ$. This equation also follows by substituting
$w=\ee^{\lambda_*(j-c_*t)}Z$ into $w_t=\mathcal L_\parallel w$:
in \eqref{eq:Lparallel}, the off-diagonal coefficients become one,
while the diagonal coefficient, including the time derivative of
the exponential factor, is $c_*\lambda_*-2\cosh\lambda_*=a-2$.

The positive-part map is 1-Lipschitz, so $W$ is locally Lipschitz in
time at each site. Where $Z(t,j)>0$, one has $W_t=Z_t$ and
$W(t,j\pm1)\geq Z(t,j\pm1)$, hence $W_t\leq\Delta_{\Z}W+aW$.
Where $Z(t,j)<0$, one has $W_t=W=0$ and
$\Delta_{\Z}W=W(t,j-1)+W(t,j+1)\geq0$. At a time with
$Z(t,j)=0$ at which $W(\cdot,j)$ is differentiable, $W(\cdot,j)$ has
a minimum and hence $W_t(t,j)=0$; there too $W=0$ and
$\Delta_{\Z}W\geq0$. For each site, the times at which $W$ is not
differentiable form a null set. Their union over $j\in\Z$ is a null
set, outside which
\begin{equation}\label{eq:positive-part-inequality}
    W_t\leq\Delta_{\Z}W+aW
\end{equation}
holds for every $j$ and almost every $t$.

We first prove the uniform estimate
\begin{equation}\label{eq:integrable-positive-mode}
    \sup_{j\in\Z}W(t,j)\leq C_{\rm mode}(1+t)^{-3/2}\qquad(t\geq0)
\end{equation}
with $C_{\rm mode}$ depending only on $a$. For the chosen $g$,
one has $M_1=1$. Put
$c_{\rm G}=(\sqrt{4\pi}D_*^{3/2})^{-1}>0$. By
\eqref{eq:longitudinal-asymptotic} with $A=2$, there are
$E_*>0$ and $T_\ell\geq1$, depending only on $a$, such that
\begin{equation}\label{eq:mode-window-error}
 \left|w(t,j)-c_{\rm G}(s+\delta_*)t^{-3/2}
                   \ee^{-s^2/(4D_*t)}\right|
 \leq E_*(1+|s|)t^{-2},\qquad s=j-c_*t,
\end{equation}
for $t\geq T_\ell$ and $|s|\leq2\sqrt t$.
Let $J(t)=\lfloor c_*t\rfloor-2$, so $-3<J(t)-c_*t\leq-2$.
Choose $T_{\rm sign}\geq\max\{T_\ell,9\}$ so that
$4E_*T_{\rm sign}^{-1/2}\leq(3/4)c_{\rm G}\ee^{-1/(4D_*)}$.
For $t\geq T_{\rm sign}$, formula \eqref{eq:mode-window-error}
and $0<\delta_*<1/2$ give
\[
 w(t,J(t))\leq-\frac32c_{\rm G}\ee^{-1/(4D_*)}t^{-3/2}
                     +4E_*t^{-2}<0.
\]
By Lemma~\ref{lem:logconcavity}, for $j\leq J(t)$,
\[
 \frac{G(t,j+1)}{G(t,j-1)}
 \geq\frac{G(t,J(t)+1)}{G(t,J(t)-1)}>1.
\]
Thus $w(t,j)<0$ there. For $t\geq T_{\rm sign}$, positivity of
$W(t,j)$ is therefore possible only when $s=j-c_*t>-3$.
For $t\geq T_{\rm sign}$ and $-3\leq s\leq\sqrt t$,
\eqref{eq:mode-window-error} gives
\[
 W(t,j)\leq C(1+|s|)\ee^{-\lambda_*s}t^{-3/2}
          \leq C_{\rm near}t^{-3/2},
\]
because $(1+|s|)\ee^{-\lambda_*s}$ is bounded on $[-3,\infty)$.
For $s>\sqrt t$, the probability bound $0\leq G\leq1$ gives
$|w|\leq2$, hence
$W(t,j)\leq2\ee^{-\lambda_*\sqrt t}\leq C_{\rm far}t^{-3/2}$.
The constants $C_{\rm near},C_{\rm far}$ depend only on $a$.
On $0\leq t\leq T_{\rm sign}$,
\eqref{eq:positive-mode-representation} and the heat-semigroup
contraction give $\|Z(t)\|_\infty\leq
\ee^{aT_{\rm sign}}(\ee^{-\lambda_*}+\ee^{\lambda_*})$.
Since $t^{-3/2}\leq2^{3/2}(1+t)^{-3/2}$ for $t\geq1$,
\eqref{eq:integrable-positive-mode} holds with the fixed choice
\[
\begin{split}
 C_{\rm mode}:={}&2^{3/2}(C_{\rm near}+C_{\rm far})\\
 &+(1+T_{\rm sign})^{3/2}\ee^{aT_{\rm sign}}
       (\ee^{-\lambda_*}+\ee^{\lambda_*}).
\end{split}
\]

Choose a site $j_0$ with $Q(0,j_0)>0$ and let
$\widetilde W(t,j)=W(t,j-j_0+1)$. Define
\[
    \chi(t)=\left(\chi_0^{-1}
                 +KC_{\rm mode}\int_0^t(1+s)^{-3/2}\dd s\right)^{-1},
\]
where
$\chi_0:=\min\{\ee^{\lambda_*}Q(0,j_0),a/(KC_{\rm mode})\}>0$.
Set $\chi_{\min}=(\chi_0^{-1}+2KC_{\rm mode})^{-1}$. Then
\[
    0<\chi_{\min}\leq\chi(t)\leq\chi_0.
\]
The integer translate $\widetilde W$ satisfies
\eqref{eq:positive-part-inequality}, since the nearest-neighbor
operator commutes with integer shifts. Differentiating $\chi$ gives
$\chi'=-KC_{\rm mode}(1+t)^{-3/2}\chi^2$. Therefore
\[
\begin{split}
    &(\partial_t-\Delta_{\Z})(\chi\widetilde W)
               -a\chi\widetilde W+K\chi^2\widetilde W^2\\
    &\qquad\leq
       \chi^2\widetilde W\bigl[-KC_{\rm mode}(1+t)^{-3/2}
                                      +K\widetilde W\bigr]\leq0,
\end{split}
\]
where the last inequality is exactly
$\widetilde W(t,j)\leq C_{\rm mode}(1+t)^{-3/2}$ from
\eqref{eq:integrable-positive-mode}.
At $t=0$, $\widetilde W(0,\cdot)$ is supported at $j_0$ and equals
$\ee^{-\lambda_*}$ there.  Hence
$\chi_0\widetilde W(0,j_0)\leq Q(0,j_0)$ by the definition of $\chi_0$,
and at every other site $\chi_0\widetilde W(0,j)=0\leq Q(0,j)$.
The functions $\chi\widetilde W$ and $Q$ take values in $[0,a/K]$,
where $s\mapsto as-Ks^2$ is Lipschitz.
Lemma~\ref{lem:comparison}, with $D=\Z$, $U=\chi\widetilde W$,
$W=Q$ and $F(t,s)=as-Ks^2$, gives $Q\geq\chi\widetilde W$.

Put $h=1-j_0$ and choose $L_0=2(1+|h|)$. If
$L_0\leq s=j-c_*t\leq\sqrt t$ and $t\geq h^2$, then
$s':=j-j_0+1-c_*t=s+h$ satisfies $1\leq s/2\leq s'\leq2\sqrt t$.
Set $c_{\rm long}=c_{\rm G}\ee^{-1/D_*}/2$. For
$t\geq T_\ell$ with $2E_*t^{-1/2}\leq c_{\rm long}$,
\eqref{eq:mode-window-error} gives
\[
 w(t,j-j_0+1)\geq
    \bigl(c_{\rm G}\ee^{-1/D_*}-2E_*t^{-1/2}\bigr)s't^{-3/2}
 \geq c_{\rm long}s't^{-3/2}>0.
\]
Consequently $\widetilde W=\ee^{-\lambda_*s'}w(t,j-j_0+1)$ there.
Using $Q\geq\chi\widetilde W$, $\chi\geq\chi_{\min}$ and
$\ee^{-\lambda_*s'}=\ee^{-\lambda_*h}\ee^{-\lambda_*s}$ yields
\[
 Q(t,j)\geq\frac{\chi_{\min}c_{\rm long}}2\ee^{-\lambda_*h}
               t^{-3/2}s\ee^{-\lambda_*s}.
\]
Choose $c_0=(\chi_{\min}c_{\rm long}/2)\ee^{-\lambda_*h}$ and
$t_0=\max\{1,T_\ell,h^2,(2E_*/c_{\rm long})^2\}$.
These choices prove \eqref{eq:autonomous-leading-lower}.
\end{proof}

The integrability of the bound in \eqref{eq:integrable-positive-mode}
ensures $\chi\geq\chi_{\min}>0$. The next lemma constructs a subsolution
whose argument contains the logarithmic delay.

\begin{lemma}\label{lem:delayed-pulse}
Fix $\beta\geq0$, $K>0$ and $T\geq1$, and set
$b_\beta:=(\beta+3/2)/\lambda_*$.
Choose $0<\mu<\lambda_*$ and define
\begin{equation}\label{eq:pulse-parameters}
\begin{gathered}
    \omega_\mu:=\frac1{2(\ee^\mu-1)},\qquad
    P_\mu(r):=r-\omega_\mu+\omega_\mu\ee^{-\mu r},\\
    \sigma_\mu:=2\bigl(\cosh(\lambda_*+\mu)-1\bigr)+a
                                      -c_*(\lambda_*+\mu)>0.
\end{gathered}
\end{equation}
For $\varepsilon>0$, let
\begin{equation}\label{eq:critical-pulse}
    \Phi_\varepsilon(r)=
    \begin{cases}
      \varepsilon\ee^{-\lambda_*r}P_\mu(r),&r>0,\\
      0,&r\leq0.
    \end{cases}
\end{equation}
There are $\varepsilon_*>0$ and $\tau_*>0$, depending only on
$a,K,\beta,\mu$, such that, for every
$0<\varepsilon\leq\varepsilon_*$, the function
\[
    \Psi(t,j):=\Phi_\varepsilon
          \bigl(j-c_*t+b_\beta\log(t+T)\bigr)
\]
has values in $[0,a/K]$ and satisfies
\begin{equation}\label{eq:delayed-pulse-subsolution}
    \Psi_t\leq\Delta_{\Z}\Psi+
               \left(a-\frac\beta{t+T}\right)\Psi-K\Psi^2
\end{equation}
for $t\geq0$ with $t+T\geq\tau_*$, in the almost-everywhere sense
at each site.
\end{lemma}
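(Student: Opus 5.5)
The plan is to verify \eqref{eq:delayed-pulse-subsolution} by direct computation in the moving variable $r=j-c_*t+b_\beta\log\tau$, where $\tau:=t+T$. Write $\Phi_\varepsilon(r)=\varepsilon\ee^{-\lambda_*r}\psi(r)$ with $\psi=P_\mu$, and let $\widetilde\Phi(r):=\varepsilon\ee^{-\lambda_*r}P_\mu(r)$ denote the untruncated extension to all $r\in\R$. Two elementary facts about $P_\mu$ come first. It is convex with $P_\mu(0)=0$ and $P_\mu'(0)=1-\omega_\mu\mu>1/2$, since $\mu/(\ee^\mu-1)<1$. The choice of $\omega_\mu$ gives exactly $P_\mu(-1)=-1+\omega_\mu(\ee^\mu-1)=-1/2$. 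Consequently $P_\mu>0$ on $(0,\infty)$, and by convexity $P_\mu\leq0$ on $[-1,0]$. The first fact gives $\Psi\geq0$. The bound $\sup_{r>0}\ee^{-\lambda_*r}P_\mu(r)<\infty$ gives $\Psi\leq a/K$ once $\varepsilon$ is small.

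The second fact handles the truncation. At a site with $r>0$, the neighbour $r+1$ also has $r+1>0$, so $\Phi_\varepsilon(r+1)=\widetilde\Phi(r+1)$. The neighbour $r-1$ either has $r-1>0$, so that $\Phi_\varepsilon(r-1)=\widetilde\Phi(r-1)$, or lies in $(-1,0]$, where $\Phi_\varepsilon(r-1)=0\geq\widetilde\Phi(r-1)$. Hence $\Delta_{\Z}\Psi\geq\Delta_{\Z}\widetilde\Phi$ there, and it suffices to prove the differential inequality for $\widetilde\Phi$ at every site with $r>0$. At sites with $r<0$ one has $\Psi=\Psi_t=0$ and $\Delta_{\Z}\Psi\geq0$. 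The kink at $r=0$ is crossed at a single time per site, because $r$ is strictly increasing in $t$ (as $-c_*+b_\beta/\tau<0$ for large $\tau$); this is why the inequality is claimed only almost everywhere. Since $\Psi$ is Lipschitz in time, this suffices.

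For $r>0$, put $\mathcal N:=\Psi_t-\Delta_{\Z}\Psi-(a-\beta/\tau)\Psi+K\Psi^2$. Using $\Delta(\ee^{-\lambda_*r}\psi)=\ee^{-\lambda_*r}[\ee^{\lambda_*}\psi(r-1)+\ee^{-\lambda_*}\psi(r+1)-2\psi(r)]$ and $\partial_t r=-c_*+b_\beta/\tau$, I would compute $\varepsilon^{-1}\ee^{\lambda_*r}\mathcal N$ and sort it by type.

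\textbf{Linear part.} On $\psi(r)=r$ and on constants, the order-one terms cancel exactly by \eqref{eq:critical-system}, namely $c_*\lambda_*=2(\cosh\lambda_*-1)+a$ and $c_*=2\sinh\lambda_*$. On $\omega_\mu\ee^{-\mu r}$ they produce exactly $-\sigma_\mu\omega_\mu\ee^{-\mu r}$, with $\sigma_\mu>0$ because $\lambda_*$ is the unique minimizer in \eqref{eq:c-lambda-def}.

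\textbf{The $1/\tau$ terms.} They combine to $\tau^{-1}[(\beta-b_\beta\lambda_*)\psi+b_\beta\psi']=\tau^{-1}[-\tfrac32\psi+b_\beta\psi']\leq\tau^{-1}(b_\beta-\tfrac32\psi)$, using $\psi'\leq1$.

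\textbf{The quadratic term.} It is $K\varepsilon\ee^{-\lambda_*r}\psi^2$.

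It remains to show that $\mathcal N\leq0$, splitting into two ranges of $r$.

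\emph{Large $r$.} Fix $R_0$ with $\tfrac32P_\mu(r)\geq b_\beta$ for $r\geq R_0$; this is possible since $P_\mu(r)\geq r-\omega_\mu$. For $r\geq R_0$ the $1/\tau$ terms are nonpositive.

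\emph{Small $r$.} For $0<r\leq R_0$ the $1/\tau$ terms are at most $b_\beta/\tau$. This is at most $\tfrac12\sigma_\mu\omega_\mu\ee^{-\mu R_0}$ as soon as $\tau\geq\tau_*:=2b_\beta\ee^{\mu R_0}/(\sigma_\mu\omega_\mu)$.

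\emph{Quadratic term.} Since $\mu<\lambda_*$, $C_\mu:=\sup_{r>0}\psi(r)^2\ee^{-(\lambda_*-\mu)r}<\infty$. Taking $K\varepsilon C_\mu\leq\tfrac12\sigma_\mu\omega_\mu$ gives $K\varepsilon\ee^{-\lambda_*r}\psi^2\leq\tfrac12\sigma_\mu\omega_\mu\ee^{-\mu r}$ for all $r>0$.

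Adding these bounds, the quadratic term together with the small-$r$ part of the $1/\tau$ terms is absorbed by $-\sigma_\mu\omega_\mu\ee^{-\mu r}$, so $\mathcal N\leq0$ for all $r>0$. The resulting $\varepsilon_*$ and $\tau_*$ depend only on $a,K,\beta,\mu$, since $\lambda_*$ and $c_*$ are determined by $a$.

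I expect the main obstacle to be the bookkeeping of the $1/\tau$ terms. The growth-loss term $\beta\psi/\tau$ and the logarithmic drift $b_\beta(\psi'-\lambda_*\psi)/\tau$ must combine with the correct sign, and this is exactly where the value $b_\beta=(\beta+3/2)/\lambda_*$ is forced: it leaves the strictly negative coefficient $-\tfrac32$ in front of $\psi$. Only the bounded leftover $b_\beta\psi'/\tau$ then has to be absorbed by the $\sigma_\mu$ gain, and only on the compact range $0<r\leq R_0$, which is what the threshold $\tau_*$ provides.
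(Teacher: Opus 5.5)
Your proposal is correct and follows essentially the same route as the paper: the exact linear residual $-\sigma_\mu\omega_\mu\ee^{-\mu r}$ coming from \eqref{eq:critical-system}, the inequality $\Delta_{\Z}\Psi\geq\Delta_{\Z}\widetilde\Phi$ obtained from $P_\mu\leq0$ on $[-1,0]$, the cancellation $\beta-\lambda_*b_\beta=-3/2$, and splitting at a finite $R_0$ so that the quadratic term and the small-$r$ part of the $1/\tau$ terms are each absorbed by half of the residual. One minor slip: $t\mapsto r(t,j)$ is strictly \emph{decreasing}, not increasing, and you use $-c_*+b_\beta/\tau<0$ without building $\tau\geq2b_\beta/c_*$ into $\tau_*$ as the paper does; this is harmless, because the zero set of the real-analytic function $t\mapsto r(t,j)$ is discrete in any case.
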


\begin{proof}
The function
$k\mapsto2(\cosh k-1)+a-c_*k$ is strictly convex. It and its first
derivative vanish at $k=\lambda_*$, so $\sigma_\mu>0$.
Since $\ee^\mu-1>\mu$, one has $\omega_\mu\mu<1/2$.
The identities $P_\mu(0)=0$ and
$P_\mu'(r)=1-\omega_\mu\mu\ee^{-\mu r}$ give
\[
    \frac r2\leq P_\mu(r)\leq r,\qquad
    \frac12<P_\mu'(r)\leq1\qquad(r\geq0).
\]
On $[-1,0]$, convexity of $P_\mu$ and the values
$P_\mu(-1)=-1/2$, $P_\mu(0)=0$ give $P_\mu\leq0$.

Temporarily define
$F_\varepsilon(r)=\varepsilon\ee^{-\lambda_*r}P_\mu(r)$
on the whole real line. For a function of a real argument, write
$\Delta_1F(r)=F(r-1)-2F(r)+F(r+1)$.
The critical identities \eqref{eq:critical-system} give the exact
calculation
\begin{equation}\label{eq:pulse-linear-residual}
    -c_*F_\varepsilon'(r)-\Delta_1F_\varepsilon(r)
                        -aF_\varepsilon(r)
       =-\varepsilon\omega_\mu\sigma_\mu
                                     \ee^{-(\lambda_*+\mu)r}.
\end{equation}
Indeed, the operator on the left annihilates both
$\ee^{-\lambda_*r}$ and $r\ee^{-\lambda_*r}$; applying it to
$\ee^{-(\lambda_*+\mu)r}$ gives the remaining term.

For $r>0$, the only possible difference between
$\Delta_1\Phi_\varepsilon(r)$ and $\Delta_1F_\varepsilon(r)$
is the value at $r-1$ when $0<r<1$. That value of
$F_\varepsilon$ is nonpositive, while its truncated value is zero.
Consequently
$\Delta_1\Phi_\varepsilon(r)\geq\Delta_1F_\varepsilon(r)$.
Set $M_\mu:=\sup_{r\geq0}r^2\ee^{-(\lambda_*-\mu)r}
=4/(\ee^2(\lambda_*-\mu)^2)$. Choose
\[
    0<\varepsilon_*\leq
      \min\left\{\frac{a\ee\lambda_*}{K},
                  \frac{\omega_\mu\sigma_\mu}{2KM_\mu}\right\}.
\]
Since $P_\mu(r)\leq r$ and
$\sup_{r\geq0}r\ee^{-\lambda_*r}=1/(\ee\lambda_*)$, the condition
$\varepsilon_*\leq a\ee\lambda_*/K$ gives
$\Phi_\varepsilon(r)\leq\varepsilon/(\ee\lambda_*)\leq a/K$.
Also
\[
 K\Phi_\varepsilon(r)^2
 \leq K\varepsilon^2r^2\ee^{-2\lambda_*r}
 =K\varepsilon^2
   \bigl(r^2\ee^{-(\lambda_*-\mu)r}\bigr)
   \ee^{-(\lambda_*+\mu)r}
 \leq K\varepsilon^2M_\mu\ee^{-(\lambda_*+\mu)r}.
\]
The condition $\varepsilon_*\leq\omega_\mu\sigma_\mu/(2KM_\mu)$
gives $K\varepsilon^2M_\mu\leq\varepsilon\omega_\mu\sigma_\mu/2$.  Combining this with
\eqref{eq:pulse-linear-residual} and
$\Delta_1\Phi_\varepsilon\geq\Delta_1F_\varepsilon$ gives
\begin{equation}\label{eq:pulse-nonlinear-residual}
    -c_*\Phi_\varepsilon'(r)-\Delta_1\Phi_\varepsilon(r)
                     -a\Phi_\varepsilon(r)+K\Phi_\varepsilon(r)^2
       \leq-\frac{\varepsilon\omega_\mu\sigma_\mu}{2}
                                  \ee^{-(\lambda_*+\mu)r}
       \qquad(r>0).
\end{equation}

Put $\tau=t+T$ and $r=j-c_*t+b_\beta\log\tau$. At $r>0$, the
full residual of $\Psi(t,j)=\Phi_\varepsilon(r)$ is
\begin{equation}\label{eq:pulse-full-residual}
\begin{split}
 &\Psi_t-\Delta_{\Z}\Psi-\left(a-\frac\beta\tau\right)\Psi+K\Psi^2\\
 &\quad=-c_*\Phi_\varepsilon'(r)-\Delta_1\Phi_\varepsilon(r)
          -a\Phi_\varepsilon(r)+K\Phi_\varepsilon(r)^2\\
 &\qquad+\frac{b_\beta\Phi_\varepsilon'(r)+\beta\Phi_\varepsilon(r)}\tau.
\end{split}
\end{equation}
Since $\beta-\lambda_*b_\beta=-3/2$, the last term satisfies
\begin{equation}\label{eq:pulse-temporal-residual}
\begin{split}
    \frac1\tau\bigl(b_\beta\Phi_\varepsilon'(r)
                                      +\beta\Phi_\varepsilon(r)\bigr)
      &=\frac{\varepsilon\ee^{-\lambda_*r}}\tau
          \left[b_\beta P_\mu'(r)-\frac32P_\mu(r)\right]\\
      &\leq\frac{\varepsilon\ee^{-\lambda_*r}}\tau
                       \left(b_\beta-\frac34r\right).
\end{split}
\end{equation}
For $r\geq R_0:=1+4b_\beta/3$, this quantity is nonpositive.
For $0<r<R_0$, the right-hand side of
\eqref{eq:pulse-temporal-residual} is at most
$\varepsilon b_\beta\tau^{-1}\ee^{-\lambda_*r}$.  Since
$\ee^{-\lambda_*r}\leq\ee^{\mu R_0}
\ee^{-(\lambda_*+\mu)r}$ in this interval, the choice
\[
    \tau\geq\tau_*:=
       \max\left\{1,\frac{2b_\beta}{c_*},
          \frac{2b_\beta\ee^{\mu R_0}}{\omega_\mu\sigma_\mu}\right\}
\]
implies
\[
 \frac{\varepsilon b_\beta}{\tau}\ee^{-\lambda_*r}
 \leq\frac{\varepsilon\omega_\mu\sigma_\mu}{2}
       \ee^{-(\lambda_*+\mu)r}.
\]
Thus \eqref{eq:pulse-full-residual} is nonpositive for $r>0$ by
\eqref{eq:pulse-nonlinear-residual}. The constants
$\varepsilon_*$ and $\tau_*$ are independent of $T$ and of the chosen
$\varepsilon\in(0,\varepsilon_*]$.
On the open set of times where
$r(t,j):=j-c_*t+b_\beta\log(t+T)<0$, one has
$\Psi(t,j)=\Psi_t(t,j)=0$.  Since both neighboring
values of $\Psi$ are nonnegative,
$\Delta_{\Z}\Psi(t,j)\geq0$, and hence
\[
 \Psi_t-\Delta_{\Z}\Psi
   -\left(a-\frac\beta{t+T}\right)\Psi+K\Psi^2\leq0
\]
at every such time. For $r>0$,
$|\Phi_\varepsilon'(r)|\leq
\varepsilon(1+\lambda_*r)\ee^{-\lambda_*r}\leq\varepsilon$.
Since $\Phi_\varepsilon(0)=0$ and $\Phi_\varepsilon=0$ on $(-\infty,0]$,
it is globally $\varepsilon$-Lipschitz. Hence $\Psi(\cdot,j)$ is locally
Lipschitz at each site. For $t+T\geq\tau_*$,
\[
 \frac{\dd}{\dd t}\bigl(j-c_*t+b_\beta\log(t+T)\bigr)
   =-c_*+\frac{b_\beta}{t+T}\leq-\frac{c_*}{2}<0.
\]
Thus the moving coordinate at any fixed site can equal zero at most once
for $t\geq\max\{0,\tau_*-T\}$. The inequality in
\eqref{eq:delayed-pulse-subsolution} is asserted only for almost every
time at each site, so these isolated crossing times require no value of
$\Phi_\varepsilon'$ at $0$. At every other time the ordinary chain rule
applies. The nonpositive residual in \eqref{eq:pulse-full-residual}
for $r>0$ and the inequality at $r<0$ prove
\eqref{eq:delayed-pulse-subsolution}.
\end{proof}

\begin{proposition}\label{prop:scalar-logarithmic-lower}
Let $\beta\geq0$, $K>0$ and $T\geq1$. Let $q$ solve
\begin{equation}\label{eq:scalar-growth-loss}
\left\{
\begin{aligned}
    q_t&=\Delta_{\Z}q+\left(a-\frac\beta{t+T}\right)q-Kq^2,\\
    q(0,\cdot)&=q_0.
\end{aligned}
\right.
\end{equation}
where $q_0$ is nonzero and finitely supported, and
$0\leq q_0\leq a/K$. Fix $0<\mu<\lambda_*$ and use
$\Phi_\varepsilon$ from Lemma~\ref{lem:delayed-pulse}.
There are $\varepsilon\in(0,\varepsilon_*]$ and $t_0\geq1$,
depending only on $a,K,\beta,\mu,T,q_0$, such that, with
\[
    r(t,j):=j-c_*t+b_\beta\log(t+T),\qquad
    b_\beta=\frac{\beta+3/2}{\lambda_*},
\]
one has
\begin{equation}\label{eq:scalar-profile-lower}
    q(t,j)\geq\Phi_\varepsilon(r(t,j))
    \qquad\bigl(t\geq t_0,\ r(t,j)\leq(t+T)^{1/4}\bigr).
\end{equation}
In particular, for some $\eta>0$,
\begin{equation}\label{eq:scalar-positive-seed}
    q\left(t,
       \left\lceil c_*t-b_\beta\log(t+T)+1\right\rceil\right)
       \geq\eta\qquad(t\geq t_0),
\end{equation}
after increasing $t_0$ if necessary.
\end{proposition}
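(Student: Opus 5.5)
We compare $q$ with the delayed pulse $\Psi(t,j)=\Phi_\varepsilon(r(t,j))$ of Lemma~\ref{lem:delayed-pulse}, by the moving half-line comparison of Lemma~\ref{lem:moving-halfline-comparison}. The pulse is a subsolution of \eqref{eq:scalar-growth-loss} once $t+T\geq\tau_*$. Its support is the whole half-line $r>0$, which extends to $j=+\infty$. There we cannot hope to lie below $q$ at the starting time without controlling $q$ on a far tail. For this reason we restrict the comparison to the moving region $\{r(t,j)\leq(t+T)^{1/4}\}$. On its right boundary we supply the needed bound from Lemma~\ref{lem:autonomous-leading-lower}.

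\textbf{Step 1: an autonomous lower bound for $q$.} Since $q\geq0$ and $\beta/(t+T)\to0$, we first remove the loss term. Put $Q_1(t,j)=\ee^{\int_0^t\beta/(s+T)\,\dd s}q(t,j)=((t+T)/T)^{\beta}q(t,j)$. Then
\[
 \partial_tQ_1=\Delta_\Z Q_1+aQ_1-K(T/(t+T))^{\beta}Q_1^2\geq\Delta_\Z Q_1+aQ_1-KQ_1^2 .
\]
Hence $Q_1$ is a supersolution of the autonomous logistic equation with the same data, for as long as $Q_1\leq a/K$. To avoid that range restriction, we compare directly: let $Q$ be the autonomous solution with $Q(0)=q_0$. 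Then $q\geq(T/(t+T))^{\beta}Q$ holds provided $\underline q:=(T/(t+T))^\beta Q$ is a subsolution of \eqref{eq:scalar-growth-loss}. Indeed
\[
 \partial_t\underline q-\Delta_\Z\underline q-(a-\beta/(t+T))\underline q+K\underline q^2=K\underline q^2-K(T/(t+T))^{\beta}\underline q\,Q\leq0,
\]
because $\underline q\leq(T/(t+T))^{\beta}Q$. Lemma~\ref{lem:comparison} then gives $q\geq\underline q$. Combining this with Lemma~\ref{lem:autonomous-leading-lower}, we obtain
\[
 q(t,j)\geq c_0T^\beta(t+T)^{-\beta}t^{-3/2}s\,\ee^{-\lambda_*s}\qquad(s=j-c_*t\in[L_0,\sqrt t],\ t\geq t_0).
\]
On the boundary curve $r=(t+T)^{1/4}$ we have $s=r-b_\beta\log(t+T)$. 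Since $\lambda_*b_\beta=\beta+3/2$, the factor $\ee^{-\lambda_*s}$ equals $(t+T)^{\beta+3/2}\ee^{-\lambda_* r}$, which exactly absorbs the powers $(t+T)^{-\beta}t^{-3/2}$. This gives $q\geq c\,s\,\ee^{-\lambda_*r}$ there, with $c$ depending on $T$. Meanwhile $\Phi_\varepsilon(r)\leq\varepsilon r\ee^{-\lambda_*r}$, and $s\geq r/2$ for large $t$. So choosing $\varepsilon\leq c/2$ orders the boundary values. The same holds on the lattice strip $B(t)<j\leq B(t)+1$, because a unit shift changes the exponentials only by the factor $\ee^{\lambda_*}$, which is absorbed into $\varepsilon$.

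\textbf{Step 2: initial ordering and the comparison.} We fix a large start time $t_1\geq\max\{t_0,\tau_*-T\}$. At $t_1$ the region $0<r\leq(t_1+T)^{1/4}$ is a bounded set of sites. On it $q(t_1,\cdot)>0$ by strict positivity, which follows from the kernel representation as in Lemma~\ref{lem:comparison}. Shrinking $\varepsilon$ therefore gives $\Psi(t_1)\leq q(t_1)$ there, and for $r\leq0$ this is trivial. We then apply Lemma~\ref{lem:moving-halfline-comparison} with $B(t)=c_*t-b_\beta\log(t+T)+(t+T)^{1/4}$, which is strictly increasing for large $t$, and with $F(t,s)=(a-\beta/(t+T))s-Ks^2$ on $[0,a/K]$. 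This yields \eqref{eq:scalar-profile-lower}.

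\textbf{Step 3: the positive seed.} At $j=\lceil c_*t-b_\beta\log(t+T)+1\rceil$ we have $r\in[1,2)$, so \eqref{eq:scalar-positive-seed} follows with $\eta=\min_{[1,2]}\Phi_\varepsilon>0$.

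The main obstacle is Step 1: matching the powers of $t$ exactly on the boundary $r=(t+T)^{1/4}$, where we must check that $s\leq\sqrt t$ and $s\geq L_0$. We must also keep the window choice consistent with Lemma~\ref{lem:autonomous-leading-lower}, whose constant must not degrade along the boundary.
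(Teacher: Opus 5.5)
Your proposal is correct and follows essentially the same route as the paper: you compare $q\geq(T/(t+T))^{\beta}Q$, evaluate the lower bound of Lemma~\ref{lem:autonomous-leading-lower} on the exterior strip $B(t)<j\leq B(t)+1$ (where $\lambda_*b_\beta=\beta+3/2$ absorbs the powers of $t$), obtain the initial ordering on a finite set by fixing the start time before shrinking $\varepsilon$, and conclude with Lemma~\ref{lem:moving-halfline-comparison}. One algebraic slip needs fixing: with $\rho=(T/(t+T))^{\beta}$ the residual of $\underline q=\rho Q$ is $K\rho^2Q^2-K\rho Q^2=-K\rho(1-\rho)Q^2\leq0$ (equivalently $K\underline q^2-K\underline q\,Q$, nonpositive because $\underline q\leq Q$), whereas the expression you wrote vanishes identically; the conclusion $q\geq\underline q$ is unaffected.
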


\begin{proof}
Define $\Pi_K(s)=\min\{a/K,\max\{0,s\}\}$ and
$F_\Pi(t,s)=(a-\beta/(t+T))\Pi_K(s)-K\Pi_K(s)^2$.
The map $q\mapsto\Delta_{\Z}q+F_\Pi(t,q)$ is globally Lipschitz on
$\ell^\infty(\Z)$, with constant at most $4+a+\beta/T$, uniformly in
$t\geq0$. The fixed-point construction in Lemma~\ref{lem:comparison}
therefore gives a global $C^1([0,\infty);\ell^\infty(\Z))$ solution
with initial datum $q_0$.
For the equation with reaction $F_\Pi$, the constant $0$ is a solution,
and the residual at $q\equiv a/K$ is
$a\beta/(K(t+T))\geq0$. Lemma~\ref{lem:comparison}, with
$D=\Z$, $F=F_\Pi$ and $0\leq q_0\leq a/K$, gives
$0\leq q\leq a/K$. Hence $F_\Pi(t,q)=(a-\beta/(t+T))q-Kq^2$,
so this solution solves \eqref{eq:scalar-growth-loss}.
Since $aq-Kq^2\geq0$ on this range,
\[
  q_t\geq\Delta_{\Z}q-\frac\beta Tq.
\]
In \eqref{eq:variation-constants}, take
$A=\Delta_{\Z}-\beta/T$ and use
the nonnegative source
$g_q(t)=(a+\beta/T-\beta/(t+T))q(t)-Kq(t)^2$.
The semigroup $\ee^{tA}=\ee^{-\beta t/T}\ee^{t\Delta_{\Z}}$ preserves
nonnegativity, so
\begin{equation}\label{eq:scalar-positivity}
 q(t,j)\geq\ee^{-\beta t/T}
    \sum_{\ell\in\Z}\pheat_1(t,j-\ell)q_0(\ell).
\end{equation}
By \eqref{eq:heat-semigroup}, $\pheat_1(t,m)>0$ for $t>0$.
Since $q_0\not\equiv0$, formula \eqref{eq:scalar-positivity} gives
$q(t,j)>0$ for every $t>0$ and $j\in\Z$.
Let $Q$ be the solution in Lemma~\ref{lem:autonomous-leading-lower}
with initial datum $q_0$, and
put $\rho(t)=(T/(t+T))^\beta$.  Since
$\rho'(t)=-\beta\rho(t)/(t+T)$ and
$Q_t-\Delta_{\Z}Q=aQ-KQ^2$, direct algebra gives
\begin{equation}\label{eq:scaled-autonomous-residual}
\begin{split}
    &(\partial_t-\Delta_{\Z})(\rho Q)
         -\left(a-\frac\beta{t+T}\right)\rho Q+K\rho^2Q^2\\
    &\hspace{4em}=-K\rho(1-\rho)Q^2\leq0.
\end{split}
\end{equation}
Since $\rho Q,q\in[0,a/K]$, $\rho(0)Q(0)=q_0$, and
\eqref{eq:scaled-autonomous-residual} is nonpositive on $\Z$,
Lemma~\ref{lem:comparison} with
$D=\Z$ and $F(t,s)=(a-\beta/(t+T))s-Ks^2$ gives
\begin{equation}\label{eq:rhoQ-comparison}
    q(t,j)\geq\rho(t)Q(t,j).
\end{equation}
Let $c_0,L_0,T_{\rm aut}$ be the constants in
Lemma~\ref{lem:autonomous-leading-lower} for initial datum $q_0$,
writing $T_{\rm aut}$ for its time threshold. Substitution in
\eqref{eq:rhoQ-comparison} gives
\begin{equation}\label{eq:nonautonomous-leading-lower}
    q(t,j)\geq c_0T^\beta(t+T)^{-\beta}t^{-3/2}
                     (j-c_*t)\ee^{-\lambda_*(j-c_*t)}
\end{equation}
for $t\geq T_{\rm aut}$ and $L_0\leq j-c_*t\leq\sqrt t$.

Write $\tau=t+T$ and set
$B(t):=c_*t-b_\beta\log\tau+\tau^{1/4}$.  Its derivative is
\[
 B'(t)=c_*-\frac{b_\beta}{t+T}+\frac14(t+T)^{-3/4},
\]
Thus $B'(t)>c_*/2$ whenever $t+T\geq2b_\beta/c_*$.
The set $D(t)=\{j\in\Z:j\leq B(t)\}$ has the single exterior vertex
$\lfloor B(t)\rfloor+1$. At that vertex, equivalently
$B(t)<j\leq B(t)+1$,
one has
$\tau^{1/4}<r:=r(t,j)\leq\tau^{1/4}+1$ and
$s:=j-c_*t=r-b_\beta\log\tau$.
Choose $t_{\rm ext}\geq\max\{1,T_{\rm aut}\}$ so that, for all
$t\geq t_{\rm ext}$,
\[
 b_\beta\log\tau\leq\tfrac12\tau^{1/4},\qquad
 \tau^{1/4}\geq2L_0,\qquad \tau^{1/4}+1\leq\sqrt t.
\]
Such a choice exists because $\log(t+T)=o((t+T)^{1/4})$ and
$(t+T)^{1/4}=o(\sqrt t)$ as $t\to\infty$, with $T$ fixed.
For every adjacent exterior vertex these inequalities give
$s=r-b_\beta\log\tau\geq r/2\geq L_0$ and $s\leq\sqrt t$.
Since $\lambda_*b_\beta=\beta+3/2$, estimate
\eqref{eq:nonautonomous-leading-lower} becomes
\[
\begin{split}
    q(t,j)
      &\geq c_0T^\beta\left(\frac\tau t\right)^{3/2}
                                  s\ee^{-\lambda_*r}\\
      &\geq c_1r\ee^{-\lambda_*r},
       \qquad c_1:=c_0T^\beta/2>0.
\end{split}
\]
On the other hand,
$\Phi_\varepsilon(r)\leq\varepsilon r\ee^{-\lambda_*r}$.
Thus $\Phi_\varepsilon(r(t,j))\leq q(t,j)$ for
$t\geq t_{\rm ext}$ and $B(t)<j\leq B(t)+1$, provided
$\varepsilon\leq c_1$.

Set $t_0=\max\{1,t_{\rm ext},\tau_*-T,16-T\}$. Since
$\tau_*\geq2b_\beta/c_*$, this choice gives $B'>0$ on $[t_0,\infty)$,
$t_0+T\geq\tau_*$ and $(t_0+T)^{1/4}\geq2$.
The time $t_0$ is fixed before the amplitude $\varepsilon$ is chosen.
At time $t_0$, the set
\[
 \mathcal J_0:=\{j\in\Z:0<r(t_0,j)\leq(t_0+T)^{1/4}\}
\]
is finite and nonempty: its defining interval in $j$ has length
$(t_0+T)^{1/4}\geq2$. Estimate \eqref{eq:scalar-positivity} gives
$q(t_0,j)>0$ for every $j\in\mathcal J_0$.  Since
$\Phi_\varepsilon(r)=\varepsilon
\ee^{-\lambda_*r}P_\mu(r)$ for $r>0$, define
\[
 \varepsilon_0:=
 \min_{j\in\mathcal J_0}
 \frac{q(t_0,j)}{\ee^{-\lambda_*r(t_0,j)}P_\mu(r(t_0,j))}>0.
\]
Fix $\varepsilon=\min\{\varepsilon_*,c_1,\varepsilon_0\}>0$.  Then
$\Phi_\varepsilon(r(t_0,j))\leq q(t_0,j)$ for every
$j\in\mathcal J_0$; for $r(t_0,j)\leq0$ the pulse is zero.  Thus
\[
    \Phi_\varepsilon(r(t_0,j))\leq q(t_0,j)
                 \qquad(j\leq B(t_0)).
\]
Fix $t_1>t_0$ and put $\Psi(t,j)=\Phi_\varepsilon(r(t,j))$.
On $[t_0,t_1]$, $\Psi$ is locally Lipschitz in time at each site and
satisfies \eqref{eq:delayed-pulse-subsolution} in $j\leq B(t)$, while
$q$ is continuously differentiable in time and solves
\eqref{eq:scalar-growth-loss} there. Both take values in $[0,a/K]$.
For $F(t,s)=(a-\beta/(t+T))s-Ks^2$, one has
$|\partial_sF|\leq a+\beta/T$ on this range.
We have $\Psi(t_0,j)\leq q(t_0,j)$ for $j\leq B(t_0)$, and
$\Psi(t,j)\leq q(t,j)$ for $B(t)<j\leq B(t)+1$ at every
$t\in[t_0,t_1]$.
Together with $B'>0$, these verify all hypotheses of
Lemma~\ref{lem:moving-halfline-comparison}. Apply it with $U=\Psi$ and
$W=q$ to obtain \eqref{eq:scalar-profile-lower} on $[t_0,t_1]$.
Since $t_1>t_0$ is arbitrary, \eqref{eq:scalar-profile-lower} holds for
all $t\geq t_0$, including the integer-crossing times of $B(t)$.
Put $j_t=\lceil c_*t-b_\beta\log(t+T)+1\rceil$. Then
$1\leq r(t,j_t)<2\leq(t+T)^{1/4}$ for $t\geq t_0$.
For $1\leq r\leq2$, the bound $P_\mu(r)\geq r/2$ gives
$\Phi_\varepsilon(r)\geq(\varepsilon/2)\ee^{-2\lambda_*}$.
Thus \eqref{eq:scalar-profile-lower} at $j=j_t$ proves
\eqref{eq:scalar-positive-seed} with the fixed choice
$\eta=(\varepsilon/2)\ee^{-2\lambda_*}>0$, independently of the
fractional part of $c_*t-b_\beta\log(t+T)$.
\end{proof}

\section{Lower bound on coordinate axes}\label{sec:amplification}

Lemma~\ref{lem:amplification} uses finite boxes to prove that
$u(t_0,z)\geq\eta$ implies $u(t_0+s,z)\geq\theta$ for all
$s\geq S(\eta,\theta,d,f)$.

\begin{lemma}\label{lem:elliptic-liouville}
Suppose $q:\Z^d\to[0,1]$ satisfies
$\Delta_{\Z^d}q+f(q)=0$ and $\inf_{\Z^d}q>0$. Then $q\equiv1$.
\end{lemma}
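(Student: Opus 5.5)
The plan is to compare $q$ with a spatially constant solution of the ODE $\dot\xi=f(\xi)$, and to treat $q$ as a stationary solution of the parabolic problem \eqref{eq:main}. Put $m:=\inf_{\Z^d}q\in(0,1]$. If $m=1$ we are done, so assume $m<1$.

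Let $\xi(t)$ solve $\dot\xi=f(\xi)$ with $\xi(0)=m$. Since $f>0$ on $(0,1)$ and $f(1)=0$, the solution is nondecreasing, stays in $[m,1)$, and satisfies $\xi(t)\to1$ as $t\to\infty$. The constant-in-space function $U(t,x):=\xi(t)$ has $\Delta_{\Z^d}U=0$, so it is an exact solution of \eqref{eq:main}. The function $W(t,x):=q(x)$ is also an exact solution, because $\Delta_{\Z^d}q+f(q)=0$. Both take values in $[0,1]$, and $U(0,\cdot)=m\le q=W(0,\cdot)$. Lemma~\ref{lem:comparison}, applied with $D=\Z^d$ and $F(t,s)=\widetilde f(s)$, then gives $\xi(t)\le q(x)$ for all $t\ge0$ and $x\in\Z^d$. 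Letting $t\to\infty$ yields $q\ge1$. Since $q\le1$ by hypothesis, $q\equiv1$.

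The main point to check is that $\xi(t)\to1$. Because $\xi$ is monotone and bounded by $1$, it has a limit $\ell\in[m,1]$. The limit must satisfy $f(\ell)=0$, since otherwise $\dot\xi$ would stay near $f(\ell)\neq0$ and $\xi$ could not converge. As $f>0$ on $(0,1)$ and $\ell\ge m>0$, this forces $\ell=1$.

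The remaining hypotheses of Lemma~\ref{lem:comparison} are routine. Both functions are bounded and $C^1$ in time, and $\widetilde f$ is globally Lipschitz. Neither the strict sign $f'(1)<0$ nor concavity of $f$ is needed.
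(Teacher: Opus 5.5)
Your argument is correct, but it takes a different route from the paper. The paper stays at the elliptic level. It takes a minimizing sequence $x_n$ with $q(x_n)\to m=\inf q$. Since every neighbour of $x_n$ has value at least $m$, one gets $-f(q(x_n))=\Delta_{\Z^d}q(x_n)\geq-2d(q(x_n)-m)$. Passing to the limit gives $f(m)\leq0$, and $f>0$ on $(0,1)$ with $m\in(0,1]$ then forces $m=1$.

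You instead treat $q$ as a stationary solution of \eqref{eq:main} and place the spatially constant ODE solution $\xi(t)$, with $\xi(0)=m$, beneath it using Lemma~\ref{lem:comparison}. The fact that the infimum need not be attained is then absorbed into the global $\ell^\infty$ comparison principle, rather than handled by a minimizing sequence.

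The paper's version is shorter. It uses only continuity of $f$ and its positivity on $(0,1)$, with no time-dependent comparison or ODE asymptotics. Yours relies on the Lipschitz extension $\widetilde f$, both for ODE uniqueness and for the comparison lemma, but it proves more. Any solution of \eqref{eq:main} with $\inf_{\Z^d}u(0,\cdot)\geq m>0$ satisfies $u(t,\cdot)\geq\xi(t)\to1$ uniformly on $\Z^d$. The Liouville statement is the stationary special case of that.
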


\begin{proof}
Let $m=\inf q>0$ and choose $x_n$ so that $q(x_n)\to m$.
Every neighbor of $x_n$ has value at least $m$, and consequently
\[
    -f(q(x_n))=\Delta_{\Z^d}q(x_n)
       \geq-2d(q(x_n)-m).
\]
Since $q(x_n)\to m$ and $f$ is continuous, taking $n\to\infty$ in
$-f(q(x_n))\geq-2d(q(x_n)-m)$ gives $f(m)\leq0$.  The assumptions
$f>0$ on $(0,1)$ and $m\in(0,1]$ force $m=1$, and $q\leq1$ then gives
$q\equiv1$.
\end{proof}

\begin{lemma}\label{lem:box-equilibria}
For $L\in\N$, put $Q_L:=\{-L,\ldots,L\}^d$. If $L$ is sufficiently
large, the equation
\[
    \Delta_{\Z^d}q+f(q)=0\text{ in }Q_L,
    \qquad q=0\text{ on }\Z^d\setminus Q_L
\]
has a solution with $0<q<1$ in $Q_L$.
For any choice of such positive solutions $q_L$, one has
$q_L(x)\to1$ for every fixed $x\in\Z^d$ as $L\to\infty$.
\end{lemma}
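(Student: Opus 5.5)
Existence will come from a monotone iteration started at a small multiple of the principal Dirichlet eigenfunction of $-\Delta_{\Z^d}$ on $Q_L$. Let $\mu_L$ be the principal eigenvalue of $-\Delta_{\Z^d}$ with zero exterior data on $Q_L$, and let $\psi_L>0$ on $Q_L$ be its eigenfunction, normalized by $\max\psi_L=1$. By separation of variables, $\psi_L(x)=\prod_k\cos(\pi x_k/(2L+2))$ and $\mu_L=2d(1-\cos(\pi/(2L+2)))$, so $\mu_L\to0$. Fix $L$ with $\mu_L<a$. Since $f(s)=as+O(s^2)$, the function $\underline q=\delta\psi_L$ satisfies
\[
\Delta_{\Z^d}\underline q+f(\underline q)=\delta\psi_L\bigl(a-\mu_L+O(\delta)\bigr)\geq0
\]
in $Q_L$ for small $\delta>0$. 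The constant $1$ is a supersolution.

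For existence I would run the parabolic problem on $Q_L$ with zero exterior data, starting from $\underline q$. Lemma~\ref{lem:comparison}, applied with $D=Q_L$, shows that the solution stays in $[\underline q,1]$. It is nondecreasing in time, because the subsolution $\underline q$, compared with the solution shifted in time, gives $U(t+h)\geq U(t)$. On a finite set a bounded monotone ODE solution converges, and its limit is an equilibrium $q_L$ with $q_L\geq\underline q>0$ in $Q_L$. To get $q_L<1$, note that $q_L\leq1$ and that $q_L(x)=1$ at some $x\in Q_L$ would give $\Delta q_L(x)=0$. Since $q_L\le 1$ everywhere, all neighbours of $x$ then equal $1$, and propagating this to the boundary of $Q_L$ contradicts the zero exterior values.

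For the limit $q_L(x)\to1$ with an arbitrary choice of positive solutions, the key step is a uniform lower bound on a fixed region, and I expect this to be the main obstacle. The plan is to fix $L_1$ with $\mu_{L_1}<a$ and $\delta$ as above, and then compare any positive solution $q_L$ on $Q_L$, for $L\geq L_1$, with translates $\delta\psi_{L_1}(\cdot-z)$ for all $z$ with $z+Q_{L_1}\subset Q_L$. To show $q_L\geq\delta\psi_{L_1}(\cdot-z)$ I would use a sweeping argument. For $\sigma\in(0,1]$ let
\[
\sigma^*=\sup\{\sigma:q_L\geq\sigma\delta\psi_{L_1}(\cdot-z)\}.
\]
Positivity of $q_L$ on the finite set gives $\sigma^*>0$. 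If $\sigma^*<1$, there is a touching point $x$ in the translated box. There $\Delta_{\Z^d}(q_L-\sigma^*\delta\psi)(x)\geq0$, while the strict subsolution inequality, which holds because $f(s)/s$ decreases only to second order and $a-\mu_{L_1}>0$ gives strictness for $\sigma^*\delta\psi$, yields the opposite strict sign. So $\sigma^*\geq1$. This makes $q_L\geq\delta\psi_{L_1}(0)=\delta$ at the centres of all admissible translates, so $q_L\geq\delta$ on $Q_{L-L_1}$.

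Finally I would pass to a limit. Take any subsequence of $L$. By diagonal extraction on the countable lattice, $q_L\to q$ pointwise along a further subsequence, and the equation passes to the limit at every site because each site eventually lies inside $Q_L$. The limit satisfies $\Delta_{\Z^d}q+f(q)=0$ on $\Z^d$ with $\delta\leq q\leq1$. Lemma~\ref{lem:elliptic-liouville} then gives $q\equiv1$. Since every subsequence has a further subsequence converging to $1$, we get $q_L(x)\to1$ for each fixed $x$.
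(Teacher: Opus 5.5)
Your proposal is correct and follows the paper's proof essentially step for step. You use the explicit cosine eigenfunction on $Q_L$; the monotone parabolic flow from a small multiple of it, with path propagation giving $q_L<1$; sliding $\sigma\delta\psi_{L_1}(\cdot-z)$ with amplitude capped at $\delta$ to get $q_L\geq\delta$ on $Q_{L-L_1}$; and diagonal extraction with Lemma~\ref{lem:elliptic-liouville}. The only differences are cosmetic: the constant $1$ instead of $\mathbf 1_{Q_L}$ as supersolution, and $f(s)=as+O(s^2)$ instead of $f(s)/s\to a$ for strictness on $(0,\delta]$.
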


\begin{proof}
Define
\[
    \phi_L(x):=\prod_{k=1}^d
           \cos\left(\frac{\pi x_k}{2L+2}\right)\quad(x\in Q_L),
    \qquad \phi_L=0\quad\text{outside }Q_L.
\]
Writing $h_L=\pi/(2L+2)$, the identity
$\cos(\xi+h_L)+\cos(\xi-h_L)=2\cos h_L\cos\xi$ gives
\[
    -\Delta_{\Z^d}\phi_L=\mu_L\phi_L\text{ in }Q_L,
    \qquad
    \mu_L=2d\left(1-\cos\frac{\pi}{2L+2}\right)\to0.
\]
This calculation also holds at boundary vertices because
$\cos(h_L(L+1))=\cos(-h_L(L+1))=0$.
Moreover, $0<\phi_L\leq1$ in $Q_L$ and $\phi_L(0)=1$.
Choose $L$ with $\mu_L<a$. Since $f(s)/s\to a$ as $s\downarrow0$,
there is $\varepsilon_L\in(0,1)$ such that
$f(s)>\mu_Ls$ for $0<s\leq\varepsilon_L$.
For $0<\varepsilon\leq\varepsilon_L$ and $x\in Q_L$,
\[
 \Delta_{\Z^d}(\varepsilon\phi_L)(x)
      +f(\varepsilon\phi_L(x))
 =-\mu_L\varepsilon\phi_L(x)+f(\varepsilon\phi_L(x))>0,
\]
so the time-independent function $\varepsilon\phi_L$ is a strict
subsolution.  Let $\mathbf 1_{Q_L}$ equal one on $Q_L$ and zero outside.
For $x\in Q_L$ one has
$\Delta_{\Z^d}\mathbf1_{Q_L}(x)\leq0$ and $f(1)=0$, so
$\mathbf1_{Q_L}$ is a supersolution for the differential equation
and zero exterior condition in \eqref{eq:box-dirichlet-evolution}.
Let $z_L$ solve the finite system
\begin{equation}\label{eq:box-dirichlet-evolution}
\left\{
\begin{aligned}
 (z_L)_t&=\Delta_{\Z^d}z_L+\widetilde f(z_L)
     &&\text{in }Q_L,\\
 z_L&=0
     &&\text{outside }Q_L,\\
 z_L(0)&=\varepsilon\phi_L.
\end{aligned}
\right.
\end{equation}
With zero exterior values, the vector field in
\eqref{eq:box-dirichlet-evolution} is globally Lipschitz on
$\R^{Q_L}$ in the maximum norm, with constant at most $4d+L_f$.
The fixed-point construction in Lemma~\ref{lem:comparison} therefore
gives a unique global solution. We have
$\varepsilon\phi_L=z_L(0)\leq\mathbf1_{Q_L}$.
The functions $\varepsilon\phi_L,z_L,\mathbf1_{Q_L}$ vanish on
$\partial_{\rm ext}Q_L$ and satisfy their subsolution, solution and
supersolution inequalities in $Q_L$, respectively.
Lemma~\ref{lem:comparison}, with $D=Q_L$ and $F(t,s)=\widetilde f(s)$, gives
\[
 \varepsilon\phi_L\leq z_L(t)\leq\mathbf1_{Q_L}\qquad(t\geq0).
\]
For $h>0$, $z_L(h)\geq z_L(0)$. The functions $U(t,x)=z_L(t,x)$ and $W(t,x)=z_L(t+h,x)$ satisfy
the differential equation and zero exterior condition in
\eqref{eq:box-dirichlet-evolution}, with $U(0)\leq W(0)$.
Lemma~\ref{lem:comparison} therefore gives $z_L(t+h)\geq z_L(t)$.  Thus each component is nondecreasing and bounded,
so $z_L(t)\to q_L\in[0,1]^{Q_L}$; extend $q_L$ by zero outside
$Q_L$. Equation~\eqref{eq:box-dirichlet-evolution} and continuity
of $f$ give
$(z_L)_t(t,x)\to\Delta_{\Z^d}q_L(x)+f(q_L(x))$ for each $x\in Q_L$.
If this limit were nonzero, $(z_L)_t(t,x)$ would eventually have a fixed
sign and absolute value bounded away from zero, contradicting convergence.
Hence $\Delta_{\Z^d}q_L+f(q_L)=0$ in $Q_L$, and
$q_L\geq\varepsilon\phi_L>0$ there.
If $q_L(x_0)=1$ at a vertex $x_0\in Q_L$, then
$0=\Delta q_L(x_0)+f(1)=\sum_{z\sim x_0}(q_L(z)-1)$; since every summand is
nonpositive, all neighbors of $x_0$ equal one.  Repeating this along a
nearest-neighbor path to the exterior contradicts the zero Dirichlet data.
Therefore $q_L<1$ in $Q_L$.

We next prove a lower bound independent of $L$ away from the exterior.
Fix $L_{\rm b}\in\N$ with $\mu_{L_{\rm b}}<a$, and choose
$\eta_{\rm b}\in(0,1)$ such that
$f(s)>\mu_{L_{\rm b}}s$ for $0<s\leq\eta_{\rm b}$.
Let $q_L$ satisfy $\Delta_{\Z^d}q_L+f(q_L)=0$ in $Q_L$, with
$0<q_L<1$ in $Q_L$ and zero exterior values.
For $z+Q_{L_{\rm b}}\subset Q_L$, define
\[
 \sigma_*:=\min\left\{\eta_{\rm b},
    \min_{x\in z+Q_{L_{\rm b}}}
        \frac{q_L(x)}{\phi_{L_{\rm b}}(x-z)}\right\}>0.
\]
Then $q_L-\sigma_*\phi_{L_{\rm b}}(\cdot-z)\geq0$ on the whole
lattice: the definition gives this inequality in the translated box,
and outside it $\phi_{L_{\rm b}}(\cdot-z)=0$ while $q_L\geq0$.
If $\sigma_*<\eta_{\rm b}$, the finite minimum is attained at some
$x\in z+Q_{L_{\rm b}}$, where
$q_L(x)=\sigma_*\phi_{L_{\rm b}}(x-z)$.
All neighboring values of the difference are nonnegative, so
\[
 \Delta_{\Z^d}\bigl(q_L-\sigma_*\phi_{L_{\rm b}}(\cdot-z)\bigr)(x)
 \geq0.
\]
At the same vertex, the stationary equation and the computed eigenvalue
give
\[
 \Delta_{\Z^d}\bigl(q_L-\sigma_*\phi_{L_{\rm b}}(\cdot-z)\bigr)(x)
 =\mu_{L_{\rm b}}\sigma_*\phi_{L_{\rm b}}(x-z)
       -f(\sigma_*\phi_{L_{\rm b}}(x-z))<0.
\]
This contradiction forces $\sigma_*=\eta_{\rm b}$. Evaluating at
$x=z$, where $\phi_{L_{\rm b}}(0)=1$, yields $q_L(z)\geq\eta_{\rm b}$.

Given any sequence $L_n\to\infty$, enumerate
$\Z^d=\{x_1,x_2,\ldots\}$.  Since every sequence
$(q_{L_n}(x_m))_n$ lies in $[0,1]$, the Bolzano--Weierstrass theorem
allows us to choose nested subsequences $(n_k^{(m)})_k$, $m\geq1$, such
that $q_{L_{n_k^{(m)}}}(x_i)$ converges for $i\leq m$.
The diagonal choice $n_k:=n_k^{(k)}$ then makes
$q_{L_{n_k}}(x_m)$ converge for every fixed $m$.  Denote the pointwise
limit by $q_\infty:\Z^d\to[0,1]$. For every fixed $z$, the estimate
$q_L(z)\geq\eta_{\rm b}$ holds once $z+Q_{L_{\rm b}}\subset Q_L$;
hence $q_\infty(z)\geq\eta_{\rm b}$ for every $z\in\Z^d$. For a fixed $x$, all $2d$ neighbors of $x$ also have convergent values on
this subsequence; continuity of $f$ therefore permits passage to the limit
in
$\Delta_{\Z^d}q_{L_{n_k}}(x)+f(q_{L_{n_k}}(x))=0$.  Thus
$\Delta_{\Z^d}q_\infty(x)+f(q_\infty(x))=0$ for every $x$.
Lemma~\ref{lem:elliptic-liouville} gives $q_\infty\equiv1$.
If $q_L(x)\not\to1$ at some fixed vertex $x$, one could choose
$\varepsilon>0$ and a subsequence with $q_L(x)\leq1-\varepsilon$.
A further diagonal subsequence would converge to $q_\infty\equiv1$, which
contradicts the inequality at $x$. Thus $q_L(x)\to1$ for every fixed
$x\in\Z^d$.
\end{proof}

\begin{lemma}\label{lem:amplification}
For every $\eta,\theta\in(0,1)$ there is
$S=S(\eta,\theta,d,f)>0$ such that every solution $u$ of
\eqref{eq:main} with values in $[0,1]$ satisfies
\begin{equation}\label{eq:amplification}
 u(t_0,z)\geq\eta
 \quad\Longrightarrow\quad
 u(t_0+s,z)\geq\theta\quad\text{for every }s\geq S
\end{equation}
for every $t_0\geq0$ and $z\in\Z^d$. The time $S$ does not
depend on $u$, $t_0$, or $z$.
\end{lemma}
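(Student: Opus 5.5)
The plan is to compare $u$ from below with the monotone box evolution $z_L$ from the proof of Lemma~\ref{lem:box-equilibria}, centered at $z$, after first spreading the mass $\eta$ at the single site $z$ into a small uniform multiple of the eigenfunction $\phi_{L}$. By translation invariance we take $z=0$ and $t_0=0$. Fix $\theta$ and use Lemma~\ref{lem:box-equilibria} to choose $L$ with $\mu_L<a$ and positive box equilibria satisfying $q_L(0)>\theta$. Choosing $L$ once, depending only on $\theta,d,f$, makes all later constants independent of $u$, $t_0$ and $z$.

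First we spread the seed. From $u_t\geq\Delta_{\Z^d}u$ (since $f\geq0$ on $[0,1]$) and Lemma~\ref{lem:comparison} (or the lower bound in the variation-of-constants formula) we get
\[
u(1,x)\geq\eta\,\pheat_d(1,x)\geq\eta\,c_L\qquad(x\in Q_L),
\]
where $c_L:=\min_{Q_L}\pheat_d(1,\cdot)>0$ by \eqref{eq:heat-semigroup}. Set $\varepsilon:=\min\{\varepsilon_L,\eta c_L\}$, with $\varepsilon_L$ as in the proof of Lemma~\ref{lem:box-equilibria}. Then $\varepsilon\phi_L\leq u(1,\cdot)$ on $Q_L$, because $\phi_L\leq1$.

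Next we compare with the box evolution. Let $z_L$ solve \eqref{eq:box-dirichlet-evolution} with $z_L(0)=\varepsilon\phi_L$. On $\partial_{\rm ext}Q_L$ we have $z_L=0\leq u$, and $z_L$ satisfies the equation in $Q_L$. Lemma~\ref{lem:comparison} with $D=Q_L$ then gives $u(1+s,x)\geq z_L(s,x)$ for all $s\geq0$. As shown there, $z_L(s)$ is nondecreasing in $s$ and converges to some positive box equilibrium $q_L^{(\varepsilon)}$.

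The main obstacle is uniformity: the limit and the rate of convergence depend on $\varepsilon$, hence on $\eta$, but not on $u$. I would handle this as follows. For fixed $\eta$, $\varepsilon$ is a fixed number determined by $\eta,\theta,d,f$, so $z_L$ is a single explicit trajectory. Its limit $q_L^{(\varepsilon)}$ is a positive equilibrium with $0<q<1$, and Lemma~\ref{lem:box-equilibria} says every such choice tends to $1$ pointwise as $L\to\infty$. To make this usable I would select $L$ so large that every positive box equilibrium on $Q_L$ has value $>\theta$ at $0$. This uniform version follows by the same contradiction and diagonal argument used in Lemma~\ref{lem:box-equilibria}: if it failed along a sequence $L_n\to\infty$, the corresponding equilibria would converge to $1$ at $0$, contradicting a value $\leq\theta$. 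Then $q_L^{(\varepsilon)}(0)>\theta$, and since $z_L(s,0)\uparrow q_L^{(\varepsilon)}(0)$ there is a finite $s_1=s_1(\eta,\theta,d,f)$ with $z_L(s,0)\geq\theta$ for all $s\geq s_1$; monotonicity of $z_L$ in $s$ gives this for every later time. Setting $S:=1+s_1$ yields \eqref{eq:amplification} uniformly in $u$, $t_0$ and $z$.
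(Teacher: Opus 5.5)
Your proposal is correct and follows essentially the paper's route. Both spread the seed by the heat kernel at time $1$, fit $\varepsilon\phi_L$ underneath on $Q_L$, compare with the monotone box evolution $z_L$, and use convergence of the box equilibria to $1$.

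The only differences are cosmetic. The paper compares $u$ with the translate of a single auxiliary solution started from $\eta\1_{\{0\}}$. It then applies Lemma~\ref{lem:box-equilibria} to the particular family of limits of $z_L$, so its $L$ also depends on $\eta$. You instead compare $u$ directly and choose $L$, depending only on $\theta,d,f$, so that every positive box equilibrium exceeds $\theta$ at the origin; this follows directly from the lemma's ``any choice'' conclusion.
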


\begin{proof}
Let $v$ solve \eqref{eq:main} with $v(0,x)=\eta\1_{\{0\}}(x)$.
The lower bound in \eqref{eq:linear-comparison} and the strict positivity
of the kernel in \eqref{eq:heat-semigroup} give
$v(1,x)\geq\eta\pheat_d(1,x)>0$ for every $x\in\Z^d$.
Use $Q_L,\phi_L,\mu_L$ from the proof of
Lemma~\ref{lem:box-equilibria}. For each $L$ with $\mu_L<a$, choose
$\varepsilon_L^{\rm sub}\in(0,1)$ such that
$f(s)>\mu_Ls$ for $0<s\leq\varepsilon_L^{\rm sub}$, and set
\[
 \varepsilon_L:=\min\left\{\varepsilon_L^{\rm sub},
    \min_{x\in Q_L}\frac{\eta\pheat_d(1,x)}{\phi_L(x)}\right\}>0.
\]
Let $z_L$ solve \eqref{eq:box-dirichlet-evolution} with
$\varepsilon=\varepsilon_L$. Its initial values satisfy
$z_L(0,x)\leq v(1,x)$ in $Q_L$. Both $z_L(s,x)$ and $v(1+s,x)$ solve
$w_s=\Delta_{\Z^d}w+f(w)$ in $Q_L$ and take values in $[0,1]$.
On $\partial_{\rm ext}Q_L$, one has $z_L(s,x)=0\leq v(1+s,x)$
for all $s\geq0$. Lemma~\ref{lem:comparison}, with $D=Q_L$,
$F(t,s)=\widetilde f(s)$, $U=z_L$ and $W(s,x)=v(1+s,x)$, gives
\[
 v(1+s,x)\geq z_L(s,x)\qquad(s\geq0,\ x\in Q_L).
\]
The proof of Lemma~\ref{lem:box-equilibria} gives
$z_L(s,0)\uparrow q_L(0)$ as $s\to\infty$ for each such $L$.
Its conclusion for any family of positive equilibria gives
$q_L(0)\to1$ as $L\to\infty$. Now fix $L$ with $q_L(0)>\theta$,
and choose $s_0\geq0$ such that $z_L(s,0)\geq\theta$ for every
$s\geq s_0$. Then $v(s,0)\geq\theta$ for every $s\geq S:=1+s_0$.
The choices of $\varepsilon_L^{\rm sub}$, $L$ and $s_0$ depend only
on $\eta,\theta,d,f$.

For a solution $u$ with $0\leq u\leq1$ and $u(t_0,z)\geq\eta$,
define $v_z(s,x):=v(s,x-z)$. Substitution in the nearest-neighbor
formula gives
$\partial_sv_z(s,x)=\Delta_{\Z^d}v_z(s,x)+f(v_z(s,x))$ and
$v_z(0,x)=\eta\1_{\{z\}}(x)\leq u(t_0,x)$.
Lemma~\ref{lem:comparison}, with $D=\Z^d$, $U=v_z$ and
$W(s,x)=u(t_0+s,x)$, yields $u(t_0+s,x)\geq v_z(s,x)$ for every
$s\geq0$. At $x=z$ and $s\geq S$, this proves
\eqref{eq:amplification}.
\end{proof}

\begin{theorem}\label{thm:nonlinear-lower}
Assume \eqref{eq:kpp}--\eqref{eq:u0}. For each $\theta\in(0,1)$,
there are $C_\theta^{-}>0$ and $T_\theta\geq2$ such that
\begin{equation}\label{eq:axial-inner-lower}
    u(t,je_1)\geq\theta
    \quad\text{if }t\geq T_\theta,
    \quad j\in\Z,\quad 0\leq j\leq m_d(t)-C_\theta^{-}.
\end{equation}
In particular,
\begin{equation}\label{eq:sharp-axial-lower}
    R_\theta(t)\geq c_*t-\frac{d+2}{2\lambda_*}\log t-C_\theta^{-}
    \qquad(t\geq T_\theta).
\end{equation}
Moreover, for every $A\geq0$ there are $C_{\theta,A}>0$ and
$T_{\theta,A}\geq2$ such that, for each $t\geq T_{\theta,A}$, an
integer $j_t$ can be chosen with
\begin{equation}\label{eq:transverse-section-lower}
    |j_t-m_d(t)|\leq C_{\theta,A},\qquad
    \inf_{\substack{y\in\Z^{d-1}\\ |y|\leq A\sqrt t}}
            u(t,j_t,y)\geq\theta.
\end{equation}
The constants may depend on $d,f,u_0,\theta,A$, but not on $t$ or the
lattice site.
\end{theorem}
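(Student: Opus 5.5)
We combine the product lower solution of Lemma~\ref{lem:product-subsolution} with the scalar logarithmic lower bound of Proposition~\ref{prop:scalar-logarithmic-lower}, and then amplify via Lemma~\ref{lem:amplification}.

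\textbf{Step 1: a positive seed near $m_d(t)$.} Take $n=d-1$, $K=C_f$ and $\beta=(d-1)/2$. Then $b_\beta=(d/2+1)/\lambda_*=\kappa_d/\lambda_*$. Let $T,(j_0,y_0),\eta_q$ be as in Lemma~\ref{lem:product-subsolution}. Since $q(0,\cdot)=\eta_q\1_{\{j_0\}}$ is nonzero, finitely supported and bounded by $a/C_f$, estimate \eqref{eq:scalar-positive-seed} applies and gives
\[
q(t,k_t)\geq\eta,\qquad k_t:=\lceil c_*t-b_\beta\log(t+T)+1\rceil,\qquad t\geq t_0 .
\]
Combining this with \eqref{eq:product-comparison} yields $u(T+t,k_t,y)\geq\eta H_T(t,y-y_0)$. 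Lemma~\ref{lem:fourier-expansion} (with $p=0$ in dimension $n$) shows that $(t+T)^{n/2}\pheat_n(t+T,y-y_0)$ is bounded below uniformly for $|y|\leq A'\sqrt t$. Hence $u(T+t,k_t,y)\geq\eta_A>0$ on such transverse sections for large $t$. Writing $\tau=T+t$, we have $k_t-m_d(\tau)=O(1)$, because $c_*T$ and $\log(t+T)-\log\tau$ are bounded. The same argument at every earlier time gives seeds along a whole family of positions: in \eqref{eq:scalar-profile-lower}, for $r\in[1,R]$ the pulse $\Phi_\varepsilon(r)$ is bounded below by $\varepsilon\ee^{-\lambda_*R}R/2$.

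\textbf{Step 2: amplification to all $0\leq j\leq m_d(t)-C$.} Fix $\theta$ and let $S=S(\eta',\theta,d,f)$ come from Lemma~\ref{lem:amplification}, where $\eta'$ is the seed level. For a target $(t,j)$ with $j\leq m_d(t)-C$, choose the earlier time $t'\in[t-S-\text{const},\,t-S]$ at which the seed position $k_{t'}$, or a nearby site with $1\leq r(t',\cdot)\leq2$, equals $j$. This is possible because the seed moves at speed $\approx c_*$ and passes every integer. Then $u(t',j,0)\geq\eta'$, and the amplification gives $u(t,j,0)\geq\theta$ for every $t-t'\geq S$.

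The delicate point is the requirement $t-t'\geq S$ together with arbitrary later times. If the seed passed $j$ at time $t'$, then for any $t\geq t'+S$ we get $u(t,j)\geq\theta$. Since $k_{t'}=j$ means $j\approx m_d(t')$, the condition $t\geq t'+S$ holds whenever $j\leq m_d(t)-c_*S-C'$. For small $j$, namely $j$ below the seed position at time $T+t_0$, we use instead a seed at time $T+t_0$. By \eqref{eq:scalar-positivity} together with the product bound, $u(T+t_0,j,0)$ is bounded below on the finite set of such $j\geq0$, and then Lemma~\ref{lem:amplification} applies for $t\geq T+t_0+S$. This proves \eqref{eq:axial-inner-lower}, and \eqref{eq:sharp-axial-lower} follows immediately.

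\textbf{Step 3: transverse sections.} Take $j_t:=k_{t-S-T}$. Step 1 gives $u(t-S,j_t,y)\geq\eta_A$ for $|y|\leq A\sqrt{t}$, taking $A'$ slightly larger than $A$. Apply Lemma~\ref{lem:amplification} at each site $(j_t,y)$, using its uniformity in $z$. Since $|j_t-m_d(t)|\leq c_*S+O(1)$, this gives \eqref{eq:transverse-section-lower}.

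I expect the main obstacle to be the bookkeeping in Step 2. One must guarantee that every integer $j$ in the range is visited by a seed at some earlier time $t'$ with $t-t'\geq S$, and that the lower bound $\eta'$ is uniform in $t'$. This needs the seed to be taken along the interval $1\leq r\leq2$ of \eqref{eq:scalar-profile-lower}, not only at the single site $k_t$, so that integer crossings are not skipped. It also needs the product factor $H_T(\cdot,-y_0)\geq h_*$ from \eqref{eq:axis-product-lower}.
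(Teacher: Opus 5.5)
Your proposal is correct and follows the paper's proof essentially step by step. The ingredients match: the seed $u(s,\lceil m_d(s)-c_*T+1\rceil,y)\geq\eta_A$ for $|y|\leq A\sqrt s$ from Proposition~\ref{prop:scalar-logarithmic-lower}, \eqref{eq:product-comparison} and the local limit expansion of $\pheat_{d-1}$; amplification via Lemma~\ref{lem:amplification} from the time each integer $j$ is crossed; positivity plus amplification for the finitely many small $j\geq0$; and a seed at time $t-S_A$ amplified site by site for \eqref{eq:transverse-section-lower}.

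Your worry about skipped integers is unnecessary. Since $k_t$ is the ceiling of a continuous increasing function of $t$, it equals each integer $j$ exactly at the crossing time, which is what the paper uses via $J(s_j)=j$ when $a_0(s_j)=j$. So the interval $1\leq r\leq2$ is not needed. Incidentally, the pulse bound there should read $\Phi_\varepsilon(r)\geq\varepsilon\ee^{-\lambda_*R}/2$, not $\varepsilon\ee^{-\lambda_*R}R/2$.
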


\begin{proof}[Proof of Theorem~\ref{thm:nonlinear-lower}]
Choose $T$, $y_0$ and the one-site initial datum for $q$ as in
Lemma~\ref{lem:product-subsolution}. Apply
Proposition~\ref{prop:scalar-logarithmic-lower} with
$\beta=(d-1)/2$, $K=C_f$ and $\mu=\lambda_*/2$.
Write $b:=\kappa_d/\lambda_*=b_\beta$, and denote the time and positive
constant in \eqref{eq:scalar-positive-seed} by $t_{\rm sc}$ and
$\eta_{\rm sc}$, respectively. For original times $s\geq T+1$, set
\[
 a_0(s):=m_d(s)-c_*T+1,\qquad J(s):=\lceil a_0(s)\rceil.
\]
Since $a_0(s)=c_*(s-T)-b\log s+1$, the integer $J(s)$ is exactly
the site in \eqref{eq:scalar-positive-seed} at scalar time $s-T$.
Thus $q(s-T,J(s))\geq\eta_{\rm sc}$ for $s\geq T+t_{\rm sc}$.
The rounding error satisfies
$J(s)-m_d(s)+c_*T=J(s)-a_0(s)+1\in[1,2)$.

The transverse factor in \eqref{eq:product-comparison} is
$H_T(s-T,y-y_0)=\beta_\perp s^{(d-1)/2}\pheat_{d-1}(s,y-y_0)$.
Apply Lemma~\ref{lem:fourier-expansion} in dimension $d-1$ with
$p=0$ and initial datum $\1_{\{0\}}$. There are constants
$C_{\perp}^{\rm err}>0$ and $s_\perp\geq1$, depending only on $d$,
such that, for every $s\geq s_\perp$ and $y\in\Z^{d-1}$,
\[
 \left|s^{(d-1)/2}\pheat_{d-1}(s,y-y_0)
  -\gamma_0\left(\frac{y-y_0}{\sqrt s}\right)\right|
 \leq C_{\perp}^{\rm err}s^{-1/2},\qquad
 \gamma_0(z):=(4\pi)^{-(d-1)/2}\ee^{-|z|^2/4}.
\]
For $A\geq0$, put
\[
 g_A:=(4\pi)^{-(d-1)/2}\ee^{-(A+1)^2/4},\qquad
 h_A:=\frac{\beta_\perp g_A}{2},\qquad
 \eta_A:=\min\{1/2,\eta_{\rm sc}h_A\},
\]
and choose the fixed threshold
\[
 s_A^{\rm seed}:=\max\left\{T+t_{\rm sc},s_\perp,|y_0|^2,
              \left(\frac{2C_{\perp}^{\rm err}}{g_A}\right)^2\right\}.
\]
If $s\geq s_A^{\rm seed}$ and $|y|\leq A\sqrt s$, then
$|(y-y_0)/\sqrt s|\leq A+1$, so the Gaussian term is at least $g_A$
and the error is at most $g_A/2$. Hence $H_T(s-T,y-y_0)\geq h_A$.
Using \eqref{eq:product-comparison} and the scalar lower bound gives
\begin{equation}\label{eq:diffusive-section-seed}
 \inf_{\substack{y\in\Z^{d-1}\\ |y|\leq A\sqrt s}}
       u(s,J(s),y)\geq\eta_A\qquad(s\geq s_A^{\rm seed}).
\end{equation}

We now prove \eqref{eq:axial-inner-lower}. Fix $\theta\in(0,1)$ and
let $S_{\rm ax}:=S(\eta_0,\theta,d,f)$ be the amplification time in
Lemma~\ref{lem:amplification}, using \eqref{eq:diffusive-section-seed}
with $A=0$. Choose
$s_0\geq\max\{s_0^{\rm seed},2b/c_*\}$ such that $a_0(s_0)>1$.
Then $a_0'(s)=c_*-b/s\geq c_*/2$ for $s\geq s_0$, and
$a_0(s)\to\infty$ as $s\to\infty$.
Put $j_{\rm start}:=\lceil a_0(s_0)\rceil$.
For every integer $j\geq j_{\rm start}$, there is a unique
$s_j\geq s_0$ with $a_0(s_j)=j$. At this time $J(s_j)=j$, so
\eqref{eq:diffusive-section-seed} gives $u(s_j,je_1)\geq\eta_0$.
Lemma~\ref{lem:amplification} gives $u(t,je_1)\geq\theta$ for every
$t\geq s_j+S_{\rm ax}$. Consequently, if $t-S_{\rm ax}\geq s_0$,
then every integer $j$ with
$j_{\rm start}\leq j\leq a_0(t-S_{\rm ax})$ satisfies
$s_j\leq t-S_{\rm ax}$ and hence $u(t,je_1)\geq\theta$.
Moreover,
\[
 m_d(t)-m_d(t-S_{\rm ax})
 =c_*S_{\rm ax}-b\log\frac{t}{t-S_{\rm ax}}
 \leq c_*S_{\rm ax},
\]
so $a_0(t-S_{\rm ax})\geq m_d(t)-c_*(T+S_{\rm ax})+1$.

For the remaining sites $0\leq j<j_{\rm start}$, strict positivity
from Lemma~\ref{lem:comparison} gives
\[
 \eta_{\rm fin}:=\min_{\substack{j\in\Z\\0\leq j<j_{\rm start}}}
                 u(s_0,je_1)>0.
\]
Let $S_{\rm fin}:=S(\min\{\eta_{\rm fin},1/2\},\theta,d,f)$.
Lemma~\ref{lem:amplification} gives $u(t,je_1)\geq\theta$ at every
one of these sites for all $t\geq s_0+S_{\rm fin}$.
Set $D_\theta:=c_*(T+S_{\rm ax})+1$, and choose
$T_\theta\geq\max\{2,s_0+S_{\rm ax},s_0+S_{\rm fin}\}$ with
$m_d(T_\theta)\geq D_\theta+1$.
Since $m_d'\geq c_*/2$ on $[s_0,\infty)$, one has
$m_d(t)\geq D_\theta+1$ for every $t\geq T_\theta$.
For these times, the ranges $0\leq j<j_{\rm start}$ and
$j_{\rm start}\leq j\leq a_0(t-S_{\rm ax})$ cover every
integer $0\leq j\leq m_d(t)-D_\theta$.
At the nonnegative integer $j=\lfloor m_d(t)-D_\theta\rfloor$, we obtain
$R_\theta(t)\geq m_d(t)-D_\theta-1$.
Thus $C_\theta^-:=D_\theta+1$ gives both
\eqref{eq:axial-inner-lower} and \eqref{eq:sharp-axial-lower}.

To prove \eqref{eq:transverse-section-lower}, fix $A\geq0$, put
$A':=2A+1$, and use the constants $\eta_{A'}$ and $s_{A'}^{\rm seed}$
in \eqref{eq:diffusive-section-seed}. Set
$S_A:=S(\eta_{A'},\theta,d,f)$ and choose
\[
 T_{\theta,A}:=\max\{2,2S_A,S_A+s_{A'}^{\rm seed},S_A+2b/c_*\},
 \qquad C_{\theta,A}:=c_*(T+S_A)+2.
\]
For $t\geq T_{\theta,A}$, put $s=t-S_A$ and $j_t=J(s)$.
Then $s\geq s_{A'}^{\rm seed}$, $s\geq t/2$, and
$A\sqrt t\leq\sqrt2 A\sqrt s<A'\sqrt s$.
Thus every $y\in\Z^{d-1}$ with $|y|\leq A\sqrt t$ satisfies
$u(s,j_t,y)\geq\eta_{A'}$ by \eqref{eq:diffusive-section-seed}.
Lemma~\ref{lem:amplification}, applied at each site $(j_t,y)$ with
the same time $S_A$, gives $u(t,j_t,y)\geq\theta$ for all these $y$.
Finally, $s\geq2b/c_*$ implies
$0\leq m_d(t)-m_d(s)\leq c_*S_A$.
Together with $J(s)-m_d(s)+c_*T\in[1,2)$, this yields
$|j_t-m_d(t)|\leq c_*(T+S_A)+2=C_{\theta,A}$.
\end{proof}

\begin{corollary}\label{cor:nonempty}
Under \eqref{eq:kpp}--\eqref{eq:u0}, for every $\theta\in(0,1)$ the
set in \eqref{eq:level-definition} is nonempty and bounded above for all
sufficiently large $t$.
\end{corollary}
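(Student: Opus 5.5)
The corollary follows by combining the lower bound of Theorem~\ref{thm:nonlinear-lower} with the linear upper envelope of Remark~\ref{rem:axis-upper}. Fix $\theta\in(0,1)$; we treat nonemptiness and boundedness separately.

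\emph{Nonemptiness.} Apply \eqref{eq:axial-inner-lower}. For $t\geq T_\theta$, every integer $j$ with $0\leq j\leq m_d(t)-C_\theta^-$ satisfies $u(t,je_1)\geq\theta$. Since $m_d(t)\to\infty$, there is a time after which $m_d(t)-C_\theta^-\geq0$. Then $j=0$ belongs to the set in \eqref{eq:level-definition}, so the set is nonempty. (Alternatively, one can use the explicit choice $j=\lfloor m_d(t)-C_\theta^-\rfloor$ from the proof of that theorem.)

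\emph{Boundedness above.} This holds for every $t\geq1$ and does not require the sharp estimate. By \eqref{eq:exponential-envelope} with $p=\lambda_*e_1$ and $x=je_1$,
\[
u(t,je_1)\leq C_{{\rm ker},d}\,t^{-d/2}M_0\,\ee^{-\lambda_*(j-c_*t)},
\]
using $H(\lambda_*e_1)=\lambda_*c_*$ from \eqref{eq:critical-system}. The right-hand side tends to $0$ as $j\to\infty$ for each fixed $t$. Hence only finitely many $j\geq0$ satisfy $u(t,je_1)\geq\theta$, which gives $R_\theta(t)\leq c_*t-\frac{d}{2\lambda_*}\log t+L_{\theta,0}<\infty$.

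The only point requiring care is that $R_\theta$ takes a supremum over all of $\Z$, including negative $j$. This causes no problem: the set is a set of integers that is nonempty and bounded above, so its supremum is a finite maximum. No step presents a real obstacle.
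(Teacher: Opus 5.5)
Your proof is correct. The boundedness half matches the paper's. The paper quotes \eqref{eq:directional-upper} with $e=e_1$, $\rho=0$, which is just \eqref{eq:exponential-envelope} at $p=\lambda_*e_1$ restricted to $j\geq0$, exactly as you do.

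For nonemptiness you take a genuinely different route. The paper's argument is elementary:
\begin{itemize}
\item Lemma~\ref{lem:comparison} gives $\eta:=u(1,0)\in(0,1)$.
\item Lemma~\ref{lem:amplification}, with $t_0=1$ and $z=0$, then gives $u(t,0)\geq\theta$ for $t\geq1+S(\eta,\theta,d,f)$.
\item Hence $j=0$ lies in the level set.
\end{itemize}
You instead invoke the sharp axial lower bound \eqref{eq:axial-inner-lower}. That bound rests on the product subsolution, the log-concavity argument and the delayed pulse of Sections~\ref{sec:lower}--\ref{sec:amplification}.

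Your route is legitimate: the proof of Theorem~\ref{thm:nonlinear-lower} does not use the corollary, so there is no circularity. It also yields more, namely $R_\theta(t)\geq m_d(t)-C_\theta^-$. The cost is that it is much heavier than needed, and it reduces the corollary to a restatement of \eqref{eq:sharp-axial-lower}.

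The paper's version shows that nonemptiness follows softly from strict positivity and the amplification lemma alone, with no front-location information. This is what Remark~\ref{rem:axis-upper} relies on when it cites Lemma~\ref{lem:amplification} for nonemptiness before the lower-bound machinery is developed.
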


\begin{proof}
Lemma~\ref{lem:comparison} gives $\eta:=u(1,0)\in(0,1)$.
Lemma~\ref{lem:amplification}, with $t_0=1$ and $z=0$, gives
$u(t,0)\geq\theta$ for $t\geq1+S(\eta,\theta,d,f)$.
Let $C_{\rm ray}$ denote the constant $C_\rho$ in
Theorem~\ref{thm:nonlinear-upper} with $\rho=0$.
For $e=e_1$, $r=j\geq0$ and $x=je_1$, \eqref{eq:directional-upper}
gives
\[
 u(t,je_1)\leq C_{\rm ray}t^{-d/2}
    \ee^{-\lambda_*(j-c_*t)}\qquad(t\geq1,\ j\in\Z_{\geq0}).
\]
For each fixed $t\geq1$, this bound tends to zero as $j\to+\infty$.
Hence the level set in \eqref{eq:level-definition} is bounded above
and is nonempty once $t\geq1+S(\eta,\theta,d,f)$.
\end{proof}

\begin{proof}[Proof of Theorem~\ref{thm:nonlinear-target}]
Fix $\theta\in(0,1)$. For $e=e_1$, Lemma~\ref{lem:critical-pair}
gives $z_{e_1}(t,(j,y))=\lambda_*(j-m_d(t))$.
With $C_{\rm up}$ from \eqref{eq:sharp-directional-envelope}, choose
$L_\theta:=2\lambda_*^{-1}\log(4C_{\rm up}/\theta)>0$.
For $r\geq L_\theta$, the inequality $1+\lambda_*r\leq2\ee^{\lambda_*r/2}$
gives
$C_{\rm up}(1+\lambda_*r)\ee^{-\lambda_*r}
 \leq2C_{\rm up}\ee^{-\lambda_*r/2}\leq\theta/2$.
Equation~\eqref{eq:sharp-directional-envelope} gives
$u(t,j,y)\leq\theta/2$ uniformly in $j\geq m_d(t)+L_\theta$ and
$y\in\Z^{d-1}$ for $t\geq4$. Hence
\begin{equation}\label{eq:remaining-sharp-upper}
 \sup_{\substack{j\in\Z,\ y\in\Z^{d-1}\\
                  j\geq m_d(t)+L_\theta}}u(t,j,y)<\theta
       \qquad(t\geq4).
\end{equation}
In particular, $R_\theta(t)\leq m_d(t)+L_\theta+1$ whenever the
level set is nonempty and $t\geq4$.
Denote the time threshold in Theorem~\ref{thm:nonlinear-lower} by
$T_\theta^{\rm low}$. For $t\geq T_\theta^{\rm low}$,
\eqref{eq:sharp-axial-lower} gives
$R_\theta(t)\geq m_d(t)-C_\theta^->-\infty$, so the level set is nonempty.
Set $C_\theta:=\max\{C_\theta^-,L_\theta+1\}$ and
$T_\theta:=\max\{4,T_\theta^{\rm low}\}$. Then
\begin{equation}\label{eq:two-sided-current-bounds}
 c_*t-\frac{d+2}{2\lambda_*}\log t-C_\theta
 \leq R_\theta(t)\leq
 c_*t-\frac{d+2}{2\lambda_*}\log t+C_\theta
 \qquad(t\geq T_\theta),
\end{equation}
which is \eqref{eq:bramson-target}.

For $\varepsilon\in(0,1/2)$, apply \eqref{eq:axial-inner-lower} at
level $1-\varepsilon$ and \eqref{eq:remaining-sharp-upper} at level
$\varepsilon$. Put
$C_\varepsilon^{\rm tr}:=\max\{C_{1-\varepsilon}^-,L_\varepsilon\}$.
Choose $T_\varepsilon\geq\max\{4,T_{1-\varepsilon}^{\rm low},
2\kappa_d/(\lambda_*c_*)\}$ with
$m_d(T_\varepsilon)\geq C_\varepsilon^{\rm tr}$.
Since $m_d$ is increasing for $t\geq T_\varepsilon$, the interval
$0\leq j\leq m_d(t)-C_\varepsilon^{\rm tr}$ contains $j=0$.
The lower bound applies throughout this interval, and the uniform
upper bound applies for $j\geq m_d(t)+C_\varepsilon^{\rm tr}$.
These are the two final inequalities in
Theorem~\ref{thm:nonlinear-target}.
\end{proof}

\section{Convergence to the critical wave}\label{sec:profiles}

An entire solution $V$ of \eqref{eq:main} is defined on
$\R\times\Z^d$, with $t\mapsto V(t,x)$ continuously
differentiable and
$V_t(t,x)=\Delta_{\Z^d}V(t,x)+f(V(t,x))$ for every
$(t,x)\in\R\times\Z^d$. Local $C^1$ convergence means uniform
convergence of the functions and their time derivatives on bounded
time intervals and finite sets of lattice sites.
Let $U:I\times\Z^d\to[0,1]$ solve \eqref{eq:main} on an open time
interval $I$. The nearest-neighbor formula gives
$|\partial_tU(t,x)|\leq2d+\|f\|_{L^\infty([0,1])}$ at every site.
Integration in time makes $U$ locally Lipschitz as an
$\ell^\infty(\Z^d)$-valued function. Since the vector field
$\mathcal F$ in Lemma~\ref{lem:comparison} is Lipschitz,
$\mathcal F(U(t))$ is norm-continuous. The coordinatewise integral
equation therefore gives $U\in C^1(I;\ell^\infty(\Z^d))$.
Differentiating the equation, using $f\in C^2([0,1])$, yields
$U_{tt}=\Delta_{\Z^d}U_t+f'(U)U_t$ and
\begin{equation}\label{eq:profile-time-bounds}
 \|U_t(t)\|_\infty\leq C_{{\rm t},1}:=2d+\|f\|_{L^\infty([0,1])},
 \qquad
 \|U_{tt}(t)\|_\infty\leq C_{{\rm t},2}:=(4d+L_f)C_{{\rm t},1}.
\end{equation}
These bounds apply to $u$ for $t>0$ and to entire solutions with
values in $[0,1]$.

For $u_t=\Delta u+f(u)$ in $\R^d$,
\cite[Theorem~3.5]{BerestyckiHamel} shows that an entire front trapped
between two translates of one planar wave is itself a translate.
Lemma~\ref{lem:planar-lattice-rigidity} proves the lattice conclusion
from \eqref{eq:entire-rear-state} and \eqref{eq:entire-critical-tail}.
Its proof slides continuous time shifts and integer spatial shifts,
and rescales by the critical tail when the contact points tend to infinity.

\begin{lemma}\label{lem:entire-positive-comparison}
Let $V,W:\R\times\Z^d\to[0,1]$ be entire solutions of
\eqref{eq:main}. There are $\delta\in(0,1)$ and $\nu>0$, depending
only on $f$, such that, for every set $E\subset\R\times\Z^d$ with
$E^c:=(\R\times\Z^d)\setminus E$,
\[
 V\geq1-\delta\ \text{on }E,\qquad W\leq V\ \text{on }E^c
 \quad\Longrightarrow\quad W\leq V\ \text{on }\R\times\Z^d.
\]
Moreover, if $V-W\geq0$ on $\R\times\Z^d$ and
$(V-W)(t_0,x_0)=0$ at one point, then $V\equiv W$.
\end{lemma}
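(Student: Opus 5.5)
The lemma has two parts: an order statement near the stable state $1$, and a strong maximum principle. The plan for the first part is to show that the positive part of $W-V$ satisfies a damped linear differential inequality on the whole lattice, and then to send the initial time to $-\infty$. The second part follows from the nearest-neighbour variation-of-constants inequality together with uniqueness for the Cauchy problem.

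\emph{Choice of constants and the damped inequality.} Since $f\in C^2([0,1])$ and $f'(1)<0$, fix $\delta\in(0,1)$ such that $f'(s)\leq-\nu:=f'(1)/2<0$ for $s\in[1-\delta,1]$. These depend only on $f$. Put $D:=W-V$ and $Z:=D_+$. Because $V,W\in C^1$ in time with the uniform bounds \eqref{eq:profile-time-bounds}, $Z$ is bounded by $1$ and Lipschitz in time at each site. Fix a site $x$ and a time $t$ at which $Z(\cdot,x)$ is differentiable, and consider two cases.
\begin{itemize}[leftmargin=1.5em]
\item If $Z(t,x)>0$, then $W>V$ at $(t,x)$. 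The hypothesis $W\leq V$ on $E^c$ forces $(t,x)\in E$, so $V\geq1-\delta$ there, and both $V(t,x)$ and $W(t,x)$ lie in $[1-\delta,1]$. The mean value theorem gives $f(W)-f(V)\leq-\nu D$. Together with $\Delta_{\Z^d}D\leq\Delta_{\Z^d}Z$ (the centre values coincide and $D\leq Z$ at the neighbours), this yields $Z_t=D_t\leq\Delta_{\Z^d}Z-\nu Z$.
\item If $Z(t,x)=0$, then $Z(\cdot,x)$ has a minimum at $t$, so $Z_t=0$. Also $\Delta_{\Z^d}Z(t,x)\geq0$, so the same inequality holds.
\end{itemize}
Hence $Z_t\leq\Delta_{\Z^d}Z-\nu Z$ for almost every $t$ at every site. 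This is the same positive-part argument used for \eqref{eq:positive-part-inequality}.

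\emph{Sending the initial time to $-\infty$.} Fix $s<t$. Apply Lemma~\ref{lem:comparison} on $[s,t]$ with $D=\Z^d$ and $F(\cdot,r)=-\nu r$, taking $U=Z$ and taking as supersolution the spatially constant solution $e^{-\nu(\cdot-s)}$, which dominates $Z(s)\leq1$. This gives $Z(t,x)\leq e^{-\nu(t-s)}$. Letting $s\to-\infty$ gives $Z\equiv0$, that is $W\leq V$ everywhere.

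\emph{Strong maximum principle.} Now let $D:=V-W\geq0$, vanishing at $(t_0,x_0)$. Then $D_t=\Delta_{\Z^d}D+c(t,x)D$ with $|c|\leq L_f$. Put $K:=2d+L_f$; then $D_t+KD\geq\sum_{z\sim x}D(t,z)\geq0$. Integrating from $s<t_0$ gives
\[
 0=D(t_0,x_0)\geq e^{-K(t_0-s)}D(s,x_0)+\int_s^{t_0}e^{-K(t_0-r)}\sum_{z\sim x_0}D(r,z)\,\dd r .
\]
Both terms are nonnegative, so $D(s,x_0)=0$ for all $s\leq t_0$, and by continuity the neighbours of $x_0$ vanish on $(-\infty,t_0]$. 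Induction along nearest-neighbour paths gives $D=0$ on $(-\infty,t_0]\times\Z^d$. In particular $V(t_0)=W(t_0)$, and uniqueness of the Cauchy problem on $[t_0,\infty)$ (Lemma~\ref{lem:comparison}, applied in both directions) gives $V\equiv W$.

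\emph{Main obstacle.} The delicate step is the case analysis for the damped inequality. One must check that every point where $Z>0$ lies in $E$, so that the stable-state bound on $f'$ is available there. One must also justify the almost-everywhere differential inequality for the positive part at each site, and confirm that the Lipschitz regularity in time meets the hypotheses of Lemma~\ref{lem:comparison}.
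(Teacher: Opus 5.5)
Your proof is correct and follows the paper's argument. It uses the same choice of $\delta,\nu$ from $f'(1)<0$, the same almost-everywhere inequality $Z_t\leq\Delta_{\Z^d}Z-\nu Z$ for $(W-V)_+$ with comparison on $[s,t]$ and $s\to-\infty$, and the same backward strong maximum principle followed by forward uniqueness; the only cosmetic differences are that you compare with the constant solution $\ee^{-\nu(t-s)}$ instead of $\ee^{-\nu h}\ee^{h\Delta_{\Z^d}}D(t-h)$, and that you propagate the zero of $V-W$ along nearest-neighbour paths rather than invoking strict positivity of $\pheat_d(h,\cdot)$ in one step.
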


\begin{proof}
Choose $\delta$ and $\nu$ so that $f'\leq-\nu$ on $[1-\delta,1]$.
Set $D=(W-V)_+$. At each site, $D$ is locally Lipschitz in time,
with Lipschitz constant at most $2C_{{\rm t},1}$ by
\eqref{eq:profile-time-bounds}.
If $D(t,x)>0$, then $W(t,x)>V(t,x)$; the
assumption $W\leq V$ outside $E$ forces $(t,x)\in E$, so
$1-\delta\leq V(t,x)<W(t,x)\leq1$.  The mean-value theorem and
$f'\leq-\nu$ on $[1-\delta,1]$ give
$f(W)-f(V)\leq-\nu(W-V)$.  Moreover
$D(t,z)\geq W(t,z)-V(t,z)$ at every neighbor $z\sim x$, so
$\Delta D(t,x)\geq\Delta(W-V)(t,x)$.  Hence
$D_t\leq\Delta D-\nu D$ at such points. If $D(t,x)=0$ at a time
where $D(\cdot,x)$ is differentiable, nonnegativity makes that time a
minimum and gives $D_t(t,x)=0$. The exceptional times have measure
zero at each site. Also $\Delta D(t,x)=\sum_{z\sim x}D(t,z)\geq0$, so
$D_t\leq\Delta_{\Z^d}D-\nu D$ also holds for almost every time at
points where $D=0$. Thus
\[
 D_t\leq\Delta_{\Z^d}D-\nu D
\]
at every site for almost every time. Fix $t\in\R$ and $h>0$.
On $[t-h,t]$, compare $D$ with
the solution of $z_s=\Delta_{\Z^d}z-\nu z$ starting from $D(t-h)$.
Lemma~\ref{lem:comparison} on $\Z^d$, with $F(s,r)=-\nu r$, gives
\[
 0\leq D(t,x)\leq\ee^{-\nu h}
       (\ee^{h\Delta_{\Z^d}}D(t-h,\cdot))(x).
\]
Since $0\leq D\leq1$ and the heat kernel has total mass one, the
right-hand side is at most $\ee^{-\nu h}$.  Letting $h\to\infty$
gives $D(t,x)=0$ for every $(t,x)$, i.e. $W\leq V$.

To prove $V\equiv W$ when $V\geq W$ and
$V(t_0,x_0)=W(t_0,x_0)$, set $D=V-W$ and define
\[
 b_{V,W}(t,x):=\int_0^1
 f'\bigl(W(t,x)+\rho(V(t,x)-W(t,x))\bigr)\,d\rho.
\]
Then $|b_{V,W}|\leq L_f$ and
$D_t=\Delta_{\Z^d}D+b_{V,W}D$. Since $D\geq0$ and $b_{V,W}\geq-L_f$,
$D_t\geq\Delta_{\Z^d}D-L_fD$ on $\R\times\Z^d$.

For $h>0$, compare $D$ on $[t_0-h,t_0]$ with
$z(s)=\ee^{-L_f(s-t_0+h)}
\ee^{(s-t_0+h)\Delta_{\Z^d}}D(t_0-h)$, which solves
$z_s=\Delta_{\Z^d}z-L_fz$ and satisfies $z(t_0-h)=D(t_0-h)$.
Lemma~\ref{lem:comparison}, with $F(t,s)=-L_fs$, gives
\[
  0=D(t_0,x_0)\geq
  \ee^{-L_f h}
  \sum_{x\in\Z^d}\pheat_d(h,x_0-x)D(t_0-h,x).
\]
Every term in the sum is nonnegative and
$\pheat_d(h,x_0-x)>0$ for all $x$, so
$D(t_0-h,x)=0$ for every $x$.  Since this holds for every $h>0$,
$V=W$ for $t\leq t_0$. For any $h>0$, apply
Lemma~\ref{lem:comparison} to $V,W$ in both orders on $[t_0,t_0+h]$,
with spatial domain $\Z^d$ and reaction $\widetilde f$. Their initial
data agree at $t_0$, so $V=W$ also for $t\geq t_0$.
\end{proof}

\begin{lemma}\label{lem:planar-lattice-rigidity}
Let $V:\R\times\Z^d\to(0,1)$ be an entire solution of
\eqref{eq:main}, and put $r=j-c_*t$ for $x=(j,y)$. Suppose that
\begin{equation}\label{eq:entire-rear-state}
 \lim_{R\to\infty}\inf_{\substack{t\in\R,\ (j,y)\in\Z^d\\
                                  j-c_*t\leq-R}}V(t,j,y)=1,
\end{equation}
and that there are $R_0\geq1$ and $0<b_0\leq B_0<\infty$ such that
\begin{equation}\label{eq:entire-critical-tail}
 b_0(1+r)\ee^{-\lambda_*r}\leq V(t,j,y)
          \leq B_0(1+r)\ee^{-\lambda_*r}
 \qquad(r\geq R_0).
\end{equation}
All bounds are uniform in $t$ and $y$. Then, for some $\xi\in\R$,
\begin{equation}\label{eq:entire-planar-wave}
 V(t,j,y)=\phi_*(j-c_*t+\xi)
 \qquad(t\in\R,\ (j,y)\in\Z^d).
\end{equation}
\end{lemma}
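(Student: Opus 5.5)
Throughout I work in the moving frame $r=j-c_*t$. The plan is a sliding argument in the spirit of \cite[Theorem~3.5]{BerestyckiHamel}, carried out in two steps. First I show that $V$ does not depend on $y$ and is a function of $r$ only. Then I compare $V$ with translates of $\phi_*$, whose tails are known from \cite{ChenFuGuo}: $\phi_*(r)\sim(\alpha r+\beta)\ee^{-\lambda_*r}$ with $\alpha>0$. By \eqref{eq:entire-critical-tail} and this asymptotics, every translate $\phi_*(\cdot+\xi)$ and $V$ have comparable tails of the form $(1+r)\ee^{-\lambda_*r}$. Together with \eqref{eq:entire-rear-state}, this allows Lemma~\ref{lem:entire-positive-comparison} to be applied with $E$ equal to a rear region $\{r\leq-R\}$.

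\emph{Step 1: ordering under large shifts.} For $\tau\geq0$ consider the shifted function $V^\tau(t,x):=V(t+\tau/c_*,x)$, or more generally shifts in the direction of motion combined with integer spatial shifts $x\mapsto x+ke_1$ and transverse shifts $y\mapsto y+z$. I claim that for a sufficiently large shift $\sigma$ the shifted solution lies below $V$. Choose $R$ so that $V\geq1-\delta$ on $\{r\leq-R\}$. In the region $r\geq R_0$, the shifted solution is at most $B_0(1+r+\sigma)\ee^{-\lambda_*(r+\sigma)}$, which is below $b_0(1+r)\ee^{-\lambda_*r}$ once $\sigma$ is large, uniformly in $r\geq R_0$; here I use $(1+r+\sigma)/(1+r)\leq1+\sigma$ against $\ee^{-\lambda_*\sigma}$. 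In the middle band $-R\leq r\leq R_0$, $V$ is bounded below by a positive constant. This follows from \eqref{eq:entire-rear-state}, from \eqref{eq:entire-critical-tail} at $r=R_0$, and from a Harnack-type bound obtained with the kernel positivity used in Lemma~\ref{lem:entire-positive-comparison}. Applying the same tail bound to the shifted solution, which is evaluated at $r+\sigma$, shows that it is small on this band. Lemma~\ref{lem:entire-positive-comparison} then gives the ordering everywhere.

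\emph{Step 2: sliding to the critical shift.} Let $\sigma^*$ be the infimum of the shifts $\sigma$ for which the ordering holds for all larger shifts. Suppose $\sigma^*>0$. If the infimum of $V-V^{\sigma^*}$ over the band $\{|r|\leq R'\}$ is positive, continuity lets me decrease $\sigma$, with the tail region handled as in Step 1. The tail comparison must now be done at leading order: near $\sigma^*>0$ the ratio of the two tails is $\ee^{-\lambda_*\sigma}(1+o(1))<1$. Making this rigorous is the main obstacle. Two-sided bounds such as \eqref{eq:entire-critical-tail} do not by themselves give an exact ratio. To handle it I would rescale by the critical tail: set $\widetilde V=\ee^{\lambda_*r}V$, which solves a tilted equation with generator $\mathcal L_*$ and a nonpositive absorption term. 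Let $(t_n,x_n)$ be contact points whose coordinate $r_n$ tends to infinity. I take limits of $\widetilde V(t+t_n,x+x_n)/(1+r_n)$. Using \eqref{eq:profile-time-bounds}, these converge to positive solutions of the linear tilted equation $w_t=\mathcal L_*w$ that grow at most linearly in $r$. A Liouville argument, via Fourier analysis or the Martin boundary of the tilted walk, identifies such limits as multiples of $r$ plus constants. This forces the limiting ratio $\ee^{-\lambda_*\sigma^*}$ to equal $1$, a contradiction. If instead the infimum over the band is zero, attained in a limit along translates, I pass to a limit using compactness (Arzel\`a--Ascoli via \eqref{eq:profile-time-bounds}). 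The strong maximum principle in Lemma~\ref{lem:entire-positive-comparison} then shows that the limit of $V$ equals its own shift by $\sigma^*$, so it is periodic in the moving frame. Periodicity is incompatible with the rear limit $1$ and the front decay $0$, both of which pass to the limit uniformly. Hence $\sigma^*=0$.

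\emph{Step 3: conclusion.} Since $V\geq V^\sigma$ for all $\sigma>0$ along every direction of motion, the same argument applied to integer shifts $y\mapsto y\pm e_k$ (which preserve \eqref{eq:entire-rear-state}--\eqref{eq:entire-critical-tail}) shows that $V$ is invariant under transverse shifts, so $V=V(t,j)$. Similarly, combining a time shift $\tau$ with the spatial shift $c_*\tau$ shows that $V$ is a function of $j-c_*t$ alone. Sliding $V$ against $\phi_*(\cdot+\xi)$ in the same way gives $\phi_*(\cdot+\xi)\leq V$ for $\xi\geq\xi^*$, with contact at $\xi^*$. The contact may occur only in the limit $r\to\infty$; this is ruled out by the same rescaled-tail argument, using the $(\alpha r+\beta)\ee^{-\lambda_*r}$ asymptotics. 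The strong comparison in Lemma~\ref{lem:entire-positive-comparison} then yields $V\equiv\phi_*(j-c_*t+\xi^*)$, which is \eqref{eq:entire-planar-wave}.
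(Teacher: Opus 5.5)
Your Steps~1--2 follow the paper's sliding argument closely. You order large shifts via Lemma~\ref{lem:entire-positive-comparison} with $E$ a rear region, slide to the critical shift, handle a bounded contact by compactness and strong comparison, and handle contact at infinity by rescaling with $a_n=(1+r_n)\ee^{-\lambda_*r_n}$. In the last case you appeal to an unproved Liouville theorem. Note that the rescaled limits do not merely grow at most linearly: since $(1+r_n+k-c_*s)/(1+r_n)\to1$, they lie between $b_0$ and $B_0$ everywhere. So only bounded entire solutions of $w_t=\mathcal L_*w$ are involved.

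The paper avoids Liouville altogether. It applies strong comparison, via kernel positivity in the weighted space $X_{\lambda_*}$, to $D=Z-Z(\cdot+\tau_*,\cdot+q,\cdot+\eta)\geq0$ with $D(0,0,0)=0$. This gives exact invariance of the limit $Z$ under the shift. Since the shift moves the front coordinate by $\sigma_*>0$, iterating the invariance contradicts $Z\leq B_0\ee^{-\lambda_*(k-c_*s)}$. Your version is acceptable once the bounded Liouville statement is proved, but it is heavier than needed.

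The genuine gap is in the last part of Step~3. After reducing to $V=\psi(j-c_*t)$, you slide $\phi_*(\cdot+\xi)$ under $\psi$ and claim that contact at infinity at the critical $\xi^*$ ``is ruled out by the same rescaled-tail argument''. It is not.

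In the self-comparison, the contradiction comes from the fact that the rescaled limit of the shifted solution is $\ee^{-\lambda_*\sigma^*}$ times a translate of the \emph{same} limit. That is incompatible with relative contact when $\sigma^*\neq0$. When you compare $\psi$ with $\phi_*(\cdot+\xi^*)$, the two rescaled limits are unrelated constants: the constant $c$ coming from $\ee^{\lambda_*r}\psi(r)/(1+r_n)$ along the contact sequence, and $\alpha\ee^{-\lambda_*\xi^*}$ coming from the tail of $\phi_*$. Relative contact at infinity only yields $c=\alpha\ee^{-\lambda_*\xi^*}$, which contradicts nothing. Equal leading coefficients are exactly the case in which the order of two critical tails is decided by the second-order coefficient. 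The bound \eqref{eq:entire-critical-tail} gives no information about that coefficient for $\psi$, so excluding this case amounts to reproving uniqueness of critical lattice waves.

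The repair is the paper's route:
\begin{itemize}
\item Use shifts $(q/c_*,q,\eta)$ with $q\in\Z$ to get $V=\psi(j-c_*t)$. A spatial shift $c_*\tau$ with real $\tau$ need not be an integer, so the time shift must be tied to an integer $q$.
\item Use the ordering under pure time shifts together with the strong comparison to show that $\psi$ is strictly decreasing. A touching would make $\psi$ periodic, contradicting the limits $1$ and $0$ given by \eqref{eq:entire-rear-state} and \eqref{eq:entire-critical-tail}.
\item Observe that $\psi$ solves \eqref{eq:critical-wave}, and cite uniqueness up to translation from \cite{ChenFuGuo}.
\end{itemize}
With this route, the tail asymptotics of $\phi_*$ are not needed at all.
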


\begin{proof}
Fix $A,B>0$ and choose $h>0$ with $-A+c_*h\geq R_0$.  At a fixed
site $(j,y)$,
\[
 \partial_tV(t,j,y)
 =\sum_{z\sim(j,y)}V(t,z)-2dV(t,j,y)+f(V(t,j,y))
 \geq-2dV(t,j,y),
\]
so integration from $t-h$ to $t$ gives
$V(t,j,y)\geq\ee^{-2dh}V(t-h,j,y)$.  If the current front coordinate
$r=j-c_*t$ lies in $[-A,B]$, then at time $t-h$ it is
$r+c_*h\in[-A+c_*h,B+c_*h]\subset[R_0,B+c_*h]$.  Therefore
\eqref{eq:entire-critical-tail} gives
\[
 V(t,j,y)\geq \ee^{-2dh}b_0
 \min_{\rho\in[-A+c_*h,B+c_*h]}
      (1+\rho)\ee^{-\lambda_*\rho}>0.
\]
For the fixed solution $V$, this is a positive constant independent
of $t,j,y$ in the strip $-A\leq j-c_*t\leq B$.

Fix an arbitrary integer shift $z=(q,\eta)\in\Z\times\Z^{d-1}$.
For $\tau\in\R$ define
\[
 W_\tau(t,j,y):=V(t+\tau,j+q,y+\eta),
 \qquad \sigma(\tau):=q-c_*\tau.
\]
We will prove that $W_{q/c_*}=V$. By \eqref{eq:entire-rear-state}, choose $A>0$ so large that
\[
 \inf_{\substack{t\in\R,(j,y)\in\Z^d\\j-c_*t\leq-A}}
 V(t,j,y)\geq1-\delta,
\]
where $\delta$ is the constant from
Lemma~\ref{lem:entire-positive-comparison}.
Let
$m_{A,R_0}:=\inf\{V(t,j,y):-A\leq j-c_*t\leq R_0\}>0$.
Choose $\Sigma_{\rm in}\geq A+R_0$ so large that, for all
$\sigma\geq\Sigma_{\rm in}$,
\[
 B_0(1+R_0+\sigma)\ee^{-\lambda_*(\sigma-A)}\leq m_{A,R_0},
 \qquad
 \frac{B_0}{b_0}(1+\sigma)\ee^{-\lambda_*\sigma}\leq1.
\]
Both left-hand sides tend to zero as $\sigma\to\infty$.
If $\tau\leq(q-\Sigma_{\rm in})/c_*$, then
$\sigma(\tau)\geq\Sigma_{\rm in}$. For $-A\leq r\leq R_0$,
the front coordinate of $W_\tau$ satisfies $r+\sigma(\tau)\geq R_0$.
The upper bound in \eqref{eq:entire-critical-tail} therefore gives
\[
 W_\tau(t,j,y)
 \leq B_0(1+R_0+\sigma(\tau))
             \ee^{-\lambda_*(\sigma(\tau)-A)}
 \leq m_{A,R_0}\leq V(t,j,y).
\]
For $r\geq R_0$, the two tail bounds give
\[
 \frac{W_\tau(t,j,y)}{V(t,j,y)}
 \leq\frac{B_0}{b_0}
       \frac{1+r+\sigma(\tau)}{1+r}
       \ee^{-\lambda_*\sigma(\tau)}
 \leq\frac{B_0}{b_0}(1+\sigma(\tau))
       \ee^{-\lambda_*\sigma(\tau)}\leq1.
\]

Set $E=\{(t,j,y):j-c_*t<-A\}$. We have $V\geq1-\delta$ on $E$ and,
for every $\tau\leq(q-\Sigma_{\rm in})/c_*$, $W_\tau\leq V$ on $E^c$.
Lemma~\ref{lem:entire-positive-comparison} gives $W_\tau\leq V$
on $\R\times\Z^d$ for all these shifts.

Set
\[
 \tau_*:=\sup\{\tau\in\R:\ W_{\tau'}\leq V
                 \text{ everywhere for every }\tau'\leq\tau\}.
\]
The number $\tau_*$ is finite.  Indeed, for fixed $(t,j,y)$ the front
coordinate of $W_\tau(t,j,y)$ is
$j+q-c_*(t+\tau)=r+\sigma(\tau)\to-\infty$ as $\tau\to\infty$;
\eqref{eq:entire-rear-state} gives $W_\tau(t,j,y)\to1$, while
$V(t,j,y)<1$.  To obtain the order at the endpoint, take
$\tau_m\uparrow\tau_*$ with $W_{\tau_m}\leq V$.  Continuity of
$t\mapsto V(t,j,y)$ at every site gives
$W_{\tau_*}(t,j,y)=\lim_mW_{\tau_m}(t,j,y)\leq V(t,j,y)$ for every
$(t,j,y)$. The definition of $\tau_*$ also gives $W_\tau\leq V$
for every $\tau<\tau_*$, since some admissible $\tau_m$ exceeds $\tau$.
Suppose that $\sigma_*:=q-c_*\tau_*>0$. There are
$\tau_n\in(\tau_*,\tau_*+1/n)$ and points $(t_n,j_n,y_n)$ such that
\begin{equation}\label{eq:sliding-failure}
 r_n:=j_n-c_*t_n\geq-A,
 \qquad W_{\tau_n}(t_n,j_n,y_n)>V(t_n,j_n,y_n).
\end{equation}
Indeed, if no such sequence existed, then $W_\tau\leq V$ would persist
on $r\geq-A$ for $\tau$ in a right neighborhood of $\tau_*$. Applying
Lemma~\ref{lem:entire-positive-comparison} with this set $E$ would contradict
the definition of $\tau_*$. We consider
the two possible behaviors of $r_n$.

If a subsequence of $r_n$ is bounded, take it so that $r_n\to r_\infty$.
Define
\[
 V_n(s,k,w):=V(t_n+s,j_n+k,y_n+w).
\]
Bounds \eqref{eq:profile-time-bounds} apply to $V_n$ and its time
derivatives. On each cylinder
$[-m,m]\times\{(k,w):|k|+|w|\leq m\}$, $V_n$ and $(V_n)_s$ are
uniformly bounded and equicontinuous. Apply Arzel\`a--Ascoli to both
families, take nested subsequences over $m\in\N$, and choose a
diagonal subsequence. The integral identity in time identifies the
limit of $(V_n)_s$ as the derivative of the limit $V_\infty$.
The convergence is thus locally $C^1$ in time on finite sets of sites.
The equation passes to the limit because each Laplacian uses only
$2d$ neighbors and $f$ is continuous, so $V_\infty$ is an entire
solution with values in $[0,1]$.

For fixed $(s,k,w)$ the original front coordinate is
$r_n+k-c_*s\to r_\infty+k-c_*s$. If the limit exceeds $R_0$,
the upper bound in \eqref{eq:entire-critical-tail} passes to
$V_\infty$ with that shifted coordinate. For the rear state, put
\[
 \omega(R):=\inf_{\substack{t\in\R,(j,y)\in\Z^d\\j-c_*t\leq-R}}
                    V(t,j,y),\qquad R>0.
\]
If $r_\infty+k-c_*s\leq-R-1$, then
$r_n+k-c_*s\leq-R$ for all large $n$, so
$V_\infty(s,k,w)\geq\omega(R)$.
By \eqref{eq:entire-rear-state}, $\omega(R)\to1$ as $R\to\infty$.

Also,
\[
 D_\infty(s,k,w):=V_\infty(s,k,w)
              -V_\infty(s+\tau_*,k+q,w+\eta)\geq0,
 \qquad D_\infty(0,0,0)=0.
\]
From $W_{\tau_*}\leq V$ we have
$D_\infty(0,0,0)\geq0$.  On the other hand,
\eqref{eq:sliding-failure} gives
$V(t_n+\tau_n,j_n+q,y_n+\eta)>V(t_n,j_n,y_n)$.
The uniform bound on $V_t$ implies
\[
  0\leq V(t_n,j_n,y_n)-V(t_n+\tau_*,j_n+q,y_n+\eta)
     \leq C_{{\rm t},1}|\tau_n-\tau_*|,
\]
so $D_\infty(0,0,0)=0$ after passing to the limit.  The second conclusion
of Lemma~\ref{lem:entire-positive-comparison} then gives
$D_\infty\equiv0$.
Iteration gives
$V_\infty(s,k,w)=V_\infty(s+\ell\tau_*,k+\ell q,w+\ell\eta)$
for every integer $\ell$. At the point on the right the front coordinate is
$r_\infty+k-c_*s+\ell\sigma_*$.  Since $\sigma_*>0$, the upper tail
implies
$V_\infty(s+\ell\tau_*,k+\ell q,w+\ell\eta)\to0$ as
$\ell\to+\infty$, whereas the rear-state limit implies convergence to
$1$ as $\ell\to-\infty$.  The invariance says both sequences are
identically equal to the fixed number $V_\infty(s,k,w)$, which is
impossible.

It remains to exclude $r_n\to\infty$. Discard finitely many terms
so that $r_n\geq R_0$, and define
\[
 a_n:=(1+r_n)\ee^{-\lambda_*r_n},\qquad
 Z_n(s,k,w):=a_n^{-1}V(t_n+s,j_n+k,y_n+w).
\]
For $L\geq0$, let $K_L:=\{(k,w)\in\Z^d:|k|+|w|\leq L\}$.
Fix $S>0$ and $L\geq0$. For all large $n$, the front coordinates
$r_n+k-c_*s$ exceed $R_0$ on $[-S,S]\times K_{L+2}$, and
\[
 \frac12\leq\frac{1+r_n+k-c_*s}{1+r_n}\leq2.
\]
Dividing \eqref{eq:entire-critical-tail} by $a_n$ therefore gives
\[
 \frac{b_0}{2}\ee^{-\lambda_*(L+2+c_*S)}
 \leq Z_n(s,k,w)\leq
 M_{S,L}:=2B_0\ee^{\lambda_*(L+2+c_*S)}
 \quad\text{on }[-S,S]\times K_{L+2}.
\]
Since $Z_n$ satisfies
$(Z_n)_s=\Delta_{\Z^d}Z_n+f(a_nZ_n)/a_n$ and
$|f(a_nZ_n)|/a_n\leq L_f|Z_n|$, one has
\[
 |(Z_n)_s|\leq(4d+L_f)M_{S,L}
 \quad\text{on }[-S,S]\times K_{L+1}.
\]
Differentiating the equation gives
$(Z_n)_{ss}=\Delta_{\Z^d}(Z_n)_s+f'(a_nZ_n)(Z_n)_s$, hence
$|(Z_n)_{ss}|\leq(4d+L_f)^2M_{S,L}$ on $[-S,S]\times K_L$.
These bounds do not depend on $n$. Arzel\`a--Ascoli applied to
$Z_n,(Z_n)_s$ on the cylinders $[-m,m]\times K_m$, followed by
diagonal extraction and integration in time, gives a locally
$C^1$ limit $Z$. Moreover, Taylor's formula at zero gives
\[
 \left|\frac{f(a_nZ_n)}{a_n}-aZ_n\right|
 \leq\frac12\|f''\|_{L^\infty([0,1])}a_nM_{S,L}^2\to0
 \quad\text{on }[-S,S]\times K_L.
\]
Thus $Z_s=\Delta_{\Z^d}Z+aZ$ on $\R\times\Z^d$.
For each fixed $(s,k,w)$,
$(1+r_n+k-c_*s)/(1+r_n)\to1$; passing to the limit in the
normalized tail bounds gives
\begin{equation}\label{eq:normalized-linear-tail}
 b_0\ee^{-\lambda_*(k-c_*s)}\leq Z(s,k,w)
 \leq B_0\ee^{-\lambda_*(k-c_*s)}.
\end{equation}
In particular, $Z>0$.

The order $W_{\tau_*}\leq V$, divided by $a_n$ and passed to the
limit, gives $Z(s,k,w)\geq Z(s+\tau_*,k+q,w+\eta)$.
At the origin, \eqref{eq:sliding-failure} and the endpoint order give
$Z_n(\tau_n,q,\eta)>Z_n(0,0,0)\geq Z_n(\tau_*,q,\eta)$.
Choose $S_{\rm ct}:=|\tau_*|+1$ and $L_{\rm ct}:=|q|+|\eta|$.
For all large $n$, both $\tau_n$ and $\tau_*$ lie in
$[-S_{\rm ct},S_{\rm ct}]$, and the derivative bound above applies
at $(q,\eta)$ with the fixed constant
$C_{\rm ct}:=(4d+L_f)M_{S_{\rm ct},L_{\rm ct}}$. Hence
\[
 0\leq Z_n(0,0,0)-Z_n(\tau_*,q,\eta)
 \leq C_{\rm ct}|\tau_n-\tau_*|\to0.
\]
Consequently,
$D(s,k,w):=Z(s,k,w)-Z(s+\tau_*,k+q,w+\eta)$ satisfies
$D\geq0$, $D(0,0,0)=0$, and $D_s=\Delta_{\Z^d}D+aD$.

We justify the evolution formula for $D$ in the weighted space
\[
 X_{\lambda_*}:=\left\{h:\Z^d\to\R:
           \|h\|_{X_{\lambda_*}}:=
           \sup_{(k,w)}\ee^{\lambda_*k}|h(k,w)|<\infty\right\}.
\]
Multiplication by $\ee^{\lambda_*k}$ is an isometry from this space
onto $\ell^\infty(\Z^d)$, so $X_{\lambda_*}$ is a Banach space.
For $(S_\pm h)(k,w)=h(k\pm1,w)$, one has
$\|S_\pm h\|_{X_{\lambda_*}}=\ee^{\mp\lambda_*}\|h\|_{X_{\lambda_*}}$;
transverse unit shifts preserve the norm. Therefore
$A_*:=\Delta_{\Z^d}+a$ is bounded on $X_{\lambda_*}$, with
\[
 \|A_*\|\leq2\cosh\lambda_*+2(d-1)+|a-2d|=:C_{\rm op}.
\]
Because $0\leq D\leq Z$, \eqref{eq:normalized-linear-tail} gives
$\|D(s)\|_{X_{\lambda_*}}\leq B_0\ee^{\lambda_*c_*s}$.
The coordinatewise equation implies
$\|D_s(s)\|_{X_{\lambda_*}}\leq C_{\rm op}B_0\ee^{\lambda_*c_*s}$.
Integrating at each site and taking the weighted supremum yields
\[
 \|D(t)-D(s)\|_{X_{\lambda_*}}
 \leq C_{\rm op}B_0\ee^{\lambda_*c_*S}|t-s|
 \qquad(s,t\in[-S,S]).
\]
Thus $D$ is norm-continuous, so $A_*D$ is norm-continuous as well.
The integral equation then holds in $X_{\lambda_*}$, and the
bounded-operator ODE gives $D(0)=\ee^{hA_*}D(-h)$ for $h>0$.

The shift-operator expansion defining the heat kernel in
\eqref{eq:heat-semigroup} also converges in operator norm on
$X_{\lambda_*}$. Its convolution is absolutely convergent: by
\eqref{eq:kernel-tilt} with $p=\lambda_*e_1$ and $\sum_xK_p(h,x)=1$,
\[
 \begin{split}
 \sum_{x\in\Z^d}\pheat_d(h,-x)|D(-h,x)|
 &\leq\|D(-h)\|_{X_{\lambda_*}}
       \sum_{x\in\Z^d}\pheat_d(h,-x)\ee^{-\lambda_*x_1}\\
 &=\|D(-h)\|_{X_{\lambda_*}}
       \ee^{2h(\cosh\lambda_*-1)}<\infty.
 \end{split}
\]
Evaluating $D(0)=\ee^{hA_*}D(-h)$ at the origin now gives
\[
 0=D(0,0,0)=\ee^{ah}
   \sum_{x\in\Z^d}\pheat_d(h,-x)D(-h,x).
\]
The summands are nonnegative and the kernel is strictly positive,
so $D(-h,x)=0$ for every $x$ and $h>0$. For $s\geq0$, the same
ODE gives $D(s)=\ee^{sA_*}D(0)=0$.
Hence $Z(s,k,w)=Z(s+\ell\tau_*,k+\ell q,w+\ell\eta)$ for every
integer $\ell$. But \eqref{eq:normalized-linear-tail} gives
\[
 0<Z(0,0,0)=Z(\ell\tau_*,\ell q,\ell\eta)
 \leq B_0\ee^{-\lambda_*\ell\sigma_*}\to0
 \qquad(\ell\to\infty),
\]
a contradiction.

The contradictions in the bounded and unbounded cases show that the
assumption $q-c_*\tau_*>0$ is impossible; hence
$\tau_*\geq q/c_*$ and $W_{q/c_*}\leq V$.  Since the integer shift $z=(q,\eta)$ was arbitrary, apply
$V(t+q/c_*,j+q,y+\eta)\leq V(t,j,y)$ with
$(q,\eta)$ replaced by $(-q,-\eta)$. This gives
$V(t-q/c_*,j-q,y-\eta)\leq V(t,j,y)$.  Replacing
$(t,j,y)$ by $(t+q/c_*,j+q,y+\eta)$ gives
$V(t,j,y)\leq V(t+q/c_*,j+q,y+\eta)$, the reverse inequality.  Therefore
\begin{equation}\label{eq:lattice-space-time-invariance}
 V(t+q/c_*,j+q,y+\eta)=V(t,j,y)
 \quad\text{for every }q\in\Z,\ \eta\in\Z^{d-1}.
\end{equation}
Taking $q=0$ proves independence of $y$. Applying
\eqref{eq:lattice-space-time-invariance} with the shifts $q=-j$ and
$\eta=-y$ gives
$V(t,j,y)=V(t-j/c_*,0,0)=\psi(j-c_*t)$, where
$\psi(r):=V(-r/c_*,0,0)$. The time regularity of $V$ gives
$\psi\in C^1(\R)$.
For the shift $(q,\eta)=(0,0)$, the sliding argument gives
$V(t+\tau,j,y)\leq V(t,j,y)$ for every $\tau\leq0$.
Taking $t=-r/c_*$ and $\tau=-\rho/c_*$ gives
$\psi(r+\rho)\leq\psi(r)$ for every $\rho>0$.
If equality held for some $r,\rho$, the ordered entire solutions
$V(t,j,y)$ and $V(t-\rho/c_*,j,y)$ would touch at
$(-r/c_*,0,0)$. Lemma~\ref{lem:entire-positive-comparison} would make
them identical, so $\psi$ would be $\rho$-periodic. This contradicts
$\psi(-\infty)=1$ and $\psi(+\infty)=0$, which follow from
\eqref{eq:entire-rear-state} and \eqref{eq:entire-critical-tail}.
Thus $\psi$ is strictly decreasing. Substitution in \eqref{eq:main}
shows that it solves \eqref{eq:critical-wave}.
The uniqueness up to translation for monostable lattice traveling waves
proved in \cite{ChenFuGuo} gives
$\psi=\phi_*(\cdot+\xi)$ for some $\xi\in\R$.

\end{proof}

\begin{lemma}\label{lem:axial-entire-limits}
Let $t_n\to\infty$, put $\ell_n=\lfloor m_d(t_n)\rfloor$, and
let $y_n\in\Z^{d-1}$ with $|y_n|\leq A\sqrt{t_n}$ for some fixed
$A\geq0$. Every sequence
\begin{equation}\label{eq:axial-translate-sequence}
 u_n(s,k,w):=u(t_n+s,\ell_n+k,y_n+w)
\end{equation}
has a subsequence converging in $C^1$ locally in time on finite sets of
sites to $\phi_*(k-c_*s+\xi)$ for some $\xi\in\R$.
The possible values of $\xi$ lie in a bounded interval depending only
on $d,f,u_0,A$.
\end{lemma}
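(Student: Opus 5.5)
The plan is to extract a limit by compactness, check that it satisfies the two hypotheses of Lemma~\ref{lem:planar-lattice-rigidity}, and conclude that it is a planar critical wave. By \eqref{eq:profile-time-bounds}, the functions $u_n$ and $(u_n)_s$ are uniformly bounded and equicontinuous on every cylinder $[-m,m]\times\{|k|+|w|\leq m\}$ (for $n$ large, since $t_n+s>0$). Arzel\`a--Ascoli, nested subsequences and a diagonal choice, exactly as in the bounded case of the proof of Lemma~\ref{lem:planar-lattice-rigidity}, give a locally $C^1$ limit $V$. This $V$ is an entire solution with values in $[0,1]$.

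Next I verify \eqref{eq:entire-rear-state} and \eqref{eq:entire-critical-tail} for $V$ with $r=k-c_*s$. The key identity is $m_d(t_n+s)-\ell_n=c_*s+O(1)+o(1)$ for fixed $s$, uniformly on compact $s$-sets. Hence the original front coordinate $\ell_n+k-m_d(t_n+s)$ equals $r+O(1)$.

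\emph{Rear state.} I apply \eqref{eq:transverse-section-lower} with $A$ replaced by $2A+1$. This gives, at each large time $t$, a column $j_t$ with $|j_t-m_d(t)|\leq C$ on which $u\geq\theta$ for $|y|\leq(2A+1)\sqrt t$. The sites $y_n+w$ lie in this range for large $n$. To get all $j$ behind the front, not only one column, I use amplification (Lemma~\ref{lem:amplification}) from earlier times: the column at time $t-\sigma$ lies about $c_*\sigma$ behind the current front and has become $\geq\theta$ after time $S$. Combining these columns over a range of $\sigma$ covers every $k$ with $r\leq -R(\theta)$. Letting $\theta\to1$ yields \eqref{eq:entire-rear-state} for $V$.

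\emph{Tail.} The upper bound follows directly from \eqref{eq:sharp-directional-envelope} with $e=e_1$: there $z_{e_1}=\lambda_*(j-m_d)$, which is uniform in $y$, so $V\leq B_0(1+r)\ee^{-\lambda_*r}$ for $r\geq R_0$.

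The lower tail bound is the main obstacle, and I would obtain it from the product subsolution. By \eqref{eq:product-comparison}, $u(T+t,j,y)\geq q(t,j)H_T(t,y-y_0)$. For $|y|\leq(A+1)\sqrt t$ the factor $H_T$ is bounded below, as in the proof of Theorem~\ref{thm:nonlinear-lower}. Proposition~\ref{prop:scalar-logarithmic-lower} with $\beta=(d-1)/2$ gives $q\geq\Phi_\varepsilon(r')$ for $r'\leq(t+T)^{1/4}$, where $r'=j-c_*t+b\log(t+T)$ differs from $\ell_n+k-m_d(t_n+s)$ by a bounded shift. Since $\Phi_\varepsilon(\rho)\geq(\varepsilon/2)\rho\,\ee^{-\lambda_*\rho}$ for $\rho>0$, this gives $V\geq b_0(1+r)\ee^{-\lambda_*r}$ for $r\geq R_0$, after adjusting constants by the bounded shift. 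The window condition $r'\leq(t+T)^{1/4}$ becomes vacuous in the limit.

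The shift offset must remain bounded along the sequence. This holds because $\ell_n-m_d(t_n)\in(-1,0]$ and $T$ is fixed. One point needs care: the shift in the product bound is measured at time $t_n+s-T$, with $\log(t_n+s)$ versus $\log t_n$. The difference is $o(1)$, so it does not matter.

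Finally, $V\in(0,1)$: the tails give $0<V<1$ in the relevant regions, and the strong comparison in Lemma~\ref{lem:entire-positive-comparison} against the constant solutions $0$ and $1$ excludes touching. Lemma~\ref{lem:planar-lattice-rigidity} then gives $V=\phi_*(k-c_*s+\xi)$. For the bound on $\xi$, the constants $b_0,B_0,R_0$ and the rear threshold depend only on $d,f,u_0,A$. Also $\phi_*(r+\xi)$ sandwiched between $b_0(1+r)\ee^{-\lambda_*r}$ and $B_0(1+r)\ee^{-\lambda_*r}$ for large $r$ forces $|\xi|\leq C$, using the critical tail $\phi_*(r)\sim c(r+\xi_0)\ee^{-\lambda_*r}$ from \cite{ChenFuGuo}. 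A simpler route is to evaluate at $r=R_0$, where $\phi_*(R_0+\xi)$ lies in a fixed compact subinterval of $(0,1)$; strict monotonicity of $\phi_*$ then bounds $\xi$.
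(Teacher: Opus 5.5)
Your proposal is correct and follows the paper's architecture:
\begin{itemize}
\item compactness, exactly as in the paper;
\item the upper tail from \eqref{eq:sharp-directional-envelope} with $e=e_1$;
\item the lower tail from \eqref{eq:product-comparison} together with \eqref{eq:scalar-profile-lower}, where the pulse argument differs from the front coordinate $\ell_n+k-m_d(t_n+s)$ by exactly $c_*T$;
\item then Lemma~\ref{lem:planar-lattice-rigidity}, with constants independent of the subsequence.
\end{itemize}

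The one real difference is how you obtain the rear state \eqref{eq:entire-rear-state}. The paper first passes to the limit in the pulse bound, obtaining $V(s,k,w)\geq h_\eta\Phi_{\varepsilon_{\rm p}}(k-c_*s-\delta+c_*T)$ for all $(s,k,w)$. It then applies Lemma~\ref{lem:amplification} to the entire solution $V$ itself, seeded at the times $s_k=(k-\delta+c_*T-1)/c_*$ where the pulse argument equals $1$. Because $s_k$ varies continuously, this gives a seed at every integer $k$ and every $w$ at once, and it uses nothing beyond the lower-tail computation.

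You instead go back to $u$ and combine \eqref{eq:transverse-section-lower} with amplification before taking the limit. That works, but your sentence ``combining these columns covers every $k$'' needs one more step. The statement of \eqref{eq:transverse-section-lower} gives only one column $j_t$ per time and does not guarantee that the set $\{j_{t-\sigma}\}$ contains every integer. You can close this in either of two ways:
\begin{itemize}
\item use the explicit choice $j_t=J(t-S_A)$ with $J=\lceil a_0\rceil$ from that theorem's proof, which takes every sufficiently large integer value; or
\item insert a one-unit-time heat-kernel lower bound from \eqref{eq:linear-comparison}, which moves the seed from a column to any integer within distance $2C_{\theta,A}+1$, and only then amplify.
\end{itemize}
The paper's route avoids this issue and does not need \eqref{eq:transverse-section-lower} at all.

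Your other variations are fine.
\begin{itemize}
\item You prove $V>0$ by the strong comparison with the solution $0$, where the paper uses backward positivity $\partial_sV\geq-2dV$; both work.
\item You bound $\xi$ by evaluating the two tail bounds, where the paper uses the rear state at $K_-$ and the upper tail at $K_+$. For your version, enlarge $R_0$ first so that $B_0(1+R_0)\ee^{-\lambda_*R_0}<1$. Otherwise the evaluation at $r=R_0$ bounds $\xi$ only from above.
\end{itemize}
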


\begin{proof}
The bounds \eqref{eq:profile-time-bounds} apply to $u_n$ on $s>-t_n$.
For each $m\in\N$, this domain contains $[-m,m]$ once $n$ is large.
Arzel\`a--Ascoli applied to $u_n,(u_n)_s$ on
$[-m,m]\times\{(k,w):|k|+|w|\leq m\}$, followed by diagonal
extraction, gives local $C^1$ convergence to a function $V$ with
$0\leq V\leq1$. Integration in time identifies the limiting
derivative, and the finitely many neighbors in each Laplacian allow
passage through \eqref{eq:main}. Thus $V$ is an entire solution.
Take the subsequence also so that
\[
 \delta_n:=m_d(t_n)-\ell_n\to\delta\in[0,1],
 \qquad y_n/\sqrt{t_n}\to\eta\in\R^{d-1},\quad |\eta|\leq A.
\]
The interval $[0,1]$ includes the possible limit $\delta=1$, although
each $\delta_n<1$. For fixed $s,k$,
\begin{equation}\label{eq:front-coordinate-limit}
 \ell_n+k-m_d(t_n+s)
 =k-c_*s-\delta_n+
       \frac{\kappa_d}{\lambda_*}\log(1+s/t_n)
 \to k-c_*s-\delta.
\end{equation}
This convergence is uniform for $s$ in bounded intervals: if
$|s|\leq S$ and $t_n\geq2S$, then
$|\log(1+s/t_n)|\leq2S/t_n$.

Write $r=k-c_*s$. At the translated point the variable in
\eqref{eq:sharp-directional-envelope} is
$z_{e_1}=\lambda_*(\ell_n+k-m_d(t_n+s))$.
For $r\geq4$ and all large $n$, \eqref{eq:front-coordinate-limit}
puts its factor in parentheses in $[r-2,r+2]$, and $t_n+s\geq4$.
Thus \eqref{eq:sharp-directional-envelope} gives
\[
 u_n(s,k,w)\leq C_{\rm up}(1+\lambda_*(r+2))
                     \ee^{-\lambda_*(r-2)}
 \leq B_0(1+r)\ee^{-\lambda_*r},
\]
where $B_0:=C_{\rm up}(1+2\lambda_*)\ee^{2\lambda_*}$.
Passing to the limit gives the upper bound in
\eqref{eq:entire-critical-tail}, independently of $\delta,\eta,s,k,w$.

For the lower bound, choose $T,y_0,q$ as in
Lemma~\ref{lem:product-subsolution}. Apply
Proposition~\ref{prop:scalar-logarithmic-lower} with
$\beta=(d-1)/2$, $K=C_f$ and $\mu=\lambda_*/2$.
Denote its fixed pulse amplitude by $\varepsilon_{\rm p}>0$ and
write $b:=\kappa_d/\lambda_*$.
At scalar time $t=t_n+s-T$, the argument of the pulse is
\[
 \ell_n+k-c_*(t_n+s-T)+b\log(t_n+s)
 \to k-c_*s-\delta+c_*T.
\]
For every fixed $(s,k)$ this argument is bounded, while
$(t+T)^{1/4}=(t_n+s)^{1/4}\to\infty$ and $t\to\infty$.
Hence both the time and spatial restrictions in
\eqref{eq:scalar-profile-lower} hold for all large $n$.
For fixed $(s,w)$,
$(y_n+w-y_0)/\sqrt{t_n+s}\to\eta$.
Lemma~\ref{lem:fourier-expansion}, in dimension $d-1$ with $p=0$
and initial datum $\1_{\{0\}}$, gives
\[
 H_T(t_n+s-T,y_n+w-y_0)\to
 h_\eta:=\beta_\perp(4\pi)^{-(d-1)/2}\ee^{-|\eta|^2/4}.
\]
In particular,
$h_\eta\geq h_{\min,A}:=\beta_\perp(4\pi)^{-(d-1)/2}\ee^{-A^2/4}>0$.
Passing to the limit in \eqref{eq:product-comparison} and
\eqref{eq:scalar-profile-lower} yields
\begin{equation}\label{eq:entire-pulse-lower}
 V(s,k,w)\geq h_\eta
       \Phi_{\varepsilon_{\rm p}}(k-c_*s-\delta+c_*T)
 \qquad(s\in\R,\ (k,w)\in\Z^d).
\end{equation}
Set $R_0:=\max\{4,2(c_*T+1)+1\}$.
If $r\geq R_0$ and $R:=r-\delta+c_*T$, then
$R\geq r/2\geq1$ and $R\geq(1+r)/3$.
The inequality $P_\mu(R)\geq R/2$ in
Lemma~\ref{lem:delayed-pulse} and \eqref{eq:entire-pulse-lower} give
\[
 V(s,k,w)\geq\frac{h_\eta\varepsilon_{\rm p}}{2}
             R\ee^{-\lambda_*R}
 \geq b_0(1+r)\ee^{-\lambda_*r},\qquad
 b_0:=\frac{\varepsilon_{\rm p}h_{\min,A}}{6}
                          \ee^{-\lambda_*c_*T}>0.
\]
The last inequality uses $-\delta+c_*T\leq c_*T$.
The constants $b_0,B_0,R_0$ do not depend on the subsequential
parameters $\delta,\eta$ or on the evaluation point $(s,k,w)$.
Although convergence is only local in $w$, the bound therefore holds
at every site with these same constants, as required by
\eqref{eq:entire-critical-tail}.

For the rear state, set
$\eta_{\rm seed}:=\min\{1/2,h_{\min,A}\Phi_{\varepsilon_{\rm p}}(1)\}>0$.
For each integer $k$, let $s_k:=(k-\delta+c_*T-1)/c_*$.
Equation~\eqref{eq:entire-pulse-lower} gives
$V(s_k,k,w)\geq\eta_{\rm seed}$ for every $w$.
For $\theta\in(0,1)$ put
$S_\theta^{\rm rear}:=S(\eta_{\rm seed},\theta,d,f)$, with $S$ from
Lemma~\ref{lem:amplification}. The function
$\widehat V_k(\tau,x):=V(s_k+\tau,x)$, $\tau\geq0$, solves
\eqref{eq:main}, takes values in $[0,1]$, and has
$\widehat V_k(0,k,w)\geq\eta_{\rm seed}$.
Applying that lemma with initial time zero and site $(k,w)$ yields
$V(s,k,w)\geq\theta$ for $s\geq s_k+S_\theta^{\rm rear}$, or equivalently
\[
 k-c_*s\leq\delta-c_*T+1-c_*S_\theta^{\rm rear}.
\]
For a rear-state error $\rho\in(0,1)$, choose
$R_\rho:=c_*(T+S_{1-\rho}^{\rm rear})$.
Since $0\leq\delta\leq1$, the condition $k-c_*s\leq-R_\rho$
implies the displayed inequality with $\theta=1-\rho$. Therefore
\[
 \inf_{\substack{s\in\R,(k,w)\in\Z^d\\k-c_*s\leq-R_\rho}}
 V(s,k,w)\geq1-\rho.
\]
For every $\rho>0$ this also holds with $R\geq R_\rho$, proving
\eqref{eq:entire-rear-state}.

To check $V>0$ at $(s,k,w)$, choose $h>0$ with
$k-c_*(s-h)\geq R_0$. The lower tail gives $V(s-h,k,w)>0$.
Since $\partial_sV\geq-2dV$ at each site,
$V(s,k,w)\geq\ee^{-2dh}V(s-h,k,w)>0$.
If $V=1$ at one point, Lemma~\ref{lem:entire-positive-comparison}
applied to the ordered entire solutions $1,V$ would give $V\equiv1$,
contradicting the upper tail. Thus $0<V<1$, and
Lemma~\ref{lem:planar-lattice-rigidity} gives
$V(s,k,w)=\phi_*(k-c_*s+\xi)$.

The constants $b_0,B_0,R_0$ and $R_{1/4}=c_*(T+S_{3/4}^{\rm rear})$
are independent of the chosen subsequential limit.
Choose a negative integer $K_-\leq-R_{1/4}$, and an integer
$K_+\geq R_0$ with $B_0(1+K_+)\ee^{-\lambda_*K_+}\leq1/4$.
Then $V(0,K_-,0)\geq3/4$ and $V(0,K_+,0)\leq1/4$.
Since $\phi_*$ is strictly decreasing, these inequalities give
\[
 \phi_*^{-1}(1/4)-K_+\leq\xi
 \leq\phi_*^{-1}(3/4)-K_-.
\]
Both endpoints depend only on $d,f,u_0,A$.
\end{proof}

\begin{proof}[Proof of Theorem~\ref{thm:axial-profile}]
Fix $B\geq0$. We first bound $u$ near both ends of the half-tube
$j\geq0$, $|y|\leq B$. Apply
\eqref{eq:sharp-directional-envelope} with $e=e_1$. Then
$p_e=\lambda_*e_1$ and $z_e(t,(j,y))=\lambda_*(j-m_d(t))$.
Since $(1+z)\ee^{-z}$ is nonincreasing for
$z\geq0$, for every $R>0$ we obtain
$\sup_{j\geq m_d(t)+R,\ |y|\leq B}u(t,j,y)\leq
C_{\rm up}(1+\lambda_*R)\ee^{-\lambda_*R}$ for every $t\geq4$.
This bound tends to zero as $R\to\infty$. For the left tail, apply
Theorem~\ref{thm:nonlinear-lower} to
$u^y(t,j,w):=u(t,j,w+y)$ for each $y\in\Z^{d-1}$ with $|y|\leq B$.
Substitution in \eqref{eq:main} verifies its equation, and its initial
datum satisfies \eqref{eq:u0}. Denote the constants in
\eqref{eq:axial-inner-lower} for $u^y$ by $C_{\theta,y}^-$ and
$T_{\theta,y}^{\rm low}$. The maxima
$C_{\theta,B}^{\rm rear}:=\max_{|y|\leq B}C_{\theta,y}^-$ and
$T_{\theta,B}^{\rm rear}:=\max_{|y|\leq B}T_{\theta,y}^{\rm low}$
are finite. Consequently, $u(t,j,y)\geq\theta$ whenever
$t\geq T_{\theta,B}^{\rm rear}$,
$0\leq j\leq m_d(t)-C_{\theta,B}^{\rm rear}$ and $|y|\leq B$.
Letting $\theta\uparrow1$ yields
\begin{equation}\label{eq:fixed-tube-rear}
 \lim_{R\to\infty}\liminf_{t\to\infty}
 \inf_{\substack{0\leq j\leq m_d(t)-R\\|y|\leq B}}u(t,j,y)=1.
\end{equation}

Put $\ell(t)=\lfloor m_d(t)\rfloor$ and
$\delta(t)=m_d(t)-\ell(t)$. Lemma~\ref{lem:axial-entire-limits} with $A=0$ gives a constant
$C_{\rm ph}$ such that every limit of
\eqref{eq:axial-translate-sequence} with $y_n=0$ has the form
$\phi_*(k-c_*s+\xi)$ with $|\xi|\leq C_{\rm ph}$.  Since $\phi_*$ is
strictly decreasing and takes values in $(0,1)$ at finite arguments,
\[
 \eta_{\rm ph}:=\frac12\min\{\phi_*(C_{\rm ph}),
                         1-\phi_*(-C_{\rm ph})\}>0.
\]
If the bound
$\eta_{\rm ph}\leq u(t,\ell(t),0)\leq1-\eta_{\rm ph}$ failed along a sequence
$t_n\to\infty$, Lemma~\ref{lem:axial-entire-limits} would give a further
subsequence converging at the origin to $\phi_*(\xi)$ with
$|\xi|\leq C_{\rm ph}$, contradicting the definition of $\eta_{\rm ph}$.
Therefore there exists $t_0\geq2$ such that
\[
 \eta_{\rm ph}\leq u(t,\ell(t),0)\leq1-\eta_{\rm ph}
 \qquad(t\geq t_0).
\]
Define, for $t\geq t_0$,
\begin{equation}\label{eq:canonical-bounded-phase}
 \zeta(t):=\phi_*^{-1}(u(t,\ell(t),0))+\delta(t),
\end{equation}
and set $\zeta=0$ for $2\leq t<t_0$. Define
$C_\zeta:=1+\max\{|\phi_*^{-1}(\eta_{\rm ph})|,
|\phi_*^{-1}(1-\eta_{\rm ph})|\}$. Then $|\zeta(t)|\leq C_\zeta$ for
$t\geq2$, and neither $C_\zeta$ nor $\zeta$ depends on $B$.

Let $t_n\to\infty$ be arbitrary. Extract a subsequence as in
Lemma~\ref{lem:axial-entire-limits} with $y_n=0$ and
$\delta(t_n)\to\delta$. Its limit is
$\phi_*(k-c_*s+\xi)$, and \eqref{eq:canonical-bounded-phase} gives
$\zeta(t_n)\to\xi+\delta$. At $s=0$, the translated solutions converge on every fixed set of $k$
and $|y|\leq B$ to $\phi_*(k+\xi)$. Since
$\zeta(t_n)\to\xi+\delta$, one also has
\[
 \ell(t_n)+k-m_d(t_n)+\zeta(t_n)
 =k-\delta(t_n)+\zeta(t_n)\to k+\xi,
\]
and continuity of $\phi_*$ shows that the profiles in
\eqref{eq:axial-profile-convergence} converge to $\phi_*(k+\xi)$ as well.
Fix $0<\varepsilon<1$. Choose $R\geq C_{1-\varepsilon/4,B}^{\rm rear}$
so large that
$C_{\rm up}(1+\lambda_*R)\ee^{-\lambda_*R}<\varepsilon/4$,
$\phi_*(R-C_\zeta)<\varepsilon/4$, and
$1-\phi_*(-R+C_\zeta)<\varepsilon/4$.
For $t\geq\max\{4,T_{1-\varepsilon/4,B}^{\rm rear}\}$, the upper
envelope controls $u$ on $j\geq m_d(t)+R$, and the translated lower
bounds give $1-u(t,j,y)\leq\varepsilon/4$ on
$0\leq j\leq m_d(t)-R$, $|y|\leq B$.
Since $|\zeta(t)|\leq C_\zeta$, the error between $u$ and the profile
is at most $\varepsilon/2$ in both regions.
For the remaining indices, $|j-m_d(t)|\leq R$ implies
$|j-\ell(t)|\leq R+1$. Thus all recentered indices lie in the fixed
finite set
\[
 F_{R,B}:=\{(k,y)\in\Z\times\Z^{d-1}:|k|\leq R+1,\ |y|\leq B\}.
\]
Along the chosen subsequence, the solution and profile converge to
$\phi_*(k+\xi)$ uniformly on $F_{R,B}$, so their difference is less
than $\varepsilon$ there for all large $n$.
This proves convergence of the supremum on the whole half-tube along
the chosen subsequence. If \eqref{eq:axial-profile-convergence} failed,
there would be a sequence $t_n\to\infty$ on which that supremum was
at least some $\varepsilon_{\rm tube}>0$.
Applying the extraction and tail estimates to that sequence would
produce a further subsequence on which the supremum tends to zero,
a contradiction. Hence \eqref{eq:axial-profile-convergence} holds
for the full family as $t\to\infty$.

Fix $S>0$. To prove \eqref{eq:phase-slow-variation}, suppose that there
are $\varepsilon_{\rm ph}>0$, $t_n\to\infty$ and $|s_n|\leq S$
such that $|\zeta(t_n+s_n)-\zeta(t_n)|\geq\varepsilon_{\rm ph}$ for every
$n$. Extract subsequences with $s_n\to s$, $\delta(t_n)\to\delta$,
$\zeta(t_n)\to z$, and $\zeta(t_n+s_n)\to z'$, so
$|z-z'|\geq\varepsilon_{\rm ph}$. Extract a further subsequence in
Lemma~\ref{lem:axial-entire-limits} centered
at $(t_n,\ell(t_n),0)$; write its phase as $\xi$.  By
\eqref{eq:canonical-bounded-phase},
$z=\lim\zeta(t_n)=\xi+\delta$.  Since $s_n\to s$, local $C^1$
convergence of the translated solutions gives
\[
 u(t_n+s_n,\ell(t_n),0)\to
 \phi_*(-c_*s+\xi)=\phi_*(-c_*s-\delta+z).
\]
For large $n$, $t_n+s_n\geq2$ and $\ell(t_n)\geq0$.
Thus \eqref{eq:axial-profile-convergence}, with $B=0$ at time
$t_n+s_n$, applies at the fixed integer site $\ell(t_n)$ and gives
\[
 u(t_n+s_n,\ell(t_n),0)
 -\phi_*\bigl(\ell(t_n)-m_d(t_n+s_n)+\zeta(t_n+s_n)\bigr)
 \to0.
\]
The exact identity
\[
 \ell(t_n)-m_d(t_n+s_n)
 =-\delta(t_n)-c_*s_n+
       \frac{\kappa_d}{\lambda_*}\log(1+s_n/t_n)
\]
and the bound $|\log(1+s_n/t_n)|\leq2S/t_n$ for $t_n\geq2S$
show that the profile argument tends to $-c_*s-\delta+z'$.  Thus $u(t_n+s_n,\ell(t_n),0)$ also converges to
$\phi_*(-c_*s-\delta+z')$. Strict monotonicity of $\phi_*$ gives $z=z'$,
a contradiction. Both limits evaluate $u$ at the integer site
$\ell(t_n)$ and require no derivative of $\ell$. They therefore remain
valid when $t_n$ or $t_n+s_n$ is a jump time of the floor function.

Finally, fix $S,L>0$, and let $t_n\to\infty$ and
$j_n\in\Z_{\geq0}$ satisfy $u(t_n,j_n,0)\to\theta\in(0,1)$. Choose
$0<\theta_-<\theta<\theta_+<1$. For large $n$,
$\theta_-<u(t_n,j_n,0)<\theta_+$.
Equation~\eqref{eq:axial-inner-lower} at level $\theta_+$ and
\eqref{eq:remaining-sharp-upper} at level $\theta_-$ give
$m_d(t_n)-C_{\theta_+}^-<j_n<m_d(t_n)+L_{\theta_-}$.
In particular,
$|j_n-m_d(t_n)|\leq\max\{C_{\theta_+}^{-},L_{\theta_-}\}$. Thus
$j_n-\ell(t_n)$ is a bounded sequence of integers. From any
subsequence extract one on which this integer is constant, and apply
Lemma~\ref{lem:axial-entire-limits}.  If
$q=j_n-\ell(t_n)$ is the constant integer on this subsequence and the
limit centered at $\ell(t_n)$ is $\phi_*(k-c_*s+\xi)$, then the limit
centered at $j_n$ is
$\phi_*(k-c_*s+q+\xi)$.  Writing $\gamma=q+\xi$, this is
$\phi_*(k-c_*s+\gamma)$. Its value at $(0,0,0)$ is
$\theta$, so $\gamma=\phi_*^{-1}(\theta)$. Thus every subsequence has a further subsequence converging uniformly on
$|s|\leq S$, $|k|+|y|\leq L$ to
$\phi_*(k-c_*s+\phi_*^{-1}(\theta))$.  If
\eqref{eq:level-profile-convergence} failed, there would be an
$\varepsilon_0>0$ and a subsequence for which the supremum in
\eqref{eq:level-profile-convergence} is
at least $\varepsilon_0$; the further convergent subsequence would make
that supremum tend to zero, a contradiction.  Hence
\eqref{eq:level-profile-convergence} holds for the full sequence.
For a signed permutation matrix $R$, apply
\eqref{eq:axial-profile-convergence}--\eqref{eq:level-profile-convergence}
to $u_R(t,x)=u(t,Rx)$, which solves \eqref{eq:main} with initial datum
$u_0\circ R$ satisfying \eqref{eq:u0}. Since
$u_R(t,je_1)=u(t,jRe_1)$, these three convergence statements hold
along $Re_1$.
\end{proof}

\begin{corollary}\label{cor:diffusive-level-profile}
Assume \eqref{eq:kpp}--\eqref{eq:u0}. Let $t_n\to\infty$ and
$(j_n,y_n)\in\Z^d$ satisfy
$|j_n-m_d(t_n)|\leq C_{\rm loc}$, $|y_n|\leq A\sqrt{t_n}$, and
$u(t_n,j_n,y_n)\to\theta\in(0,1)$, for fixed $A,C_{\rm loc}<\infty$.
Then, for every $S,L>0$,
\begin{equation}\label{eq:diffusive-profile-convergence}
 \sup_{\substack{|s|\leq S\\k\in\Z,\ w\in\Z^{d-1},\ |k|+|w|\leq L}}
 \left|u(t_n+s,j_n+k,y_n+w)
       -\phi_*(k-c_*s+\phi_*^{-1}(\theta))\right|\to0.
\end{equation}
\end{corollary}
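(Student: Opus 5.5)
The plan is a subsequence-compactness argument modeled on the last part of the proof of Theorem~\ref{thm:axial-profile}, with Lemma~\ref{lem:axial-entire-limits} now applied with transverse centers $y_n$ instead of $0$. Put $\ell_n=\lfloor m_d(t_n)\rfloor$. Since $|j_n-m_d(t_n)|\leq C_{\rm loc}$, the integers $q_n:=j_n-\ell_n$ satisfy $|q_n|\leq C_{\rm loc}+1$. From any subsequence of $(t_n,j_n,y_n)$ I first extract a further subsequence on which $q_n\equiv q$ is constant.

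Next I apply Lemma~\ref{lem:axial-entire-limits} with these $t_n$, this $\ell_n$, and the given $y_n$. This is allowed because $|y_n|\leq A\sqrt{t_n}$ is exactly the hypothesis of that lemma. A further subsequence gives
\[
u(t_n+s,\ell_n+k,y_n+w)\to\phi_*(k-c_*s+\xi)
\]
locally in $C^1$, uniformly on finite sets of sites and bounded time intervals. Shifting by the constant $q$, the solution recentered at $(j_n,y_n)$ converges to $\phi_*(k-c_*s+q+\xi)$, uniformly on $|s|\leq S$ and $|k|+|w|\leq L$; this is a finite set of sites and a compact time interval. Evaluating at the origin and using $u(t_n,j_n,y_n)\to\theta$ gives $\phi_*(q+\xi)=\theta$. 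Since $\phi_*$ is strictly decreasing, $q+\xi=\phi_*^{-1}(\theta)$, so the limit is independent of the subsequence and equals $\phi_*(k-c_*s+\phi_*^{-1}(\theta))$.

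To conclude, I argue by contradiction. If \eqref{eq:diffusive-profile-convergence} failed, some subsequence would keep the supremum at least $\varepsilon_0>0$. The extraction above would then produce a further subsequence along which that supremum tends to zero, which is impossible.

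The only substantive input is Lemma~\ref{lem:axial-entire-limits} for off-axis centers $y_n$ in the diffusive window. That lemma already handles this case, through the lower bound from the product subsolution with the transverse heat kernel bounded below by $h_{\min,A}$, together with the uniform half-space upper envelope. So I expect no real obstacle beyond bookkeeping the integer shift $q$ and the fact that the time window $s>-t_n$ eventually covers $[-S,S]$.
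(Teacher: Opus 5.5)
Your proposal is correct and follows the paper's proof essentially step for step: you fix the bounded integer offset $q_n=j_n-\lfloor m_d(t_n)\rfloor$ along a subsequence, apply Lemma~\ref{lem:axial-entire-limits} with the given transverse centers $y_n$, identify the phase through $\phi_*(q+\xi)=\theta$, and conclude with the subsequence-of-subsequence contradiction.
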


\begin{proof}
Put $q_n=j_n-\lfloor m_d(t_n)\rfloor$. Since
$|q_n|\leq C_{\rm loc}+1$, this sequence takes only finitely many
integer values. From any subsequence, extract a further subsequence on which
$q_n=q$ is constant. Lemma~\ref{lem:axial-entire-limits}, applied with the
given transverse centers, then gives a limit
$\phi_*(k+q-c_*s+\xi)$ for the solution recentered at $(j_n,y_n)$.
Its value at $(s,k,w)=(0,0,0)$ is $\theta$, so
$q+\xi=\phi_*^{-1}(\theta)$. Thus every subsequence has a further
subsequence converging locally to
$\phi_*(k-c_*s+\phi_*^{-1}(\theta))$.
If the supremum in \eqref{eq:diffusive-profile-convergence} failed
to tend to zero, it would remain at least $\varepsilon_0>0$ along
a subsequence. The further locally convergent subsequence would
contradict this bound, proving \eqref{eq:diffusive-profile-convergence}.
\end{proof}

Uniform convergence on a fixed half-tube does not mean uniform convergence
to one planar wave over all $y\in\Z^{d-1}$. For fixed $t\geq2$ and $j\in\Z$, the
upper inequality in
\eqref{eq:linear-comparison} gives
$u(t,j,y)\leq \ee^{at}\sum_z\pheat_d(t,(j,y)-z)u_0(z)$; since $u_0$ has
finite support and $\pheat_d(t,\cdot)\in\ell^1(\Z^d)$ by
\eqref{eq:heat-semigroup}, each summand tends to zero as $|y|\to\infty$.
Hence $u(t,j,y)\to0$ as $|y|\to\infty$. Since the wave profile is independent of $y$,
\[
 \sup_{y\in\Z^{d-1}}
 \left|u(t,j,y)-\phi_*(j-m_d(t)+\zeta(t))\right|
 \geq\phi_*(j-m_d(t)+\zeta(t)).
\]
If $|j-m_d(t)|\leq1$, boundedness of $\zeta$ makes the right-hand side
uniformly positive. Thus this supremum cannot tend to zero.
Corollary~\ref{cor:diffusive-level-profile} gives local wave
convergence at the centers $(j_n,y_n)$ with $|y_n|\leq A\sqrt{t_n}$.

\section{Weighted estimates and other directions}\label{sec:remaining}

The exponent $1/2$ in \eqref{eq:critical-mass-main} is sharp on
the signed coordinate axes. Corollary~\ref{cor:two-sided-critical-mass}
derives the matching lower bound from \eqref{eq:product-comparison},
\eqref{eq:autonomous-leading-lower}, and the unit mass of the
transverse heat kernel in \eqref{eq:heat-semigroup}.

\begin{corollary}\label{cor:two-sided-critical-mass}
Under \eqref{eq:kpp}--\eqref{eq:u0}, there are $c_{\rm mass}>0$ and
$T_{\rm mass}\geq1$, depending only on $d,f,u_0$, such that, with
$C_{\rm mass}$ from \eqref{eq:critical-mass-main},
\begin{equation}\label{eq:two-sided-critical-mass}
    c_{\rm mass}t^{-1/2}\leq
       \sum_{(j,y)\in\Z^d}\ee^{\lambda_*(j-c_*t)}u(t,j,y)
           \leq C_{\rm mass}t^{-1/2}\qquad(t\geq T_{\rm mass}).
\end{equation}
For every $i\in\{1,\ldots,d\}$ and $\sigma\in\{-1,1\}$ there
are $c_{i,\sigma},C_{i,\sigma},T_{i,\sigma}>0$ such that
\[
 c_{i,\sigma}t^{-1/2}\leq
 \sum_{x\in\Z^d}\ee^{\lambda_*(\sigma x_i-c_*t)}u(t,x)
 \leq C_{i,\sigma}t^{-1/2}
 \qquad(t\geq T_{i,\sigma}).
\]
\end{corollary}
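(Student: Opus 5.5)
The upper bound in \eqref{eq:two-sided-critical-mass} is immediate. For $e=e_1$, Lemma~\ref{lem:critical-pair} gives $p_{e_1}=\lambda_*e_1$ and $v_{e_1}=c_*e_1$. Hence the weight in \eqref{eq:critical-mass-main} is exactly $\ee^{\lambda_*(j-c_*t)}$, and $C_{\rm mass}(1+t)^{-1/2}\leq C_{\rm mass}t^{-1/2}$.

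For the lower bound I would use the product comparison \eqref{eq:product-comparison}. For original time $t\geq T$ it gives
\[
 u(t,j,y)\geq q(t-T,j)H_T(t-T,y-y_0),
 \qquad H_T(t-T,\cdot)=\beta_\perp t^{(d-1)/2}\pheat_{d-1}(t,\cdot).
\]
Summing over $y\in\Z^{d-1}$ and using the unit mass in \eqref{eq:heat-semigroup} gives the bound $\beta_\perp t^{(d-1)/2}q(t-T,j)$. It then remains to bound $\sum_j\ee^{\lambda_*(j-c_*t)}q(t-T,j)$ from below by $c\,t^{-d/2}$.

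For this I would use the argument in the proof of Proposition~\ref{prop:scalar-logarithmic-lower}. With $\beta=(d-1)/2$, estimate \eqref{eq:nonautonomous-leading-lower} (derived from \eqref{eq:autonomous-leading-lower} via $q\geq\rho Q$) gives, for $\tau=t-T$,
\[
 q(\tau,j)\geq c\,\tau^{-\beta-3/2}s\,\ee^{-\lambda_*s},\qquad s=j-c_*\tau\in[L_0,\sqrt\tau].
\]
Multiplying by $\ee^{\lambda_*(j-c_*t)}=\ee^{\lambda_*s}\ee^{-\lambda_*c_*T}$ leaves $c'\tau^{-d/2-1/2}s$. Summing over the integers $s\in[L_0,\sqrt\tau]$ produces a factor of order $\tau$, so the sum is at least of order $\tau^{-d/2+1/2}$. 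This is better than the required $t^{-d/2}$ by a factor $\sqrt t$, so something is inconsistent.

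The resolution is that the sharp upper bound \eqref{eq:sharp-directional-envelope} caps $u$ at $j-c_*t\approx-(\kappa_d/\lambda_*)\log t+O(1)$. The lower bound from \eqref{eq:autonomous-leading-lower} is only valid where it does not exceed that cap. Rechecking the exponents: $q(\tau,j)\gtrsim\tau^{-\beta-3/2}s\ee^{-\lambda_*s}$ with $\beta+3/2=d/2+1$. The weighted sum is then $\tau^{-d/2-1}\sum_{s\leq\sqrt\tau}s\approx\tau^{-d/2}$. Multiplying by the factor $t^{(d-1)/2}$ from the transverse sum gives $t^{-1/2}$. So the bookkeeping closes: $c_{\rm mass}t^{-1/2}$ is the sum of $s\,\ee^{0}\tau^{-(d/2+1)}$ over $s\leq\sqrt\tau$, times $t^{(d-1)/2}$.

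The only care needed is to check that the window $L_0\leq s\leq\sqrt\tau$ with $\tau=t-T$ is nonempty and contains about $\sqrt t$ integers, and that $\tau/t$ stays bounded below, for $t\geq T_{\rm mass}$. Both hold after enlarging the time threshold.

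For the signed directions $\sigma e_i$, I would apply the above to $u_R$ with $R$ a signed permutation matrix sending $e_1$ to $\sigma e_i$. The weighted sum transforms accordingly, and $u_R$ satisfies \eqref{eq:u0}, so the same constants argument applies with data-dependent constants. The main obstacle is purely the exponent bookkeeping; no new estimate is needed.
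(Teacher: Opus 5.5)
Your argument is correct and is essentially the paper's proof. It combines the product comparison \eqref{eq:product-comparison} with $q\geq\rho Q$ from \eqref{eq:rhoQ-comparison}, whose factor $(T/(\tau+T))^{\beta}$ exactly cancels $(\tau+T)^{\beta}$ in $H_T$; it then sums in $y$ using the unit transverse mass, applies the window estimate \eqref{eq:autonomous-leading-lower} to get $\tau^{-3/2}\sum_{L_0\leq s\lesssim\sqrt\tau}s\gtrsim\tau^{-1/2}$, and handles the other signed axes with a signed permutation, exactly as the paper does.

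The apparent ``inconsistency'' in your middle paragraph came only from an arithmetic slip: $\beta+3/2=d/2+1$, not $d/2+1/2$. You should delete the appeal to the upper envelope as a ``cap''. A valid lower bound needs no such restriction, and \eqref{eq:nonautonomous-leading-lower} holds on its whole window, where it is in fact consistent with \eqref{eq:sharp-directional-envelope}.
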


\begin{proof}
For $e=e_1$, Lemma~\ref{lem:critical-pair} gives
$p_e=\lambda_*e_1$ and $v_e=c_*e_1$. Thus
\eqref{eq:critical-mass-main} gives the upper bound, since
$(1+t)^{-1/2}\leq t^{-1/2}$ for $t>0$.

For the lower bound, choose $T_0,j_0,y_0,\eta_q$ as in
Lemma~\ref{lem:product-subsolution}, set
$q_0=\eta_q\1_{\{j_0\}}$, and put $\beta=(d-1)/2$.
Let $q$ solve \eqref{eq:time-dependent-logistic} with $T=T_0$ and
$q(0)=q_0$, and let $Q$ be the solution in
Lemma~\ref{lem:autonomous-leading-lower} with $K=C_f$ and $Q(0)=q_0$.
The initial datum is nonzero, finitely supported, and bounded by $a/C_f$.
Equation~\eqref{eq:rhoQ-comparison}, with these parameters, gives
$q(s,j)\geq(T_0/(s+T_0))^\beta Q(s,j)$. Substitution in
\eqref{eq:product-comparison} and the definition of $H_{T_0}$ gives
\[
 u(T_0+s,j,y)\geq\beta_\perp T_0^\beta Q(s,j)
       \pheat_{d-1}(s+T_0,y-y_0)
 \qquad(s\geq0).
\]
All summands below are nonnegative, and the left-hand sum is finite by
\eqref{eq:critical-mass-main}. Sum first in $y$ and use
$\sum_y\pheat_{d-1}(s+T_0,y-y_0)=1$ from
\eqref{eq:heat-semigroup}. This yields
\begin{equation}\label{eq:mass-product-sum}
 \sum_{(j,y)\in\Z^d}\ee^{\lambda_*(j-c_*(T_0+s))}u(T_0+s,j,y)
 \geq\beta_\perp T_0^\beta\ee^{-\lambda_*c_*T_0}
       \sum_{j\in\Z}\ee^{\lambda_*(j-c_*s)}Q(s,j).
\end{equation}

Write $c_Q,L_Q,T_Q$ for the constants in
Lemma~\ref{lem:autonomous-leading-lower} for this $Q$.
Set $S_{\rm mass}:=\max\{T_Q,16L_Q^2,64\}$.
For $s\geq S_{\rm mass}$, put
$J_s=[c_*s+\sqrt s/4,c_*s+\sqrt s/2]\cap\Z$.
Then $L_Q\leq j-c_*s\leq\sqrt s$ for every $j\in J_s$.
An interval of length $\sqrt s/4$ contains at least
$\sqrt s/4-1$ integers, so
$\#J_s\geq\sqrt s/4-1\geq\sqrt s/8$.
Equation~\eqref{eq:autonomous-leading-lower} gives
\[
 \sum_{j\in\Z}\ee^{\lambda_*(j-c_*s)}Q(s,j)
 \geq c_Qs^{-3/2}\sum_{j\in J_s}(j-c_*s)
 \geq\frac{c_Q}{32}s^{-1/2}.
\]
Choose
\[
 c_{\rm mass}:=\frac{\beta_\perp T_0^\beta c_Q}{32}
                  \ee^{-\lambda_*c_*T_0}>0,
 \qquad T_{\rm mass}:=T_0+S_{\rm mass}.
\]
For $t\geq T_{\rm mass}$, put $s=t-T_0$. Then
$s\geq S_{\rm mass}$ and $s^{-1/2}\geq t^{-1/2}$.
Equation~\eqref{eq:mass-product-sum} proves the lower inequality in
\eqref{eq:two-sided-critical-mass} with these fixed constants.

For a signed coordinate axis $\sigma e_i$, choose a signed permutation
matrix $R$ with $Re_1=\sigma e_i$ and set $u_R(t,x)=u(t,Rx)$.
Since $R$ permutes the nearest-neighbor steps,
$\Delta_{\Z^d}u_R(t,x)=(\Delta_{\Z^d}u)(t,Rx)$.
Thus $u_R$ solves \eqref{eq:main}, and its initial datum
$u_0(Rx)$ satisfies \eqref{eq:u0}. Apply
\eqref{eq:two-sided-critical-mass} to $u_R$, and denote its lower
constant and time threshold by $c_{i,\sigma}$ and $T_{i,\sigma}$.
With $z=Rx$ and $R^{-1}=R^{\mathsf T}$,
$(R^{-1}z)_1=(Re_1)\cdot z=\sigma z_i$, so
\[
 \sum_{x\in\Z^d}\ee^{\lambda_*(x_1-c_*t)}u_R(t,x)
 =\sum_{z\in\Z^d}\ee^{\lambda_*(\sigma z_i-c_*t)}u(t,z).
\]
For $e=\sigma e_i$, the formulas in \eqref{eq:I} and
\eqref{eq:pstar} give $v_e=c_*\sigma e_i$ and
$p_e=\lambda_*\sigma e_i$. Applying \eqref{eq:critical-mass-main}
to the original solution in this direction gives the upper bound with
$C_{i,\sigma}=C_{\rm mass}$.
\end{proof}

For $e\in\mathbb S^{d-1}$, the vector $n_e=p_e/|p_e|$ is the
outward unit normal to $\partial\mathcal W_a$ at $v_e=w_*(e)e$.
Lemma~\ref{lem:critical-pair} gives $\lambda(n_e)=|p_e|$ and
$\alpha_e=\lambda(n_e)n_e\cdot e>0$. Hence the logarithmic coefficient
in the upper location bound \eqref{eq:sharp-directional-location} is
\begin{equation}\label{eq:general-shift}
    \frac{\kappa_d}{\alpha_e}
       =\frac{d/2+1}{\lambda(n_e)\,n_e\cdot e}.
\end{equation}
On a signed coordinate axis, $n_e=e$ and $\lambda(n_e)=\lambda_*$,
so this coefficient is $\kappa_d/\lambda_*$.

An observation along a ray also requires a choice of lattice sites.
For example, for $r\geq0$ define
$x_e(r):=(\lfloor r(e)_k+1/2\rfloor)_{k=1}^d$, where $(e)_k$ is
the $k$th coordinate of $e$. Then $|x_e(r)-re|\leq\sqrt d/2$, and
\eqref{eq:sharp-directional-location} applies with $\rho=\sqrt d/2$.
Exact intersections with the ray need not exist: for
$e=(1,\sqrt2,0,\ldots,0)/\sqrt3$, no $r>0$ has $re\in\Z^d$,
because its first two nonzero coordinates have irrational ratio.

Under the hypotheses of \cite[Corollary~4.1]{Gartner}, the transition
region in $\mathbb{R}^d$ lies in an annulus of bounded width.
For \eqref{eq:main}, Theorem~\ref{thm:sharp-directional-upper}
gives the upper bound \eqref{eq:sharp-directional-location} uniformly
in $e$. The matching lower bound in
Theorem~\ref{thm:nonlinear-lower} uses the coordinate decomposition of
$\Delta_{\Z^d}$ and is proved only on the signed coordinate axes.
We do not obtain a bounded-width localization of the full transition set
$\{x\in\Z^d:\varepsilon<u(t,x)<1-\varepsilon\}$ that is uniform
over all directions, for $0<\varepsilon<1/2$.
Such a conclusion requires lower bounds uniform in $e$, with a
specified lattice approximation to each ray.
The uncorrected boundary $\partial(t\mathcal W_a)$ meets the positive
coordinate axis at $c_*te_1$, whereas \eqref{eq:bramson-target} gives
$c_*t-R_\theta(t)=(\kappa_d/\lambda_*)\log t+O(1)$ for each fixed
$\theta\in(0,1)$.

For initial data $u_0(j,y)=v_0(j)$ with $v_0:\Z\to[0,1]$, let $v$
solve $v_t=\Delta_{\Z}v+f(v)$ with $v(0)=v_0$.
The function $\overline u(t,j,y)=v(t,j)$ satisfies
$\Delta_{\Z^d}\overline u(t,j,y)=\Delta_{\Z}v(t,j)$, since every
transverse difference vanishes. It therefore solves \eqref{eq:main}
with initial datum $u_0$, and uniqueness in
Lemma~\ref{lem:comparison} gives $u=\overline u$.
If $v_0(j_0)>0$, the support of $u_0$ contains
$\{j_0\}\times\Z^{d-1}$ and is infinite because $d\geq2$.
Thus nonzero data of this form do not satisfy \eqref{eq:u0}.

For the finitely supported datum in \eqref{eq:u0}, fix
$\theta\in(0,1)$ and define the maximal projection by
\[
 \widehat R_\theta(t):=\sup\{j\in\Z:\ u(t,j,y)\geq\theta
     \text{ for some }y\in\Z^{d-1}\},\qquad
 \sup\varnothing:=-\infty.
\]
Let $T_\theta^{\rm low}$ denote the time threshold in
\eqref{eq:sharp-axial-lower}. For
$t\geq\max\{4,T_\theta^{\rm low}\}$, that lower bound makes the
axial level set nonempty. Equation~\eqref{eq:remaining-sharp-upper}
excludes every $j\geq m_d(t)+L_\theta$ from the projected level set,
uniformly in $y$. Both sets are therefore nonempty and bounded above
in $\Z$, so their suprema are maxima. In particular,
\[
 m_d(t)-C_\theta^-\leq R_\theta(t)\leq\widehat R_\theta(t)
       <m_d(t)+L_\theta
 \qquad\bigl(t\geq\max\{4,T_\theta^{\rm low}\}\bigr).
\]
This proves $\widehat R_\theta(t)=m_d(t)+O(1)$, with constants
depending only on $d,f,u_0,\theta$.

\begin{remark}\label{rem:integer-phase}
Fix $\theta\in(0,1)$. Corollary~\ref{cor:nonempty} implies that
$R_\theta(t)\in\Z$ for all sufficiently large $t$.
A finite limit of $R_\theta(t)-m_d(t)$ through all real times is
impossible. Indeed, $m_d'(t)=c_*-\kappa_d/(\lambda_*t)>0$ for
$t>\max\{2,\kappa_d/(\lambda_*c_*)\}$, and $m_d(t)\to\infty$.
If the displacement converged to $\sigma\in\R$, continuity and
eventual strict increase of $m_d$ would give $t_n\to\infty$ with
$m_d(t_n)+\sigma=n+1/2$ for every sufficiently large integer $n$.
Since $R_\theta(t_n)\in\Z$, one would have
$|R_\theta(t_n)-m_d(t_n)-\sigma|\geq1/2$, a contradiction.
Convergence through all real times can instead be formulated for an
interpolated level position or for the phase in a wave approximation.
Theorem~\ref{thm:axial-profile} provides a bounded phase $\zeta(t)$
satisfying \eqref{eq:phase-slow-variation}; it does not assert that
$\zeta(t)$ has a limit.
\end{remark}

An axial restriction does not satisfy a closed one-dimensional equation.
Let $\widetilde e_1,\ldots,\widetilde e_{d-1}$ be the coordinate vectors
of $\Z^{d-1}$ and set $r_j(t):=u(t,j,0)$. For $t>0$ and $j\in\Z$,
substitution in \eqref{eq:main} gives
\[
    \dot r_j=r_{j-1}-2r_j+r_{j+1}+f(r_j)
       +\sum_{k=1}^{d-1}
          \bigl(u(t,j,\widetilde e_k)+u(t,j,-\widetilde e_k)-2r_j\bigr).
\]
The transverse sum depends on values away from the axis and is not
determined by $(r_j(t))_{j\in\Z}$. Section~\ref{sec:profiles}
therefore takes limits of translates on $\Z^d$, as specified in
Lemma~\ref{lem:axial-entire-limits}. In that lemma, the lower inequality
in \eqref{eq:entire-critical-tail} comes from
\eqref{eq:product-comparison} and \eqref{eq:scalar-profile-lower};
the upper inequality comes from \eqref{eq:sharp-directional-envelope}.
Both bounds are uniform in the transverse variable.
Lemma~\ref{lem:planar-lattice-rigidity} then gives
\eqref{eq:lattice-space-time-invariance}. Taking $q=0$ in that
identity yields $V(t,j,y+\eta)=V(t,j,y)$ for every
$\eta\in\Z^{d-1}$, which proves that each such entire limit is
independent of $y$.

\end{document}